\documentclass[11pt]{article}
\setcounter{tocdepth}{1}
\usepackage[toc,page]{appendix}

\usepackage{etoolbox}

\usepackage{enumitem}
\usepackage{amsfonts,amsmath, amssymb,latexsym}
\usepackage{mathtools}
\usepackage{multirow,comment}
\setlength{\textheight}{8.75in}
\setlength{\textwidth}{6.5in}
\setlength{\topmargin}{0.0in}
\setlength{\headheight}{0.0in}
\setlength{\headsep}{0.0in}
\setlength{\leftmargin}{0.0in}
\setlength{\oddsidemargin}{0.0in}
\setlength{\parindent}{3pc}
\def\Z{{\mathbb Z}}

\def\cI{{\mathcal I}}

\def\SL{{\rm SL}}

\def\supp{{\rm supp}}
\def\mon{{\rm mon}}

\def\GL{{\rm GL}}

\def\SO{{\rm SO}}

\def\PGL{{\rm PGL}}

\def\sk{{\rm sk}}

\def\inv{{\rm inv}}

\def\diag{{\rm diag}}
\def\gen{{\rm gen}}

\def\rad{{\rm rad}}
\def\Stab{{\rm Stab}}

\def\Sel{{\rm Sel}}
\def\Inv{{\rm Inv}}
\def\Sym{{\rm Sym}}

\def\Cl{{\rm Cl}}

\def\O{{\mathcal O}}
\newcommand{\cO}{{\mathcal O}}

\def\P{{\mathbb P}}

\def\Disc{{\rm Disc}}

\def\ind{{\rm ind}}

\def\Aut{{\rm Aut}}

\def\Tr{{\rm Tr}}
\def\mg{{\rm mg}}

\def\red{{\rm red}}
\def\ord{{\rm ord}}

\def\tTr{{\rm Tr}}

\def\sd{{\rm sd}}

\def\r{{\rm r}}
\def\Res{{\rm Res}}

\def\Vol{{\rm Vol}}
\def\R{{\mathbb R}}
\def\F{{\mathbb F}}

\def\cB{{\mathcal B}}
\def\EE{{\mathcal E}}

\def\FF{{\mathcal F}}

\def\RR{{\mathcal R}}
\def\Q{{\mathbb Q}}

\def\cB{{\mathcal B}}
\def\H{{\mathcal H}}

\def\V{{\mathcal V}}
\def\Z{{\mathbb Z}}

\def\P{{\mathbb P}}
\def\F{{\mathbb F}}
\def\Q{{\mathbb Q}}
\def\C{{\mathbb C}}
\def\H{{\mathcal H}}
\def\W{{\mathcal W}}

\def\fz1{{F_{\Z,1}}}

\def\SO{{\rm SO}}

\def\max{{\rm max}}

\def\s{{\rm {(1)}}}

\newtheorem{theorem}{Theorem}[section]
\newtheorem{Theorem}{Theorem}
\newtheorem{corollary}[theorem]{Corollary}
\newtheorem{cor}[theorem]{Corollary}
\newtheorem{lemma}[theorem]{Lemma}

\newtheorem{remark}[theorem]{Remark}
\newtheorem{Remark}[Theorem]{Remark}
\newtheorem{proposition}[theorem]{Proposition}

\newenvironment{proof}{\noindent {\bf Proof:}}{$\Box$ \vspace{2 ex}}

\usepackage{xcolor,color,url,lmodern}

\title{Secondary terms in the first moment of $|\Sel_2(E)|$}
\author{Arul Shankar and Takashi Taniguchi}
\date{}

\begin{document}

\maketitle

\begin{abstract}
We prove the existence of secondary terms of order $X^{3/4}$, with power saving error terms, in the counting functions of $|\Sel_2(E)|$, the $2$-Selmer group of $E$, for elliptic curves $E$ having height bounded by $X$. This is the first improvement on the error term of $o(X^{5/6})$, proved in \cite{BS2Sel}, where the primary term of order $X^{5/6}$ for this counting function was obtained.
\end{abstract}

\vspace{-.2in}
\tableofcontents

\section{Introduction}

The Poonen--Rains heuristics \cite{MR2833483} predict the distribution of the $p$-Selmer groups of  elliptic curves over $\Q$, for all primes $p$. These heuristics are supported by works of Bhargava and the first named author \cite{BS2Sel,BS3Sel,BS5Sel}, where it is proven that when the set of all elliptic curves over $\Q$ is ordered by height, the average size of the $p$-Selmer groups is $p+1$ for $p=2$, $3$, and $5$. On the computational side, Balakrishnan--Ho--Kaplan--Spicer--Stein--Weigandt \cite{MR3540965} collect and analyze data on ranks, $2$-Selmer groups, and other arithmetic invariants of elliptic curves when they are ordered by height. They find a persistently smaller average size of the $2$-Selmer group in the data. This is despite the fact that the average rank appears bigger in the data than its predicted value of 0.5. This discrepancy between the Goldfeld \cite{MR0564926} and Katz--Sarnak \cite{MR1659828} prediction of the average rank and the data was observed, for families of quadratic twists of an elliptic curve, and the family of all elliptic curves ordered by conductor, in \cite{MR2291676}. Thus it is natural to ask whether there exists a secondary term in the counting function of $|\Sel_2(E)|$, which explains the discrepancy between the data and and the theory.

Our main result proves the existence of a secondary term in this counting function. More precisely, let $\EE$ denote the family of all elliptic curves over $\Q$. Every elliptic curve in $\EE$ can be uniquely represented in the form $E_{AB}:y^2=x^3+Ax+B$, where $A$ and $B$ are integers such that $p^4\mid A$ implies that $p^6\nmid B$ for all primes $p$. We define the height $H(E_{AB}):=\max\{4|A|^3,27B^2\}$, and for a real number $X$, define the set 
\begin{equation*}
\EE^\pm_X:=\bigl\{E\in\EE: H(E)<X,\,\pm\Delta(E)>0\bigr\},
\end{equation*}
where $\Delta(E)$ is the discriminant of $E$.
Then we prove the following result.
\begin{Theorem}\label{Theorem1}
With notation as above, we have
\begin{equation*}
\sum_{E\in\mathcal{E}_X^\pm}|\Sel_2(E)|=
3\cdot\sum_{E\in\mathcal{E}_X^\pm} 1+C(\EE)^\pm X^{3/4}+O_\epsilon(X^{3/4-\alpha+\epsilon}),
\end{equation*}
for constants $C(\EE)^\pm$ and some $\alpha>0$. $($More precisely, we show that we can take $\alpha$ to be $1/3804$.$)$
\end{Theorem}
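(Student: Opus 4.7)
The plan is to first reduce the theorem to an orbit counting problem using the parametrization from \cite{BS2Sel}: every non-identity element of $\Sel_2(E_{AB})$ corresponds to a $\GL_2(\Z)$-orbit of integer binary quartic forms with invariants $(A,B)$ that is soluble at $\R$ and at every $\Q_p$. Thus
\[
\sum_{E \in \EE_X^\pm} |\Sel_2(E)| = \sum_{E \in \EE_X^\pm} 1 + N^\pm(X),
\]
where $N^\pm(X)$ counts locally soluble $\GL_2(\Z)$-orbits of integral binary quartic forms $f$ with $H(f)<X$ and $\pm\Delta(f)>0$, and it will suffice to prove
\[
N^\pm(X) = 2\sum_{E\in\EE_X^\pm} 1 + C^\pm X^{3/4} + O_\epsilon(X^{3/4-\alpha+\epsilon}).
\]

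The approach to $N^\pm(X)$ is via the averaged fundamental-domain framework of \cite{BS2Sel}. After averaging over a compact set $\mathcal{F}\subset\GL_2(\R)$, the count becomes the number of integer points---subject to a sieve for local solubility---in a region of the form $\mathcal{F}\cdot R^\pm(X)\subset V(\R)$, where $V=\Sym^4 \A^2$ is the five-dimensional space of binary quartic forms. This region decomposes into a \emph{main body}, where all coefficients of $f$ have comparable size, and a \emph{cusp}, where the leading coefficient $a_0$ (and possibly $a_1$, etc.) is small. In the main body, a sharpened application of Davenport's lemma yields the volume main term plus a boundary-area error far below $X^{3/4}$, hence harmless.

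The heart of the proof is the cusp analysis. Irreducible forms in the cusp contribute to $N^\pm(X)$, while reducible forms do not (they correspond to the identity element of $\Sel_2$), so both populations must be counted with precision. The strategy is to stratify the cusp by the order of magnitude of $a_0$ (and iteratively of the next coefficients when they also become small), and within each stratum perform a refined lattice-point count on affine slices. The resulting sum, expanded via Euler--Maclaurin or Poisson summation, produces both a contribution to the $X^{5/6}$ primary term and, at the next order, a contribution of size $X^{3/4}$ to the secondary term; aggregating over all strata gives the constant $C^\pm$. In the shallower layers the irreducible forms dominate and are counted directly, while in the deeper layers reducibility cuts the count down enough to keep the total error within the desired range.

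The main obstacle will be the deep cusp, where several leading coefficients of $f$ are simultaneously small and crude Davenport-type bounds do not suffice. One must refine the reducibility argument of \cite{BS2Sel} to show that irreducible forms in these sparse strata contribute negligibly to the secondary term, while handling the reducible forms to enough precision that they can be subtracted cleanly. A further obstacle is the sieve: to preserve a power saving of $X^\alpha$ in the final error, the local-solubility sieve must be applied with uniform quantitative error bounds going beyond the Ekedahl-style sieve used in \cite{BS2Sel}. Balancing these parameters---the stratification depth, the sieve truncation, and the lattice-point error---produces the explicit exponent $\alpha=1/3804$ stated in the theorem.
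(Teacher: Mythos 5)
Your reduction already hides the real difficulty. A non-identity element of $\Sel_2(E)$ corresponds to a $\PGL_2(\Q)$-equivalence class of locally soluble integral forms, and converting this to a count over $\PGL_2(\Z)$-orbits forces each orbit $f$ to be weighted by $\ell(f)/m(f)$, where $m(f)=\prod_p m_p(f)$ counts the $\PGL_2(\Z_p)$-orbits inside the $\PGL_2(\Q_p)$-class of $f$. The essential obstruction, which your plan does not address, is that neither $\ell_p$ nor $m_p$ is a congruence condition: they are not defined modulo $p^k$ for any $k$, so the ``local-solubility sieve with uniform quantitative error bounds going beyond the Ekedahl-style sieve'' that your argument rests on does not exist as stated --- this is exactly where the cubic-field machinery fails, and it is why no power-saving error was previously known. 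The paper's resolution is to build periodic approximations: $\ell_p/m_p=\sum_k\phi_p^{(k)}$ with $\phi_p^{(k)}$ defined modulo $p^{2k}$ and supported on $p^{2k}\mid\Delta$, so that the global weight becomes $\sum_n\phi(n;\cdot)$ with $\phi(n;\cdot)$ defined modulo $n^2$ and supported (essentially) on $n^2\mid\Delta$. One then needs (a) for $n\ll X^{1/12+\delta}$, equidistribution of these mod-$n^2$ conditions, proved via square-root cancellation in orbital exponential sums on $V(\Z/p^2\Z)$ together with twisted Poisson summation over smoothed regions, and (b) for larger $n$, a tail estimate (Theorem \ref{Thm:Unif}) for the number of bounded-height orbits with $n^2\mid\Delta$, summed over \emph{all} $n$, not just squarefree $n$. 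Item (b) is the technical heart and is not a refinement of the cusp analysis in $V$ at all: it is proved in Bhargava's space of pairs of ternary quadratic forms by fibering over maximal quartic rings, using Nakagawa's suborder bounds, counts of symmetric matrices with fixed determinant, and an Ekedahl-type sieve in that larger space, yielding the exponent $11/15<3/4$ that makes a secondary term accessible. Your proposed stratification of the deep cusp plus ``refined reducibility arguments'' has no substitute for these ingredients, and the claimed balancing to $\alpha=1/3804$ is therefore unsupported.

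Two further points. The secondary term is not extracted by Euler--Maclaurin expansions over cusp strata; it comes from slicing the large-$t$ range by the leading coefficient $a$, which produces the Dirichlet series $D^\pm(\phi,s)=\sum_{a>0}\nu_{\pm a}(\phi)a^{-s}$ evaluated at $s=1/2$. Because $\phi=\ell/m$ is an infinite sum of congruence conditions, one must first prove that $D^\pm(\phi,s)$ continues to $\Re(s)>1/3$ (via density bounds for $p^{2k}\mid\Delta(f)$) before the constant $C(\EE)^\pm$ is even well defined; your sketch offers nothing in its place. Also, it is only forms with a rational \emph{linear} factor that represent the identity Selmer element, and the forms with nontrivial rational stabilizer (equivalently, curves with rational $2$-torsion) must be bounded by $O_\epsilon(X^{2/3+\epsilon})$ --- the paper has to improve the $O_\epsilon(X^{3/4+\epsilon})$ bound of \cite{BS2Sel}, since an error of size $X^{3/4}$ would swamp the secondary term; this improvement (Lemma \ref{lem:red2}) is another necessary step absent from your outline.
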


\noindent Since the size of $\EE_X^\pm$ grows like $c^\pm X^{5/6}+O(X^{1/2})$ for positive constants $c^\pm$, the above result recovers a secondary term with a power saving error term for the sum of $|\Sel_2(E)|$ over elliptic curves~$E$ ordered by height. In fact, this result is the first instance of a power saving error term obtained for the counting function of the $2$-Selmer groups of elliptic curves.

We express the constants $C^\pm$ in Theorem \ref{Theorem1} as the limit of the values at $s=1/2$ of certain Dirichlet series' which converges absolutely only to the right of ${\rm Re}(s)=1$. We prove that these Dirichlet series' have a meromorphic continuation to the right of $\Re(s)=1/3$ (in particular, their values at $s=1/2$ are well defined!), and that the limit of these values at $s=1/2$ exists. However, we are not yet able to find closed form formulas for $C^\pm$, or  numerically evaluate them, leaving this investigation to future work.

There has been, in arithmetic statistics and analytic number theory, a long history of studying lower order terms \cite{MR0215797,MR289428,MR0384717,MR1836927,MR3090184,MR3127806,MR4618015,MR3912928,KC, BTT}. This is because, beyond their own inherent interest, proving the existence of secondary terms have a number of consequences. First, the implied improvement in the size of the error terms have many applications, for example to the study of associated families of $L$-functions (see for example \cite{MR3318244,MR3993807}). 
Next, understanding secondary terms is necessary for providing numerical evidence to support conjectures, since these secondary terms have a significant contribution in the height range in which we are able to perform computations. (For example, even in the height range $H(E)\sim 10^{12}$, the secondary term of $X^{3/4}$ is within a factor of 10 of the primary term of $X^{5/6}$.) Finally, secondary terms are of considerable theoretical interest. In the function field case for instance, primary terms are obtained via proving homological stability results. In \cite[Problem 5]{MR4245477}, Venkatesh poses the question of what the topological significance of secondary terms are, and it is speculated in \cite{BDPW} that secondary terms might be related to secondary homological stability in the sense of \cite{MR4028513}.

\medskip

As in \cite{BS2Sel}, Theorem \ref{Theorem1} is proven by exploiting the connection between the $2$-Selmer groups of elliptic curves, and $\GL_2(\Z)$-orbits on  $V(\Z)={\rm Sym^4}(\Z^2)$, the set of irreducible integral binary quartic forms. The action of $\GL_2(\Z)$ on the set of integral binary {\it cubic} forms has been extensively studied. The symmetric cubed representation of $\GL_2(\Q)$ is {\it prehomogeneous}, i.e., the action over $\overline{\Q}$ has a Zariski open orbit. As a consequence, the ring of polynomial invariants for the action of $\GL_2(\Z)$ on integral binary cubic forms is generated by a single element, namely the discriminant. A famous result of Davenport \cite{MR43822} develops and uses geometry-of-numbers tehniques to determine asymptotics (of size $\sim \zeta(2)X/3$) for the number of $\GL_2(\Z)$-orbits on the set of integral irreducible binary cubic forms having discriminant bounded by $X$. 

Landmark work of Shintani \cite{MR289428} recovered Davenport's result, and in addition proved the existence of a secondary term of size $cX^{5/6}$ for a negative constant $c$.
More precisely, combined with his later work \cite{MR0384717} on his double zeta function, Shintani
proved that the number of $\GL_2(\Z)$-orbits on integral irreducible binary cubic forms, having nonzero discriminant bounded by $X$, can be expressed as a sum of two main terms, growing like $X$ and $X^{5/6}$, along with an error term of size $O_\epsilon(X^{2/3
+\epsilon})$. This was accomplished by considering the (Shintani) zeta function constructed from the counting function of binary cubic forms, proving that this zeta function has a meromorphic continuation to $\C$, and analyzing the location and residues of the poles. In fact, general theory developed by Sato--Shintani \cite{MR0344230} considers any prehomogeneous vector space of a reductive group whose singular set is an irreducible hypersurface, and proves that the associated zeta function has a meromorphic continuation to $\C$ under a certain condition. Moreover, they prove that the set of poles is contained within the set of zeros of the Bernstien--Sato polynomial associated to the invariant polynomial of this prehomogeneous representation. Thus in those prehomogeneous cases, there are natural guesses for the possible exponents of the lower order terms in the counting functions.

In our case, however, the symmetric fourth power representation of $\GL_2(\Z)$ is not prehomogeneous. The ring of polynomial invariants for the action of $\GL_2(\Z)$ on $V(\Z)$ is generated by two elements, usually denoted $I$ and $J$. Explicitly, for $f(x,y)=ax^4+bx^3y+cx^2y^2+dxy^3+ey^4$, we have
\begin{equation*}
I(f)=12ae-3bd+c^2;
\quad
J(f)=72ace+9bcd-27ad^2-27eb^2-2c^3  .
\end{equation*}
The discriminant polynomial on binary quartic forms can be expressed in terms of $I$ and $J$: we have $\Delta(f):=\Delta(I(f),J(f))=(4I(f)^3-J(f)^2)/27$.
Throughout this paper, we order $\GL_2(\Z)$-orbits on $V(\Z)$ by {\it height} $H$, defined as
\begin{equation}
    H(f):=H(I(f),J(f)):=\max\{|I(f)|^3,J(f)^2/4\}.
\end{equation}
The primary term in the counting function of $\GL_2(\Z)$-orbits on irreducible binary quartic forms, ordered by height, was obtained in \cite[Theorem 1.6]{BS2Sel}. In this article, we prove the existence of a secondary term, generalizing Shintani's theorem to the setting of binary quartic forms. Before stating this result, we need some additional notation. For a pair $(I,J)\in\R^2$ with $\Delta(I,J):=(4I^3-J^2)/27\neq 0$, let $E^{I,J}$ be the elliptic curve over $\R$ given by the equation $y^2=x^3-(I/3)x-(J/27)$, define
\begin{equation}\label{eq:omegatildeded}
\Omega(E^{I,J}):=\int_{\substack{(x,y)\in E^{I,J}(\R)\\y> 0}}\frac{dx}{y};\quad\quad\widetilde{\Omega}(I,J):=\Omega(E^{I,J})+\Omega(E^{I,-J}).
\end{equation}
We then define the quantities $C^{\Delta>0}_*$ and $C^{\Delta<0}_*$, for $*\in\{5/6,3/4\}$ to be
\begin{equation}\label{eq:mainconstdeg}
\begin{array}{rclrcl}
C_{5/6}^{\Delta>0}&:=&\displaystyle\int_{\R^2_{H<1,\Delta>0}}dIdJ,
&\quad C_{5/6}^{\Delta<0}&:=&\displaystyle\int_{\R^2_{H<1,\Delta<0}}dIdJ,
\\[.15in]
C_{3/4}^{\Delta>0}&:=&\displaystyle\int_{\R^2_{H<1,\Delta>0}}\widetilde{\Omega}(I,J)dIdJ,
&\quad
C_{3/4}^{\Delta<0}&:=&\displaystyle\int_{\R^2_{H<1,\Delta<0}}\widetilde{\Omega}(I,J)dIdJ,
\end{array}
\end{equation}
where
\begin{equation}
\begin{array}{rcl}
\R^2_{H<1,\Delta>0}&:=&\{(I,J)\in \R^2\mid H(I,J)<1,\Delta(I,J)>0\},\\[.1in]
\R^2_{H<1,\Delta<0}&:=&\{(I,J)\in \R^2\mid H(I,J)<1,\Delta(I,J)<0\}.
\end{array}
\end{equation}
For $i\in\{0,1,2\}$, and any subset $S$ of $V(\R)$, let $S^{(i)}$ denote the set of elements in $S$ having $4-2i$ real roots and $i$ pairs of complex conjugate roots in $\P^1_\C$. We further partition $S^{(2)}$ into $S^{(2+)}\cup S^{(2-)}$, where $S^{(2+)}$ (resp.\ $S^{(2+)}$) consists of positive (resp.\ negative) definite forms. Finally, for $i\in\{0,1,2+,2-\}$, let $h^{(i)}(I,J)$ denote the number of $\GL_2(\Z)$-orbits on irreducible elements in $V(\Z)^{(i)}$ having invariants $I$ and $J$.
Then we have the following result.
\begin{Theorem}\label{Th:Shintani4}
For $i\in\{0,1,2+,2-\}$, we have
\begin{equation*}
\sum_{\substack{(I,J)\in\Z^2\\H(I,J)<X}}h^{(i)}(I,J)=
\frac{2\zeta(2)}{27\sigma_i}C_{5/6}^{\circ}\cdot X^{5/6}
+
\frac{\zeta(1/2)}{108\sigma_i}C_{3/4}^{\circ}\cdot X^{3/4}+O_\epsilon(X^{2/3+\epsilon}),
\end{equation*}
where $\sigma_0=\sigma_{2\pm}=4$, $\sigma_1=2$, and we take $\circ$ to be $\Delta>0$ when $i\in\{0,2\pm\}$ and $\Delta<0$ when $i=1$.
\end{Theorem}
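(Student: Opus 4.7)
The plan is to count $\GL_2(\Z)$-orbits on irreducible $f \in V(\Z)^{(i)}$ of height less than $X$ by the geometry-of-numbers strategy of \cite{BS2Sel}, refined to extract a secondary term with a power-saving error. Let $\FF$ be a Siegel fundamental domain for $\GL_2(\Z) \backslash \GL_2(\R)^+$ in Iwasawa coordinates, and let $R_X^{(i)}$ be a fundamental set for the $\GL_2(\R)^+$-action on $\{f \in V(\R)^{(i)} : H(f) < X\}$, chosen as a section over the invariants $(I,J)$. The desired count equals $\sigma_i^{-1}$ times the number of points of $V(\Z)^{\irr}$ in $\FF\cdot R_X^{(i)}$. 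I first apply the averaging technique of \cite{BS2Sel}: replace this lattice count by its average over a compact $\GG\subset \GL_2(\R)^+$, obtaining
\begin{equation*}
N^{(i)}(X) \;=\; \frac{1}{\sigma_i\,\Vol(\GG)}\int_{\FF}\#\bigl(g\GG R_X^{(i)}\cap V(\Z)^{\irr}\bigr)\,dg,
\end{equation*}
and decompose $\FF = \FF^{\main}\sqcup \FF^{\mathrm{cusp}}$ by a threshold $t\le T$ on the Iwasawa torus coordinate, with $T=T(X)$ to be optimized.

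In $\FF^{\main}$, the integrand equals $\Vol(g\GG R_X^{(i)})$ plus a boundary error, which by Davenport's lemma (sharpened on the intermediate strata via Poisson summation) is acceptable after optimizing $T$. Integrating reproduces the primary term $\frac{2\zeta(2)}{27\sigma_i}C_{5/6}^{\circ}X^{5/6}$ of \cite{BS2Sel}. In $\FF^{\mathrm{cusp}}$, the leading coefficient $a$ of any integral $f = ax^4+bx^3y+cx^2y^2+dxy^3+ey^4$ in the translated set is forced to be small; forms with $a=0$ are $y$-divisible, hence reducible, and are discarded. For $a\ne 0$ I sum over $(b,c,d,e)\in\Z^4$ with invariants $(I,J)$ in the prescribed range and apply Poisson summation in those variables, in the spirit of Taniguchi--Thorne's treatment of Shintani's cubic secondary term. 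The zero frequency produces an archimedean integral whose $a$-sum gives $\sum_{a\ge 1} a^{-1/2}=\zeta(1/2)$ multiplied by the period $\widetilde{\Omega}(I,J)$ integrated against the region defining $C_{3/4}^{\circ}$; combining this with the main-body contribution yields precisely the secondary term $\frac{\zeta(1/2)}{108\sigma_i}C_{3/4}^{\circ}X^{3/4}$.

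The main obstacle will be to control the non-zero Poisson frequencies in the cusp with enough uniformity in $a$ and in $X$ (via stationary phase together with the explicit form of the Fourier transform of the indicator of $R_X^{(i)}$ on each $a$-fiber) to preserve the $O_\epsilon(X^{2/3+\epsilon})$ power saving. A related difficulty is to interchange the sum over $a$ with the $X\to\infty$ renormalization of the period integrand so that the secondary term factors cleanly as $\zeta(1/2)\,\widetilde{\Omega}$. Secondary technical tasks include executing the irreducibility sieve inside the cusp without damaging the error, matching the archimedean normalizations to the exact constants $2\zeta(2)/(27\sigma_i)$ and $\zeta(1/2)/(108\sigma_i)$, and gluing the estimates across the threshold $t=T$ so that the $T$-dependence cancels to the required order.
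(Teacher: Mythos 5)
Your skeleton is essentially the paper's: Bhargava averaging over a compact set, a main-body/cusp split at a torus threshold, slicing the cuspidal region by the leading coefficient $a$, a regularized sum of $|a|^{-1/2}$ producing $\zeta(1/2)$, and an archimedean computation converting the slice volumes into the period $\widetilde{\Omega}(I,J)$. Where you diverge is the tool used on each $a$-slice: the paper applies Davenport's lemma (Proposition \ref{prop-davenport}) slice by slice and then evaluates the $a$-sum against the smooth cutoff by a Mellin-transform computation exactly as in \cite{MR3090184}; no Poisson summation, stationary phase, or control of nonzero frequencies is needed anywhere in the proof of this theorem. Moreover the slice regions are sharp semialgebraic sets, so your stationary-phase plan would require smoothing the indicator first, reintroducing precisely the boundary errors Davenport already controls --- the ``main obstacle'' you identify is largely self-imposed. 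Relatedly, ``$\sum_{a\ge 1}a^{-1/2}=\zeta(1/2)$'' is a divergent series as written; in the paper the cutoff $\psi_2$ is retained, the $a$-sum is computed by Mellin inversion, $\zeta(1/2)$ arises as the analytically continued value of the Dirichlet series at $s=1/2$, and the cutoff-dependent complementary term recombines with the small-$t$ range to complete the $X^{5/6}$ volume term. Your bookkeeping (``combining this with the main-body contribution yields precisely the secondary term'') has this backwards, though that is repairable.

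The genuine gap is the irreducible but nongeneric forms: irreducible $f$ whose cubic resolvent is reducible, equivalently whose stabilizer in $\PGL_2(\Q)$ is nontrivial. These orbits are counted by $h^{(i)}(I,J)$, yet the averaging identity is clean only for generic orbits, so they must be estimated separately, and the best previously known bound, $O_\epsilon(X^{3/4+\epsilon})$ from \cite{BS2Sel}, is exactly the size of the secondary term; with only that bound one cannot even assert the existence of an $X^{3/4}$ term, let alone the error $O_\epsilon(X^{2/3+\epsilon})$. The paper needs a new argument here (Lemma \ref{lem:red2}): fiber over the reducible resolvent cubic ($O_\epsilon(X^{1/2+\epsilon})$ choices), use Baily's bound \cite{MR0564533} on quartic algebras with a given cubic resolvent together with the triviality of $\Cl(\Q\times F)[2]$ up to $X^\epsilon$, apply Nakagawa's bound \cite{MR1342021} on suborders of a given index, and optimize over the discriminant of the quartic algebra to reach $O_\epsilon(X^{2/3+\epsilon})$. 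Your proposal only discards the forms with $a=0$ (divisible by $y$) and relegates ``the irreducibility sieve'' to a technical afterthought; without an argument of the above kind the stated asymptotic is not established.
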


\noindent We note that the values of $C_{5/6}^{\circ}$ are easily computed: from \cite[(22),(23)]{BS2Sel}, we see that $C_{5/6}^{\Delta>0}=8/5$ and $C_{5/6}^{\Delta<0}=32/5$. Thus the primary terms of Theorem \ref{Th:Shintani4} agree with \cite[Theorem 1.6]{BS2Sel}. We leave a numerical estimation of the secondary constant to future work, but note that since $\zeta(1/2)<0$, the secondary term must be negative.

We note that Yukie (in \cite{yukie}) introduced 
and analyzed a certain ``global zeta integral'' for the space of binary forms of general degree $d$. For $d=4$, he showed that the zeta integral converges absolutely for ${\Re}(s)>5/6$ and has a holomorphic continuation to the region ${\Re}(s)>2/3$ except for simple poles at $s=5/6$ and $s=3/4$. We expect that Yukie's result is related to a smoothed version of Theorem \ref{Th:Shintani4}.

\subsubsection*{Outline of the proofs} As described previously, asymptotics for the number of $\GL_2(\Z)$-orbits on integral irreducible binary cubic forms with bounded discriminant were first obtained by Davenport \cite{MR43822} using geometry-of-numbers techniques. Using zeta function methods, Shintani \cite{MR289428, MR0384717} recovered these asymptotics, and also proved the existence of secondary main terms in these counting functions. Using a ``slicing technique'', Bhargava--Shankar--Tsimerman \cite{MR3090184}, obtained secondary terms for the number of $\GL_2(\Z)$-orbits on integral binary cubic forms, reproving Shintani's result. Combining sieving techniques developed by Belabas--Bhargava--Pomerance \cite{BBPEE} with these two counting methods, Taniguchi--Thorne \cite{MR3127806} and Bhargava--Shankar--Tsimerman \cite{MR3090184} independently and simultaneously obtained secondary terms for the counting functions of cubic fields.

In our case, since the representation $V(\Q)=\Sym^4(\Q^2)$ of $\GL_2(\Q)$ is not prehomogeneous, Shintani's methods are not available to us. However, the slicing technique generalizes in a straightforward manner and allows us to prove Theorem \ref{Th:Shintani4}, which we do in the largely self contained \S\ref{sec:slice}. Unfortunately, the sieving techniques used in the cubic case fail for us at the very first step. This is why, previous to our work, even power saving error terms were not known for the counting function of $|\Sel_2(E)|$. The primary reason is that to prove Theorem \ref{Theorem1}, it is necessary for us to count $\GL_2(\Z)$-orbits on $V(\Z)$, with height bounded by $X$, where each orbit $f$ is weighted by $\phi(f)$, where $\phi:V(\Z)\to\R$ is a $\GL_2(\Z)$-invariant function. It is possible to write $\phi$ as a product over $p$ of some functions $\phi_p:V(\Z_p)\to\R$. However, the functions $\phi_p$ are not defined modulo $p^k$ for any integer $k$. This is quite in contrast to the cubic case, where the function analogous to $\phi_p$ is the characteristic function of the set of binary cubic forms corresponding to maximal orders over $\Z_p$. This set (and hence its characteristic function) is defined modulo $p^2$. 
Resolving this difficulty of $\phi_p$ not being a periodic function requires many new ideas and tools, on both algebraic and analytic aspects of the arithmetic of binary quartic forms.

To begin, we develop approximation techniques in Section \ref{sec:locsol}, in order to analyze these functions $\phi_p$. We prove that they can be written as an infinite sum of functions $\phi_p^{(k)}$ for $k\geq0$, where $p$ is an odd prime, $\phi_p^{(k)}$ is defined modulo $p^{2k}$ and supported on the set of binary quartic forms whose discriminants are divisible by $p^{2k}$. We prove something analogous (though slightly weaker) for $p=2$ as well.
We can then write
\begin{equation}
\phi(f)=\prod_p \phi_p(f)
=\prod_p\sum_{k\geq0}  \phi_p^{(k)}(f)=\sum_{n\geq 1}\phi(n;f),
\end{equation}
where $\phi(n;\cdot):V(\Z)\to\R$ is given by
$\phi(n;f)=\prod_{p^k\parallel n}\phi_p^{(k)}(f)$. Our results on $\phi_p^{(k)}$ imply that $\phi(n;\cdot)$ is defined modulo $n^2$, and its support is on a sparse set: namely the set of elements in $V(\Z)$ whose discriminants are divisible by $n^2$ (up to absolutely bounded powers of $2$).

To sum $\phi(n;\cdot)$ over $\GL_2(\Z)$-orbits on $V(\Z)$ with height bounded by $X$ when $n$ is small (i.e., $n\ll X^{1/12+\delta}$ for small positive $\delta$), we use equidistribution methods. More specifically, we combine nontrivial bounds on orbital exponential sums on $V(\Z/p^2\Z)$ with twisted Poisson summation. This allows us to carry out the analogous sum of $\phi(n;\cdot)$, when the $H(f)<X$ condition is replaced with a smooth approximation. We then prove that this approximation is good enough to carry out the weighted sharp sum.

Finally, it remains to show that the contribution from $n\gg X^{1/12+\delta}$ is negligible. To do this, it is necessary for us to prove uniformity (or tail) estimates on the number of $\PGL_2(\Z)$-orbits on integral binary quartic forms of bounded height with discriminant divisible by $n^2$, on average over $n$. Moreover, unlike in the cubic case, we need these estimates for all $n$, not just squarefree $n$. In Section 6, we prove the following result, giving us a sufficiently strong uniformity estimate:
\begin{Theorem}\label{Thm:Unif}
Let $\W_n\subset V(\Z)$ denotes the set of irreducible integral binary quartic forms whose discriminants are divisible by~$n^2$, and whose associated Galois group is $S_4$ or $A_4$.
For positive real numbers $X$ and $M$,
we have
\begin{equation}\label{eq:UnifMain}
\sum_{n\geq M}\#\Bigl\{
f\in \frac{\W_n}{\PGL_2(\Z)}:H(f)<X\Bigr\}
\ll_\epsilon 
\frac{X^{5/6+\epsilon}}{M}+X^{11/15+\epsilon}.
\end{equation}
\end{Theorem}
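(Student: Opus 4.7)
The plan is to adapt the reduction-theoretic approach of Bhargava--Shankar \cite{BS2Sel} for bounding $\PGL_2(\Z)$-orbit counts on $V(\Z)$ with a square-divisibility condition on the discriminant, and to sharpen their uniformity estimates (which are for squarefree $n$) to work for general $n$ and to yield a better error exponent.

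First, I would use reduction theory: every $\PGL_2(\Z)$-orbit on $\Z$-irreducible forms in $\W_n$ with $H(f)<X$ has a representative $f=ax^4+bx^3y+cx^2y^2+dxy^3+ey^4$ in a fundamental domain $\FF\subset V(\R)$, and reduced forms satisfy $|a|\ll X^{1/6}$, with total volume in $\FF$ of order $X^{5/6}$. I would partition $\FF$ into a \emph{main body} $\FF^{\main}:=\{f\in\FF:|a|\geq X^\tau\}$ and a \emph{cuspidal region} $\FF^{\mathrm{cusp}}:=\{f\in\FF:|a|<X^\tau\}$, for a parameter $\tau\in(0,1/6]$ to be chosen.

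For $f\in\FF^{\main}$, I would view $\disc(f)$ as a cubic polynomial in $e$ with leading coefficient $256a^3\neq 0$. When $\gcd(a,n)=1$, Hensel-type bounds on roots of cubics modulo prime powers yield $\ll_\epsilon n^\epsilon$ solutions for $e$ mod $n^2$, so the count of integer $e$ in the reduced range with $n^2\mid\disc(f)$ is $\ll_\epsilon (X^{1/6}/n^2+1)n^\epsilon$ per $(a,b,c,d)$. For $\gcd(a,n)>1$ I would split $n$ multiplicatively, combining $p\mid a$ with $p^{2v_p(n)}\mid\disc(f)$ to force additional $p$-adic constraints on $(b,c,d,e)$, treated via a prime-by-prime analysis incurring only $n^\epsilon$ overhead. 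Summing and using divisor estimates on $\sum_a\gcd(a,n)^c$ should yield a main-body contribution of $\ll_\epsilon X^{5/6+\epsilon}/n^2$ plus a controlled error depending on $\tau$.

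For $f\in\FF^{\mathrm{cusp}}$, reduced forms cluster near the reducible locus $\{a=0\}$, which the $S_4$ or $A_4$ Galois hypothesis excludes. Following the cuspidal analysis of \cite[\S2.5]{BS2Sel} (see also the cubic analog in \cite{MR3090184}), I would exploit that the other coefficients obey tighter bounds in the cusp and that the Galois condition yields savings from the excluded irreducibility loci. This should bound the cusp contribution by $\ll_\epsilon X^{\alpha-\beta\tau+\epsilon}/n$ for appropriate $\alpha,\beta>0$. Balancing $\tau$ against the main-body error and summing over $n\geq M$ should give
$$\sum_{n\geq M}\#\{f\in\W_n/\PGL_2(\Z):H(f)<X\}\ll_\epsilon\frac{X^{5/6+\epsilon}}{M}+X^{11/15+\epsilon}.$$

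The main obstacle will be handling general non-squarefree $n$. For $p^k\parallel n$ with $k\geq 2$, the divisibility $p^{2k}\mid\disc(f)$ encodes a refined $p$-adic stratification of $f$ rather than a single mod-$p^2$ congruence as in the squarefree case; isolating the correct stratum and avoiding losses in the cubic-in-$e$ count requires a careful prime-by-prime $p$-adic analysis. A secondary challenge is that the cuspidal analysis must be sharpened beyond \cite{BS2Sel} to extract the $X^{11/15}$ exponent, which I expect will require treating sub-cusps (where multiple coefficients vanish simultaneously modulo small primes) with care and using the irreducibility condition thoroughly.
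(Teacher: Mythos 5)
The central quantitative claim in your main-body analysis is false, and it hides the entire difficulty of the theorem. For fixed $(a,b,c,d)$ with $\gcd(a,n)=1$, the number of residues $e \bmod n^2$ with $n^2\mid\Delta(f)$ is \emph{not} $O_\epsilon(n^\epsilon)$: Hensel-type arguments control the number of $\Z_p$-roots of the cubic-in-$e$, but not the number of solutions of the congruence modulo $p^{2k}$ when that cubic has a repeated root modulo $p$ --- equivalently, when $f$ has a multiple root to high $p$-adic precision, i.e.\ when the associated quartic ring has large $p$-power index in the maximal order. In that case the solution set contains whole congruence classes $e\equiv r\pmod{p^{k}}$, giving about $p^{k}=n$ classes modulo $n^2$ rather than $n^\epsilon$; these forms are exactly the source of non-squarefree discriminant values, so the issue you defer to a ``careful prime-by-prime $p$-adic analysis'' is the substance of the theorem, not a technicality. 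Moreover, even granting your fiber bound, the unavoidable $+1$ per fiber (relevant as soon as $n^2>X^{1/6}$) contributes $\gg X^{2/3}$ for \emph{each} $n$, and $n$ ranges up to $X^{1/2}$; nothing in your sketch controls this range, the asserted cusp saving of the shape $X^{\alpha-\beta\tau+\epsilon}/n$ is not derived, and there is no mechanism in your outline from which the specific exponent $11/15$ could emerge.

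For comparison, the paper does not work in $V(\Z)$ coefficient-by-coefficient at all. It writes $\Delta(f)=\ind(f)^2\Delta(\O_f)$ and splits the divisibility $n^2\mid\Delta(f)$ into an index part $m_1\mid\ind(f)$ and a squarefree ``strongly divisible'' part $m_2\mid\sd(f)$ of the field discriminant; it then fibers over the pair (maximal quartic order, cubic resolvent), realized via Bhargava's parametrization as $\GL_2(\Z)\times\SL_3(\Z)$-orbits on pairs of ternary quadratic forms in $W(\Z)$, bounds each fiber by counting monogenizers of bounded height in the cubic resolvent lattice together with suborders of prescribed index (Nakagawa's bound $N(\O_L,k)\ll_\epsilon k^\epsilon N(k)$), and controls the resulting sums using a count of integral symmetric matrices with fixed determinant and an Ekedahl-type sieve in $W(\Z)$ to extract the $1/M_2$ saving. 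The term $X^{5/6+\epsilon}/M$ comes from that index/sieve decomposition, and $X^{11/15+\epsilon}$ arises from optimizing $\min\{X^{7/12}Y^{1/4},\,X^{4/3}/Y\}$ over the resolvent discriminant range $Y\ll X^{2/3}$ --- inputs with no counterpart in your plan. As it stands, your proposal does not contain the ideas needed to prove the estimate.
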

It is of course crucial for us that $11/15=3/4-1/60<3/4$. The primary term in the above uniformity estimate should be optimal (up to a factor of $X^\epsilon$). Thus, we expect \eqref{eq:UnifMain} to be optimal (up to a factor of $X^\epsilon$) in the range $M\ll X^{1/10}$.

Theorem \ref{Thm:Unif} is used to bound the contribution from $n\gg X^{1/12+\delta}$. Combining this with the methods and results described above to handle smaller $n$ yields Theorem \ref{Theorem1}.

\medskip

This paper is organized as follows. We begin by introducing some notation and stating versions of our main results for congruence families of elliptic curves and binary quartic forms in \S\ref{sec:CongFam}. In \S\ref{sec:param}, we collect an assortment of parametrization results regarding elements in the $2$-Selmer groups of elliptic curves and regarding quartic fields. We prove our quartic analogue of Shintani's theorem in \S\ref{sec:slice}, using the ``slicing method'' developed in \cite{MR3090184}. We construct periodic approximations to the relevent local functions in \S\ref{sec:locsol}. Then in \S\ref{sec:FT}, we obtain density estimates required to prove that sum describing our secondary term constant converges. We also obtain bounds on the Fourier transforms of various subsets of $V(\Z/n\Z)$; this will be necessary to obtain equidistribution results. Our main uniformity estimate (Theorem \ref{Thm:Unif}) is proven in \S\ref{sec:Unif}, and this is combined with the previous results to prove all the main theorems in \S\ref{sec:proofs}. Finally, we compute some local densities appearing in our secondary terms in the appendix.

\subsection*{Acknowledgments}
AS was supported by an NSERC discovery grant, and is also supported by a Simons fellowship.
TT was supported by JSPS KAKENHI Grant Number 22H01115.
It is a pleasure to thank Manjul Bhargava, Dick Gross, Wei Ho, Ananth Shankar, and Jacob Tsimerman for many helpful conversations. The authors would like to thank Hiroyuki Ochiai for many useful comments.

\section{Statements of results for congruence families}\label{sec:CongFam}

Versions of our main results, Theorems \ref{Theorem1} and \ref{Th:Shintani4} also hold if we restrict to families of elliptic curves and binary quartic forms, respectively, satisfying congruence conditions modulo a fixed finite integer. In this section, we state these results, and also describe certain infinite sets of congruence conditions that we may impose on these families.

Let $p$ be a prime number, and let $\phi:V(\Z_p)\to \R$ (resp.\ $\phi:\Z_p^2\to\R$) be a function. For a positive integer $k$, we say that $\phi$ is {\it periodic with period $p^k$} if $\phi$ can be written as a composition of functions 
\begin{equation*}
V(\Z_p)\xrightarrow{} V(\Z_p)/p^kV(\Z_p)=V(\Z/p^k\Z)
\xrightarrow{\overline{\phi}}\R \quad \mbox{(resp. }
\Z_p^2\xrightarrow{} \Z_p^2/p^k\Z_p^2=(\Z/p^k\Z)^2
\xrightarrow{\overline{\phi}}\R),
\end{equation*}
where the first map is reduction modulo $p^k$.
We say that a function $\phi:V(\Z_p)\backslash\{\Delta=0\}\to\R$ (resp.\ $\phi:\Z_p^2\backslash\{\Delta=0\}\to\R$) is {\it well approximated by periodic functions} if for $k\geq 0$, there exist functions $\phi^{(k)}:V(\Z_p)\to\R$ (resp.\ $\phi:\Z_p^2\to\R$) bounded by $1$ in absolute value, with $\phi^{(0)}$ being identically $1$, satisfying the following properties:
\begin{itemize}
\item[{\rm (a)}] For every element $f$ in $V(\Z_p)\backslash\{\Delta=0\}$ (resp.\ $\Z_p^2\backslash\{\Delta=0\}$), we have
\begin{equation*}
\phi(f)=\sum_{k=0}^\infty\phi^{(k)}(f)=1+\sum_{k=1}^\infty\phi^{(k)}(f),
\end{equation*}
where the convergence is absolute.
In fact, Condition (c) implies that the sum is finite for each $f$.
\item[{\rm (b)}] The function $\phi^{(k)}$ is periodic with period $p^{2k}$.
\item[{\rm (c)}] The support of $\phi^{(k)}$ is contained within the set of elements $f\in V(\Z_p)$ with $p^{2k}\mid\Delta(f)$ (resp.\ $(I,J)\in \Z_p^2$ with $p^{2k}\mid\Delta(I,J)$).
\end{itemize}
If instead of Property (c), we only have that the functions $\phi^{(k)}$ are supported on elements $f$ in $V(\Z_p)$ or $\Z_p^2$ with $p^{2k-O(1)}\mid\Delta(f)$, then we say that $\phi$ is {\it almost well approximated by periodic functions}. A bounded function $\phi:\Z^2\to\R$ is said to be {\it large and locally well approximated} if for every $p$ there exist bounded functions $\phi_p:\Z_p^2\backslash\{\Delta=0\}\to\R$ such that $\phi(I,J)=\prod_p\phi_p(I,J)$ for every $(I,J)\in \Z^2\backslash\{\Delta=0\}$, the functions $\phi_p$ are all almost well approximated, and for all large enough $p$, the functions $\phi_p$ are well approximated.

We will need an analogous notion not only for functions from $V(\Z)\to\R$ but for {\it $\PGL_2(\Z)$-invariant} functions. Here, for a ring $R$, the action of $\PGL_2(R)$ on $V(R)$ comes from descending the following twisted action of $\GL_2(R)$ on $V(R)$: for $\gamma\in \GL_2(R)$ and $f\in V(R)$, define 
\begin{equation}\label{eq:PGL2action}
\gamma\cdot f(x,y):=f((x,y)\cdot\gamma)/(\det(\gamma)^2).
\end{equation}
(Note that over $\Z$, the twisted action and the ordinary action coincide.)
We say that a function $\psi:V(\Z/p^{2k}\Z)\to\R$ is {\it strongly invariant} if $\psi$ is $\PGL_2(\Z/p^{2k}\Z)$-invariant, and $\psi(t^2f)=\psi(f)$ for $t\in(\Z/p^{2k}\Z)^\times$.
Then a $\PGL_2(\Z)$-invariant function $\phi:V(\Z)\to\R$ is said to be {\it large and locally well approximated}
if there exist 
almost well approximated functions
$\phi_p:V(\Z_p)\backslash\{\Delta=0\}\to\R$
for each $p$,
such that $\phi(f)=\prod_p\phi_p(f)$ for every $f\in V(\Z)\backslash\{\Delta=0\}$, 
and for all large enough $p$, the functions $\phi_p$ are well approximated via strongly invariant periodic functions $\phi_p^{(k)}$.

\begin{Remark}{\em 
A periodic function $\phi:V(\Z_p)\to\R$ is always almost well approximated by periodic functions. Thus, every periodic $\PGL_2(\Z)$-invariant function $\phi:V(\Z)\to\R$ (i.e., a lift to $V(\Z)$ of a $\PGL_2(\Z/n\Z)$-invariant function $V(\Z/n\Z)\to\R$ for some integer $n$) is large and locally well approximated. Moreover, it is easy to check that the characteristic functions of many other natural sets in $V(\Z)$ are large and well approximated. For example, the set of $f$ having squarefree discriminant, and the set of $f$ such that the associated elliptic curve is semistable.
}
\end{Remark}

The following result is a congruence version of Theorem \ref{Theorem1}.

\begin{Theorem}\label{Theorem1:cong}
Let $\phi:\Z^2\to\Z$ be large and locally well approximated.
\begin{equation*}
\sum_{E_{AB}\in\mathcal{E}_X^\pm}|\Sel_2(E)|\phi(A,B)=
3\Bigl(\sum_{E_{AB}\in\mathcal{E}_X^\pm} \phi(A,B)\Bigr)+C(\EE;\phi)^\pm X^{3/4}+O_\epsilon(X^{3/4-\alpha+\epsilon}),
\end{equation*}
for constants $C(\EE;\phi)^\pm$ and the same $\alpha$ as in Theorem \ref{Theorem1}.
\end{Theorem}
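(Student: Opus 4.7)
The plan is to combine the parametrization of $2$-Selmer elements by $\PGL_2(\Z)$-orbits on $V(\Z)$ (Section~\ref{sec:param}) with the local periodic decomposition of Section~\ref{sec:locsol}, the Shintani-style slicing count Theorem~\ref{Th:Shintani4}, the Fourier-transform bounds of Section~\ref{sec:FT}, and the uniformity estimate Theorem~\ref{Thm:Unif}. The parametrization writes
\[
\sum_{E_{AB}\in\EE_X^\pm}\bigl(|\Sel_2(E)|-1\bigr)\phi(A,B)
\;=\;\sum_{\substack{f\in V(\Z)^{\irr}/\PGL_2(\Z)\\H(f)<X,\,\pm\Delta(f)>0}}\Psi(f),
\]
where $\Psi(f):=\phi(A(f),B(f))\cdot\prod_p\phi_p^{\Sel}(f)$ combines the input weight (pulled back through the map $f\mapsto(A(f),B(f))$) with the local Selmer weights $\phi_p^{\Sel}$. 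Both factors of $\Psi$ are products over primes of local functions that are almost well approximated (and well approximated for all large $p$), and $\Psi$ is $\PGL_2(\Z)$-invariant, so $\Psi$ itself is large and locally well approximated in the sense of Section~\ref{sec:CongFam}.

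Applying the decomposition of Section~\ref{sec:locsol} to each local factor of $\Psi$ and multiplying out, write $\Psi=\sum_{n\ge 1}\Psi(n;\cdot)$, where $\Psi(n;\cdot)$ is periodic modulo $n^2$ (up to an absolutely bounded power of $2$) and supported on forms with $n^2\mid\Delta(f)$ (up to the same bounded factor). Fix a threshold $M:=X^{1/12+\delta}$ for a small $\delta>0$ to be optimized, and split the $n$-sum at $M$. For $n\le M$, periodicity reduces the count to summing $\PGL_2(\Z)$-orbits on $V(\Z)$ of bounded height within each residue class modulo $n^2$. Running the slicing method of Section~\ref{sec:slice} in this congruence setting, and feeding in the nontrivial bounds of Section~\ref{sec:FT} on the Fourier transforms of strongly invariant subsets of $V(\Z/n^2\Z)$ (equivalently, on the relevant orbital exponential sums, via twisted Poisson summation), yields in each residue class an asymptotic of the form $c_1 X^{5/6}+c_2 X^{3/4}+(\text{error})$ whose error saves a positive power of $n$. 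Summing the contributions over residue classes weighted by $\Psi(n;\cdot)$ and then over $n\le M$ produces a main-term contribution $2\sum_{E_{AB}\in\EE_X^\pm}\phi(A,B)+C(\EE;\phi)^\pm X^{3/4}$, with total error $O_\epsilon(X^{3/4-\alpha+\epsilon})$ for an appropriate $\alpha>0$.

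For $n>M$, the support condition $n^2\mid\Delta(f)$, together with Theorem~\ref{Thm:Unif} and the pointwise bound $|\Psi(n;\cdot)|\le 1$, gives the trivial tail bound
\[
\sum_{n>M}\Big|\sum_f\Psi(n;f)\Big|\;\ll_\epsilon\;\frac{X^{5/6+\epsilon}}{M}+X^{11/15+\epsilon}
\;=\;X^{3/4-\delta+\epsilon}+X^{11/15+\epsilon},
\]
which is $O_\epsilon(X^{3/4-\alpha+\epsilon})$ provided $\delta$ and $\alpha$ are chosen small enough, using $11/15<3/4$. Adding back the $\sum_E\phi$ contribution coming from the identity class in each $\Sel_2(E)$ converts the $2\sum\phi$ above into the claimed $3\sum\phi$, completing the proof; Theorem~\ref{Theorem1} then falls out as the case $\phi\equiv 1$.

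The main obstacle is the equidistribution step for $n\le M$: the error in the slicing count, uniformly over the $n^{O(1)}$ residue classes modulo $n^2$, must save a positive power of $n$, and this is precisely the content of, and motivation for, the Fourier-transform estimates of Section~\ref{sec:FT} (both the odd-prime and the more delicate $p=2$ case). A second subtle point is that the constant $C(\EE;\phi)^\pm$ is only defined as the value at $s=1/2$ of a sum of Dirichlet series converging absolutely for $\Re(s)>1$; one must therefore justify that the aggregated $n$-sum coming from the slicing method converges and agrees with $C(\EE;\phi)^\pm$, which amounts to establishing meromorphic continuation of the associated Dirichlet series past $\Re(s)=1/2$.
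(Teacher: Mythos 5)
Your proposal is correct and follows essentially the same route as the paper: the paper sets $\psi(f)=\phi(A(f),B(f))\,\ell(f)/m(f)$, shows via Section \ref{sec:locsol} and Lemma \ref{lem:lawainv} that $\psi$ is large and locally well approximated, and then invokes Theorem \ref{thm:NphiXM} --- whose proof is precisely the tail-cut (Theorem \ref{Thm:Unif}), small-$n$ equidistribution via twisted Poisson summation and the Fourier bounds of \S\ref{sec:FT}, and cusp/Dirichlet-series analysis that you sketch inline --- before matching the $X^{5/6}$ term with $2\sum_{E_{AB}\in\mathcal{E}_X^\pm}\phi(A,B)$ via \cite[Theorem 3.1]{BS2Sel}. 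The only points your sketch elides are bookkeeping ones the paper handles briefly or by citation: archimedean solubility (which removes the negative-definite component $V(\R)^{(2-)}$ and dictates which $N^{(i)}(\psi,2^6X)$ occur for each sign of $\Delta$), the treatment of curves with nontrivial $2$-torsion and of nongeneric orbits through Lemmas \ref{lem:red1} and \ref{lem:red2}, and the mass computation identifying the leading constant as exactly $2\sum\phi$.
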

\noindent In the theorem above we assume that $\phi$ is fixed (and suppress the dependence on $\phi$ in the error term$)$.

We note that the characteristic function of the set of pairs $(A,B)$ such that $E_{AB}$ is a semistable elliptic curve (resp.\ has squarefree discriminant) is large and locally well approximated. Hence, in addition to families of elliptic curves whose coefficients satisfy finitely many congruence conditions, the above result also includes natural families of elliptic curves defined by the imposition of infinitely many congruence conditions.

We next describe two generalizations of Theorem \ref{Th:Shintani4}. In our first result, we impose finitely many {\it splitting conditions} on the set of binary quartic forms being counted. An element $f$ of $V(\Z)$, $V(\Z_p)$, of $V(\F_p)$ is said to have {\it splitting type $(f_1^{e_1}\ldots f_k^{e_k})$} if the reduction of $f$ modulo $p$ splits as the product of irreducible polynomials of degrees $f_i$ raised to the powers $e_i$. As convention, we will define the splitting type of the zero form in $V(\F_p)$ by $(0)$.
For nonzero $\sigma_p=(f_1^{e_1}\ldots f_k^{e_k})$, we define its index by $\ind(\sigma_p):=\sum(e_i-1)f_i$.
The splitting types that arise this way are called {\it quartic splitting types}. Let $S$ be a finite set of primes, and for each $p\in S$, let $\sigma_p$ be a quartic splitting type. We denote this collection of splitting types $\{\sigma_p\}_{p\in S}$ by $\Sigma$. For $i\in\{0,1,2+,2-\}$ and $(I,J)\in\Z^2$, we let $h^{(i)}_{\Sigma}(I,J)$ denote the number of $\GL_2(\Z)$-orbits on irreducible elements $f\in V(\Z)$, having invariants $I$ and $J$, such that $f(x,y)$ has $r-i$ real roots, $i$ pairs of complex conjugate roots, and such that for all $p\in S$, the splitting type of $f$ at $p$ is $\sigma_p$. Then we have the following result.

\begin{Theorem}
\label{th:mainCongCountFin}
Let notation be as above. For $i\in\{0,1,2+,2-\}$, we have
\begin{equation*}
\begin{array}{rcl}
\displaystyle\sum_{\substack{(I,J)\in\Z^2\\H(I,J)<X}}h^{(i)}_\Sigma(I,J)&=&
\displaystyle\frac{2\zeta(2)}{27\sigma_i}\kappa_{5/6}(\Sigma)C_{5/6}^{\circ}\cdot X^{5/6}
+
\frac{\zeta(1/2)}{108\sigma_i}\kappa_{3/4}(\Sigma)C_{3/4}^{\circ}\cdot X^{3/4}
\\[.15in]
&&+O_\epsilon\left(\ell(\Sigma)X^{2/3+\epsilon}+\ell'(\Sigma)X^{1/2+\epsilon}\right),
\end{array}
\end{equation*}
where we take $\circ$ to be $\Delta>0$ when $i\in\{0,2+,2-\}$ and $\Delta<0$ when $i=1$, and
 we have $\kappa_\ast(\Sigma)=\prod_{p\in S}\kappa_\ast(\sigma_p)$, $\ell(\Sigma)=\prod_p \ell_p(\sigma_p)$, and $\ell'(\Sigma)=\prod_p \ell'_p(\sigma_p)$. The values $\kappa_{5/6}(\sigma_p)$ are listed in Table \ref{tab:primaryden}, while the values $\kappa_{3/4}(\sigma_p)$ can be computed from Table \ref{tablenu}. We have $\ell_p(\sigma_p)=p^{1-\ind_p(\sigma_p)}$ and $\ell'_p(\sigma_p)=p^{2-\ind_p(\sigma_p)}$, for nonzero splitting types $\sigma_p$. We take $\ell_p(0)=\ell'_p(0)=1$.
\end{Theorem}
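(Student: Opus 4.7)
The plan is to adapt the slicing method of \S\ref{sec:slice} (which proves Theorem \ref{Th:Shintani4}) to incorporate the congruence conditions imposed by $\Sigma$. For each prime $p$ and quartic splitting type $\sigma_p$, membership in the subset of $V(\Z_p)$ of splitting type $\sigma_p$ depends only on the reduction modulo $p$ (with the convention that $f \equiv 0$ yields splitting type $(0)$). Hence the condition $\Sigma = \{\sigma_p\}_{p\in S}$ cuts out a union of residue classes on $V(\Z)$ modulo $m_\Sigma := \prod_{p \in S} p$, and the strategy is to run the slicing argument weighted by the characteristic function of these classes.

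Recall that the slicing argument writes $\sum_{H(I,J)<X} h^{(i)}(I,J)$ as a sum over $J$ of the number of $\GL_2(\Z)$-equivalence classes of irreducible binary quartic forms with that $J$-invariant, height less than $X$, and specified real structure. Adding the $\Sigma$-condition replaces each inner count by a count of lattice points in a shifted sublattice of $V(\Z)$ whose covolume scales by a bounded power of $m_\Sigma$. By a geometry-of-numbers computation parallel to the one in \S\ref{sec:slice}, the main and secondary contributions in each slice are multiplied by the local density of $V(\Z_p)^{\sigma_p}$ in $V(\Z_p)$ (weighted by the usual mass formula with stabilizer factors already folded into $\sigma_i$) at each $p \in S$. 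Summing over $J$ and multiplying over $p \in S$ yields the main-term factor $\kappa_{5/6}(\Sigma) = \prod_p \kappa_{5/6}(\sigma_p)$, with $\kappa_{5/6}(\sigma_p)$ read off from Table \ref{tab:primaryden}, and a secondary factor $\kappa_{3/4}(\Sigma)$ from an analogous local average governing the boundary piece that generates the $X^{3/4}$ term (computable from Table \ref{tablenu}).

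The main obstacle is the uniform control of the error terms in $\Sigma$. The unconditional slicing argument has two error sources: $X^{2/3+\epsilon}$ from the bulk sum over $J$ and $X^{1/2+\epsilon}$ from boundary effects near the edge of the height region. Restricting to residue classes of density $\asymp p^{-\ind_p(\sigma_p)}$ in $V(\F_p)$ means that the lattice-point perimeter error in each fiber is inflated relative to the main term by a power of $p$ arising from the loss of volume under the congruence. Careful bookkeeping yields the multipliers $\ell_p(\sigma_p) = p^{1-\ind_p(\sigma_p)}$ on the first error source and $\ell'_p(\sigma_p) = p^{2-\ind_p(\sigma_p)}$ on the second, which multiply over $p \in S$ to produce $\ell(\Sigma)$ and $\ell'(\Sigma)$. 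The delicate part is verifying that the implied constants in the geometry-of-numbers steps of \S\ref{sec:slice} remain uniform when $V(\Z)$ is replaced by an arbitrary $\PGL_2(\Z)$-invariant sublattice of index a bounded power of $m_\Sigma$, so that the dependence on $\Sigma$ is captured precisely by these multipliers and not by any hidden factors depending on $|S|$ or on the specific shapes of the $\sigma_p$.
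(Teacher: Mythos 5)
Your overall plan coincides with the paper's: the theorem is obtained by specializing the congruence-weighted asymptotic \eqref{eq:Shintanifinal} of \S\ref{sec:slice} to $\phi=\prod_{p\in S}\phi_p$ with $\phi_p$ the characteristic function of splitting type $\sigma_p$ (periodic mod $m=\prod_{p\in S}p$), reading off the primary constant from $\nu(\phi)=\kappa_{5/6}(\Sigma)$, the secondary constant from $D^\pm(\phi,1/2)$ via Table \ref{tablenu}, and the $\Sigma$-dependence of the error from $\supp(\phi)\ll m^{5}\prod_{p\in S}p^{-\ind(\sigma_p)}$ and $\supp(\phi,a)$, which is exactly where $\ell_p(\sigma_p)=p^{1-\ind(\sigma_p)}$ and $\ell'_p(\sigma_p)=p^{2-\ind(\sigma_p)}$ come from. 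The uniformity you single out as the delicate point is in fact already built into the paper's Lemmas \ref{lem:ptcountMB} and \ref{lem:elemslct}: Davenport's lemma has implied constants depending only on the number and degrees of the defining inequalities, and the congruence conditions enter only through the covolume $m^5$ and the support counts, so no separate verification for sublattices is needed.

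One correction to how you describe the mechanism, which matters if you were to execute the argument: the ``slicing'' of \S\ref{sec:slice} is not a decomposition as a sum over $J$ of counts of classes with fixed $J$-invariant. It is the fibering of the cuspidal range $t\gg Y^{\kappa}$ of the fundamental domain over the $x^4$-coefficient $a$ of the form, and the $X^{3/4}$ term arises from the value at $s=1/2$ of the Dirichlet series $D^\pm(\phi,s)=\sum_{a>0}\nu_{\pm a}(\phi)a^{-s}$ after the Mellin-type summation \eqref{eq:suma}. In particular the secondary constant is not ``the main constant multiplied by a local density'': it is a weighted sum over the $p$-adic valuations of the leading coefficient, which is precisely why $\kappa_{3/4}(\sigma_p)$ must be computed from the leading-coefficient-restricted densities $\nu_{p^k}(\chi_{\sigma_p})$ of Table \ref{tablenu} (the local factor $\sum_{k\geq0}\nu_{p^k}(\phi_p)p^{-k/2}$ relative to the corresponding factor of $\zeta(1/2)$), rather than from the single mod-$p$ density of Table \ref{tab:primaryden}. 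A literal sum-over-$J$ decomposition would not produce the $\zeta(1/2)$ structure of the secondary term, so that part of your write-up should be replaced by the leading-coefficient fibration before the bookkeeping of the error multipliers can be carried out as you describe.
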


\noindent We note that Theorem \ref{th:mainCongCountFin} can be easily generalized to collections $\Sigma=(\Sigma_p)_p$, where $\Sigma_p$ is a set of quartic splitting types (not just a singleton set). Moreover, for some natural choices for $\Sigma_p$, the error term can be easily improved. For example, when $\Sigma_p=\Sigma_p^{{\rm ur}}$ is the set of all unramified splitting types, the error can be improved to the error coming from the case when $\Sigma_p=\Sigma_p^{{\rm ram}}$ is the 
set of all ramified splitting types. This is because the count for $\Sigma_p^{{\rm ur}}$ is the difference between the count when there is no condition on $p$, and the count for $\Sigma_p^{{\rm ram}}$.

\medskip

Our next result involves summing a large and locally well approximated function $\phi:V(\Z)\to \R$ over $\PGL_2(\Z)$-orbits on irreducible integral binary quartic forms. To state it, we need some additional notation. First let $\nu(\phi)$ denote the {\it density} of $\phi$ and, for $a\in\Z$, let $\nu_a(\phi)$ denote the density of $\phi$ restricted to $V_a(\Z)$, the set of binary quartic forms whose $x^4$-coefficients are $a$. That is, we define 
\begin{equation}\label{eq:nuden}
\nu(\phi):=\int_{V(\widehat{\Z})}\phi(v)dv;\quad\quad
\nu_a(\phi):=\int_{V_a(\widehat{\Z})}\phi(v)dv.
\end{equation}
Above, the measures $dv$ are normalized so that $V(\widehat{\Z})$ and $V_a(\widehat{\Z})$ have volume $1$. Next, define the Dirichlet series
\begin{equation}\label{eq:Dpmdef}
D^\pm(\phi,s):=\sum_{a>0}\frac{\nu_{\pm a}(\phi)}{a^s}.
\end{equation}
We will prove in \S\ref{sec:FT} that $D^\pm(\phi,s)$ has an analytic continuation to the region $\Re(s)>1/3$, with at most a simple pole at $s=1$. In particular, the value $D^\pm(\phi,1/2)$ is well defined. For $i\in\{0,1,2+, 2-\}$ and $(I,J)\in\Z^2$, we let $h^{(i)}_{\phi}(I,J)$ denote the sum of $\phi(f)$ over $\GL_2(\Z)$-orbits on irreducible elements $f\in V(\Z)$, having invariants $I$ and $J$, such that $f(x,y)$ has $r-i$ real roots and $i$ pairs of complex conjugate roots. We have the following result.

\begin{Theorem}\label{th:mainCongCountFull}
Let $\phi:V(\Z)\backslash\{\Delta\neq 0\}\to\R$ be a large and locally well approximated function. For $i\in\{0,1,2+,2-\}$, let $V(\Z)^{(i)}$ $($resp.\ $V(\R)^{(i)})$ denote the set of elements $f\in V(\Z)$ $($resp.\ $f\in V(\R))$ having nonzero discriminant and exactly $4-2i$ real roots in $\P^1_\R$. Then for any positive constant $\alpha$ acceptable in Theorem \ref{Theorem1}, we have
\begin{equation*}
\begin{array}{rcccccl}
\displaystyle\sum_{\substack{(I,J)\in\Z^2\\H(I,J)<X}}h_\phi^{(0)}(I,J)\!\!&=&\!\!\displaystyle
\nu(\phi)\frac{2\zeta(2)}{27\sigma_0}C_{5/6}^{\Delta>0}X^{5/6}
\!\!&+&\!\!\displaystyle \frac{D^+(\phi,\frac12)+D^-(\phi,\frac12)}{216\sigma_0}C_{3/4}^{\Delta>0} X^{3/4}
\!\!&+&\!\!\displaystyle O_\epsilon(X^{3/4-\alpha+\epsilon});
\\[.25in]
\displaystyle\sum_{\substack{(I,J)\in\Z^2\\H(I,J)<X}}h_\phi^{(1)}(I,J)\!\!&=&\!\!\displaystyle
\nu(\phi)\frac{2\zeta(2)}{27\sigma_1}C_{5/6}^{\Delta<0}X^{5/6}
\!\!&+&\!\!\displaystyle \frac{D^+(\phi,\frac12)+D^-(\phi,\frac12)}{216\sigma_1}C_{3/4}^{\Delta<0} X^{3/4}
\!\!&+&\!\!\displaystyle O_\epsilon(X^{3/4-\alpha+\epsilon});
\\[.25in]
\displaystyle\sum_{\substack{(I,J)\in\Z^2\\H(I,J)<X}}h_\phi^{(2\pm)}(I,J)\!\!&=&\!\!\displaystyle
\nu(\phi)\frac{2\zeta(2)}{27\sigma_2}C_{5/6}^{\Delta>0}X^{5/6} 
\!\!&+&\!\!\displaystyle \frac{D^{\pm}(\phi,\frac12)}{108\sigma_2}C_{3/4}^{\Delta>0} X^{3/4}
\!\!&+&\!\!\displaystyle O_\epsilon(X^{3/4-\alpha+\epsilon}).
\end{array}
\end{equation*}
\end{Theorem}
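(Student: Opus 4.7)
The plan is to combine three ingredients: the slicing argument underlying Theorem \ref{Th:Shintani4}, the periodic decomposition of $\phi$ established in Section \ref{sec:locsol}, and the uniformity estimate of Theorem \ref{Thm:Unif}. Using the factorization $\phi = \prod_p \phi_p$ and the (almost) well approximated property, I would write
\begin{equation*}
\phi(f) = \sum_{n \geq 1} \phi(n;f), \qquad \phi(n;f) := \prod_{p^k \parallel n} \phi_p^{(k)}(f),
\end{equation*}
so that each $\phi(n;\cdot)$ is bounded by $1$, is periodic modulo $n^2$ outside a bounded set of primes, and is supported on forms with $n^2 \mid \Delta(f)$ up to absolutely bounded powers. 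The theorem then reduces to estimating $\sum_{H(I,J) < X} h_{\phi(n;\cdot)}^{(i)}(I,J)$ and summing over $n$. Choosing a cutoff $N = X^\theta$ with $\theta$ slightly larger than $1/12$, Theorem \ref{Thm:Unif} immediately handles the tail $n > N$: it is bounded by $X^{5/6+\epsilon}/N + X^{11/15+\epsilon} \ll X^{3/4 - \alpha + \epsilon}$ for a suitable $\alpha > 0$.

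For $n \leq N$, I would apply a weighted version of the slicing argument of Section \ref{sec:slice}, treating $\phi(n;\cdot)$ as a periodic weight on $V(\Z)$ of period $n^2$. The slicing method fixes the leading coefficient $a$ and reduces the count of $\GL_2(\Z)$-orbits to a lattice point count for $(b,c,d,e)$ in a $4$-dimensional, height-bounded region; inserting a weight that depends only on $f \bmod n^2$ is handled by Poisson summation over $V(\Z/n^2\Z)$, using the Fourier transform bounds of Section \ref{sec:FT}. The unweighted slicing yields a per-$a$ primary contribution whose sum over $a$ gives $\sim X^{5/6}$, and a per-$a$ secondary contribution of shape $X^{3/4}/\sqrt{|a|}$; in the weighted version these pick up the local density factor $\nu_{\pm a}(\phi(n;\cdot))$, with the sign pattern and combination governed by which real-root type $i$ is being counted (both signs contribute for $i\in\{0,1\}$, a single sign for $i = 2\pm$). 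Summing over $a$, the primary total becomes $\nu(\phi(n;\cdot)) \cdot X^{5/6}$ times the Shintani constant, and the secondary total becomes $D^\pm(\phi(n;\cdot), 1/2) \cdot X^{3/4}$ times the corresponding constant, where $D^\pm(\phi(n;\cdot), s)$ is defined via the meromorphic continuation to $\Re(s) > 1/3$ proven in Section \ref{sec:FT}. Summing over $n \leq N$ and applying the density estimates of the same section then identifies $\sum_n \nu(\phi(n;\cdot)) = \nu(\phi)$ and $\sum_n D^\pm(\phi(n;\cdot), 1/2) = D^\pm(\phi, 1/2)$ up to an acceptable tail, producing the asserted main terms.

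The principal obstacle lies in this weighted slicing step: the unweighted slicing error of $O(X^{2/3+\epsilon})$ inevitably degrades by positive powers of $n$ once a period-$n^2$ weight is introduced, since the induced Poisson sums over $V(\Z/n^2\Z)$ have trivial bounds that are too weak. Recovering a net power saving requires the nontrivial bounds on orbital exponential sums developed in Section \ref{sec:FT}, together with a careful optimization of the cutoff $N$ that balances this weighted slicing error against the two tail terms $X^{5/6+\epsilon}/N$ and $X^{11/15+\epsilon}$ of Theorem \ref{Thm:Unif}. This three-way balance is what dictates the explicit exponent $\alpha$ inherited from Theorem \ref{Theorem1} and appearing in the error term.
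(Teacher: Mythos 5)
Your skeleton matches the paper's: decompose $\phi=\sum_{n}\phi(n;\cdot)$, discard $n>X^{1/12+\delta}$ via Theorem \ref{Thm:Unif}, estimate each weighted count, and resum using the continuation of $D^\pm$ from \S\ref{sec:FT}. The genuine gap is in where you deploy the Fourier input. You propose to insert the period-$n^2$ weight into the sliced count (leading coefficient $a$ fixed) and handle it by Poisson summation over $V(\Z/n^2\Z)$ with the orbital exponential sum bounds. But the slicing lives in the cuspidal range $t\gg Y^{\kappa}$ (with $Y=X^{1/6}$), where the box for $(b,c,d,e)$ has sides $\asymp t^{-2}Y,\,Y,\,t^2Y,\,t^4Y$; for $n$ anywhere near the cutoff one has $n^2\approx Y^{1+12\delta}$, which exceeds the short side $t^{-2}Y$ throughout the entire cusp, so the dual sum is not dominated by the zero frequency and the savings of Corollary \ref{prop:FT} (at best a factor $p^{-1}$ per prime, and only under strong invariance, hence only for large $p$) cannot restore a power saving. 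The paper's Theorem \ref{thm:NphiXM} uses Poisson summation (with smoothed cutoffs $\Psi^\sharp,\Psi^\flat$, which you also omit but which are essential for controlling the dual sum) only in the main body $t\ll Y^{\kappa}$, where every side is $Y^{1+o(1)}$, and treats the cusp by entirely different arguments: for small $n$ the periodic estimate \eqref{eq:larget} combined with the elementary bound $\supp(\chi_{n^2},a)\ll \supp(\chi_{n^2})\,(a,n^2)/n^{2-\epsilon}$, and for $n$ near the cutoff the fibering over both $a$ and $b$ in Lemma \ref{lemma_pre_prop}, which exploits that $|\phi(n;\cdot)|$ is dominated by the discriminant-divisibility indicator and that $\nu_{a,b}(\chi_{n^2})$ depends only on the valuations of $a$ and $b$. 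Without a substitute for these two cusp arguments, your "weighted slicing via Poisson mod $n^2$" step fails exactly in the regime that determines the exponent $\alpha$.

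A second omission: you assert that the secondary contribution is $D^\pm(\phi(n;\cdot),\tfrac12)X^{3/4}$ "times the corresponding constant," with the sign pattern (both signs for $i\in\{0,1\}$, one sign for $i=2\pm$) taken for granted. The constants in the statement are the period integrals $C^{\circ}_{3/4}$, and identifying the slicing volumes with them is a nontrivial archimedean computation: Proposition \ref{prop:Jac}, Lemma \ref{lem:hyperellipinfint} and Corollary \ref{cor:vvsi} show $\V^{\pm}(S^{(i)})=c^{\pm,i}\Vol(G_0)\,C^{\circ}_{3/4}/(27\pi)$, and in particular $\V^{\pm}(S^{(2\mp)})=0$, which is precisely what justifies your claimed sign pattern and the normalizations $1/216$ and $1/108$. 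This identification (together with the bookkeeping for irreducible-but-nongeneric orbits via Lemma \ref{lem:red2}) must be supplied to pass from your main terms to the ones stated in the theorem.
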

\noindent In the theorem above we assume that $\phi$ is fixed (and suppress the dependence on $\phi$ in the error term$)$.

The average sizes of the $2$-torsion in the class groups $\Cl(K)$ and narrow class groups $\Cl^+(K)$ of monogenic cubic fields $(K,\alpha)$ ordered by height are determined in \cite{BSHMC}. The above result would imply that this bound of $2$-torsion in these class groups admits a secondary term growing as $X^{3/4}$.

Suppose that a large and locally well approximated function $\phi=\prod_p\phi_p$ is such that $\phi_p:V(\Z_p)\to\R$ is invariant under multiplication by units in $\Z_p$. This is the case, for instance, for the functions $\phi$ arising by imposing splitting conditions on the family of binary quartic forms, for the characteristic function of elements in $V(\Z)$ with squarefree discriminant, and the function $\phi$ corresponding to the $2$-torsion in the class groups of monogenic cubic fields. It is easy to see that $D^+(\phi,s)=D^-(\phi,s)$ for such functions $\phi$, giving a uniform description of the leading second order constants for the four possible splitting types at infinity.
We will also see in the appendix that for such functions $\phi$, the Dirichlet series $D^\pm(\phi,s)$ admits an Euler product. However, we do not believe this to be true of general functions $\phi$. Finally, we note that the characteristic function of the set of {\it soluble} binary quartic forms over $\Z_p$ is not invariant under multiplication by units. This is the main reason our secondary term constants of Theorems \ref{Theorem1} and \ref{Theorem1:cong} are not explicit.

\section{Parametrization results}\label{sec:param}

In this section, we collect an assortment of results parametrizing various arithmetic objects using (weighted) group orbits on lattices. Specifically, in \S 3.1, we  parametrize elements in the $2$-Selmer group of elliptic curves, following work of Birch and Swinnerton-Dyer \cite{BSDEC1}, Cremona \cite{MR1628193}, and Bhargava and the first named author \cite{BS2Sel}. Then in \S 3.2, we use results of Bhargava \cite{BHCL3} and Wood \cite{WoodThesis} to parametrize quartic rings along with a monogenized cubic resolvent ring. We combine this latter parametrization with a construction of Heilbronn \cite{HCF} to parametrize $2$-torsion elements in the dual of the class group of monogenized cubic fields.

\subsection{The $2$-Selmer groups of elliptic curves}

For $E=E_{AB}\in\EE$, we define the following two invariants of $E$:
\begin{equation*}
I(E):=-2^4\cdot 3\cdot A;\quad\quad J(E):=-2^6\cdot 3^3\cdot B.
\end{equation*}
We denote the elliptic curve having invariants $I$ and $J$ by $E^{IJ}$; note that the height of $E^{IJ}$ is given by $H(E^{IJ})=H(I,J)/(2^{10}3^3)$.
We recall the following two results, proved originally by Birch--Swinnerton-Dyer \cite{BSDEC1} and further refined by Cremona \cite{MR1628193}, regarding the connection between the $2$-Selmer groups of elliptic curves and $\PGL_2$-orbits on $V$. The results are stated in the notation in~\cite[\S3.1]{BS2Sel} (see \cite[Theorem 3.2 and Proposition 3.3]{BS2Sel}). A binary quartic form $f(x,y)$ with coefficients in a field $K$ is said to be {\it $K\!$-soluble} if the equation $z^2=f(x,y)$ has a nontrivial solution. A binary quartic form $f(x,y)$ in $V(\Q)$ is said to be {\it locally soluble} if $f$ is soluble over $\R$ and over $\Q_p$ for every prime $p$.

\begin{theorem}\label{thparame2e}
  Let $K$ be a field having characteristic not $2$ or $3$. Let
  $E:y^2=x^3-\frac{I}3x-\frac{J}{27}$ be an elliptic curve over $K$.
  Then there exists a bijection between elements in $E(K)/2E(K)$ and
  $\PGL_2(K)$-orbits of $K\!$-soluble binary quartic forms having
  invariants equal to $I$ and $J$, given by
\begin{equation*}
(\xi,\eta)+2E(K)\mapsto \PGL_2(K)\cdot\left(\frac{1}{4}x^4-\frac{3}{2}\xi x^2y^2+2\eta xy^3+\left(\frac{I}{3}-\frac{3}{4}\xi^2\right)y^4\right).
\end{equation*}
Under this bijection, the identity element in
$E(K)/2E(K)$ corresponds to the $\PGL_2(K)$-orbit of binary quartic
forms having a linear factor over $K$.

Furthermore, the stabilizer in $\PGL_2(K)$ of any $($not necessarily
$K\!$-soluble$)$ binary quartic form $f$ in $V_K$, having nonzero
discriminant and invariants $I$ and $J$, is isomorphic to $E(K)[2]$,
where $E$ is the elliptic curve defined by
$y^2=x^3-\frac{I}{3}x-\frac{J}{27}$.
\end{theorem}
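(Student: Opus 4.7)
The plan is to prove this via the classical 2-descent of Birch--Swinnerton-Dyer \cite{BSDEC1}, refined by Cremona \cite{MR1628193}. The conceptual setup: a binary quartic $f \in V(K)$ with nonzero discriminant and invariants $(I, J)$ cuts out a smooth genus-one curve $C_f: z^2 = f(x,y)$ whose Jacobian is $E$. The degree-two projection $C_f \to \P^1$ exhibits $C_f$ as a 2-covering of $E$, so $\PGL_2(K)$-orbits of such quartics correspond bijectively to isomorphism classes of pairs (2-covering of $E$, degree-two map to $\P^1$). Under the identification of $H^1(K,E[2])$ with isomorphism classes of 2-coverings of $E$, the subset of $K$-soluble pairs corresponds exactly to the image of the Kummer connecting map $\delta: E(K)/2E(K) \hookrightarrow H^1(K, E[2])$, and the bijection of the theorem should be realized as the composition.

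First I would directly verify by substitution into $I(f) = 12ae - 3bd + c^2$ and $J(f) = 72ace + 9bcd - 27ad^2 - 27eb^2 - 2c^3$, using the Weierstrass equation $\eta^2 = \xi^3 - (I/3)\xi - J/27$, that the quartic $f_{(\xi,\eta)}$ displayed in the theorem has invariants $(I, J)$. Solubility is immediate from the point $(x, y, z) = (1, 0, 1/2)$ on $z^2 = f_{(\xi,\eta)}(x,y)$. Next I would prove that the map $(\xi, \eta) \mapsto [C_{f_{(\xi,\eta)}}] \in H^1(K, E[2])$ agrees with $\delta$ up to canonical identifications, by explicitly extracting a 1-cocycle from the geometry of $C_{f_{(\xi,\eta)}}$ and matching it to the standard construction of $\delta$. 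Well-definedness on cosets of $2E(K)$ and injectivity of the theorem's map are then immediate consequences of $\ker(\delta) = 0$.

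For surjectivity, given a $K$-soluble $f$ with invariants $(I, J)$, I would choose a $K$-point on $C_f$, use the 2-covering map $C_f \to E$ to produce $(\xi, \eta) \in E(K)$, and verify (by matching the distinguished rational points) that $f$ is $\PGL_2(K)$-equivalent to $f_{(\xi,\eta)}$. The identity coset corresponds to the trivial class in $H^1(K, E[2])$, which is the split 2-covering $E \sqcup E \to E$; equipping it with a degree-two map to $\P^1$ yields a quartic with a $K$-rational linear factor. Conversely, a rational linear factor of $f$ produces a rational degree-one divisor on $C_f$, trivializing the associated $E[2]$-torsor. This establishes the characterization of the identity coset.

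Finally, for the stabilizer: any $\gamma \in \PGL_2(K)$ fixing $f$ induces an automorphism of $C_f$ over $\P^1$, and these automorphisms form a group canonically isomorphic to the deck transformation group of the 2-covering $C_f \to E$, namely $E(K)[2]$ (acting by translation). Conversely, each root $\xi_0$ of $x^3 - (I/3)x - (J/27)$ produces an explicit $2 \times 2$ matrix (written down in \cite{MR1628193}) preserving $f$, yielding the reverse inclusion. The main obstacle is the explicit verification that the cocycle attached to $f_{(\xi,\eta)}$ matches $\delta(\xi,\eta)$; this is a classical but delicate computation, requiring one to work with explicit 1-cocycles built from the geometry of the double cover, as carried out in \cite{BSDEC1,MR1628193}.
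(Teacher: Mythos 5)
The paper itself does not prove this theorem: it is imported verbatim from Birch--Swinnerton-Dyer \cite{BSDEC1} and Cremona \cite{MR1628193}, stated in the formulation of \cite[Theorem 3.2, Proposition 3.3]{BS2Sel}, with no argument given. Your sketch reconstructs exactly the classical $2$-descent proof used in those sources (the quartic $f$ gives the $2$-covering $z^2=f(x,y)$, the Kummer map $\delta$ identifies soluble classes with $E(K)/2E(K)$, linear factors detect the trivial coset, and deck transformations of the covering give the stabilizer $E(K)[2]$), so your approach coincides with the paper's source; note only that the one genuinely delicate step you identify, matching the cocycle of $C_{f_{(\xi,\eta)}}$ with $\delta(\xi,\eta)$, is deferred by you to the very same references, so as written your proposal is an outline of their proof rather than an independent one.
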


This leads to the following parametrization of the $2$-Selmer groups of elliptic curves over~$\Q$.

\begin{theorem}\label{2spar}
  Let $E=E^{IJ}$ be an elliptic curve over $\Q$. Then the elements
  of the $2$-Selmer group of $E$ are in one-to-one correspondence with
  $\PGL_2(\Q)$-equivalence classes of locally soluble integral binary
  quartic forms having invariants equal to $I$ and $J$.

  Furthermore, the set of integral binary quartic forms that have a
  rational linear factor and invariants equal to $I$ and $J$ lie
  in a single $\PGL_2(\Q)$-equivalence class, and this class corresponds to
  the identity element in the $2$-Selmer group of $E$.
\end{theorem}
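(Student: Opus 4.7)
The plan is to combine the field-theoretic parametrization of Theorem \ref{thparame2e} with the definition of the $2$-Selmer group as a subgroup of $H^1(\Q,E[2])$ cut out by local conditions, and then pass from rational to integral representatives by a denominator-clearing argument.

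First I would set up the cohomological picture. The stabilizer statement in Theorem \ref{thparame2e} implies that $\PGL_2(\Q)$-orbits of binary quartic forms in $V(\Q)$ with invariants $I,J$ are naturally a torsor for $H^1(\Q, E[2])$, with the distinguished orbit (the forms with a rational linear factor, corresponding to the identity in $E(\Q)/2E(\Q)$) serving as basepoint. Applying Theorem \ref{thparame2e} over $K=\Q_v$ for each place $v$ of $\Q$ shows that the image of $E(\Q_v)/2E(\Q_v)$ in $H^1(\Q_v, E[2])$ corresponds exactly to $\PGL_2(\Q_v)$-orbits of $\Q_v$-soluble binary quartic forms with invariants $I,J$. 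By definition $\Sel_2(E)$ consists of the classes in $H^1(\Q,E[2])$ whose restriction to $H^1(\Q_v,E[2])$ lies in this image for every $v$. Pulling back through the torsor identification, Selmer classes correspond to $\PGL_2(\Q)$-orbits of rational binary quartic forms with invariants $I,J$ that are $\Q_v$-soluble for every place $v$, i.e.\ locally soluble.

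Next I would upgrade from $\Q$-forms to $\Z$-forms. The point is that since $I,J\in\Z$, every $\PGL_2(\Q)$-orbit of rational binary quartic forms with these invariants contains an integral representative. Given $f\in V(\Q)$, one clears denominators using the twisted $\PGL_2$-action \eqref{eq:PGL2action}: scaling by $\lambda\in\Q^\times$ via $\mathrm{diag}(\lambda,1)$ or similar transformations multiplies the coefficients of $f$ by appropriate powers of $\lambda$, and working prime-by-prime one can reduce to a $p$-integral model at each $p$. Equivalently, the $\PGL_2(\Q)$-orbit of $f$ meets $V(\Z_p)$ for every $p$ because the invariants $I(f),J(f)$ are $p$-integral and the generic stabilizer is finite; a standard reduction-theory/strong-approximation argument then produces a single element in $V(\Z)$. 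Moreover, two integral forms that are $\PGL_2(\Q)$-equivalent define the same orbit in the parametrization, so the count of $\PGL_2(\Q)$-equivalence classes of locally soluble integral forms with invariants $I,J$ equals the count of $\PGL_2(\Q)$-orbits of locally soluble rational forms with those invariants, which is $|\Sel_2(E)|$ by the previous paragraph.

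Finally, for the statement about the identity element, Theorem \ref{thparame2e} already tells us that over $\Q$ the identity in $E(\Q)/2E(\Q)$, and hence in $\Sel_2(E)$, corresponds to the $\PGL_2(\Q)$-orbit of $\Q$-soluble binary quartic forms with a rational linear factor. It only remains to observe that, up to $\PGL_2(\Q)$, any integral binary quartic form with a rational linear factor and fixed invariants $I,J$ can be brought into a single standard shape (for instance, with linear factor $y$, so $f = bx^3y + cx^2y^2 + dxy^3 + ey^4$ with $I(f)=I,J(f)=J$): the residual $\PGL_2(\Q)$ action on such forms is transitive on those with fixed invariants, yielding the claimed single orbit. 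The main obstacle is the integrality step, which requires checking that no local obstruction prevents a rational form from being moved into $V(\Z_p)$; this is precisely where the assumption that $I,J\in\Z$ together with the $\PGL_2$-equivariance of the invariants is used.
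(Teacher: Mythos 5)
The paper does not prove this statement at all: it is quoted from Birch--Swinnerton-Dyer and Cremona (via \cite[Theorem 3.5]{BS2Sel}), and the entire nontrivial content of the citation is exactly the step you dispatch in one sentence. Your cohomological frame (Selmer classes $=$ everywhere-locally-soluble classes, matched with $\PGL_2(\Q)$-orbits of locally soluble rational quartics via Theorem \ref{thparame2e} applied over $\Q$ and over each $\Q_v$) is the standard route, modulo one imprecision: the set of $\PGL_2(\Q)$-orbits with invariants $I,J$ is not a torsor under all of $H^1(\Q,E[2])$, only under the kernel of the obstruction map to $H^1(\Q,\PGL_2)\subset {\rm Br}(\Q)[2]$; one must argue that for locally soluble classes this obstruction vanishes (locally trivial, hence trivial by Hasse), so that every Selmer class is in fact represented by a rational binary quartic form. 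That can be repaired. The genuine gap is the passage from rational to integral representatives.

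Your claim that ``since $I,J\in\Z$, every $\PGL_2(\Q)$-orbit of rational binary quartic forms with these invariants contains an integral representative,'' justified by denominator-clearing plus ``a standard reduction-theory/strong-approximation argument,'' is not a proof and is not correct as stated. The twisted $\PGL_2$-action preserves $I$ and $J$ exactly, so this is a minimization problem: one must show that a \emph{locally soluble} rational quartic with integral invariants admits an integral model with the \emph{same} invariants. This is a theorem of Birch--Swinnerton-Dyer, refined by Cremona (and by Cremona--Fisher--Stoll's minimisation theory); it genuinely uses local solubility, and it is delicate at $p=2$ --- historically one only gets integral models after scaling the invariants by $2^4$ and $2^6$, which is precisely why this paper's normalization $I(E)=-2^4\cdot 3A$, $J(E)=-2^6\cdot 3^3 B$ absorbs those powers of $2$. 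Note also that the representatives produced by Theorem \ref{thparame2e} have coefficients $\tfrac14,\tfrac32,\tfrac34$, so there is real work to do, and finiteness of the generic stabilizer gives no control on denominators. Without invoking (or reproving) this minimization input, your argument establishes only the bijection with $\PGL_2(\Q)$-classes of locally soluble \emph{rational} forms, not the integral statement of the theorem. The identity-orbit paragraph is fine in substance: forms with a rational linear factor are $\Q$-soluble, so by the last part of Theorem \ref{thparame2e} they constitute the single orbit corresponding to $0\in E(\Q)/2E(\Q)$, and with this paper's normalization $x^3y-(I/3)xy^3-(J/27)y^4$ is an integral member of that orbit.
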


Next, we express the number of $\PGL_2(\Q)$-equivalence classes of locally soluble integral binary quartic forms having fixed invariants as the weighted count of $\PGL_2(\Z)$-orbits on integral binary quartic forms. Moreover, we show that this weight is {\it local}, i.e., it can be expressed as a product of weights defined over $\Z_p$. The condition of local solubility is clearly local: let $\ell_p:V(\Z_p)\to\R$ be the characteristic function of the set of elements in $V(\Z_p)$ that are soluble over $\Q_p$, and let $\ell:V(\Z)\to\R$ be given by $\ell(f):=\prod\ell_p(f)$. That is, $\ell$ is the characteristic function of locally soluble elements in $V(\Z)$. 

Given a binary quartic form $f\in V(\Z)$ (resp.\ $f\in V(\Z_p)$ for some prime $p$), let $B(f)$ (resp.\ $B_p(f)$) denote a set of representatives for the action of
$\PGL_2(\Z)$ (resp.\ $\PGL_2(\Z_p)$) on the $\PGL_2(\Q)$-equivalence class of $f$ (resp. $\PGL_2(\Q_p)$-equivalence class of $f$) in
$V(\Z_p)$. For an integral binary quartic form $f$ in  $V(\Z)$ or $V(\Z_p)$, respectively define 
\begin{equation*}
\begin{array}{rcl}
\Aut_\Q(f):=\Stab_{\PGL_2(\Q)}(f);&\quad\quad&
\Aut_\Z(f):=\Stab_{\PGL_2(\Z)}(f);
\\[.1in]
\Aut_{\Q_p}(f):=\Stab_{\PGL_2(\Q_p)}(f);
&\quad\quad&
\Aut_{\Z_p}(f):=\Stab_{\PGL_2(\Z_p)}(f).
\end{array}
\end{equation*}
We define the {\it global weight} $m(f)$ for $f$ in $V(\Z)$ and the {\it local weight} $m_p(f)$ for $f$ in $V(\Z_p)$ to be:
\begin{equation}\label{eq:mp}
\begin{array}{rcccl}
m(f)&:=&\displaystyle\sum_{f'\in B(f)}\frac{\#\Aut_\Q(f')}{\#\Aut_\Z(f')}&=&\displaystyle\sum_{f'\in B(f)}\frac{\#\Aut_\Q(f)}{\#\Aut_\Z(f')};
\\[.2in]
m_p(f)&:=&\displaystyle\sum_{f'\in B_p(f)}\frac{\#\Aut_{\Q_p}(f')}{\#\Aut_{\Z_p}(f')}&=&\displaystyle\sum_{f'\in B_p(f)}\frac{\#\Aut_{\Q_p}(f)}{\#\Aut_{\Z_p}(f')}.
\end{array}
\end{equation}
Moreover, given $f\in V(\Z)$ (resp.\ $f\in V(\Z_p)$), we define $\PGL_2(\Q)_f$ (resp.\ $\PGL_2(\Q_p)_f$) to be the set of elements $\gamma\in \PGL_2(\Q)$ (resp.\ $\gamma \in\PGL_2(\Q_p)$), such that $\gamma\cdot f$ belongs to $V(\Z)$ (resp.\ $V(\Z_p)$). Then the following result follows from \cite[Proposition 3.6]{BS2Sel} and its proof.
\begin{proposition}\label{prop:mp}
For elements $f$ in $V(\Z)$ and $V(\Z_p)$, we have the equalities
\begin{equation*}
m(f)=\#[\PGL_2(\Z)\backslash\PGL_2(\Q)_f],\quad
m_p(f)=\#[\PGL_2(\Z_p)\backslash\PGL_2(\Q_p)_f],
\end{equation*}
respectively. Moreover, we have
\begin{equation*}
m(f)=\prod_p m_p(f)
\end{equation*}
for every $f\in V(\Z)$.
\end{proposition}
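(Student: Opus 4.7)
My plan is to derive the two individual identities from an orbit-stabilizer computation applied to a natural right action of $\Aut_\Q(f)$ (respectively $\Aut_{\Q_p}(f)$), and to obtain the product formula from a local-global principle that is essentially the class-number-one property of $\GL_2$ over $\Z$.

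For the first identity, I would make $\Aut_\Q(f)$ act on the right on the coset space $\PGL_2(\Z)\backslash\PGL_2(\Q)_f$ by $[\gamma]\cdot\alpha:=[\gamma\alpha]$; this is well-defined because $\alpha\cdot f=f$ forces $(\gamma\alpha)\cdot f=\gamma\cdot f\in V(\Z)$, and because the right action commutes with the left action of $\PGL_2(\Z)$. The map sending $[\gamma]$ to the $\PGL_2(\Z)$-orbit of $\gamma\cdot f$ should then descend to a bijection
$$
\PGL_2(\Z)\backslash\PGL_2(\Q)_f/\Aut_\Q(f)\;\longleftrightarrow\; B(f),
$$
surjective by the very definition of $B(f)$ and injective by a direct unwinding. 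For $f'\in B(f)$ and any $\gamma\in\PGL_2(\Q)_f$ with $\gamma\cdot f=f'$, the stabilizer of $[\gamma]$ in $\Aut_\Q(f)$ is $\{\alpha:\gamma\alpha\gamma^{-1}\in\PGL_2(\Z)\}=\gamma^{-1}\Aut_\Z(f')\gamma$, since $\gamma\alpha\gamma^{-1}$ automatically fixes $f'$; this subgroup has order $\#\Aut_\Z(f')$. By orbit-stabilizer the fiber of $[\gamma]\mapsto f'$ has cardinality $\#\Aut_\Q(f)/\#\Aut_\Z(f')$, and summing over $B(f)$ together with $\#\Aut_\Q(f)=\#\Aut_\Q(f')$ (they are $\PGL_2(\Q)$-conjugate) would yield $\#[\PGL_2(\Z)\backslash\PGL_2(\Q)_f]=m(f)$. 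Substituting $\Z_p$ and $\Q_p$ for $\Z$ and $\Q$ throughout gives the local analogue.

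For the product formula, I would show that the diagonal embedding induces a bijection
$$
\PGL_2(\Z)\backslash\PGL_2(\Q)_f\;\xrightarrow{\;\sim\;}\;\prod_p\PGL_2(\Z_p)\backslash\PGL_2(\Q_p)_f,
$$
where the right-hand product is implicitly restricted and has only finitely many nontrivial factors (for primes $p$ of good reduction for $f$ one has $\PGL_2(\Q_p)_f=\PGL_2(\Z_p)$). Injectivity should reduce, via $\GL_2$-lifts, to the identity $\PGL_2(\Q)\cap\prod_p\PGL_2(\Z_p)=\PGL_2(\Z)$. For surjectivity I would lift each local class $\gamma_p$ to some $g_p\in\GL_2(\Q_p)$, invoke the class-number-one property $\GL_2(\A_f)=\GL_2(\Q)\GL_2(\widehat{\Z})$ to find a global $g\in\GL_2(\Q)$ agreeing with every $g_p$ up to an element of $\GL_2(\Z_p)$, and then project back to $\PGL_2$. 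Taking cardinalities and combining with the identities from the first step would yield $m(f)=\prod_p m_p(f)$. The principal obstacle is managing the scalar ambiguity in the $\PGL_2\to\GL_2$ lift: the local scalings must be chosen coherently so that the patched global element actually lies in $\GL_2(\Q)$, which is precisely what the class-number-one statement supplies.
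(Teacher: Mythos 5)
Your proposal is correct, and it is essentially the argument the paper relies on: the paper simply cites \cite[Proposition 3.6]{BS2Sel}, whose proof identifies $m(f)$ and $m_p(f)$ with the coset counts via exactly this orbit--stabilizer computation and then deduces the product formula from the class-number-one property of $\GL_2$ (equivalently $\PGL_2$) over $\Q$. Your handling of the scalar ambiguity in lifting from $\PGL_2$ to $\GL_2$ is the right point to be careful about, and your treatment of it is sound.
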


Suppose an elliptic curve $E^{IJ}$ over $\Q$ is such that $E^{IJ}(\Q)[2]=\{0\}$. Then it follows by Theorem \ref{thparame2e} that if $f\in V(\Q)$ has invariants $I$ and $J$, then $\Aut_\Q(f)=\Aut_\Z(f)=\{{\rm id}\}$. As a consequence, $m(f)=\#B(f)$. First, for integers $I$ and $J$, let $V(\Z)_{IJ}$ denote the set of elements in $V(\Z)$ with invariants $I$ and $J$. An element in $V(\Q)$ is said to be {\it generic} if it is irreducible and its corresponding elliptic curves has trivial $2$-torsion. For any set $S\subset V(\Q)$ let $S^\gen$ denote the set of  generic elements in $S$. Then Theorem \ref{2spar} immediately implies the following result.

\begin{theorem}\label{th:ellipselpar}
Let $E$ be an elliptic curve over $\Q$ with trivial $2$-torsion and invariants $I$ and $J$. Then we have
\begin{equation*}
|\Sel_2(E)|=1+\sum_{f\in \frac{V(\Z)_{IJ}^\gen}{\PGL_2(\Z)}}\frac{\ell(f)}{m(f)}.
\end{equation*}
\end{theorem}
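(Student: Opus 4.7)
The plan is to unwind the parametrization results already stated. By Theorem~\ref{2spar}, the $2$-Selmer group $\Sel_2(E)$ is in bijection with the set of $\PGL_2(\Q)$-equivalence classes of locally soluble integral binary quartic forms with invariants $(I,J)$, and the identity element corresponds to the unique class of forms having a rational linear factor. This identity class will account for the ``$+1$'' on the right-hand side, so what remains is to show that
\begin{equation*}
\sum_{f\in V(\Z)_{IJ}^\gen/\PGL_2(\Z)}\frac{\ell(f)}{m(f)}
\end{equation*}
equals the number of locally soluble \emph{non-identity} $\PGL_2(\Q)$-classes.

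The first step is to verify that every non-identity class is represented by a generic form. Since $E$ has trivial $2$-torsion, genericity is equivalent to $\Q$-irreducibility for forms in $V(\Z)_{IJ}$. The hypothesis $E(\Q)[2]=0$ implies that the cubic $x^3-(I/3)x-J/27$ is irreducible over $\Q$; this cubic is (up to normalization) the resolvent cubic of any binary quartic form with invariants $(I,J)$. If some $f\in V(\Z)_{IJ}$ factored as a product of two irreducible rational quadratics with root pairs $\{\alpha_1,\alpha_2\}$ and $\{\alpha_3,\alpha_4\}$, then $\alpha_1\alpha_2+\alpha_3\alpha_4\in\Q$ would supply a rational root of the resolvent cubic, a contradiction. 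Hence every $f\in V(\Z)_{IJ}$ either has a rational linear factor (identity class) or is irreducible, so the non-identity classes are precisely the generic ones. This resolvent-cubic argument is the only substantive step; the rest is bookkeeping.

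Finally, we translate from $\PGL_2(\Q)$-classes to $\PGL_2(\Z)$-orbits via the weight $m(f)$. Local solubility is a $\PGL_2(\Q_p)$-invariant property for each $p$, so $\ell(f)$ is constant on each $\PGL_2(\Q)$-class. Since $E(\Q)[2]=0$, Theorem~\ref{thparame2e} yields $\Aut_\Q(f)=\Aut_\Z(f)=\{1\}$ for every $f\in V(\Z)_{IJ}^\gen$, so $m(f)=\#B(f)$ is exactly the number of $\PGL_2(\Z)$-orbits inside the $\PGL_2(\Q)$-class of $f$ that admit an integral representative. Therefore each locally soluble non-identity $\PGL_2(\Q)$-class contributes exactly $m(f)\cdot(1/m(f))=1$ to the displayed sum, while non-locally-soluble classes contribute~$0$. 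Combined with the contribution $1$ from the identity class, this yields the claimed formula.
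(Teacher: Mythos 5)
Your proposal is correct and follows essentially the same route as the paper: the paper notes that trivial $2$-torsion forces $\Aut_\Q(f)=\Aut_\Z(f)=\{\mathrm{id}\}$, hence $m(f)=\#B(f)$, and then deduces the formula "immediately" from Theorem~\ref{2spar}, exactly the bookkeeping you carry out. Your extra verification that non-identity classes consist of generic forms (via irreducibility of the resolvent cubic) is a correct filling-in of a step the paper leaves implicit.
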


\subsection{Quartic rings with monogenic cubic resolvent rings}

In this section, we recall results that parametrize certain cubic and quartic rings over $\Z$.
We begin with the following parametrization of cubic rings due to Levi \cite{Levi}, Delone--Faddeev \cite{DFCF}, and Gan-Gross-Savin \cite{GGSCF}. Let $U=\Sym^3(\Z^2)$ denote the space of binary cubic forms. The group $\GL_2$ acts on $U$ via the following twisted action: $\gamma\cdot f(x,y):=f((x,y)\cdot\gamma)/\det(\gamma)$. Then we have the following result.
\begin{proposition}[\cite{Levi},\cite{DFCF},\cite{GGSCF}]\label{prop:DF}
There is a natural bijection between the set of cubic rings over $\Z$ upto isomorphism and the set of $\GL_2(\Z)$-orbits on binary cubic forms.
\end{proposition}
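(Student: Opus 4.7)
The plan is to prove Proposition \ref{prop:DF} by the classical Delone--Faddeev construction, going through explicit ``normalized bases'' in both directions and then matching the ambiguities of normalization with the twisted $\GL_2(\Z)$-action on $U$.

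First I would construct the map from cubic rings to $\GL_2(\Z)$-orbits on binary cubic forms. Given a cubic ring $R$, pick any $\Z$-basis $(1,\alpha,\beta)$. Writing $\alpha\beta=n+r\alpha+s\beta$ for integers $n,r,s$, the replacement $\alpha\mapsto\alpha-s$, $\beta\mapsto\beta-r$ yields a new basis with $\alpha\beta\in\Z$; call such a basis \emph{normalized}. With a normalized basis we may write uniquely
\begin{equation*}
\alpha\beta=-ad,\qquad \alpha^2=-ac+b\alpha-a\beta,\qquad \beta^2=-bd+d\alpha-c\beta
\end{equation*}
for integers $a,b,c,d$, and I set $f(x,y)=ax^3+bx^2y+cxy^2+dy^3\in U(\Z)$. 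The remaining freedom in choosing a normalized basis is precisely translation of $(\alpha,\beta)$ by $\Z\cdot 1$ (which doesn't affect anything beyond the identification of the quotient $R/\Z$), followed by change of basis of $R/\Z\simeq\Z^2$ by an element $\gamma\in\GL_2(\Z)$ and a re-normalization. A direct computation shows that under such $\gamma$, the cubic form transforms exactly by the twisted action $\gamma\cdot f(x,y)=f((x,y)\gamma)/\det(\gamma)$, so the $\GL_2(\Z)$-orbit of $f$ depends only on the isomorphism class of $R$.

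Next I would construct the inverse map. Given $f(x,y)=ax^3+bx^2y+cxy^2+dy^3$, I define $R_f:=\Z\oplus\Z\alpha\oplus\Z\beta$ as a free $\Z$-module and impose the multiplication table above. The key step is verifying that this multiplication is commutative, unital, and associative; commutativity and the unit are immediate, and associativity reduces, by $\Z$-linearity and symmetry, to checking the three identities $(\alpha^2)\alpha=\alpha(\alpha^2)$ (trivial), $(\alpha^2)\beta=\alpha(\alpha\beta)$, and $(\alpha\beta)\beta=\alpha(\beta^2)$. Both nontrivial identities unwind to the same pair of polynomial identities in $a,b,c,d$ which are routine to verify by hand. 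This is the only step that requires actual computation; everything else is bookkeeping.

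Finally, I would check that the two constructions are mutually inverse and functorial up to the stated equivalence: starting from $R$, forming $f$ from a normalized basis, and rebuilding $R_f$ gives back $R$ on the nose via the chosen basis; conversely $R_f$ carries a distinguished normalized basis whose associated form is $f$. Combined with the analysis of the $\GL_2(\Z)$-ambiguity above, this yields the claimed natural bijection between isomorphism classes of cubic rings and $\GL_2(\Z)$-orbits on $U(\Z)$. The main obstacle, as indicated, is the associativity verification for $R_f$; once that is in hand, the correspondence is essentially formal.
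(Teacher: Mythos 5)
Your proposal is correct, and it is essentially the classical Delone--Faddeev argument from the sources the paper cites; the paper itself gives no proof of Proposition \ref{prop:DF} (it is quoted from \cite{Levi}, \cite{DFCF}, \cite{GGSCF}) and only describes the ring-to-form direction intrinsically, via the index form $\ind\colon C/\Z\to\wedge^2(C/\Z)$, $\alpha\mapsto\alpha\wedge\alpha^2$. Your normalized-basis construction recovers exactly that description: with $\alpha\beta\in\Z$ and the multiplication table you wrote, the form $f(x,y)=ax^3+bx^2y+cxy^2+dy^3$ is precisely the index form of $x\alpha+y\beta$ under a choice of generator of $\wedge^2(C/\Z)$, so the two presentations match. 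What your route adds beyond the paper's description is the explicit inverse map (form $\mapsto$ ring via the multiplication table), with the associativity check as the only genuine computation, plus the verification that a change of normalized basis transforms $f$ by the twisted action $\gamma\cdot f(x,y)=f((x,y)\gamma)/\det(\gamma)$; note that at the level of orbits the statement is insensitive to the usual convention ambiguity (acting by $\gamma$ versus $\gamma^{-1}$ or its transpose), since these produce the same orbit partition of $U(\Z)$. One small point to make explicit when writing it up: to see that the map on orbits is well defined in both directions you should record that an isomorphism of cubic rings carries normalized bases to normalized bases, and that any two normalized bases of the same ring differ by a translation by $\Z\cdot 1$ together with a $\GL_2(\Z)$-change of basis of $C/\Z$; you state this, and it is what makes the bijection between isomorphism classes and orbits, rather than between based rings and forms.
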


\noindent In fact, this bijection has a very explicit description. Let $C$ be a cubic ring over $\Z$, and let $L:=C/\Z$ be the associated two dimensional lattice. Consider the {\it index form} $\ind:L\to \Z$ which sents $\alpha\in L$, to the signed index of $\Z[\alpha]$ in $C$. More precisely, this is defined by $\ind\colon L\ni \alpha\mapsto \alpha\wedge\alpha^2\in\wedge^2L$.
Since $L\cong\Z^2$ and $\ind$ is cubic, this yields an integral binary cubic form $f\in U(\Z)$ upto the action of $\GL_2(\Z)$.  A single element $f\in U(\Z)$ corresponds to a cubic ring $C$ along with a basis $\{\alpha_1,\alpha_2\}$ of $C/\Z$. These can be lifted in a unique way to two elements $\beta_1$ and $\beta_1$ of $C$ whose traces belong to $\{-1,0,1\}$, and it is then true that $\{1,\beta_1,\beta_2\}$ is a basis for $C$ as a $\Z$-module.

\medskip

Next, let $W=2\otimes\Sym^2(3)$ denote the space of pairs of ternary quadratic forms. There is a natural action of the group $\GL_2\times\SL_3$ on $W$. We represent elements in $W(\Z)$ by pairs $(A,B)$ of $3\times 3$ symmetric matrices with coefficients $\frac12 a_{ij}$ and $\frac12 b_{ij}$ with $1\leq i\leq j\leq 3$, where $a_{ii},b_{ii}\in2\Z$ and $a_{ij},b_{ij}\in\Z$ for $i\neq j$. Then the following landmark result of Bhargava \cite[Theorem 1]{BHCL3} parametrizes pairs $(Q,C)$, where $Q$ is a quartic ring and $C$ is a cubic resolvent ring of $Q$.
\begin{theorem}[\cite{BHCL3}]\label{th:QCparam}
There is a canonical bijection between the set of $\GL_2(\Z)\times\SL_3(\Z)$-orbits on $W(\Z)$ and the set of isomorphism classes of pairs $(Q,C)$, where $Q$ is a quartic ring and $C$ is a cubic resolvent ring of $Q$.
\end{theorem}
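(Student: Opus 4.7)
The plan is to construct a canonical bijection explicitly in both directions and then verify that the two maps are mutually inverse. In the forward direction, given a pair $(Q, C)$ consisting of a quartic ring $Q$ and a cubic resolvent ring $C$, I would use the \emph{resolvent map} $\phi\colon Q \to C$ that is built into the definition of a cubic resolvent. In the generic case where $Q \otimes \Q = K$ is an $S_4$-quartic field with four embeddings $\alpha \mapsto \alpha^{(i)}$, the map $\phi$ sends $\alpha$ to the element of $C$ whose three conjugates are $\alpha^{(1)}\alpha^{(2)} + \alpha^{(3)}\alpha^{(4)}$, $\alpha^{(1)}\alpha^{(3)} + \alpha^{(2)}\alpha^{(4)}$, and $\alpha^{(1)}\alpha^{(4)} + \alpha^{(2)}\alpha^{(3)}$. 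This $\phi$ is quadratic, so polarizing gives a symmetric $\Z$-bilinear map $Q \times Q \to C$; quotienting both sides by their identity copies of $\Z$ produces a symmetric bilinear form $\Sym^2(Q/\Z) \to C/\Z$. Choosing $\Z$-bases for $Q/\Z \cong \Z^3$ and $C/\Z \cong \Z^2$ renders this map as a pair of ternary quadratic forms, i.e., an element $(A, B) \in W(\Z)$, well-defined up to the natural $\GL_2(\Z) \times \SL_3(\Z)$-action (the restriction from $\GL_3$ to $\SL_3$ is forced by a sign/orientation compatibility with the discriminant).

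For the inverse direction, starting from $(A, B) \in W(\Z)$, the cubic resolvent $C$ should be read off from the binary cubic form $f(x, y) := -\det(xA - yB) \in U(\Z)$, which is manifestly $\SL_3(\Z)$-invariant and transforms appropriately under $\GL_2(\Z)$; Proposition \ref{prop:DF} then delivers $C$. For $Q$ itself, I would posit explicit multiplication constants on the free $\Z$-module with basis $\{1, \alpha_1, \alpha_2, \alpha_3\}$, expressing each structure constant $c_{ij}^k$ as a specific polynomial in the twelve entries $a_{ij}, b_{ij}$. The correct formulas can be reverse-engineered by carrying out the analysis over $\Q$ in the generic case: starting from a quartic field $K$ with resolvent $F$, one computes the forward map to get $(A,B)$, then solves for how the $\alpha_i$ multiply in terms of the matrix entries. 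The resulting formulas are polynomial, so they make sense integrally.

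The main obstacle is verifying that the proposed multiplication laws actually define a commutative associative ring, and that $C$ is genuinely a cubic resolvent of $Q$. Commutativity is immediate from the symmetry of $(A,B)$, but associativity is a long list of polynomial identities in $a_{ij}, b_{ij}$. I would dispatch these by the following density argument: the identities hold whenever $(A, B)$ comes from a bona fide quartic field order (the generic case), this locus is Zariski-dense in $W(\C)$, and since the identities are polynomial they must hold universally. Checking that the forward and inverse constructions are mutually inverse then reduces to verifying that the bilinear resolvent map built from the $Q$ constructed above reproduces $(A,B)$; again this is polynomial in $(A,B)$ and can be verified on the generic locus. Finally, identifying isomorphism classes of $(Q,C)$ with $\GL_2(\Z) \times \SL_3(\Z)$-orbits on $W(\Z)$ comes down to tracking how admissible changes of basis for $Q/\Z$ and $C/\Z$ act on $(A, B)$, which matches the given group action by construction.
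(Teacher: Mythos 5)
The paper does not prove this statement at all: it is Bhargava's parametrization theorem, quoted verbatim from \cite{BHCL3}, so there is no in-paper argument to compare against. Measured against Bhargava's actual proof, your outline has the right overall shape — express the quadratic resolvent map in coordinates to get $(A,B)$, define the inverse by universal polynomial structure constants, and track base changes to match the $\GL_2(\Z)\times\SL_3(\Z)$-action — but several points conceal real content. The structure constants $c_{ij}^k$ are not functions of $(A,B)$ until you normalize the basis of $Q/\Z$ (each $\alpha_i$ is only determined up to translation by $\Z$); the Zariski-density of the locus of forms arising from quartic field orders is itself something to argue (density of the arithmetic group plus infinitely many discriminant hypersurfaces, say), not a given; and with the paper's half-integral convention for $W(\Z)$ the integral resolvent cubic is $4\det(Ax-By)$, the factor $4$ being needed for integrality.

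The genuine gap is in the last step. Your density argument can only establish identities of polynomials in the twelve entries $a_{ij},b_{ij}$: associativity of the proposed multiplication, the resolvent property of the constructed cubic ring, and the identity $F(G(A,B))=(A,B)$, where $G$ is your inverse construction and $F$ the forward map. That gives surjectivity of $F$ with $G$ as a section, but not the other composite: it does not show that an arbitrary pair $(Q,C)$ — and the theorem quantifies over all quartic rings, including degenerate ones of discriminant zero and rings that do not embed in an \'etale quartic algebra, which are not visibly specializations of field orders — is isomorphic to $G(F(Q,C))$. This uniqueness statement is not a polynomial identity in $(A,B)$; to get it by specialization you would need every point of the parameter space of pairs-with-bases (structure constants together with resolvent data) to lie in the closure of the field-order locus, which is essentially the theorem itself. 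Bhargava instead shows directly, for an arbitrary pair, that the resolvent conditions force the $c_{ij}^k$ to be given by the universal formulas in the coordinates of the resolvent map; that derivation, valid for all possibly degenerate pairs, is the technical heart of the proof and is absent from your sketch.
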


\noindent There is a natural {\it resolvent} map $\Res: W\to U$ given by $\Res(A,B):=4\det(Ax-By)$. This map is $\SL_3$-invariant and $\GL_2$-covariant. Moreover, if $(A,B)\in W(\Z)$ corresponds to the pair $(Q,C)$ of rings, the binary cubic form corresponding to $C$ is $\Res(A,B)\in U(\Z)$.

\medskip

Let $C$ be a rank-$n$ ring over $\Z$. An element $\alpha\in C$ is said to be a {\it monogenizer} for $C$ if $C=\Z[\alpha]$. We then say that the pair $(C,\alpha)$ over $\Z$ is an ($n$-ic) {\it monogenized rank-$n$ ring}. Two monogenized rings $(C,\alpha)$ and $(C',\alpha')$ are said to be {\it isomorphic} if there is an isomorphism from $C$ to $C'$ such that it sends $\alpha$ to $\alpha'+n$ for some $n\in\Z$.
For a ring $R$, let $U_1(R)$ denote the set $x^3+tx^2+sx+r$, where $r,s,t\in R$. There is a natural action of $R$ on $U_1(R)$ given by $(r\cdot f)(x):=f(x+r)$ for $r\in R$ and $f\in U_1(R)$.
Then we have the following result.
\begin{proposition}\label{prop:monCparam}
The map $f(x)\mapsto\left(\Z[x]/(f(x)),\overline x\right)$ gives a bijection between the set of $\Z$-orbits on $U_1(\Z)$ and the set of pairs $(C,\alpha)$, where $C$ is a cubic ring and $\alpha\in C/\Z$ is a monogenizer for $C$.
\end{proposition}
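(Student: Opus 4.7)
The plan is to exhibit explicit mutually inverse maps and verify the $\Z$-equivariance on one side corresponds to the identification $\alpha \leftrightarrow \alpha + n$ on the other. The forward map $\Phi\colon f \mapsto (\Z[x]/(f(x)),\overline x)$ is already given. I would define an inverse $\Psi$ as follows: given a pair $(C,\alpha)$ with $\alpha\in C/\Z$ a monogenizer, choose any lift $\tilde\alpha\in C$, and let $f_{\tilde\alpha}(x)\in\Z[x]$ be the characteristic polynomial of multiplication-by-$\tilde\alpha$ as a $\Z$-linear endomorphism of $C$. Since $C$ is free of rank $3$ over $\Z$, this is a well-defined monic element of $U_1(\Z)$.

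First I would check that $\Phi$ is well-defined and compatible with the equivalence relations. That $\Z[x]/(f(x))$ is a cubic ring for $f\in U_1(\Z)$ is immediate from $f$ being monic of degree $3$, which makes $\{1,\overline x,\overline x^2\}$ a $\Z$-basis, and $\overline x$ is clearly a monogenizer. If $f_2(x)=f_1(x+r)$, i.e.\ $f_2=r\cdot f_1$, then the map $\Z[x]/(f_1)\to\Z[x]/(f_2)$ sending $\overline x_1\mapsto\overline x_2+r$ is a well-defined ring isomorphism (because $f_1(\overline x_2+r)=f_2(\overline x_2)=0$) and carries the monogenizer $\overline x_1$ to $\overline x_2+r$, matching the isomorphism convention for monogenized rings. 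Next I would verify that $\Psi$ is well-defined: replacing $\tilde\alpha$ by $\tilde\alpha+n$ for some $n\in\Z$ replaces the endomorphism by itself plus $n\cdot\mathrm{id}$, so its characteristic polynomial changes from $f_{\tilde\alpha}(x)$ to $f_{\tilde\alpha}(x-n)$, which is the same $\Z$-orbit in $U_1(\Z)$; and an isomorphism $(C,\alpha)\cong(C',\alpha')$ sends any lift of $\alpha$ to a lift of $\alpha'$ differing by an integer, so produces the same $\Z$-orbit.

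It then remains to show $\Psi\circ\Phi=\mathrm{id}$ and $\Phi\circ\Psi=\mathrm{id}$. For $\Psi\circ\Phi$: starting from $f\in U_1(\Z)$, the element $\overline x\in\Z[x]/(f(x))$ acts on the basis $\{1,\overline x,\overline x^2\}$ by the companion matrix of $f$, whose characteristic polynomial is $f$ itself; hence $f_{\overline x}=f$. For $\Phi\circ\Psi$: given $(C,\alpha)$, pick a lift $\tilde\alpha$ with $C=\Z[\tilde\alpha]$, so $\{1,\tilde\alpha,\tilde\alpha^2\}$ is a $\Z$-basis of $C$ and $\tilde\alpha^3$ is an integer combination of $1,\tilde\alpha,\tilde\alpha^2$; this relation is exactly $f_{\tilde\alpha}(\tilde\alpha)=0$ (Cayley--Hamilton, which reduces to a direct check in rank $3$). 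Therefore the ring map $\Z[x]/(f_{\tilde\alpha})\to C$ sending $\overline x\mapsto\tilde\alpha$ is a surjection between free $\Z$-modules of the same rank, hence an isomorphism carrying the monogenizer $\overline x$ to $\tilde\alpha$.

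No step is genuinely hard; the only point requiring a little care is the equivariance computation that matches translation $f(x)\mapsto f(x+r)$ on polynomials with the shift $\alpha\mapsto\alpha+n$ on monogenizers, which I would just spell out with signs as above. A small subtlety worth flagging is that one must use the characteristic polynomial rather than the minimal polynomial in defining $\Psi$, so that the output is automatically monic cubic with integer coefficients even when $\tilde\alpha$ is a priori only known to generate $C$ as a ring.
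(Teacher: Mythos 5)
Your proof is correct. The paper in fact states Proposition \ref{prop:monCParam}\,---\,more precisely, Proposition \ref{prop:monCparam}\,---\,without proof, treating it as a standard fact in the spirit of the Levi/Delone--Faddeev correspondence, so there is no in-paper argument to compare against; the route you take (inverse map via the characteristic polynomial of multiplication by a lift $\tilde\alpha$, the companion-matrix computation for $\Psi\circ\Phi=\mathrm{id}$, Cayley--Hamilton plus the fact that a surjection of free $\Z$-modules of equal rank is an isomorphism for $\Phi\circ\Psi=\mathrm{id}$) is exactly the standard one, and you correctly handle the only delicate point, namely matching the translation action $f(x)\mapsto f(x+r)$ on $U_1(\Z)$ with the identification of monogenizers modulo $\Z$-shifts.
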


Let $Q$ be a quartic ring and let $C$ be a monogenized cubic resolvent of $Q$. Then there exists a monic binary cubic form representing $C$. Hence, there exists a pair $(A,B)\in W(\Z)$, corresponding to $(Q,C)$, such that $\det(A)=1/4$. The set of integral symmetric $3\times 3$-matrices with determinant $1/4$ forms a single $\SL_3(\Z)$-orbit. Hence, the pair $(Q,C)$ can be represented by $(A_0,B)\in W(\Z)$, wher $A_0$ is an anti-diagonal $3\times 3$ matrix with coefficients $1/2$, $-1$, and $1/2$. Wood \cite{WoodThesis} considers the following embedding of $V(\Z)$ into $W(\Z)$:
\begin{equation}\label{eq:iota}
\iota:ax^4+bx^3y+cx^2y^2+dxy^3+ey^4\mapsto
\left[
\begin{pmatrix}
&&\frac12\\&-1&\\\frac12&&
\end{pmatrix},
\begin{pmatrix}
a&\frac{b}2&\\[.02in]
\frac{b}{2}&c&\frac{d}{2}\\
&\frac{d}{2}&e
\end{pmatrix}
\right].
\end{equation}
The special orthogonal group $\SO_{A_0}$ is naturally isomorphic to $\PGL_2$. Moreover, the map $\iota$ respects the two actions of $\PGL_2(\Z)$ on $V(\Z)$ and of $\SO_{A_0}(\Z)$ on the set of pairs $(A_0,B)$ in $W(\Z)$. Combining the construction of $\iota$ with Theorem \ref{th:QCparam} and Proposition \ref{prop:monCparam}, Wood proves the following result.

\begin{theorem}\label{th:bqfQC}
There is a natural bijection between the set of $\PGL_2(\Z)$-orbits on $V(\Z)$ and the set of isomorphism classes of pairs $(Q,C,\alpha)$, where $Q$ is a quartic ring and $(C,\alpha)$ is a monogenized cubic resolvent ring of $Q$.
\end{theorem}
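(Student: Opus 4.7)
The plan is to deduce Theorem \ref{th:bqfQC} by combining Theorem \ref{th:QCparam} with Proposition \ref{prop:monCparam}, and then tracking how the choice of the monogenizer $\alpha$ of the resolvent refines the $\GL_2(\Z)\times\SL_3(\Z)$-orbit on $W(\Z)$ attached to $(Q,C)$ down to a $\PGL_2(\Z)$-orbit on the slice $\iota(V(\Z))$. Given a triple $(Q,C,\alpha)$, Theorem \ref{th:QCparam} produces a pair $(A,B)\in W(\Z)$, well-defined up to $\GL_2(\Z)\times\SL_3(\Z)$, whose resolvent cubic $\Res(A,B)=4\det(Ax-By)$ represents $C$ via Proposition \ref{prop:DF}. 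Specifying a monogenizer $\alpha$ for $C$ refines $C$ to a monic cubic polynomial by Proposition \ref{prop:monCparam}; since the leading coefficient in $x$ of $\Res(A,B)$ is $4\det A$, this refinement is equivalent to demanding $\det A = 1/4$.

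The key algebraic input I would use is that the $\SL_3(\Z)$-action on integral symmetric $3\times 3$ matrices $A$ with $\det A=1/4$ is transitive; this is standard and can be verified directly using elementary row/column operations (computing $\det A_0=1/4$ for the antidiagonal matrix $A_0$ of \eqref{eq:iota} makes $A_0$ a natural base point). I can therefore use $\SL_3(\Z)$ to move $A$ to $A_0$, uniquely up to the stabilizer $\SO_{A_0}(\Z)$, which is canonically isomorphic to $\PGL_2(\Z)$. With $A=A_0$ fixed, the shift action $\alpha\mapsto\alpha+n$ from Proposition \ref{prop:monCparam} corresponds on the $W(\Z)$-side to an action $B\mapsto B+nA_0$ that affects only the middle diagonal entry of $B$; quotienting by this shift pins $B$ down to the five-parameter slice $\iota(V(\Z))$ of \eqref{eq:iota}. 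The resulting binary quartic form $\iota^{-1}(A_0,B)$ is then a complete invariant of the isomorphism class $(Q,C,\alpha)$ modulo $\PGL_2(\Z)$.

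The main technical step, and the one I expect to be the principal obstacle, is to verify that the canonical isomorphism $\SO_{A_0}\cong\PGL_2$ intertwines the ambient $\SO_{A_0}(\Z)$-action on sliced pairs $(A_0,B)\in W(\Z)$ with the twisted action \eqref{eq:PGL2action} of $\PGL_2(\Z)$ on $V(\Z)$, and that the residual $\GL_2(\Z)$-factor from Theorem \ref{th:QCparam} is absorbed into this diagonally embedded copy of $\PGL_2(\Z)$ (any scaling it induces on $A_0$ must be undone by $\SL_3(\Z)$). This is a direct but slightly delicate computation on generators. Once that equivariance is in place, the composition of Theorem \ref{th:QCparam} (passing from $(Q,C)$ to $W(\Z)$-orbits), the normalization to $A=A_0$ (encoding the extra datum $\alpha$), and the identification of the slice with $V(\Z)$ via $\iota$, yields the claimed natural bijection, and Theorem \ref{th:bqfQC} follows.
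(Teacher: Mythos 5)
Your outline is essentially the paper's own route---the paper itself only sketches these steps and defers the detailed verification to Wood's thesis \cite{WoodThesis}: produce a pair $(A,B)$ with $4\det(A)=1$ from $(Q,C,\alpha)$ via Theorem \ref{th:QCparam} and Proposition \ref{prop:monCparam}, normalize $A$ to $A_0$ using that integral symmetric matrices of determinant $1/4$ form a single $\SL_3(\Z)$-orbit, and let the residual group $\Stab_{\SL_3(\Z)}(A_0)=\SO_{A_0}(\Z)\cong\PGL_2(\Z)$ act on the slice through $\iota$, checking equivariance with the twisted action \eqref{eq:PGL2action}. One small correction: since $A_0$ is antidiagonal, the translation $B\mapsto B+nA_0$ changes the $(1,3)$, $(3,1)$, and $(2,2)$ entries of $B$, not only the middle diagonal entry, and it is precisely the induced shift of the $(1,3)$-entry that lets you move an arbitrary $(A_0,B)$ into the slice $\iota(V(\Z))$ (an action touching only $b_{22}$ could not reach $b_{13}=0$); with that detail fixed, your argument agrees with Wood's proof as cited.
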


\noindent Given a binary quartic form $f(x,y)$ in $V(\Z)$ or $V(\Z_p)$ for some prime $p$, we let $Q_f$ and $C_f$ denote the quartic ring and monogenic cubic ring associated to $f$. Note that $C_f$ only depends on the invariants $I$ and $J$ of $f$ since the coefficients of $\Res(\iota(f))$ are invariants for the action of $\PGL_2$ on $V$. We denote this cubic ring by $R_{IJ}$.

\section{Secondary terms in the count of integral binary quartic forms}\label{sec:slice}

In this section, we determine secondary terms for the counting function of $\PGL_2(\Z)$-orbits on generic elements of $V(\Z)$, where each orbit is weighted by $\phi$, a $\PGL_2(\Z)$-invariant periodic function on $V(\Z)$. First, in \S4.1, we set up notation, and introduce the averaging method from \cite{BS2Sel}. In \S4.2, we give bounds for the number of nongeneric $\PGL_2(\Z)$-orbits on integral binary quartic forms with bounded height and nonzero discriminants. A bound of $O(X^{3/4+\epsilon})$ was proved in \cite[Lemma 2.4]{BS2Sel}, but in order to recover a secondary term, we need to improve this. Then in \S4.3, we use the ``slicing method'' developed in \cite{MR3090184} to obtain primary and secondary terms for our counting functions and complete the proofs of Theorems \ref{Th:Shintani4} and \ref{th:mainCongCountFin} (modulo a volume computation, carried out in \S8).

\subsection{Preliminaries}

Recall that we say an element $f\in V(\Q)$ is {\it generic} if $f$ is irreducible over $\Q$ and the corresponding elliptic curve has trivial $2$-torsion. The latter condition is equivalent to the stabilizer of $f$ in $\PGL_2(\Q)$ being trivial. By Theorem \ref{thparame2e}, it follows that $f$ is generic if and only if both $f$ and the cubic resolvent polynomial of $f$ are irreducible.

We partition the set of elements in $V(\R)$ with nonzero discriminant into the following sets:
\begin{equation*}
V(\R)\backslash\{\Delta=0\}=
V(\R)^{(0)}\cup V(\R)^{(1)} \cup V(\R)^{(2)}=V(\R)^{(0)}\cup V(\R)^{(1)} \cup V(\R)^{(2+)} \cup V(\R)^{(2-)}.
\end{equation*}
Above, for $i\in\{0,1,2\}$, the set $V(\R)^{(i)}$ consists of elements $f\in V(\R)$ having $4-2i$ distinct real roots in $\P^1(\R)$ and $i$ pairs of distinct complex conjugate roots in $\P^1(\C)$. The set $V(\R)^{(2)}$ consists of definite binary quartic forms, and we further partition it into the set of positive definite binary quartics $V(\R)^{(2+)}$ and the set of negative definite binary quartics $V(\R)^{(2-)}$. For a set $S\subset V(\R)$ and $i\in\{0,1,2+,2-\}$, we define $S^{(i)}$ to be $S\cap V(\R)^{(i)}$.

For a $\PGL_2(\Z)$-invariant function $\phi:V(\Z)\to\R$, and $i\in\{0,1,2+,2-\}$, we define the counting function $N^{(i)}(\phi;X)$ to be
\begin{equation*}
N^{(i)}(\phi;X):=\sum_{\substack{f\in\PGL_2(\Z)\backslash V(\Z)^{(i),\gen}\\H(f)<X}}\phi(f).
\end{equation*}
We start with a slightly modified treatment of \cite[\S2.1,2.3]{BS2Sel}, and provide fundamental domains for the action of $\PGL_2(\Z)$ on $V(\R)$. The only difference is that we work with the action of $\PGL_2$ on $V$, while \cite[\S2]{BS2Sel} considers the action of $\GL_2$ on $V$. We fix $i\in\{0,1,2+,2-\}$, and note that $V(\R)^{(i)}$ contains only points with positive discriminant when $i\in\{0,2+,2-\}$ and only points with negative discriminant when $i=1$. Given any pair $(I,J)$ with $\Delta(I,J)>0$ (resp.\ $\Delta(I,J)<0$), the set of elements in $V(\R)^{(i)}$, for $i\in \{0,2+,2-\}$ (resp.\ $i=1$), having invariants $I$ and $J$ consists of a single $\PGL_2(\R)$-orbit.
Fundamental sets $L^{(i)}$ for the action of $\PGL_2(\R)$ on the set of elements in $V(\R)^{(i)}$ having height $1$ are given in \cite[Table 1]{BS2Sel}.
Let $R^{(i)}_\infty$ denote the set $\R_{>0}\cdot L^{(i)}$. Then the sets $R^{(i)}_\infty$ are fundamental sets for the action of $\PGL_2(\R)$ on $V(\R)^{(i)}$. Moreover $R^{(i)}_\infty$ contains exactly one element with invariants $I$ and $J$ with $\Delta(I,J)>0$ when $i\in\{0,2+,2-\}$ and exactly one element with invariants $I$ and $J$ with $\Delta(I,J)<0$ when $i=1$.

Let $\FF$ be Gauss' fundamental domain for the action of $\PGL_2(\Z)$ on $\PGL_2(\R)$. Explicitly, we write
$\FF=\bigl\{u(t)k:u\in N(t),\,(t)\in A',\,k\in K
\bigr\}$, where
\begin{equation*}
    N(t):=\left\{u:=\begin{pmatrix} 1&\\u&1
\end{pmatrix}:u\in {\rm i}(t)\right\},\quad
A':=\left\{(t):=\begin{pmatrix} t^{-1}&\\&t
\end{pmatrix}:t\geq\frac{\sqrt[4]{3}}{\sqrt{2}}\right\},\quad
K:=\SO_2(\R).
\end{equation*}
Above ${\rm i}(t)$ is a subset of $[-1/2,1/2]$ depending on $t$, and is all of $[-1/2,1/2]$ when $t\geq1$.
For elements $u\in N(t)$ and $(t)\in A'$, we denote the product $u(t)$ by $(u,t)$.

Let $G_0$ be a nonempty semialgebraic left $K$-invariant bounded open subset of $\PGL_2(\R)$. 
Denote the set of elements in $R^{(i)}_\infty$ (resp.\ $G_0\cdot R^{(i)}_\infty$) with height less than $X$ by $R^{(i)}_X$ (resp.\ $S^{(i)}_X$).
Then Bhargava's averaging method, developed in \cite{dodqf,dodpf} yields the following equality for every $\PGL_2(\Z)$-invariant function $\phi:V(\Z)\to\R$:
\begin{equation}\label{eq:avg}
\begin{array}{rcl}
\displaystyle N^{(i)}(\phi;X)&=&\displaystyle
\frac{1}{\sigma_i\Vol(G_0)}
\int_{\gamma\in\FF}\Bigl( \sum_{
f\in\gamma S^{(i)}_X\cap V(\Z)^\gen}
\phi(f)\Bigr)d\gamma
\\[.3in]&=&\displaystyle
\frac{1}{\sigma_i\Vol(G_0)}
\int_{t\geq \frac{\sqrt[4]{3}}{\sqrt{2}}}\int_{u\in N(t)}\Bigl( \sum_{
f\in(u,t) S^{(i)}_X\cap V(\Z)^\gen}
\phi(f)\Bigr)t^{-2}dud^\times t.
\end{array}
\end{equation}
Above, $\sigma_i=4$ when $i\in\{0,2+,2-\}$ and $\sigma_i=2$ when $i=1$, $d\gamma=t^{-2}dud^\times t dk$ is a Haar measure on $\PGL_2(\R)$ with $dk$ normalized so that the volume of $K$ is $1$, and the volume of $G_0$ is computed with respect to $d\gamma$. A version of \eqref{eq:avg}, using the group $\GL_2(\R)$ rather than $\PGL_2(\R)$, and where $\phi$ is taken to be $1$, is proved in \cite[Theorem 2.5]{BS2Sel}; the proof in our case is identical.

\subsection{Bounding the number of nongeneric elements}

In this subsection, we bound the number of $\PGL_2(\Z)$-orbits on nongeneric integral orbits on integral binary quartic forms having bounded height. By construction, we have $R^{(i)}_X=X^{1/6}R^{(i)}_1$.
Since the sets $L^{(i)}$ of \cite[Table 1]{BS2Sel} (and hence the sets $R^{(i)}_1$) are absolutely bounded, it follows that the absolute values of coefficients of the elements in $R^{(i)}_X$ are $\ll X^{1/6}$. Since $G_0$ is bounded, the same is true of the coefficients of elements in $S^{(i)}_X$. This means that when $t>CX^{1/24}$, for some sufficiently large $C$, every element in $(u,t)S^{(i)}_X\cap V(\Z)$ has $x^4$-coefficient in $(-1,1)$, and hence $x^4$-coefficient equal to $0$. Such an element is not generic implying that $(u,t)S^{(i)}_X\cap V(\Z)^\gen$ is empty. That is, every integral element $f(x,y)$ deep enough in the cusp is nongeneric. Moreover, it is nongeneric because $y$ is a factor of $f(x,y)$.

Let $a$, $b$, $c$, $d$, and $e$, denote the coeffients of elements in $V(\R)$. That is, for $f(x,y)\in V(R)$, the $x^4$-, $x^3y$-, $x^2y^2$-, $xy^3$-, and $y^4$-coefficients of $f(x,y)$ are denoted by $a(f)$, $b(f)$, $c(f)$, $d(f)$, and $e(f)$, respectively. Let $V(\Z)^\red$ denote the set of reducible elements in $V(\Z)$. In the next two lemmas, we bound the number of elements in $V(\Z)^\red$ that lie within the main body of the fundamental domain.

\begin{lemma}\label{lem:red0}
We have
\begin{equation*}
\int_{1\ll t\ll X^{1/24}}\int_{u\in [-1/2,1/2]}\# \bigl\{(u,t)S_X^{(i)}\cap V(\Z)^\red:a(f)\neq 0\bigr\}t^{-2}dud^\times t\ll_\epsilon X^{2/3+\epsilon}.
\end{equation*}
\end{lemma}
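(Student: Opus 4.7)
The plan is to bound, for each $(u,t)$ with $1 \ll t \ll X^{1/24}$, the number of integer reducible $f$ in the box $(u,t) S_X^{(i)}$ with $a(f) \neq 0$, and then integrate against $t^{-2}\, du\, d^{\times}t$. Since $L^{(i)}$ and $G_0$ are bounded and the twisted action reads $((u,t)\cdot f)(x,y) = f((x+uy)/t, ty)$, any $f \in (u,t) S_X^{(i)}$ satisfies $|a(f)| \ll X^{1/6} t^{-4}$, $|b(f)| \ll X^{1/6} t^{-2}$, $|c(f)| \ll X^{1/6}$, $|d(f)| \ll X^{1/6} t^{2}$, and $|e(f)| \ll X^{1/6} t^{4}$. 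Moreover, the roots in $\P^1(\C)$ of forms in $L^{(i)}$ are bounded in absolute value, and $(u,t)$ sends a finite root $\rho$ to $t^2 \rho - u$; hence every finite root of such $f$ satisfies $|\rho| \ll t^2$.

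An integer reducible $f$ with $a(f) \neq 0$ falls into one of two cases: either (A) $f$ has a rational linear factor $L = nx - my$ with $\gcd(m,n) = 1$ and $n \geq 1$, or (B) $f$ factors over $\Q$ as a product of two irreducible quadratics (and has no rational root). Each such $f$ is counted with bounded multiplicity by these parametrizations, so it suffices to bound each case separately.

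In Case A, writing the cofactor as $g = Ax^3 + Bx^2y + Cxy^2 + Dy^3$ yields $a = nA$, $b = nB - mA$, $c = nC - mB$, $d = nD - mC$, $e = -mD$. The rational root theorem forces $n \leq |a| \ll X^{1/6} t^{-4}$, and the root bound gives $|m/n| \ll t^2$, hence $|m| \ll n t^2$. For each such $(m,n)$, the number of integer cubics $(A,B,C,D)$ satisfying the four interval constraints on $(a,b,c,d)$ is bounded by the product of the interval lengths, namely $\ll X^{2/3} t^{-4}/n^4$. Summing over $|m| \ll n t^2$ contributes a factor $\ll n t^2$, and summing over $n \leq X^{1/6} t^{-4}$ (where $\sum n^{-3}$ converges) yields $\ll X^{2/3} t^{-2}$ such forms; the boundary case $m = 0$ (giving $x \mid f$, so $e = 0$) separately contributes $\ll X^{2/3} t^{-4}$. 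In Case B, writing $q_i = \alpha_i x^2 + \beta_i xy + \gamma_i y^2$ primitive, the same root bound gives $|\beta_i| \ll \alpha_i t^2$ and $|\gamma_i| \ll \alpha_i t^4$; combined with $\alpha_1 \alpha_2 \ll X^{1/6} t^{-4}$, a direct summation yields a Case B count of $\ll X^{1/2} \log X$, which is negligible.

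Integrating both estimates against $t^{-2}\, du\, d^{\times}t$ over $t \in [1, X^{1/24}]$ yields the desired bound $O(X^{2/3+\epsilon})$. The main technical obstacle is the careful bookkeeping in Case A: a priori the number of admissible $D$ is $\min(X^{1/6} t^2/n, X^{1/6} t^4/|m|)$, coming from the $|d|$- and $|e|$-constraints. It is precisely the root bound $|m| \ll n t^2$ that ensures the first term is the smaller (hence binding) one, which in turn makes the sum over $n$ converge uniformly in $t$.
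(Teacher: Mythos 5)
Your overall skeleton (splitting reducible forms with $a\neq 0$ into those with a rational linear factor and those that are a product of two irreducible quadratics, then counting lattice points in the coefficient boxes $|a|\ll X^{1/6}t^{-4},\dots,|e|\ll X^{1/6}t^4$) is the standard route, and is essentially what the proof the paper cites ([BS2Sel, Lemma 2.3]) does. However, there is a genuine gap at the step you yourself identify as the linchpin: the uniform root bound. You argue that roots of forms in $L^{(i)}$ are bounded and that $(u,t)$ sends a finite root $\rho$ to $t^2\rho-u$, and conclude that every finite root of $f\in(u,t)S_X^{(i)}$ is $\ll t^2$. This ignores the factor $G_0$: the set is $S_X^{(i)}=G_0\cdot R_X^{(i)}$, and elements of $G_0$ also move roots. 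Since $G_0$ is required to be left $K$-invariant and $K=\SO_2(\R)$ acts transitively on $\P^1(\R)$, for $i\in\{0,1\}$ the set $G_0\cdot R_1^{(i)}$ contains forms whose real roots are arbitrarily large or at infinity (this is precisely why forms with $a(f)=0$ occur in $(u,t)S_X^{(i)}$ at all, cf.\ Lemma \ref{lem:red1}). For an integral form in the box with $a\neq 0$, integrality only forces something like $|m/n|\ll X^{1/6}t^{-2}$ when $|a|$ is small, which vastly exceeds $t^2$ throughout the range $t\ll X^{1/24}$. So the bound $|m|\ll nt^2$ (and likewise $|\beta_i|\ll\alpha_i t^2$, $|\gamma_i|\ll\alpha_i t^4$ in your Case B) is unjustified and false in general, and it is doing real work: replacing it by the ranges actually forced by the coefficient boxes (e.g.\ $|m|\le|e|\ll X^{1/6}t^4$) inflates your Case A total to roughly $X^{5/6}$ per fiber, which is useless after integration.

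The argument can be repaired with the same box-counting philosophy but different bookkeeping, in which the range of $e$ (rather than a pointwise root bound) controls the number of admissible roots. For instance: fix $a,b,c,d$ in their boxes ($\ll X^{2/3}t^{-4}$ choices, with $a\neq0$); the denominator $n$ of a rational root divides $a$, giving $O_\epsilon(X^\epsilon)$ choices; for each numerator $m$ the value of $e$ is then determined, and the requirement $|e|\ll X^{1/6}t^4$ confines $m$ to $\ll 1+n\bigl(X^{1/6}t^4/|a|\bigr)^{1/4}$ values. Summing over $a,b,c,d,n$ gives $O_\epsilon\bigl(X^{2/3+\epsilon}(t^{-4}+t^{-2})\bigr)$ per $(u,t)$, which integrates against $t^{-2}\,du\,d^\times t$ to $O_\epsilon(X^{2/3+\epsilon})$; the quadratic-times-quadratic case is handled similarly. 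As written, though, your proof does not establish the lemma.
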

This follows immediately from the proof of \cite[Lemma 2.3]{BS2Sel}.

\begin{lemma}\label{lem:red1}
For a positive real number $\delta\leq 1/24$, we have
\begin{equation*}
\int_{t\gg 1}^{X^\delta}\int_{u\in [-1/2,1/2]}\# \bigl\{f(x,y)\in (u,t)S_X^{(i)}\cap V(\Z):a(f)=0\bigr\}t^{-2}dud^\times t\ll X^{2/3+2\delta}.
\end{equation*}
\end{lemma}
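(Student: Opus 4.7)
The plan is to reduce the integral to a box-counting problem: bound the number of integer lattice points in the slice $a(f) = 0$ of the transformed region $(u,t) S_X^{(i)}$, and then integrate against $t^{-2}du\,d^\times t$.

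First I would compute how the coefficients of $f = (u,t)\cdot f_0$ scale in terms of $X$ and $t$. Since $S_X^{(i)} = X^{1/6}\cdot S_1^{(i)}$ with $S_1^{(i)}$ absolutely bounded, every $f_0 \in S_X^{(i)}$ has all five coefficients of size $O(X^{1/6})$. Writing $(u,t)$ as the substitution $(x,y) \mapsto (t^{-1}(x+uy), ty)$ and expanding $f_0(t^{-1}(x+uy), ty)$, one reads off the coefficients of $f$ directly; since $|u| \leq 1/2$ is uniformly bounded, the dominant power of $t$ in each coefficient gives the standard cuspidal estimates
\begin{equation*}
|a(f)| \ll X^{1/6}t^{-4},\ |b(f)| \ll X^{1/6}t^{-2},\ |c(f)| \ll X^{1/6},\ |d(f)| \ll X^{1/6}t^{2},\ |e(f)| \ll X^{1/6}t^{4}.
\end{equation*}

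Next I would bound the integer count. Imposing $a = 0$, the remaining integer tuples $(b, c, d, e)$ lie in an axis-aligned box with side lengths as above, so their number is at most $\ll \prod_i(1 + \mathrm{side}_i)$. Because $\delta \leq 1/24$ forces $t \leq X^{1/24}$, we have $X^{1/6} t^{-2} \geq X^{1/12} \gg 1$ and similarly each of the other side lengths is $\gg 1$. Therefore this product simplifies to $\ll X^{2/3}\cdot t^{-2}\cdot t^{2}\cdot t^{4} = X^{2/3} t^4$.

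Finally, integrating against the measure gives
\begin{equation*}
\int_{1 \ll t \leq X^\delta}\int_{-1/2}^{1/2} X^{2/3} t^4 \cdot t^{-2}\,du\,d^\times t \;\ll\; X^{2/3}\int_1^{X^\delta} t\, dt \;\ll\; X^{2/3 + 2\delta},
\end{equation*}
which is exactly the claimed bound. The argument is essentially just a lattice-point-in-a-box computation and has no substantive obstacle; the only point requiring care is verifying that the $u$-shift, which mixes the coefficients, does not disturb the dominant $t$-scaling of each one, and this follows from the uniform bound $|u| \leq 1/2$.
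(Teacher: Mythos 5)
Your proposal is correct and follows essentially the same route as the paper: bound the coefficients of elements of $(u,t)S_X^{(i)}$ by $|a|\ll t^{-4}X^{1/6}$, $|b|\ll t^{-2}X^{1/6}$, $|c|\ll X^{1/6}$, $|d|\ll t^{2}X^{1/6}$, $|e|\ll t^{4}X^{1/6}$, count lattice points in the box with $a=0$ to get $\ll t^4X^{2/3}$, and integrate against $t^{-2}\,du\,d^\times t$ to obtain $X^{2/3+2\delta}$. Your explicit check (using $\delta\le 1/24$) that each remaining side length is $\gg 1$ is only implicit in the paper, but it is exactly the right justification.
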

\begin{proof}
For $u\in[-1/2,1/2]$ and $1\ll t<X^{\delta}$
we have
\begin{equation*}
\# \bigl\{f\in (u,t)S_X^{(i)}\cap V(\Z):a(f) = 0\bigr\}\ll t^4X^{4/6},
\end{equation*}
since $(u,t) S_X^{(i)}$ is contained within the set of elements $f(x,y)=ax^4+bx^3y+cx^2y^2+dxy^3+ey^4$ with $|a|\ll t^{-4}X^{1/6}$, $|b|\ll t^{-2}X^{1/6}$, $|c|\ll X^{1/6}$, $|d|\ll t^{2}X^{1/6}$, and $|e|\ll t^{4}X^{1/6}$. Thus
\begin{equation*}
\begin{array}{rcl}
\displaystyle\int_{t\gg 1}^{X^\delta}\int_{u\in [-1/2,1/2]}\# \bigl\{f\in (u,t)S_X^{(i)}\cap V(\Z):a(f)=0\bigr\}t^{-2}dud^\times t
\ll
X^{4/6}\int_{t\gg 1}^{X^\delta} t^{2} d^\times t
\ll
X^{2/3+2\delta},
\end{array}
\end{equation*}
and the lemma follows.
\end{proof}

\noindent The next result, which follows immediately from the method of Lemma \ref{lem:red1}, is needed in the sequel.
\begin{lemma}\label{lem:b0bound}
We have
\begin{equation}
\int_{t\gg 1}\int_{u\in [-1/2,1/2]}\# \bigl\{f(x,y)\in (u,t)S_X^{(i)}\cap V(\Z)^\gen:b(f)=0\bigr\}t^{-2}dud^\times t\ll X^{2/3}.
\end{equation}
\end{lemma}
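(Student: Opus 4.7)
The plan is to adapt the box-counting argument from Lemma \ref{lem:red1}. The key initial observation is that on a generic form $f$, the conditions $a(f)=b(f)=0$ cannot both hold: such an $f$ would equal $cx^2y^2+dxy^3+ey^4=y^2(cx^2+dxy+ey^2)$ and hence be reducible, contradicting genericity. Consequently every form contributing to the sum has $a(f)\in\Z\setminus\{0\}$. Combined with the coefficient bound $|a(f)|\ll t^{-4}X^{1/6}$ for elements of $(u,t)S_X^{(i)}$ (computed as in Lemma \ref{lem:red1}), this forces the integrand to vanish outside the range $1\ll t\ll X^{1/24}$, since for $t$ larger than this threshold the allowed interval for the integer $a(f)$ has length less than one and contains no nonzero integer.

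For $t$ in the surviving range, the remaining coefficient bounds derived exactly as in Lemma \ref{lem:red1} give $|c(f)|\ll X^{1/6}$, $|d(f)|\ll t^2X^{1/6}$, and $|e(f)|\ll t^4X^{1/6}$. Counting integer tuples $(a(f),0,c(f),d(f),e(f))$ with $a(f)\neq 0$ inside this box yields
\[
\#\bigl\{f\in(u,t)S_X^{(i)}\cap V(\Z)^\gen : b(f)=0\bigr\}\ll (t^{-4}X^{1/6})\cdot X^{1/6}\cdot (t^2X^{1/6})\cdot (t^4X^{1/6})=t^2X^{2/3},
\]
where the four factors are the number of integers available for $a(f)$ (which must be nonzero), $c(f)$, $d(f)$, and $e(f)$, respectively. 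Integrating this estimate against the measure $t^{-2}\,du\,d^\times t$ over $u\in[-1/2,1/2]$ and $t\in[c,X^{1/24}]$ then produces the claimed bound of $\ll X^{2/3}$ (with any logarithmic factor arising from the $d^\times t$-integration absorbed into the $\ll$ notation).

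I anticipate no serious obstacle; the argument is essentially the same as the proof of Lemma \ref{lem:red1}, except that the coefficient forced to vanish is $b$ instead of $a$. The one substantive difference is that in Lemma \ref{lem:red1} the hypothesis $a(f)=0$ already implies reducibility (automatic from $y\mid f$), whereas here we must additionally invoke genericity to rule out the degenerate case $a(f)=0$. It is precisely this extra input that yields the cutoff at $t\approx X^{1/24}$ and hence the stronger exponent $X^{2/3}$ in place of the $X^{2/3+2\delta}$ of Lemma \ref{lem:red1}.
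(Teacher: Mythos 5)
Your argument is essentially the paper's: the authors justify this lemma only with the remark that it ``follows immediately from the method of Lemma \ref{lem:red1}'', and your write-up --- genericity forces $a(f)\neq 0$ (since $a=b=0$ makes $y^2$ a factor), hence the range $t\ll X^{1/24}$, followed by a box count in the remaining coefficients --- is exactly that method. One caveat: your final step does not literally give $\ll X^{2/3}$; the count $\ll t^2X^{2/3}$ multiplied by the measure $t^{-2}\,du\,d^\times t$ is constant in $t$, so the $d^\times t$-integral over $1\ll t\ll X^{1/24}$ produces a factor $\asymp \log X$, and a logarithm cannot be ``absorbed into the $\ll$ notation'' as you claim. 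This imprecision is shared by the paper's own one-line proof (the same box count yields $X^{2/3}\log X$), and it is harmless where the lemma is applied (the cuspidal estimate in the proof of Theorem \ref{thm:NphiXM} only requires $O_\epsilon(X^{2/3+\epsilon})$), but your closing assertion overstates what the computation delivers.
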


Suppose an element $f\in V(\Z)$ is irreducible but nongeneric. Then its cubic resolvent polynomial must be reducible. To bound the number of such elements in the fundamental domain, we need the following result which follows from work of Nakagawa (see \cite[Theorem 1]{MR1342021}).
\begin{proposition}\label{prop:Nak}
Let $L$ be a quartic \'etale algebra over $\Q$ with ring of integers $\O_L$. Then the number $N(\O_L,k)$ of suborders of $\O_L$ having index $k$ is bounded by 
\begin{equation}
N(\O_L,k)\ll_\epsilon
k^\epsilon\prod_{\substack{p^2\nmid\Disc(\O_L)\\p^3||k}}p
\prod_{\substack{p^2\nmid\Disc(\O_L)\\p^e||k,\ e\geq 4}}p^{\lfloor e/2\rfloor}
\prod_{\substack{p^2\mid\Disc(\O_L)\\p^e||k,\ e\geq 2}}p^{\lfloor e/2\rfloor}
\end{equation}
In particular, we have $N(\O_L,k)\ll_\epsilon k^\epsilon N(k)$
where $N(k):= \prod_{p^e\parallel k}p^{\lfloor e/2\rfloor}$, a bound that is independent of $L$.
\end{proposition}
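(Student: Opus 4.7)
The plan is to localize the problem at each prime and then obtain the bound by an explicit classification of $\Z_p$-suborders in each of the possible local types of $L \otimes \Q_p$.

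First I will establish multiplicativity. Applying the Chinese Remainder Theorem to the finite quotient $\O_L/\O$, a suborder $\O \subset \O_L$ of index $k$ corresponds bijectively to a compatible collection of $\Z_p$-suborders $\O_p \subset \O_L \otimes \Z_p$ of index $p^{v_p(k)}$, one for each $p \mid k$. Consequently $N(\O_L, k) = \prod_p N_p(\O_L, p^{v_p(k)})$, where $N_p$ denotes the local count. Since the bound asserted in the proposition is already a product over primes, the problem reduces to bounding $N_p(\O_L, p^e)$ for each $p$.

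Next I will use the local classification: write $L \otimes \Q_p = \prod_i K_i$ as a product of étale extensions of $\Q_p$ of total degree $4$, so $\O_L \otimes \Z_p = \prod_i \O_{K_i}$. The hypothesis $p^2 \nmid \Disc(\O_L)$ forces all but at most one of the $K_i$ to be unramified, with the possibly ramified factor being a tamely ramified quadratic (ramification index $2$, residue degree $1$), since $v_p(\Disc(K_i)) = f_i(e_i - 1)$ in the tame case. This leaves only finitely many local étale types to handle. For each type I will enumerate $\Z_p$-suborders $\O$ of index $p^e$ via the conductor ideal $\mathfrak{c} = \{x \in \O_L \otimes \Z_p : x \cdot (\O_L \otimes \Z_p) \subset \O\}$, which is an $(\O_L \otimes \Z_p)$-ideal contained in $\O$. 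The suborder $\O$ is determined by the pair $(\mathfrak{c}, \O/\mathfrak{c})$, where $\O/\mathfrak{c}$ is a $\Z_p$-subalgebra of the Artinian quotient $(\O_L \otimes \Z_p)/\mathfrak{c}$ containing the image of $\Z_p$. Both the number of $(\O_L \otimes \Z_p)$-ideals of a given index and the number of $\Z_p$-subalgebras of a small Artinian quotient admit direct case-by-case enumeration in terms of partitions of exponents among the factors $K_i$, and assembling these gives $N_p(\O_L, p^e) \ll_\epsilon p^{\lfloor e/2 \rfloor + e\epsilon}$, with the finer refinement claimed at primes where $p^2 \nmid \Disc(\O_L)$.

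The main obstacle will be the careful bookkeeping needed to extract the sharp exponents, particularly for the low-index cases $e \leq 3$ where the generic conductor bound is loose, and in separating the two regimes $p^2 \mid \Disc(\O_L)$ and $p^2 \nmid \Disc(\O_L)$ with their slightly different exponents (the ``gap'' at $e \in \{1,2\}$ when $p^2 \nmid \Disc(\O_L)$ reflects that no nontrivial ideal-plus-subalgebra count arises until $e$ is large enough). The uniform ``In particular'' bound $N(\O_L, k) \ll_\epsilon k^\epsilon N(k)$ then follows immediately by replacing every local factor by its worst-case value $p^{\lfloor e/2 \rfloor}$, with the absorbed polynomial prefactors collapsing into the $k^\epsilon$ slack.
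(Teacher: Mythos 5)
Your reduction to a prime-by-prime count is fine, and the conductor parametrization $(\mathfrak{c},\O/\mathfrak{c})$ is a legitimate way to organize the local count; but the proposal stops exactly where the content of the proposition begins. The paper does not prove this statement by such an enumeration at all: it quotes Nakagawa's Theorem 1 (reference [MR1342021]), which computes the local zeta function $\sum_{\O\subseteq \O_L\otimes\Z_p}[\O_L\otimes\Z_p:\O]^{-s}$ explicitly for every quartic \'etale algebra, and the displayed bound (the $O(1)$ behaviour for $e\le 2$ and the $p^{\lfloor e/2\rfloor}$ behaviour in general, uniformly in $p$ and in the local algebra) is read off from that explicit formula. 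Your plan, by contrast, asserts that the number of $\Z_p$-subalgebras of the Artinian quotients $(\O_L\otimes\Z_p)/\mathfrak{c}$ "admits direct case-by-case enumeration in terms of partitions of exponents among the factors $K_i$," and that assembling these yields $N_p(\O_L,p^e)\ll_\epsilon p^{\lfloor e/2\rfloor+\epsilon e}$. That assertion is the entire theorem, and as stated it is not correct: subalgebras of such quotients are not classified by partitions of exponents. Already for $\O_L\otimes\Z_p=\Z_p^4$ and index $p^2$ one finds, besides the seven "partition" orders pulled back from subalgebras of $\F_p^4$, orders such as $\{(a,b,c,d):a\equiv b\ (\mathrm{mod}\ p^2)\}$, and for larger $e$ the count genuinely grows like $p^{\lfloor e/2\rfloor}$ through families that no soft partition bookkeeping produces. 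Establishing that exponent (and the sharper claims $O(1)$ for $e\le2$, $O(p)$ for $e=3$ at primes with $p^2\nmid\Disc(\O_L)$) is a substantial computation — essentially the quartic subring zeta function — and your outline gives no mechanism for it.

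A second uniformity point is also skipped: your finite classification of local types is carried out only under $p^2\nmid\Disc(\O_L)$ (unramified, or one tame quadratic factor). For the third product, where $p^2\mid\Disc(\O_L)$, the local algebra ranges over all \'etale quartic $\Q_p$-algebras, including wildly ramified ones at $p=2,3$ and factors with large discriminant, and the bound must be uniform in $p$ and in the algebra; "the finer refinement claimed" is not argued there at all. So either carry out the local enumeration in full (in effect reproving the relevant part of Nakagawa's memoir, with the conductor-plus-subalgebra device you describe), or do what the paper does and deduce the bound from Nakagawa's explicit local factors; as written, the proposal has a genuine gap at its quantitative core.
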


We have the following lemma improving \cite[Lemma 2.4]{BS2Sel}.
\begin{lemma}\label{lem:red2}
The number of $\PGL_2(\Z)$-orbits $f$ on $V(\Z)$ with $\Delta(f)\neq 0$ and $H(f)<X$ such that the stabilizer of $f$ in $\PGL_2(\Q)$ is nontrivial is bounded by $O_\epsilon(X^{2/3+\epsilon})$.
\end{lemma}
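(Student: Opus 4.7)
The plan is to translate the stabilizer condition into a constraint on the invariants and then bound orbits in two stages. By Theorem \ref{thparame2e}, the stabilizer of $f\in V(\Z)$ (with $\Delta(f)\neq 0$ and invariants $(I,J)$) in $\PGL_2(\Q)$ is isomorphic to $E^{IJ}(\Q)[2]$, which is nontrivial exactly when the cubic $x^3-(I/3)x-J/27$ has a rational root. Clearing denominators to $27x^3-9Ix-J\in\Z[x]$ and invoking the rational root theorem, I would show that any such root must be of the form $s/3$ for some $s\in\Z$, giving the equivalent constraint $J=s^3-3Is$.

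First, I would count the admissible pairs $(I,J)\in\Z^2$. The height bound $H(I,J)<X$ yields $|I|\leq X^{1/3}$ and $|s^3-3Is|\leq 2X^{1/2}$; for $s\neq 0$, the latter localizes $I$ in an interval of length $O(\min(X^{1/3},X^{1/2}/|s|))$ about $s^2/3$, and the requirement that this interval meet $[-X^{1/3},X^{1/3}]$ forces $|s|=O(X^{1/6})$. Summing over $s$ (with $s=0$ contributing $O(X^{1/3})$ from the $J=0$ locus) yields $O(X^{1/2})$ admissible pairs. Next, for each such pair, I would bound the number of $\PGL_2(\Z)$-orbits on $V(\Z)$ of invariants $(I,J)$ by $O_\epsilon(X^{1/6+\epsilon})$. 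By Theorem \ref{th:bqfQC}, these orbits correspond to isomorphism classes of quartic rings $Q$ whose monogenized cubic resolvent is $R_{IJ}$; the reducibility $R_{IJ}\otimes\Q\cong\Q\times K$ forces $Q\otimes\Q$ to contain the quadratic \'etale algebra $K$. Combining Proposition \ref{prop:Nak} (which bounds the suborders of fixed index in each admissible quartic \'etale algebra $L\supset K$) with the standard bound $|\Cl(K)[2]|\ll_\epsilon X^\epsilon$ should produce the desired $O_\epsilon(X^{1/6+\epsilon})$ estimate. Multiplying the two bounds gives the total count $O_\epsilon(X^{2/3+\epsilon})$.

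The hard part will be obtaining the second-stage bound uniformly: the cubic resolvent $R_{IJ}$ is typically non-maximal, with index in its maximal order possibly as large as $X^{1/2}$, so Proposition \ref{prop:Nak} must be combined with a careful classification of the possible quadratic algebras $K$ and the quartic \'etale algebras $L\supset K$ satisfying $\Disc(\O_L)\mid \Delta(I,J)$. Imposing the further condition that the cubic resolvent of $Q$ equals the prescribed $R_{IJ}$ (and not merely shares its discriminant) should constrain the count enough to recover the uniform $X^{1/6+\epsilon}$ bound needed to close the argument.
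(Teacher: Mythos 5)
Your first stage is fine and matches the paper: the nontrivial-stabilizer condition is equivalent (via Theorem \ref{thparame2e}) to the resolvent cubic having a rational root, and the count of such invariant pairs $(I,J)$ with $H(I,J)<X$ is $O_\epsilon(X^{1/2+\epsilon})$; the paper gets the same bound by counting reducible traceless monic cubics via \cite[Lemma 4.3]{BSHMC}. The problem is your second stage. The uniform per-pair bound $O_\epsilon(X^{1/6+\epsilon})$ on the number of $\PGL_2(\Z)$-orbits with fixed invariants $(I,J)$ is exactly the step you defer to ``the hard part,'' and the tools you cite do not give it. Fixing $(I,J)$ and one of the $O_\epsilon(X^\epsilon)$ quartic \'etale algebras $L$, the orbits inject into suborders $Q\subset\O_L$ of index $k=(|\Delta(I,J)|/|\Disc(L)|)^{1/2}$, and Proposition \ref{prop:Nak} gives only $\ll_\epsilon k^{\epsilon}N(k)\leq k^{1/2+\epsilon}$; since $k$ can be as large as $X^{1/2}$ (when $\Disc(L)$ is bounded), this is $X^{1/4+\epsilon}$ per pair, and multiplying by $X^{1/2}$ pairs recovers only the old bound $O_\epsilon(X^{3/4+\epsilon})$ of \cite[Lemma 2.4]{BS2Sel} that this lemma is meant to improve. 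Your hope that demanding the cubic resolvent \emph{ring} equal $R_{IJ}$ (rather than merely matching discriminants) saves a uniform factor $X^{1/12}$ is unsubstantiated; no argument is offered, and it is not clear such a uniform maximum bound is even true. As a side point, your structural claim that reducibility of the resolvent forces $Q\otimes\Q$ to contain the quadratic algebra $K$ is false: for $x^4-2$ the resolvent cubic is $x(x^2+8)$, so $K=\Q(\sqrt{-2})$, while the quartic field contains only $\Q(\sqrt{2})$. The paper needs no such containment, since it invokes Baily's theorem bounding the number of quartic algebras with a \emph{given} cubic resolvent algebra by $\ll_\epsilon|\Cl(K)[2]|\,|\Delta(K)|^\epsilon$.

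The way the paper closes the gap is precisely by \emph{not} seeking a uniform per-pair bound. It splits according to the size of $|\Disc(L)|$ with a parameter $\delta$: if $|\Disc(L)|\geq X^{\delta}$, then $k\leq X^{(1-\delta)/2}$, so Nakagawa gives $X^{(1-\delta)/4+\epsilon}$ per pair and $X^{3/4-\delta/4+\epsilon}$ in total; if $|\Disc(L)|<X^{\delta}$, it abandons the fibration over $(I,J)$ altogether, notes there are only $O(X^{\delta}\log X)$ such algebras, counts all their suborders of discriminant $<X$ (giving $\ll_\epsilon (X/D)^{1/2+\epsilon}$ each), and uses that a quartic order arises from boundedly many orbits, for a total of $X^{1/2+\delta/2+\epsilon}$. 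Optimizing at $\delta=1/3$ yields $O_\epsilon(X^{2/3+\epsilon})$. If you want to salvage your two-stage structure, you would need to incorporate some such trade-off between the number of admissible pairs attached to a given $L$ and the size of the index $k$; as written, the argument does not close.
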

\begin{proof}
If the stabilizer of $f(x,y)$ in $\PGL_2(\Q)$ is nontrivial, then the cubic resolvent polynomial $g(x,y)$ is reducible. Let $Q$ and $C$ denote the quartic ring and cubic ring corresponding to $f$ and $g$, respectively. Let $L=Q\otimes\Q$ and $K = C\otimes \Q$. It follows from a result of Baily (see \cite{MR0564533}) that the number of possible quartic $\Q$-algebras associated to a cubic algebra $K$ is $\ll_\epsilon |\Cl(K)[2]||\Delta(K)|^\epsilon$. Since $K$ is isomorphic to $\Q\times F$, for a quadratic field $F$, we have $|\Cl(K)[2]|\ll_\epsilon|\Delta(K)|^\epsilon$. Therefore, once a reducible integer cubic polynomial $g$ is fixed, the associated quartic algebra $L$ has $\ll_\epsilon |\Delta(g)|^\epsilon$ choices.

A quick application of \cite[Lemma 4.3]{BSHMC} implies that the number of reducible integral traceless monic cubic polynomials $g$ with $H(g)<X$ is $\ll_\epsilon X^{1/2+\epsilon}$. Fix such a polynomial $g$ as well as one of the $O_\epsilon(X^\epsilon)$ choices for the quartic \'etale algebra $L$. The set of $\PGL_2(\Z)$-orbits $f$ such that the cubic resolvent polynomial of $f$ is $g$ and the \'etale quartic algebra corresponding to $f$ is $L$ injects into the set of quartic suborders $Q$ of $L$ with $\Disc(Q)=\Delta(g)$. In particular, the index of $Q$ in $\O_L$, the ring of integers of $L$, is determined by the pair $(g,L)$, and is equal to $k=(|\Delta(g)|/|\Disc(L)|)^{1/2}$. The previous lemma now implies that the number of possible options for $Q$ given $(g,L)$ is $\ll_\epsilon k^\epsilon N(k)\leq k^{1/2+\epsilon}$.

Fix $\delta>0$ to be optimized later. We will use partition the relevant set of forms $f$ into two sets: those whose corresponding \'etale quartic algebra $L$ satisfies $|\Disc(L)|\geq X^\delta$, and those for which $|\Disc(L)|<X^\delta$. Putting together the previous observations, we see that the number of $\PGL_2(\Z)$-orbits $f$ on $V(\Z)$ with $\Delta(f)\neq 0$, $H(f)<X$, $\Stab_{\PGL_2(\Q)}(f)\neq 1$, and such that the \'etale quartic algebra $L$ corresponding to $f$ satisfies $|\Disc(L)|\geq X^{\delta}$ is $\ll_\epsilon X^{1/2+\epsilon}\cdot X^{(1-\delta)/4}= X^{3/4-\delta/4+\epsilon}$.

Meanwhile, the number of quartic \'etale algebras $L$ with $|\Disc(L)|<X^\delta$ is bounded by $O(X^\delta\log X)$. Each algebra with discriminant $D$ has $\ll_\epsilon (X/D)^{1/2+\epsilon}$ suborders with discriminant less than $X$ (again using Nakagawa's result). This gives a bound of $O_\epsilon(X^{1/2+\delta/2+\epsilon})$ on the number of possible quartic orders with discriminant less than $X$ and whose \'etale algebras have discriminant bounded by $X^\delta$. Since a quartic order $Q$ occurs for an absolutely bounded number of $\PGL_2(\Z)$-orbits of binary quartic forms (see \cite[Proposition 2.16]{BS2Sel}), we see that the number of $\PGL_2(\Z)$-orbits $f$ on $V(\Z)$ with $\Delta(f)\neq 0$, $H(f)<X$, and such that the \'etale quartic algebra $L$ corresponding to $f$ satisfies $|\Disc(L)|< X^{\delta}$ is $\ll_\epsilon X^{1/2+\delta/2+\epsilon}$.
Optimizing, we pick $\delta=1/3$, yielding the result.
\end{proof}

\subsection{Slicing and secondary terms}

For ease of notation, we let $Y$ denote $X^{1/6}$ throughout this subsection. Let $\phi:V(\Z)\to\R$ be a $\PGL_2(\Z)$-invariant function. From \eqref{eq:avg}, we have
\begin{equation*}
N^{(i)}(\phi,X)=\frac{1}{\sigma_i\Vol(G_0)}\cI(\phi,Y),
\end{equation*}
where $\cI(\phi,Y)$ is defined by
\begin{equation}\label{eq:tcIdef}
\cI(\phi,Y):=\int_{t\geq \frac{\sqrt[4]{3}}{\sqrt{2}}}\int_{u\in N(t)}\Bigl(\sum_{f\in Y\cdot (u,t)S^{(i)}\cap V(\Z)^\gen}\phi(f)\Bigr) du\frac{d^\times t}{t^{2}},
\end{equation}
where we denote the bounded set $S_1^{(i)}$ by $S^{(i)}$. 
We break up the integral over $t$ above into the ``small $t$ range'' and the ``large $t$ range'' as follows.
Let $\psi_1:\R_{\geq 0}\to [0,1]$ be a smooth function such that $\psi(x)=1$ for $x\leq  2$ and $\psi_1(x)=0$ for $x\geq 3$, and denote $1-\psi_1$ by $\psi_2$. Let $0<\kappa<1/4$ be a real number to be optimized later. We define
\begin{equation}\label{eq:cIdef}
\begin{array}{rcl}
\displaystyle\cI^{(1)}(\phi,Y;\kappa)
&:=&\displaystyle
\int_{t\geq \frac{\sqrt[4]{3}}{\sqrt{2}}}\int_{u\in N(t)}\psi_1\Big(\frac{t}{Y^\kappa}\Big)\Bigl(\sum_{f\in Y\cdot (u,t)S^{(i)}\cap V(\Z)}\phi(f)\Bigr) du\frac{d^\times t}{t^{2}},
\\[.15in]
\displaystyle\cI^{(2)}(\phi,Y;\kappa)
&:=&\displaystyle
\int_{t\geq \frac{\sqrt[4]{3}}{\sqrt{2}}}\int_{u\in N(t)}\psi_2\Big(\frac{t}{Y^\kappa}\Big)\Bigl(\sum_{f\in Y\cdot (u,t)S^{(i)}\cap V(\Z):a(f)\neq 0}\phi(f)\Bigr) du\frac{d^\times t}{t^{2}},
\end{array}
\end{equation}
and note that Lemmas \ref{lem:red0}, \ref{lem:red1}, and \ref{lem:red2} imply the estimate
\begin{equation}\label{eq:II12}
\cI(\phi,Y)=
\cI^{(1)}(\phi,Y;\kappa)+\cI^{(2)}(\phi,Y;\kappa) +O_\epsilon(X^{2/3+\kappa/3}+X^{2/3+\epsilon}).
\end{equation}
We use the next result of Davenport to estimate the number of lattice points in bounded regions.
\begin{proposition}[\cite{MR43821}] \label{prop-davenport}
Let $\RR$ be a bounded, semi-algebraic multiset in $\R^n$ having maximum multiplicity $m$ that is defined by at most $k$ polynomial inequalities, each having degree at most $\ell$. Let $\RR'$ denote the image of $\RR$ under any $($upper or lower$)$ triangular, unipotent transformation of $\R^n$. Then the number of lattice points $($counted with multiplicity$)$ contained in the region $\RR'$ is given by
$$\Vol(\RR) + O (\max\{\overline{\Vol}(\RR)\},1\}),$$
where $\overline{\Vol}(\RR)$ denotes the greatest $d$-dimensional volume of any projection of $\RR$ onto a coordinate subspace obtained by equating $n-d$ coordinates to zero, where $d$ ranges over all values in $\{1, \dots, n-1\}$. The implied constant in the second summand depends only on $n$, $m$, $k$, and $\ell$.
\end{proposition}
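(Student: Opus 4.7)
The plan is to prove the statement by induction on the ambient dimension $n$, reducing to a one-dimensional lattice-point count via slicing along the direction fixed by the unipotent transformation. The base case $n=1$ is essentially trivial: every unipotent triangular automorphism of $\R$ is the identity, and a bounded semi-algebraic subset of $\R$ is a disjoint union of a bounded number of intervals (the bound depending only on $k$ and $\ell$), each of which contains its length plus $O(1)$ integers, yielding $\Vol(\RR) + O(1)$ as required.

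For the inductive step, without loss of generality suppose that the triangular unipotent transformation $T$ is upper triangular, so that $T$ fixes the first coordinate $x_1$ pointwise (the lower triangular case is symmetric under relabelling). Slice $\RR' = T\RR$ by the hyperplanes $\{x_1 = k\}$ for $k \in \Z$: since $T$ fixes $x_1$, the slice $\RR'_k := \RR' \cap \{x_1 = k\}$ is the image of $\RR_k := \RR \cap \{x_1 = k\}$ under the $(n-1)$-dimensional unipotent triangular transformation obtained by restricting $T$ to the last $n-1$ coordinates. Each $\RR_k$ is a bounded semi-algebraic subset of $\R^{n-1}$, defined by at most $k+1$ polynomial inequalities (adding $x_1 = k$) of degree at most $\ell$, with multiplicity at most $m$; thus the inductive hypothesis applies uniformly in $k$ and yields
\begin{equation*}
\#\bigl(\RR'_k \cap \Z^{n-1}\bigr) = \Vol_{n-1}(\RR_k) + O\bigl(\max\{\overline{\Vol}(\RR_k), 1\}\bigr).
\end{equation*}

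Now sum over the finitely many integers $k$ for which $\RR_k$ is nonempty; since $\RR$ is bounded, the number of such $k$ is $O(1)$ times the $x_1$-width of $\RR$. The total main term $\sum_k \Vol_{n-1}(\RR_k)$ is a Riemann sum for the Fubini integral $\int \Vol_{n-1}(\RR_{x_1})\,dx_1 = \Vol(\RR) = \Vol(\RR')$, and the Riemann sum error is controlled by the maximum value of $x_1 \mapsto \Vol_{n-1}(\RR_{x_1})$ times the number of jumps in this step function. Because $\RR$ is semi-algebraic of bounded complexity, this step function has $O(1)$ jumps and its maximum value is bounded by $\overline{\Vol}(\RR)$, the $(n-1)$-dimensional volume of the projection of $\RR$ onto $\{x_1 = 0\}$. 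The error terms assemble similarly: each $\overline{\Vol}(\RR_k)$ is the volume of a projection of a slice of $\RR$, and summing over $k$ absorbs one further dimension, yielding $\sum_k \overline{\Vol}(\RR_k) \ll \overline{\Vol}(\RR)$. Unipotent triangular transformations being volume-preserving and mapping each relevant coordinate projection to (a translate of) itself upon quotienting, we may translate the bound back to $\RR'$.

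The main obstacle is ensuring that the semi-algebraic complexity, the volume-preservation, and the bound on the number of Riemann-sum jumps all propagate cleanly through the induction with constants depending only on $(n,m,k,\ell)$. This requires two careful bookkeeping steps: first, verifying that slicing by $x_1 = k$ does not blow up the number or degree of defining polynomials in a way that depends on $k$; second, using the triangularity of $T$ crucially, so that projections of $\RR'$ onto coordinate subspaces obtained by zeroing out a consecutive initial or final set of coordinates match, up to volume, the corresponding projections of $\RR$. Once these are established, the inductive step goes through and the proposition follows.
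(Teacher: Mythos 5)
The paper does not actually prove this proposition: it is Davenport's lattice-point lemma, quoted from \cite{MR43821} together with the standard remark that Davenport's argument extends to bounded semialgebraic multisets and to images under unipotent triangular maps. So the right benchmark is Davenport's classical inductive argument, which your slicing strategy is clearly modelled on; however, two steps in your write-up do not hold as stated. First, the slice map: if $T$ is triangular unipotent and fixes the coordinate $x_1$ (say), then its restriction to the hyperplane $\{x_1=k\}$ is \emph{not} a unipotent triangular transformation of $\R^{n-1}$; it is such a transformation composed with a translation by a vector whose entries are $k$ times the relevant off-diagonal entries of $T$, and this vector is in general not integral. Hence the inductive hypothesis, as you have formulated it, does not apply to the slices $\RR'_k$, and the induction does not close. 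This is repairable (strengthen the statement under induction to allow images under a unipotent triangular map composed with an arbitrary real translation; the error bound refers only to projections of the untransformed $\RR$, so nothing else changes), but it must be said.

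The more serious gap is the sum-versus-integral step, which is the real heart of the lemma. The function $t\mapsto\Vol_{n-1}(\RR_t)$ is not a step function, and the property you actually need --- that it is piecewise monotone with a number of pieces bounded only in terms of $n,k,\ell$, so that the Riemann-sum error is $\ll\sup_t\Vol_{n-1}(\RR_t)\le\overline{\Vol}(\RR)$ --- does not follow from ``$\RR$ is semialgebraic of bounded complexity'' in the way you assert: the volume of a hyperplane slice of a semialgebraic set is in general not a semialgebraic function of the slicing parameter (already in $\R^3$ it can involve logarithms), so cell-decomposition bounds for $\RR$ do not directly bound the number of monotone pieces; the same unproved fact is invoked again when you assemble the inductive errors via $\sum_k\overline{\Vol}(\RR_k)\ll\overline{\Vol}(\RR)$. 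Davenport's proof is structured precisely to avoid this: he fibres over lattice points of the $(n-1)$-dimensional coordinate projection into lines parallel to a coordinate axis, where the per-fibre error is controlled by the number of intervals in the fibre (a root count depending only on the number and degrees of the defining polynomials), and the number of relevant fibres is the number of lattice points of the projection, handled by the inductive hypothesis; the projection volumes then enter exactly as in the statement. To make your version work you would either adopt that fibration, or supply a genuine argument (e.g., uniform finiteness in an o-minimal structure in which fibrewise volumes of semialgebraic families are definable) for the uniformly bounded piecewise monotonicity of the slice-volume function; without one of these, the proposal has a gap at its central step.
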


For the rest of this subsection, we restrict our function $\phi$ to be defined via congruence classes modulo some positive integer $m$, and will keep track of the dependence of our error terms on $m$. We handle the cases of small and large $t$ separately.

\subsubsection*{The small $t$ case: estimates for $\cI^{(1)}(\phi,Y;\kappa)$}

Let $m$ be a positive integer, 
let $\overline{\phi}:V(\Z/m\Z)\to\R$ be a bounded (independent of $m$) function, and denote the lift of $\overline{\phi}$ to $V(\Z)$ by $\phi$. Recall the definitions of $\nu(\phi)$ and $\nu_a(\phi)$ from \eqref{eq:nuden}. We also define $\supp(\phi)$ to denote the size of the support of $\overline{\phi}$ in $V(\Z/m\Z)$. Then we have the following lemma.
\begin{lemma}\label{lem:ptcountMB}
Let $\phi:V(\Z)\to\R$ be an absolutely bounded function defined modulo a positive integer $m$. For $u\in[-1/2,1/2]$ and $t\gg 1$, we have
\begin{equation*}
\sum_{
f\in Y\cdot(u,t) S^{(i)}\cap V(\Z)}\phi(f)=
\nu(\phi)\Vol(S^{(i)})X^{5/6}+O\Bigl(\frac{\supp(\phi)t^4X^{2/3}}{m^4}+\frac{\supp(\phi)t^6X^{1/2}}{m^3}+\supp(\phi)t^6\Bigr).
\end{equation*}
\end{lemma}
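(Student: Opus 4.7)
The plan is to decompose the sum by residue classes modulo $m$ and apply Davenport's lattice-point estimate (Proposition~\ref{prop-davenport}) to each class. Writing $\overline\phi:V(\Z/m\Z)\to\R$ for the mod-$m$ reduction of $\phi$, one has
\begin{equation*}
\sum_{f\in Y\cdot(u,t) S^{(i)}\cap V(\Z)}\phi(f)=\sum_{v\in V(\Z/m\Z)}\overline\phi(v)\cdot N_v,
\end{equation*}
where $N_v$ counts $f\in Y\cdot(u,t)S^{(i)}\cap V(\Z)$ with $f\equiv v\pmod m$. The substitution $g=(f-v)/m$ identifies $N_v$ with the number of $\Z^5$-points in $u\cdot\RR_v$, for $\RR_v:=\tfrac1m\bigl(Y\cdot(t)S^{(i)}-u^{-1}v\bigr)$. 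A direct computation (expand $f(x+uy,y)$) shows that $u\in N(t)$ acts on $V(\R)\cong\R^5$ as a lower-triangular unipotent transformation, so Proposition~\ref{prop-davenport} applies to each $N_v$ with an implied constant independent of $u$, $t$, $v$, since the semi-algebraic complexity of $(t)S^{(i)}$ is uniformly bounded.

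Next I would compute the volumes involved. The diagonal element $(t)$ scales the coefficients $a,b,c,d,e$ of a binary quartic by $t^{-4},t^{-2},1,t^2,t^4$, so $Y\cdot(t)S^{(i)}$ sits in a box of those side lengths multiplied by $Y$. Consequently $\Vol(\RR_v)=Y^5\Vol(S^{(i)})/m^5$, and the largest coordinate-subspace projections of $\RR_v$ have volumes $Y^4t^4/m^4$ (4-d, omitting the $a$-coordinate), $Y^3t^6/m^3$ (3-d, keeping $c,d,e$), $Y^2t^6/m^2$ (2-d, keeping $d,e$), and $Yt^4/m$ (1-d, keeping $e$). Proposition~\ref{prop-davenport} then yields
\begin{equation*}
N_v=\frac{Y^5\Vol(S^{(i)})}{m^5}+O\Bigl(\max\Bigl\{\tfrac{Y^4t^4}{m^4},\tfrac{Y^3t^6}{m^3},\tfrac{Y^2t^6}{m^2},\tfrac{Yt^4}{m},1\Bigr\}\Bigr),
\end{equation*}
uniformly in $u,t,v,X,m$.

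Summing this weighted by $\overline\phi(v)$, the main terms contribute $Y^5\Vol(S^{(i)})\cdot m^{-5}\sum_v\overline\phi(v)=\nu(\phi)\Vol(S^{(i)})X^{5/6}$, exactly the primary term claimed. Only the $\supp(\phi)$ residues in the support of $\overline\phi$ contribute to the error, so the total error is $\supp(\phi)$ times the displayed maximum. The remaining step is to absorb the intermediate-dimensional projections: a short case analysis in $Y\ge m$ versus $Y<m$ gives $Y^2t^6/m^2\le\max(Y^3t^6/m^3,t^6)$ and $Yt^4/m\le\max(Y^4t^4/m^4,t^6)$, and $1\le t^6$ since $t\gg 1$. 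This reduces the error to $O\bigl(\supp(\phi)(t^4Y^4/m^4+t^6Y^3/m^3+t^6)\bigr)$, which, on substituting $Y=X^{1/6}$, is the bound in the lemma. I expect no real obstacle; the heart of the argument is a single application of Davenport's theorem per residue class, and the only slightly delicate part is the elementary absorption step just described.
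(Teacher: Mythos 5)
Your proof is correct and follows essentially the same route as the paper: partition into the $\supp(\phi)$ residue classes modulo $m$, apply Davenport's Proposition \ref{prop-davenport} to the scaled translate $(Y\cdot(u,t)S^{(i)}-v)/m$ using the coefficient bounds $t^{-4}Y,\dots,t^{4}Y$, and absorb the lower-dimensional projection terms using $t\gg1$ before summing over classes. Your explicit treatment of the unipotent $u$ and the $Y\gtrless m$ absorption step just spells out details the paper leaves implicit.
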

\begin{proof}
Partition the support of $\phi$ in $V(\Z)$ into $\supp(\phi)$ translated lattices $L_v:=v+mV(\Z)$, where $v$ ranges over lifts of elements $\bar{v}$ in the support of $\overline{\phi}$ in $V(\Z/m\Z)$. The coefficients of an element $f\in Y\cdot(u,t) S^{(i)}$ satisfy 
\begin{equation*}
|a(f)|\ll t^{-4}Y;\quad
|b(f)|\ll t^{-2}Y;\quad
|c(f)|\ll Y;\quad
|d(f)|\ll t^{2}Y;\quad
|e(f)|\ll t^{4}Y.
\end{equation*}
Applying Proposition \ref{prop-davenport} to the set $(Y\cdot (u,t)S^{(i)}-v)/m$ yields
\begin{equation*}
\#\bigl(Y\cdot (u,t)S^{(i)}\cap L_v\bigr)=
\frac{\Vol(S^{(i)})Y^5}{m^{5}}+O\Bigl(\frac{t^4Y^4}{m^4}+\frac{t^6Y^3}{m^3}+\frac{t^6Y^2}{m^2}+\frac{t^4Y}{m}+1\Bigr).
\end{equation*}
Since we assume that $t\gg 1$, the first two summands in the above error term added to $t^6$ dominate the others. The lemma then follows by summing the above expression over $\bar{v}\in V(\Z/m\Z)$ that are in the support of $\overline{\phi}$.
\end{proof}

Estimating the inner sum of $f$ using Lemma \ref{lem:ptcountMB}, we obtain
\begin{equation}\label{eq:smallt}
\begin{array}{rcl}
\displaystyle\cI^{(1)}(\phi,Y;\kappa)&=&\displaystyle\nu(\phi)\Vol(S^{(i)})X^{5/6}\int_{t\geq \frac{\sqrt[4]{3}}{\sqrt{2}}}\int_{u\in N(t)}\psi_1\Bigl(\frac{t}{Y^{\kappa}}\Bigr)du\frac{d^\times t}{t^{2}}
\\[.2in]&&\displaystyle
+O\Bigl(\frac{\supp(\phi)X^{\frac23+\frac{\kappa}{3}}}{m^4}+\frac{\supp(\phi)X^{\frac12+\frac{2\kappa}{3}}}{m^3}+\supp(\phi)X^{\frac{2\kappa}{3}}\Bigr),
\end{array}
\end{equation}
where we recall that $Y=X^{1/6}$.

\subsubsection*{The large $t$ case: estimates for $\cI^{(2)}(\phi,Y;\kappa)$}

To evaluate the contribution from the large range of $t$, we fiber by the $x^4$-coefficients of integral binary quartic forms. That is, we write
\begin{equation}\label{eq:I2first}
\cI^{(2)}(\phi,Y;\kappa)=\displaystyle
\sum_{\substack{a\neq 0}}
\int_{t\geq \frac{\sqrt[4]{3}}{\sqrt{2}}}\int_{u\in N(t)}
\psi_2\Bigl(\frac{t}{Y^\kappa}\Bigr)\Bigl(\sum_{f\in Y\cdot (u,t)S^{(i)}\cap V_a(\Z)}\phi(f)\Bigr)du\frac{d^\times t}{t^2}.
\end{equation}
As before, we assume that $\phi$ is the lift of some function $\overline{\phi}:V(\Z/m\Z)\to\R$ which is absolutely bounded (independent of $m$). For any integer $a$ and any ring $R$, let $V_{a}(R)$ denote the set of elements in $V(R)$ whose $x^4$-coefficients are equal to $a$. For $a\in\Z$, recall that $\nu_a(\phi)$ denotes the density of $\phi$ restricted to $V_a(\Z)$, and for $a\in\Z$, let $\supp(\phi,a)$ denote the size of the support of $\overline{\phi}$ restricted to $V_a(\Z/m\Z)$.
For a set $T\subset V(\R)$, we let $T|_a$ denote $T\cap V_a(\R)$. Then
from a similar argument in the proof of Lemma \ref{lem:ptcountMB}, applying Proposition \ref{prop-davenport} yields the following result.
\begin{lemma}\label{lem:elemslct}
With notation as above, we have
\begin{equation*}
\sum_{f\in Y\cdot(u,t) S^{(i)}\cap V_a(\Z)}\phi(f)=\nu_a(\phi)t^4X^{2/3}\Vol\Bigl(S^{(i)}|_{t^4a/X^{1/6}}\Bigr)+O\Bigl(
\frac{\supp(\phi,a)t^6X^{1/2}}{m^3}+\supp(\phi,a)t^6\Bigr).
\end{equation*}
\end{lemma}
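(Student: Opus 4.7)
The plan is to mirror the proof of Lemma~\ref{lem:ptcountMB}, but applied to the four-dimensional slice $V_a(\R)\subset V(\R)$ of binary quartic forms with $x^4$-coefficient equal to $a$. First I would decompose the support of $\phi$ in $V_a(\Z)$ as a disjoint union of at most $\supp(\phi,a)$ translates of $mV_a(\Z)$, labelled by lifts $v\in V_a(\Z)$ of the support of $\overline{\phi}|_{V_a(\Z/m\Z)}$. Then I would apply Proposition~\ref{prop-davenport} to each rescaled four-dimensional region $\bigl(Y\cdot(u,t)S^{(i)}\cap V_a(\R)-v\bigr)/m\subset\R^4$ and sum over $v$.

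The bulk of the work lies in the Jacobian computation for the main term. A direct expansion of $f(x+uy,y)$ shows that the unipotent factor $u\in N(t)$ acts on $V(\R)$ as a lower-triangular linear transformation (in the ordered basis of coefficients) with ones on the diagonal, hence with determinant one and fixing the $x^4$-coefficient. The torus element $\diag(t^{-1},t)$ acts diagonally as $(a,b,c,d,e)\mapsto(at^{-4},bt^{-2},c,dt^2,et^4)$, again with determinant one, but scaling the $x^4$-direction by $t^{-4}$. Composing with the dilation by $Y$, the map $\Phi:f\mapsto Y\cdot(u,t)\cdot f$ has total determinant $Y^5$ and sends the hyperplane $\{a(f)=a'\}$ to the hyperplane $\{a(f)=Yt^{-4}a'\}$. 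Taking $a'=at^4/Y=at^4/X^{1/6}$ and dividing the global scaling $Y^5$ by the normal-direction scaling $Yt^{-4}$ yields a four-dimensional volume scaling of $Y^4t^4=t^4X^{2/3}$ on the slice, which gives the asserted main term $\nu_a(\phi)\,t^4X^{2/3}\Vol(S^{(i)}|_{t^4a/X^{1/6}})$.

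For the error term, the coefficient bounds $|b|\ll t^{-2}Y$, $|c|\ll Y$, $|d|\ll t^2Y$, $|e|\ll t^4Y$ on $Y\cdot(u,t)S^{(i)}$ imply that after dividing by $m$ the rescaled region has coordinate projections of volume at most $O(t^6Y^3/m^3)$ in three dimensions, $O(t^6Y^2/m^2)$ in two dimensions, and $O(t^4Y/m)$ in one dimension; each of these is bounded by $t^6Y^3/m^3+t^6$ in the regime $t\gg 1$. Proposition~\ref{prop-davenport} therefore yields an error of $O(t^6X^{1/2}/m^3+t^6)$ per translated lattice, and summing over the at most $\supp(\phi,a)$ translates gives the stated error.

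The only step requiring genuine care is the main-term Jacobian bookkeeping, where one must simultaneously track the unipotent action, the torus scaling, and the $Y$-dilation, and correctly read off the four-dimensional volume scaling transverse to the $x^4$-direction. Once this computation is in place, the error analysis is a direct application of Davenport in four variables and is entirely parallel to the proof of Lemma~\ref{lem:ptcountMB}.
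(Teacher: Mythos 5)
Your proposal is correct and matches the paper's argument, which simply applies the method of Lemma~\ref{lem:ptcountMB} to the four-dimensional slice $V_a$: the Jacobian bookkeeping (determinant $Y^5$ divided by the transverse scaling $Yt^{-4}$, giving $Y^4t^4=t^4X^{2/3}$ on the slice at $a(f)=t^4a/X^{1/6}$) and the Davenport error analysis with projections bounded via $|b|\ll t^{-2}Y$, $|c|\ll Y$, $|d|\ll t^2Y$, $|e|\ll t^4Y$ are exactly what the paper intends. No gaps.
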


We return to \eqref{eq:I2first}, and note that for $a\neq 0$, the set $(Y\cdot(u,t) S^{(i)})|_a$ is empty unless $t\ll (Y/|a|)^{1/4}$. Since
$\psi_2(t/Y^\kappa)=0$ unless $t\gg Y^\kappa$,
only $a\neq0$ with $|a|\ll Y^{1-4\kappa}$ contributes to \eqref{eq:I2first}.
We truncate the sum over $a$ and integral over $t$ in \eqref{eq:I2first} using these observations, and apply Lemma \ref{lem:elemslct} to estimate the sum over $f$. Integrating the resulting error term over $t,u$, and summing over $a$ gives an error that is $\ll$
\begin{equation*}
\bigl(\frac{X^{1/2}}{m^3}+1\bigr)\sum_{\substack{a\neq 0\\|a|\ll Y^{1-4\kappa}}}\supp(\phi,a)\int_{t\ll (Y/|a|)^{1/4}}t^4d^\times t\ll \Bigl(\frac{X^{2/3}}{m^3}+X^{1/6}\Bigr)\sum_{\substack{a\neq 0\\|a|\ll Y^{1-4\kappa}}}\frac{\supp(\phi,a)}{|a|}.
\end{equation*}
Meanwhile, the main term contribution to $\cI^{(2)}(\phi,Y;\kappa)$ is 
\begin{equation}\label{eq:SMT1}
\begin{array}{rcl}
&& \displaystyle X^{2/3}\sum_{a\neq 0}\nu_a(\phi)\int_{t\geq 1}
\psi_2\Bigl(\frac{t}{Y^\kappa}\Bigr)\Vol\Big(S^{(i)}|_{t^4a/Y}\Big)t^2d^\times t
\\[.2in]&=&\displaystyle
\frac{X^{3/4}}{4}\sum_{a\neq 0}\frac{\nu_a(\phi)}{|a|^{\frac{1}{2}}}
\int_{\substack{\alpha\in\R^\times\\a\alpha>0}}
\psi_2\Bigl(\frac{|\alpha|^{\frac14}Y^{\frac14-\kappa}}{|a|^{\frac14}}\Bigr)\Vol\bigl(S^{(i)}|_\alpha\bigr)|\alpha|^{\frac12}d^\times\alpha\end{array}
\end{equation}
by the change of variables $\alpha=t^4a/Y$.

We next have the following lemma which follows immediately since $\nu_a(\phi)$
only depends on the residue class of $a$ modulo $m$.
\begin{lemma}
Define the functions 
\begin{equation*}
D^\pm(\phi,s):=\sum_{a>0}\frac{\nu_{\pm a}(\phi)}{a^s}.
\end{equation*}
Then the functions $D^\pm(\phi,s)$ admit meromorphic continuations to $\C$, and are holomorphic with the exception of possible simple poles at $s=1$ with residue $\nu(\phi)$.
\end{lemma}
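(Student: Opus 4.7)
The plan is to exploit the periodicity of $\nu_{\pm a}(\phi)$ in $a$ to rewrite $D^\pm(\phi,s)$ as a finite linear combination of Hurwitz zeta functions, and then invoke the standard meromorphic continuation of Hurwitz zeta.

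More precisely, since $\phi$ is defined modulo $m$, the density $\nu_a(\phi)$ depends only on $a\bmod m$, so grouping terms by residue class gives
\begin{equation*}
D^\pm(\phi,s)=\sum_{b=1}^{m}\nu_{\pm b}(\phi)\sum_{\substack{a>0\\a\equiv b\,(\mathrm{mod}\,m)}}\frac{1}{a^s}=m^{-s}\sum_{b=1}^{m}\nu_{\pm b}(\phi)\,\zeta\!\left(s,\tfrac{b}{m}\right),
\end{equation*}
where $\zeta(s,\alpha)$ denotes the Hurwitz zeta function. Since $\zeta(s,\alpha)$ is known to extend to a meromorphic function on $\C$ with a unique simple pole at $s=1$ of residue $1$, each summand on the right is meromorphic on $\C$ with at most a simple pole at $s=1$, and hence so is $D^\pm(\phi,s)$.

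It remains to identify the residue. From the formula above, the residue at $s=1$ is $m^{-1}\sum_{b=1}^{m}\nu_{\pm b}(\phi)$. To see this equals $\nu(\phi)$, decompose the measure on $V(\widehat{\Z})$ as a product of the measure on $\widehat{\Z}$ (for the $x^4$-coefficient $a$) and the conditional measure on $V_a(\widehat{\Z})$. Under reduction modulo $m$, the measure on $\widehat{\Z}$ pushes forward to the uniform measure on $\Z/m\Z$; since $\phi$ is periodic modulo $m$, this gives $\nu(\phi)=\tfrac{1}{m}\sum_{b=1}^{m}\nu_b(\phi)$. As $b\mapsto -b$ permutes $(\Z/m\Z)^\times$, one has $\sum_{b=1}^m\nu_{-b}(\phi)=\sum_{b=1}^m\nu_b(\phi)$ as well, so in both cases the residue equals $\nu(\phi)$, as required.

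There is no real obstacle here: the only slightly delicate point is the bookkeeping in the residue computation, and in particular the observation that the adelic $x^4$-coefficient measure reduces to the uniform measure on $\Z/m\Z$, which lets us replace a sum over residue classes with the integral defining $\nu(\phi)$.
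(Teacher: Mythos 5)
Your proof is correct and is exactly the argument the paper leaves implicit: the paper proves this lemma with the single remark that $\nu_a(\phi)$ depends only on $a\bmod m$, and your decomposition $D^\pm(\phi,s)=m^{-s}\sum_{b=1}^m\nu_{\pm b}(\phi)\,\zeta(s,b/m)$ together with $\nu(\phi)=\frac1m\sum_b\nu_b(\phi)$ is the standard way to make that remark precise. One trivial slip: negation permutes all of $\Z/m\Z$, not just $(\Z/m\Z)^\times$, but since your sum runs over a full set of residues this does not affect the identity $\sum_b\nu_{-b}(\phi)=\sum_b\nu_b(\phi)$.
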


Then with a computation identical to \cite[(Equation 41)]{MR3090184}, we obtain for any $M>1$
\begin{equation}\label{eq:suma}
\begin{array}{rcl}
\displaystyle\sum_{\pm a> 0}\frac{\nu_a(\phi)}{|a|^{\frac12}}
\psi_2\Bigl(\frac{|\alpha|^{\frac14}Y^{\frac{1}{4}-\kappa}}{|a|^{\frac14}}\Bigr)&=&
\displaystyle
D^\pm(\phi,1/2)+4\nu(\phi)\widetilde{\psi_2}(-2)(|\alpha|Y^{1-4\kappa})^{\frac12}
\\[.2in] &&\displaystyle
+O_M(\min((|\alpha|Y^{1 -4\kappa})^{-M},1)).
\end{array}
\end{equation}
Above, $\widetilde{\psi_2}$ denotes the Mellin transform of $\psi_2$.
We will choose $\kappa<1/4$ and so the error term is negligible. The contribution of the second summand of \eqref{eq:suma} to the right hand side of \eqref{eq:SMT1} is equal~to
\begin{equation*}
\nu(\phi) X^{5/6}\widetilde{\psi_2}(-2)Y^{-2\kappa}\int_{\alpha\in\R^\times}
\Vol\bigl(S^{(i)}|_\alpha\bigr)d\alpha
=\nu(\phi)\Vol(S^{(i)})X^{5/6}\int_{t>0}\psi_2\Bigl(\frac{t}{Y^\kappa}\Bigr)t^{-2}d^\times t,
\end{equation*}
where the equality follows from the definition of the Mellin transform.
Finally, the contribution of the first summand in \eqref{eq:suma} to \eqref{eq:SMT1} is
\begin{equation*}
\begin{array}{rcl}
&&\displaystyle\frac{1}{4}\Bigl(D^+(\phi,1/2)\mathcal{V}^+(S^{(i)})+D^-(\phi,1/2)\mathcal{V}^-(S^{(i)})\Bigr)X^{3/4},\mbox{ where }
\\[.2in]&&\displaystyle
\quad \quad\mathcal{V}^\pm(S^{(i)}):=
\int_{\pm \alpha>0}
\Vol(S^{(i)}|_\alpha)\frac{1}{|\alpha|^{1/2}}d\alpha.
\end{array}
\end{equation*}
Therefore, we have
\begin{equation}\label{eq:larget}
\begin{array}{rcl}
\cI^{(2)}(\phi,Y;\kappa)&=&\displaystyle
\nu(\phi)\Vol(S^{(i)})X^{5/6}\int_{t>0}\psi_2\Bigl(\frac{t}{Y^\kappa}\Bigr)t^{-2}d^\times t
\\[.25in]&&+\displaystyle
\frac{1}{4}\Bigl(D^+(\phi,1/2)\mathcal{V}^+(S^{(i)})+D^-(\phi,1/2)\mathcal{V}^-(S^{(i)})\Bigr)X^{3/4}
\\[.25in]&&+\displaystyle
O\left(
\Bigl(\frac{X^{2/3}}{m^3}+X^{1/6}\Bigr)\sum_{0\neq |a|\ll Y^{1-4\kappa}}\frac{\supp(\phi,a)}{|a|}\right).
\end{array}
\end{equation}
We define the quantities
\begin{equation*}
M_{{5/6}}^{(i)}(\phi):=\frac{\nu(\phi)\Vol(\FF)\Vol(S^{(i)})}{\sigma_i\Vol(G_0)};\quad 
M_{ {3/4}}^{(i)}(\phi):=\frac{D^+(\phi,1/2)\mathcal{V}^+(S^{(i)})+D^-(\phi,1/2)\mathcal{V}^-(S^{(i)})}{4\sigma_i\Vol(G_0)}.
\end{equation*}
Adding the right hand sides of \eqref{eq:smallt} and \eqref{eq:larget} yields
\begin{equation}\label{eq:Shintanifinal}
N^{(i)}(\phi,X)= M_{{5/6}}^{(i)}(\phi)X^{5/6}+M_{{3/4}}^{(i)}(\phi)X^{3/4}
+O_\epsilon(X^{2/3+\epsilon}+X^{2/3+\kappa/3}+ E(\phi;X;\kappa)),
\end{equation}
for every $\kappa$, where the error term $E(\phi;X)$ is given by
\begin{equation*}
E(\phi;X;\kappa)=\Bigl(\frac{X^{\frac23+\frac{2\kappa}{3}}}{m^4}+\frac{X^{\frac12+\frac{4\kappa}{3}}}{m^3}+
X^{\frac{4\kappa}{3}}\Bigr)\supp(\phi)+
\Bigl(\frac{X^{\frac23}}{m^3}+X^{\frac{1}{6}}\Bigr)\!\!\sum_{\substack{0\neq a\\|a|\ll Y^{1-4\kappa}}}\frac{\supp(\phi,a)}{|a|}.
\end{equation*}
For any fixed $m$ and $\phi$, \eqref{eq:Shintanifinal} holds for all positive $X$. This automatically implies that $M_1^{(i)}(\phi)$ and $M_2^{(i)}(\phi)$ are independent of the choice of $G_0$.

\medskip

We are now ready to prove the analogues, Theorems \ref{Th:Shintani4} and \ref{th:mainCongCountFin}, of Shintani's result in the case of binary quartic forms.

\medskip

\noindent {\bf Proof of Theorem \ref{Th:Shintani4}:} The theorem requires us to provide a secondary term in the estimate of $N^{(i)}(\phi,X)$ when $\phi(f)=1$ for all $f$. In this case, we have $m=1$, $\nu(\phi)=\nu(a,\phi)=1$ for all $a$ and hence $D^\pm(\phi,s)=\zeta(s)$. The result now follows by taking $\kappa=\epsilon$ in \eqref{eq:Shintanifinal} in conjunction with the computations of $M_{{5/6}}^{(i)}(\phi)$ and $M_{{ 3/4}}^{(i)}(\phi)$ in \S8. $\Box$
\medskip

\noindent {\bf Proof of Theorem \ref{th:mainCongCountFin}:} For this result, we take $m=\prod_{p\in S}p$ and write $\phi:V(\Z/m\Z)\to\R$ as $\prod_{p\in S}\phi_p$, where $\phi_p:V(\Z/p\Z)\to \R$ is the characteristic function of the set of elements having splitting type $\sigma_p$. The value of the main term follows from \eqref{eq:Shintanifinal}. The value of the secondary term follows in conjunction with the special values of $D(\phi,1/2)$ computed in Table \ref{tablenu}. The error term is easily seen to also follow from \eqref{eq:Shintanifinal}. $\Box$

\begin{remark}{\rm
We note that the error terms for Theorem \ref{th:mainCongCountFin} in the $\Sigma$-aspects can be improved using the equidistribution results we prove in \S6, specifically, Corollary \ref{prop:FT}. In fact, since the functions $\phi_p$ under consideration are more than strongly invariant--they are invariant under multiplication by all units (not merely squares) in $\F_p$--we can get even better error bounds by improving Corollary \ref{prop:FT}. Finally, for many applications, it is necessary to take $\phi_p$ to be certain natural linear combinations of characteristic functions of elements with a particular splitting types. For such functions, it is often the case that further cancellations in the Fourier transform, leading to further improvements to the error terms, may be obtained. For some such examples, see \cite{ITTX}, \cite{BTT}, \cite{vanderwarden}.
}\end{remark}

\section{Constructing periodic approximations of local functions}\label{sec:locsol}

In this section, we prove that $m_p$ and $\ell_p$ are and almost well approximated by periodic functions (and well approximated except in the case of $\ell_2$). We do this by explicitly constructing the periodic approximations $m_p^{(k)}$ and $\ell_p^{(k)}$ in \S5.1 and \S5.3, respectively. In \S5.2, for $p\geq 5$, we give a natural interpretation of $m_p^{(k)}$ using the Bruhat--Tits tree of $\PGL_2(\Q_p)$. Finally, at the end of \S5.3, we prove that $\ell_p/m_p$ is large and almost well appoximated, and well approximated for $p\geq 3$.

\subsection{Approximating the number of $\PGL_2(\Z_p)$-orbits in a $\PGL_2(\Q_p)$-equivalence class}

\begin{proposition}\label{prop:mpgood}
    The function $m_p$ is well approximated by periodic functions.
\end{proposition}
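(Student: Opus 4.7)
The plan is to interpret $m_p$ via the Bruhat--Tits tree $T$ of $\PGL_2(\Q_p)$, whose vertex set is $\PGL_2(\Z_p) \backslash \PGL_2(\Q_p)$. By Proposition \ref{prop:mp}, $m_p(f)$ counts the vertices $v$ for which $\gamma_v \cdot f \in V(\Z_p)$, where $\gamma_v$ is any representative of $v$ (well-defined since $\PGL_2(\Z_p)$ stabilizes $V(\Z_p)$). Let $v_0$ denote the base vertex corresponding to the identity coset. I would define
\begin{equation*}
m_p^{(k)}(f) := \#\{v \in T : d(v_0, v) = k,\ \gamma_v \cdot f \in V(\Z_p)\},
\end{equation*}
so that $m_p^{(0)} \equiv 1$ and $m_p = \sum_{k \geq 0} m_p^{(k)}$; the support property (c) below ensures the sum is finite at each $f$ with $\Delta(f) \neq 0$. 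If boundedness by $1$ of each piece is required, it can be recovered by further refining the decomposition into individual vertex indicator functions $\mathbf{1}_{S_v}$ for $S_v := \{f \in V(\Z_p) : \gamma_v \cdot f \in V(\Z_p)\}$, reindexing as needed.

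For periodicity, the Cartan decomposition implies that every vertex at distance $k$ has a representative of the form $\kappa \cdot \diag(p^k, 1)$ for some $\kappa$ in a $(p+1)p^{k-1}$-element subset of $\PGL_2(\Z_p)$. Using the twisted action, $\diag(p^k, 1) \cdot g(x,y) = p^{-2k} g(p^k x, y)$, so integrality reduces to the divisibility conditions $p^k \mid d(\kappa^{-1} f)$ and $p^{2k} \mid e(\kappa^{-1} f)$ on the last two coefficients, each depending only on $f \pmod{p^{2k}}$. For the discriminant divisibility, writing $d(\kappa^{-1} f) = p^k d'$ and $e(\kappa^{-1} f) = p^{2k} e'$ and expanding $I = 12ae - 3bd + c^2$ and $J = 72ace + 9bcd - 27ad^2 - 27eb^2 - 2c^3$, a direct computation exploits the cancellation $3c\alpha + \beta \in 27 p^k \Z_p$ (where $\alpha, \beta$ are the lowest-order corrections to $c^2$ and $-2c^3$ in $I$ and $J$) to produce
\begin{equation*}
4I^3 - J^2 \equiv 27 p^{2k} \bigl[4c^3(4ace' - ad'^2 - e'b^2) + c^2 b^2 d'^2\bigr] \pmod{p^{3k}}.
\end{equation*}
Dividing by $27$ and using $\Delta(f) \in \Z_p$ yields $p^{2k} \mid \Delta(f)$ for $p \neq 3$.

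The main obstacle is the prime $p = 3$: the factor $27 = 3^3$ in $\Delta = (4I^3 - J^2)/27$ absorbs up to three powers of the divisibility, so the naive tree-based construction only gives $3^{2k - O(1)} \mid \Delta(f)$. Since \S 5.2 restricts the Bruhat--Tits interpretation to $p \geq 5$, I expect the $p = 2, 3$ cases to require a separate ad hoc construction --- perhaps a finer vertex stratification by auxiliary congruences that absorbs the bounded shift --- to recover the sharp divisibility needed for ``well approximated'' rather than the weaker ``almost well approximated'' notion. Once (b) and (c) are in hand for all $p$, absolute convergence of $\sum_k m_p^{(k)}(f)$ and the identity with $m_p(f)$ are both immediate from the definition.
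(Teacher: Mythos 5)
Your decomposition is exactly the paper's: the pieces $m_p^{(k)}$ indexed by distance $k$ in the tree are the counts of $\PGL_2(\Z_p)$-cosets inside the Cartan cell $\PGL_2(\Z_p)\,\diag(p^k,1)\,\PGL_2(\Z_p)$ that keep $f$ integral, and your periodicity argument (integrality at a distance-$k$ vertex amounts to divisibility conditions on two coefficients of a $\PGL_2(\Z_p)$-translate of $f$, hence depends only on $f$ modulo $p^{2k}$) is the same as the paper's. The genuine gap is in the support property (c). You route the discriminant divisibility through $I$, $J$ and the identity $27\Delta=4I^3-J^2$; the final division by $27$ forces you to exclude $p=3$, and you then defer $p=2,3$ to an unspecified ``ad hoc construction.'' Since the proposition asserts well approximation at \emph{every} prime, and the paper relies on this at $p=3$ (only $\ell_2$ is permitted to be merely almost well approximated, cf.\ the corollary closing \S\ref{sec:alt-locsol}), this deferral is a missing step, not a cosmetic one.

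The defect at $p=3$ is an artifact of your route, and the repair is short: avoid $I,J$ and use the integral discriminant polynomial directly. If $m_p^{(k)}(f)\neq0$, then after replacing $f$ by a $\PGL_2(\Z_p)$-translate $f'$ (same discriminant), integrality along $\diag(p^k,1)$ forces, in your coordinates, $p^k\mid d(f')$ and $p^{2k}\mid e(f')$, equivalently $p^{2k}\mid a$ and $p^k\mid b$ after the symmetry $\Delta(a,b,c,d,e)=\Delta(e,d,c,b,a)$. Now the congruence \eqref{eq:disc-cong-cubic} of Lemma \ref{lem:disc-cong}, namely $\Delta\equiv b^2(c^2d^2+18bcde-4bd^3-4c^3e-27b^2e^2)\pmod{(ae,ad^2)}$, immediately gives $p^{2k}\mid\Delta(f')=\Delta(f)$: the modulus generators carry the factor $a$ and the main term carries $b^2$, and no division by $27$ ever occurs, so the argument is uniform in $p$, including $p=2$ and $p=3$. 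This is precisely what the paper does, so no separate construction for small primes is needed. (Your side remarks are harmless: the boundedness-by-$1$ issue is shared by the paper's own $m_p^{(k)}$, and identifying vertices with $\PGL_2(\Z_p)\backslash\PGL_2(\Q_p)$ rather than the opposite quotient is immaterial, since membership in the distance-$k$ stratum depends only on the Cartan double coset and integrality is invariant under the $\PGL_2(\Z_p)$-factor.)
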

\begin{proof}
For $k\geq0$, let
$\mathcal C^{(k)}=\PGL_2(\Z_p)
\footnotesize\begin{pmatrix}p^k&0\\0&1\end{pmatrix}
	\PGL_2(\Z_p)\subset \PGL_2(\Q_p)$,
where we use the same notation
$\footnotesize\begin{pmatrix}p^k&0\\0&1\end{pmatrix}$
for an element in $\GL_2(\Q_p)$ and its image in $\PGL_2(\Q_p)$.
Then it is well known that
$\PGL_2(\Q_p)=\bigsqcup_{k\geq0}\mathcal C^{(k)}$ and that
the size of $\PGL_2(\Z_p)\backslash \mathcal C^{(k)}$ is finite for each $k$.
For $f\in V(\Z_p)$, we put
$\mathcal C^{(k)}_f=\{g\in\mathcal C^{(k)}\mid g\cdot f\in V(\Z_p)\}$ and define $m_p^{(k)}(f)$
to be the size
of $\PGL_2(\Z_p)\backslash \mathcal C^{(k)}_f$.
Then $m_p=\sum_{k\geq0}m_p^{(k)}$
and $m_p^{(0)}$ is identically $1$.

We show that the function $m_p^{(k)}$ is periodic with period $p^{2k}$. Suppose $f_1,f_2\in V(\Z_p)$ are congruent modulo $p^{2k}$. We let $h=f_1-f_2\in p^{2k}V(\Z_p)$. For $g\in \mathcal C^{(k)}$, $g\cdot h\in V(\Z_p)$, so
$g\cdot f_1\in V(\Z_p)$
if and only if $g\cdot f_2\in V(\Z_p)$.
Therefore $\mathcal C^{(k)}_{f_1}=\mathcal C^{(k)}_{f_2}$,
implying $m_p^{(k)}(f_1)=m_p^{(k)}(f_2)$.

Suppose that $m_p^{(k)}(f)\neq0$. Then
$\mathcal C^{(k)}_f\neq\emptyset$.
Take $g=\gamma_1\footnotesize\begin{pmatrix}p^k&0\\0&1\end{pmatrix}\gamma_2\in \mathcal C^{(k)}_f$, for elements $\gamma_1,\gamma_2\in \PGL_2(\Z_p)$ and let $f'=\gamma_2 f\in V(\Z_p)$.
Then $\footnotesize\begin{pmatrix}p^k&0\\0&1\end{pmatrix}f'\in V(\Z_p)$.
Let $f'=(a_0,b_0,c_0,d_0,e_0)$. Then
$\footnotesize\begin{pmatrix}p^k&0\\0&1\end{pmatrix}f'=
(a_0/p^{2k},b_0/p^k,c_0,d_0p^k,e_0p^{2k})$
and so $p^{2k}\mid a_0$ and $p^k\mid b_0$.
By \eqref{eq:disc-cong-cubic}, this implies $p^{2k}\mid \Delta(f')$.
Therefore  $p^{2k}\mid \Delta(f)$ as well since $\Delta(f)=\Delta(f')$.
\end{proof}

\subsection{Counting nodes in the Bruhat--Tits tree of $\PGL_2(\Q_p)$}

Let $p\geq 5$ be prime. In this subsection, which is unnecessary for the proofs of the main results, we give an alternative and more natural description of $m_p^{(k)}$ in terms of the Bruhat--Tits tree $\mathcal{T}=(\mathcal{V},\mathcal{D})$ of $\PGL_2(\Q_p)$. Recall that $\mathcal{T}$ is an undirected simple graph whose nodes $\mathfrak{n}\in\mathcal{V}$ correspond to homothety classes of lattices in $\Q_p^2$, where we recall that two lattices $L_1$ and $L_2$ are {\it homothetic} if there exists $\theta\in\Q_p^\times$ with $\theta L_1=L_2$.
Given a pair $(L,L')$ of lattices in $\Q_p^2$, there exist integers $a$ and $b$ and a basis $\{ v_1,v_2\}$ of $L$ such that $\{ p^{a}v_1,p^bv_2\}$ is a basis of $L'$. Then it is easy to check that $a$ and $b$ are invariants of the pair $(L,L')$, and that $|a-b|$ remains invariant even when we replace $L$ and $L'$ by homothetic lattices. This yields a function
\begin{equation*}
\inv:\mathcal{V}\times\mathcal{V}\to\Z_{\geq 0}.
\end{equation*}
Moreover, $\inv(\mathfrak{n}_1,\mathfrak{n_2})=0$ if and only if $\mathfrak{n}_1=\mathfrak{n_2}$. We say that two nodes $\mathfrak{n}_1,\mathfrak{n_2}\in\mathcal{V}$ are {\it neighbors} if $\inv(\mathfrak{n}_1,\mathfrak{n_2})=1$, and there is an edge in $\mathcal{D}$ between two nodes if and only if they are neighbors. 
It is well known that this set of vertices and edges makes $\mathcal{T}$ into a regular tree, where each vertex has degree $p+1$. Furthermore, the distance between two nodes $\mathfrak{n}_1$ and $\mathfrak{n_2}$ is $\inv(\mathfrak{n}_1,\mathfrak{n_2})$.

The group $\GL_2(\Q_p)$ acts on the set of lattices in $\Q_p^2$ via $\gamma\cdot L:=\{\gamma v:v\in L\}$. This action descends to an action of $\PGL_2(\Q_p)$ on the set of homothety classes of lattices, and hence an action of $\PGL_2(\Q_p)$ on $\mathcal{V}$. This action is transitive since the aforementioned action of $\GL_2(\Q_p)$ is transitive. Denote the node corresponding to the homothety class of the lattice $\Z_p^2$ by $\mathfrak{o}$.
Then it is easy to see that the stabilizer in $\PGL_2(\Q_p)$ of $\mathfrak{o}$ is $\PGL_2(\Z_p)$. This gives a bijection
\begin{equation*}
\Phi:\PGL_2(\Q_p)/\PGL_2(\Z_p)\to \mathcal{V},
\end{equation*}
sending the $\PGL_2(\Z_p)$-orbit of $\gamma\in\PGL_2(\Q_p)$ to the homothety class of $\gamma\cdot\mathfrak{o}$.
For an integer $k\geq 1$, let $\mathfrak{o}_k$ denote the node corresponding to the homothety class of $p^k\Z_p\oplus\Z_p$. Then $\mathfrak{o}_k$ has distance $k$ from $\mathfrak{o}$. It is well known that $\PGL_2(\Z_p)$ acts transitively on the set of nodes having distance $k$ from $\mathfrak{o}$, and hence every node having distance $k$ from $\mathfrak{o}$ is $\PGL_2(\Z_p)$-equivalent to $\mathfrak{o}_k$.

To describe the connection between $\mathcal{T}$ and binary quartic forms, we define the following notion: Denote the determinant of $L$ by $d(L)$. Then we say that $f\in V(\Q_p)$ is {\it integral-valued with respect to $L$} if $d(L)^2\mid f(v)$ for every $v\in L$. Since this notion is homothety invariant, it descends to a notion of $f$ being {\it integral valued} with respect to nodes in $\mathcal{V}$. We next prove that this notion of integrality respects the action of $\PGL_2(\Q_p)$.
\begin{proposition}
Let $f\in V(\Q_p)$, $\mathfrak{n}\in\mathcal{V}$ and $\gamma\in\PGL_2(\Q_p)$, be any elements. Then $f$ is integral-valued with respect to $\gamma\cdot \mathfrak{n}$ if and only if $\gamma\cdot f$ is integral with respect to $\mathfrak{n}$.
\end{proposition}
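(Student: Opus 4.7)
The plan is to reduce everything to two simple scaling identities: the determinant formula $d(\gamma\cdot L)^2 = \det(\gamma)^2\,d(L)^2$ up to units in $\Z_p^\times$, and the defining formula for the twisted action \eqref{eq:PGL2action}, namely $f((x,y)\gamma) = \det(\gamma)^2 \cdot (\gamma\cdot f)(x,y)$. Once these are in hand, the equivalence of the two integrality conditions becomes a one-line calculation; the only real subtlety is matching the row-vector convention in \eqref{eq:PGL2action} with the column-vector convention $\gamma\cdot L := \{\gamma v : v \in L\}$ used for the lattice action, which we address once at the beginning by identifying $\Q_p^2$ with its transpose.

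First I would observe that the notion of being integral-valued with respect to a lattice is homothety-invariant: if $L' = \theta L$ for $\theta\in\Q_p^\times$, then $d(L')=\theta\, d(L)$ and every $v\in L'$ is $\theta v_0$ for some $v_0\in L$, so $f(\theta v_0) = \theta^4 f(v_0)$ while $d(L')^2 = \theta^2 d(L)^2$, and the divisibility transports correctly since $f$ is homogeneous of degree $4$. Thus we may fix once and for all a lattice $L$ representing $\mathfrak{n}$, and the proposition amounts to showing that $f$ is integral-valued on $\gamma\cdot L$ if and only if $\gamma\cdot f$ is integral-valued on $L$.

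Next I would record the determinant scaling: choosing any basis $\{v_1,v_2\}$ of $L$, the set $\{\gamma v_1,\gamma v_2\}$ is a basis of $\gamma\cdot L$, and the standard computation gives $d(\gamma\cdot L) = \det(\gamma)\cdot d(L)$ up to a unit, hence
\begin{equation*}
d(\gamma\cdot L)^2\,\Z_p \;=\; \det(\gamma)^2\, d(L)^2\,\Z_p.
\end{equation*}
Applying \eqref{eq:PGL2action} (viewing $v\in L$ as a row vector and writing $v\gamma$ for the right action, so that $v\mapsto v\gamma$ is a bijection from $L$ onto the corresponding realization of $\gamma\cdot L$), we obtain the pointwise identity
\begin{equation*}
f(v\gamma) \;=\; \det(\gamma)^2 \cdot (\gamma\cdot f)(v)\qquad\text{for all }v\in L.
\end{equation*}

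Finally I would chain these together:
\begin{equation*}
\begin{array}{rcl}
f\text{ integral on }\gamma\cdot L
&\iff& f(w)\in d(\gamma\cdot L)^2\Z_p \text{ for all }w\in\gamma\cdot L \\[.05in]
&\iff& f(v\gamma)\in \det(\gamma)^2 d(L)^2\Z_p \text{ for all }v\in L \\[.05in]
&\iff& \det(\gamma)^2(\gamma\cdot f)(v)\in \det(\gamma)^2 d(L)^2\Z_p \text{ for all }v\in L \\[.05in]
&\iff& (\gamma\cdot f)(v)\in d(L)^2\Z_p \text{ for all }v\in L \\[.05in]
&\iff& \gamma\cdot f\text{ integral on }L,
\end{array}
\end{equation*}
which is the desired equivalence. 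There is no serious obstacle here; the hardest part is merely bookkeeping the row/column conventions consistently, which should be addressed by a remark identifying the left-action on lattices with the corresponding right action via transposition before carrying out the computation.
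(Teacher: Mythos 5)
Your argument is correct and is essentially the paper's own proof: both rest on the twisted-action identity $f((x,y)\cdot\gamma)=\det(\gamma)^2(\gamma\cdot f)(x,y)$ together with the scaling $d(\gamma\cdot L)=\det(\gamma)d(L)$ up to units, chained into the same equivalence of divisibility conditions. Your explicit verification of homothety invariance and of the row/column bookkeeping only spells out steps the paper leaves implicit, so there is nothing materially different here.
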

\begin{proof}
Let $L\subset\Q_p^2$ and $\widetilde{\gamma}\in\GL_2(\Q_p)$ be representatives of $\mathfrak{n}$ and $\gamma$, respectively. By definition, $\gamma\cdot f$ is integral-valued with respect to $\mathfrak{n}$ if and only if $d(L)^2\mid (\gamma\cdot f)(x,y)$ for every $(x,y)\in L$. However, we have $(\gamma\cdot f)(x,y)=f((x,y)\cdot\widetilde{\gamma})/(\det(\widetilde{\gamma}))^2$. Therefore, we have that $\gamma\cdot f$ is integral-valued with respect to $\mathfrak{n}$ if and only if $(\det(\widetilde{\gamma})d(L))^2\mid f(x,y)$ for every $(x,y)\in \widetilde{\gamma}L$, which is true if and only if $f$ is integral-valued with respect to $\gamma\cdot\mathfrak{n}$, as necessary.
\end{proof}

We next have the following consequence describing the image of $\PGL_2(\Q_p)$ under $\Phi$.
\begin{corollary}\label{cor:integralnodes}
Let $f\in V(\Q_p)$ be any element.
Then $\gamma\in\PGL_2(\Q_p)$ belongs to $\PGL_2(\Q_p)_f$ $($equivalently, $\gamma\cdot f\in V(\Z_p))$ if and only if $f$ is integral with respect to $\Phi(\gamma)$.
\end{corollary}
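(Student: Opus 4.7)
The plan is to deduce this corollary directly from the preceding proposition by specializing $\mathfrak{n} = \mathfrak{o}$, and then verify that integrality with respect to the standard node $\mathfrak{o}$ coincides with membership in $V(\Z_p)$. Since $\Phi(\gamma)$ is by definition the homothety class of $\gamma \cdot \mathfrak{o}$, the preceding proposition gives immediately that $f$ is integral-valued with respect to $\Phi(\gamma)$ if and only if $\gamma \cdot f$ is integral-valued with respect to $\mathfrak{o}$. Thus the entire corollary reduces to the claim that, for any $g \in V(\Q_p)$, we have $g \in V(\Z_p)$ if and only if $g$ is integral-valued with respect to $\mathfrak{o}$.

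To establish this claim, I first note that the lattice $\Z_p^2$ representing $\mathfrak{o}$ has determinant $d(\Z_p^2) = 1$, so the condition of being integral-valued with respect to $\mathfrak{o}$ simply states that $g(v) \in \Z_p$ for every $v \in \Z_p^2$. The forward implication is obvious: if all coefficients of $g$ lie in $\Z_p$, then $g(v) \in \Z_p$ for each $v \in \Z_p^2$.

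The reverse direction is the only nontrivial step. Writing $g(x,y) = ax^4+bx^3y+cx^2y^2+dxy^3+ey^4$, I would evaluate $g$ at the five points $(0,1), (1,1), (-1,1), (2,1), (-2,1) \in \Z_p^2$. Under the standing hypothesis $p \geq 5$, the resulting $5 \times 5$ Vandermonde-type matrix expressing $(g(0,1),\ldots,g(-2,1))$ in terms of $(e,d,c,b,a)$ has determinant $\prod_{i<j}(x_j - x_i)$, which is divisible only by $2$ and $3$, hence is a unit in $\Z_p$. Inverting this matrix expresses $a,b,c,d,e$ as $\Z_p$-linear combinations of the values $g(x_i, 1)$, each of which lies in $\Z_p$ by assumption; therefore $a,b,c,d,e \in \Z_p$, i.e.\ $g \in V(\Z_p)$.

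The substantive content of the corollary is really the $\PGL_2(\Q_p)$-equivariance of the integrality notion established in the previous proposition; once that is in hand, the only remaining obstacle is this Vandermonde unpacking, and the only reason any restriction is needed at all is to ensure the Vandermonde determinant is a $p$-adic unit, which is exactly where the hypothesis $p \geq 5$ enters.
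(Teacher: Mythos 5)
Your proposal is correct and follows essentially the same route as the paper: specialize the preceding proposition to $\mathfrak{n}=\mathfrak{o}$ and use that integrality with respect to $\mathfrak{o}$ coincides with membership in $V(\Z_p)$ when $p\geq 5$. The only difference is that you explicitly verify this last equivalence via the Vandermonde determinant $\pm 2^5\cdot 3^2$ at the nodes $0,\pm1,\pm2$, a detail the paper leaves implicit (noting only that it fails for $p=2,3$), and your verification is correct.
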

\begin{proof}
We know that $f$ is integral with respect to $\Phi(\gamma)=\gamma\cdot\mathfrak{o}$ if and only if $\gamma\cdot f$ is integral with respect to $\mathfrak{o}$ which is true if and only if $\gamma\cdot f\in V(\Z_p)$ as necessary.
\end{proof}

\noindent It is only here that we use $p\geq 5$. It is needed since for $p=2$ and $p=3$, there exist elements $f\in V(\Q_p)\backslash V(\Z_p)$ for which $f(v)\in\Z_p$ for all $v\in\Z_p$.

\medskip

Finally, denote the set of nodes in $\mathcal{V}$ with respect to which $f\in V(\Q_p)$ is integral by $\mathcal{V}(f)$.
An immediate consequence of Corollary \ref{cor:integralnodes} is that $m_p(f)$ is equal to $\#\mathcal{V}(f)$. For an integer $k\geq 0$, let $\mathcal{V}_k$ denote the set of nodes in $\mathcal{V}$ that have distance $k$ from $\mathfrak{o}$. It is easy to check that $m_p^{(k)}=\#(\V(f)\cap\V_k)$.

\subsection{Approximating local solubility}\label{sec:alt-locsol}

Let $p$ be a prime and recall that we defined $\ell_p:V(\Z_p)\to\{0,1\}$ to be the characteristic function of the set of elements in $V(\Z_p)$ that are soluble. In this section, we prove that $\ell_p$ is well approximated by periodic functions if $p$ is odd, and is almost well approximated by periodic functions if $p=2$.

We first show the following:

\begin{proposition}\label{prop:mainls-equiv}
Let $k\geq0$.
\begin{enumerate}
\item Suppose $p$ is odd.
If $f\in V(\Z_p)$ is insoluble
but there exist a soluble $f_0\in V(\Z_p)$
such that $f\equiv f_0\pmod{p^{2k}}$,
then $p^{2k+2}\mid \Delta(f)$.
\item Suppose $p=2$.
If $f\in V(\Z_p)$ is insoluble
but there exist a soluble $f_0\in V(\Z_p)$
such that $f\equiv f_0\pmod{p^{2k+2}}$,
then $p^{2k+2}\mid \Delta(f)$.
\end{enumerate}
\end{proposition}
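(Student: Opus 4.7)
The plan is to normalize the soluble $\Q_p$-point of $f_0$ to $(1,0)$ via a $\PGL_2(\Z_p)$-change of variable, and then to extract strong $p$-adic divisibility on the leading coefficients of $f$ by iterated Hensel lifting at $(1,0)$ and at nearby perturbations. First I would fix a primitive $\Q_p$-point $(x_0,y_0,z_0)$ of $z^2 = f_0(x,y)$ with $(x_0,y_0) \in \Z_p^2 \smallsetminus p\Z_p^2$ and $z_0 \in \Z_p$, and choose $\gamma \in \GL_2(\Z_p)$ with first row $(x_0,y_0)$. Then $(\gamma \cdot f_0)(1,0) = f_0(x_0,y_0)/\det(\gamma)^2 = (z_0/\det\gamma)^2$, so after replacing $f$ and $f_0$ by their $\gamma$-translates I may assume $(x_0,y_0) = (1,0)$; this preserves both $\Delta(f)$ and the hypothesis $f \equiv f_0 \pmod{p^{2k}}$ (resp.\ $\pmod{p^{2k+2}}$ for $p = 2$).

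The first substantive step is to apply Hensel's lemma at $(1,0)$ to deduce $v_p(z_0) \geq k$, and hence $v_p(a) \geq 2k$ for the $x^4$-coefficient $a := f(1,0)$. Suppose for contradiction $v_p(z_0) < k$: the ultrametric inequality then gives $v_p(a) = 2v_p(z_0) < 2k$, so $a/p^{v_p(a)}$ is a unit, congruent to the unit square $(z_0/p^{v_p(z_0)})^2$ to precision at least $p$ for odd $p$ (or at least $p^3 = 8$ when $p = 2$, using the strengthened hypothesis $\pmod{p^{2k+2}}$). Hensel's lemma for squares in $\Z_p^\times$ then lifts this to a genuine square root, so $a$ is a square in $\Q_p$; this produces a $\Q_p$-solution of $z^2 = f(x,y)$ at $(1,0)$, contradicting insolubility of $f$.

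The main technical obstacle is the second step, in which I would push the analysis to deduce the stronger bounds $v_p(a) \geq 2k+2$ and $v_p(b) \geq k+1$, possibly after an additional unipotent change of variable $(x,y) \mapsto (x+sy,y)$ (which preserves the marked point $(1,0)$). The idea is to consider perturbations $(1,\,p^m t)$ for $t \in \Z_p^\times$ and $m \geq 0$: insolubility of $f$ forbids $f(1, p^m t)$ from being a square in $\Q_p$ for any such $(m,t)$, and this obstruction, compared against the Newton polygon of $f(1,y)$ in $y$, translates into lower bounds on $v_p(b)$ (and, in tandem, on $v_p(a)$). A case analysis on whether $v_p(a) \in \{2k,\,2k+1\}$ or $v_p(a) \geq 2k+2$, and on whether $v_p(b)$ is small enough that $b \cdot p^m t$ dominates the Newton polygon of $f(1,\,p^m y)$ at some level, should yield the desired divisibilities; the mod-$8$ Hensel criterion for squares makes the $p = 2$ subcase particularly delicate.

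Once these two divisibilities are in hand, the conclusion should follow from the direct polynomial identity
\begin{equation*}
4\,I(f)^3 - J(f)^2 \;=\; 27\,b^2\,\Delta_{\mathrm{cubic}}\!\bigl(bx^3+cx^2y+dxy^2+ey^3\bigr) + a\,R_1 + a^2 R_2 + a^3 R_3,
\end{equation*}
where $R_1,R_2,R_3 \in \Z[b,c,d,e]$ arise from expanding $4I^3 - J^2$ as a polynomial in $a$, and the $a = 0$ value factors as $27 b^2 \Delta_{\mathrm{cubic}}(\cdot)$ by a direct computation. Substituting $v_p(a) \geq 2k+2$ and $v_p(b) \geq k+1$, every summand on the right has $p$-adic valuation at least $2k+2$, giving $p^{2k+2} \mid 4I^3 - J^2 = 27\,\Delta(f)$, and hence $p^{2k+2} \mid \Delta(f)$ (immediately for $p \neq 3$; the $p = 3$ case needs slightly finer bookkeeping of the factor of $27$ and the exact valuations of the $R_i$ terms).
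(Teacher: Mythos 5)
Your Step 1 is sound and parallels the paper's opening move (the paper places the soluble point at $(0:1)$ and shows $p^{2k}\mid e$ and, via Lemma \ref{lem:hensel}, $p^{k}\mid d$). The proof breaks at Step 2: the divisibilities you aim for, $\ord_p(a)\geq 2k+2$ and $\ord_p(b)\geq k+1$, do not follow from the hypotheses, and since your allowed substitution $(x,y)\mapsto(x+sy,y)$ fixes $a=f(1,0)$, no such change of variable can repair this. Concretely, take $p$ odd, $k=1$, and $u,v\in\Z_p^\times$ nonsquare units, and set $f=p^2u\,x^4+p^3x^3y+v\,y^4$. For primitive $(x,y)$ one checks $f(x,y)\in v\,y^4(1+p^2\Z_p)$ if $p\nmid y$ and $f(x,y)\in p^2u\,x^4(1+p^2\Z_p)$ if $p\mid y$, so $f(x,y)$ is always a nonzero nonsquare and $f$ is insoluble; yet $f\equiv f_0\pmod{p^2}$ with $f_0=p^2x^4+p^3x^3y+v\,y^4$ soluble (take $(x,y,z)=(1,0,p)$). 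Every soluble point of $f_0$ reduces to $(1:0)$ mod $p$, and $f$ evaluated at any such point has valuation exactly $2$, so after any admissible normalization the $x^4$-coefficient of $f$ has $\ord_p$ equal to $2k$, not $\geq 2k+2$. (The conclusion $p^{2k+2}\mid\Delta(f)$ still holds here, since $\Delta(f)=p^6v^2(256u^3v-27p^6)$, but not by the mechanism your final identity uses.) The same failure occurs already at $k=0$, which your sketch does not address: an insoluble form can have unit $x^4$-coefficient, and $p^2\mid\Delta$ in that case comes from the fact that insoluble forms reduce mod $p$ to splitting type $(1^21^2)$, $(2^2)$, $(1^4)$ or $(0)$ --- an input you never invoke. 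Note also that your Hensel/Newton-polygon considerations can only yield $\ord_p(b)\geq k$, not $k+1$.

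What is genuinely available after Step 1 is $\ord_p(a)\geq 2k$ and $\ord_p(b)\geq k$, and the remaining factor $p^2$ cannot be extracted coefficientwise from $a$ and $b$ alone; your closing identity $\Delta\equiv b^2\Delta_{\mathrm{cubic}}\pmod{a}$ is correct but insufficient, because the degree-one terms in $a$ (e.g.\ $16ac^4e-4ac^3d^2$) only lie in the ideal $(ae,ad^2)$, so one needs control of the remaining coefficients as well --- compare \eqref{eq:disc-cong-cubic}. The paper supplies exactly this missing control by a case analysis on the splitting type of $f$ mod $p$ (using that insolubility forces type $(1^21^2)$, $(2^2)$, $(1^4)$ or $(0)$), and, in the crucial $(1^21^2)$ case, by applying the non-unimodular substitution $\diag(p^k,1)$, which preserves integrality, insolubility and $\Delta$, and produces a form whose coefficients satisfy the divisibility patterns of Lemma \ref{lem:discdiv}, forcing $p^{2k+2}\mid\Delta(f)$. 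Your argument needs to be restructured along these lines; as written, the central claim of Step 2 is false.
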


For the proof, we begin with some preliminary results.
\begin{lemma}\label{lem:hensel}
Let $g(x)\in\Z_p[x]$ and $a\in\Z_p$.
Let $\lambda=\ord_p(g(a))$ and
$\mu=\ord_p(g'(a))$.
If $\lambda>2\mu$, then
$\{g(a+\xi)\mid \xi\in p^{\lambda-\mu}\Z_p\}=p^\lambda\Z_p$.
\end{lemma}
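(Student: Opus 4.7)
The plan is to combine a Taylor expansion at $a$ with a standard refined Hensel lifting. First I would expand
$$g(a+\xi)=g(a)+g'(a)\xi+\sum_{j\geq 2}\frac{g^{(j)}(a)}{j!}\,\xi^j,$$
noting that since $g\in\Z_p[x]$, every Hasse derivative $g^{(j)}(a)/j!$ already lies in $\Z_p$ (the Taylor expansion amounts to the substitution $x\mapsto x+a$, which preserves $\Z_p[x]$, so the coefficients are automatically integral). For $\xi\in p^{\lambda-\mu}\Z_p$, the hypothesis $\lambda>2\mu$ forces $2(\lambda-\mu)\geq \lambda+1$, so every term with $j\geq 2$ has $p$-adic valuation at least $2(\lambda-\mu)>\lambda$. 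Combined with $\ord_p(g(a))=\lambda$ and $\ord_p(g'(a)\xi)\geq\lambda$, this immediately yields the containment $\{g(a+\xi):\xi\in p^{\lambda-\mu}\Z_p\}\subseteq p^{\lambda}\Z_p$.

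For the reverse inclusion, I would fix an arbitrary $c\in p^{\lambda}\Z_p$ and apply the refined Hensel lemma to $h(\xi):=g(a+\xi)-c\in\Z_p[\xi]$. One has $\ord_p(h(0))=\ord_p(g(a)-c)\geq\lambda$ while $\ord_p(h'(0))=\ord_p(g'(a))=\mu$, so the assumption $\lambda>2\mu$ supplies precisely the strict inequality $\ord_p(h(0))>2\ord_p(h'(0))$ needed to lift. Hensel's lemma then produces a zero $\xi\in\Z_p$ of $h$ satisfying $\ord_p(\xi)\geq\ord_p(h(0))-\ord_p(h'(0))\geq\lambda-\mu$, i.e.\ $\xi\in p^{\lambda-\mu}\Z_p$ with $g(a+\xi)=c$. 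This is the matching containment, and the lemma follows.

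The argument is essentially careful bookkeeping of valuations, so I do not anticipate a genuine obstacle. The only point deserving care is that the factorials in the Taylor expansion do not cause any valuation loss, and this is ensured uniformly in $p$ by the Hasse-derivative observation above. The use of strict inequality $\lambda>2\mu$ is exactly what is needed both to kill the quadratic and higher terms in the expansion and to legitimize the Hensel step.
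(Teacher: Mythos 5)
Your proof is correct and follows essentially the same route as the paper, which simply records the identity $g(a+\xi)=g(a)+\xi g'(a)+\xi^{2}h(a,\xi)$ with $h\in\Z_p[x,y]$ and declares the rest standard; your Taylor expansion with integral Hasse derivatives is exactly this identity, and your invocation of the refined Hensel/Newton lemma for each target $c\in p^{\lambda}\Z_p$ is the intended ``standard'' argument for surjectivity. The valuation bookkeeping ($2(\lambda-\mu)\geq\lambda+1$ from $\lambda>2\mu$, and $\ord_p(h(0))\geq\lambda>2\mu=2\ord_p(h'(0))$) is accurate, so there is nothing to fix.
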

This is a version
of Hensel's lemma.
We have $g(a+\xi)=g(a)+\xi g'(a)+\xi^2 h(a,\xi)$
for a polynomial $h(x,y)\in\Z_p[x,y]$, and the proof is standard with this identity.
Next, we have the following lemma, which will also be used in the next section, regarding congruences for the discriminant polynomial
$\Delta(f)$ for $f=(a,b,c,d,e)\in V$:
\begin{lemma}\label{lem:disc-cong}
We have
\begin{align}
\Delta(f)&\equiv
    b^2(c^2d^2+18bcde-4bd^3-4c^3e-27b^2e^2)\pmod{(ae,ad^2)},\label{eq:disc-cong-cubic}\\
\Delta(f)&\equiv 4ac^3(4ce-d^2)\pmod{(a^2,ab,b^2)},\label{eq:disc-cong-quad}
\\
\Delta(f)&\equiv0\pmod{(a^2,abc,abd,ac^2,b^4,b^3d,b^2c^2)}.\label{eq:disc-cong-abcd}
\end{align}
\end{lemma}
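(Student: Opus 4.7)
The plan: all three congruences are polynomial identities in $\Z[a,b,c,d,e]$ that follow from the explicit formula
\[
27\Delta(f) = 4I(f)^3 - J(f)^2,
\]
with $I = 12ae - 3bd + c^2$ and $J = 72ace + 9bcd - 27ad^2 - 27eb^2 - 2c^3$. The strategy for each part is to expand $4I^3 - J^2$ modulo the specified ideal by first reducing $I$ and $J$ individually.

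For \eqref{eq:disc-cong-cubic}, I would reduce modulo $\mathfrak{a}_1 = (ae, ad^2)$. Since $12ae \in \mathfrak{a}_1$ we get $I \equiv c^2 - 3bd \pmod{\mathfrak{a}_1}$; since $72ace, 27ad^2 \in \mathfrak{a}_1$ we get $J \equiv 9bcd - 27eb^2 - 2c^3 \pmod{\mathfrak{a}_1}$. Expanding $4(c^2 - 3bd)^3 - (9bcd - 27eb^2 - 2c^3)^2$ term-by-term, the $4c^6$ and $-36bc^4d$ contributions cancel between $4I^3$ and $J^2$, and what remains collects, after dividing by $27$, into exactly $b^2(c^2d^2 + 18bcde - 4bd^3 - 4c^3e - 27b^2e^2)$.

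For \eqref{eq:disc-cong-quad}, reduce modulo $\mathfrak{a}_2 = (a^2, ab, b^2)$. The key simplification is that any product of two factors from $\{a,b\}$ vanishes in $\Z[a,b,c,d,e]/\mathfrak{a}_2$. Thus $(12ae - 3bd)^2 \equiv 0$, so $I^3 \equiv c^6 + 3c^4(12ae - 3bd) \pmod{\mathfrak{a}_2}$; similarly $J \equiv 72ace + 9bcd - 27ad^2 - 2c^3$, and in $J^2$ only cross terms containing at most one factor from $\{a,b\}$ survive. Combining the surviving terms gives $4I^3 - J^2 \equiv 108\,ac^3(4ce - d^2) \pmod{\mathfrak{a}_2}$, which yields the claim after dividing by $27$.

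Finally, for \eqref{eq:disc-cong-abcd}, I would expand $\Delta$ explicitly into its sixteen monomials and verify membership in $\mathfrak{a}_3 = (a^2, abc, abd, ac^2, b^4, b^3d, b^2c^2)$ by inspection. After dividing $4I^3 - J^2$ by $27$, the monomials are $256a^3e^3$, $-192a^2bde^2$, $-128a^2c^2e^2$, $144a^2cd^2e$, $-27a^2d^4$ (all in $(a^2)$); $144ab^2ce^2$, $-80abc^2de$, $18abcd^3$ (all in $(abc)$, using $ab^2c = b \cdot abc$); $-6ab^2d^2e$ (in $(abd)$); $16ac^4e$, $-4ac^3d^2$ (in $(ac^2)$); $-27b^4e^2$ (in $(b^4)$); $18b^3cde$, $-4b^3d^3$ (in $(b^3d)$); and $-4b^2c^3e$, $b^2c^2d^2$ (in $(b^2c^2)$). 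Every monomial falls into one of the seven generator categories, proving the claim. The only obstacle in the whole proof is bookkeeping care in the coefficient arithmetic; there is no theoretical difficulty.
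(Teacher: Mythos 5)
Your proof is correct, and your verifications check out: reducing $I\equiv c^2-3bd$, $J\equiv 9bcd-27b^2e-2c^3$ modulo $(ae,ad^2)$ gives $4I^3-J^2\equiv 27\,b^2(c^2d^2+18bcde-4bd^3-4c^3e-27b^2e^2)$, and reducing modulo $(a^2,ab,b^2)$ gives $4I^3-J^2\equiv 108\,ac^3(4ce-d^2)$, as you claim. The paper itself takes the blunter route for all three congruences: it simply records the full sixteen-monomial expansion of $\Delta(f)$ and reads off each congruence by inspecting which generator of the relevant ideal divides each monomial — exactly what you do for \eqref{eq:disc-cong-abcd}, but applied also to \eqref{eq:disc-cong-cubic} and \eqref{eq:disc-cong-quad}. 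Your detour through $27\Delta=4I^3-J^2$ for the first two congruences is a legitimate alternative and keeps the intermediate expressions smaller, at the cost of one extra (easy) justification you left implicit: from $27\Delta\equiv 27\cdot(\text{claimed expression})$ modulo the ideal you must conclude the congruence for $\Delta$ itself, i.e.\ you must divide by $27$. This is harmless here because $(ae,ad^2)$ and $(a^2,ab,b^2)$ are monomial ideals, so a polynomial lies in the ideal exactly when each of its monomials is divisible by one of the generators, and multiplying all coefficients by $27$ does not change that; equivalently, the quotient rings are free $\Z$-modules, so multiplication by $27$ is injective on them. With that one sentence added, your argument is complete; the paper's single-expansion approach avoids the issue entirely but requires writing out (or trusting) the full discriminant formula, which you in any case need for \eqref{eq:disc-cong-abcd}.
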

This immediately follows from the concrete form
the discriminant:
\begin{align*}
    \Delta(f)=&    256 a^3 e^3\\
    &- 192 a^2 b d e^2 - 128 a^2 c^2 e^2
    + 144 a^2 c d^2 e - 27 a^2 d^4\\
    &+ 144 a b^2 c e^2 - 6 a b^2 d^2 e - 80 a b c^2 d e + 18 a b c d^3 + 16 a c^4 e - 4 a c^3 d^2\\
    &- 27 b^4 e^2 + 18 b^3 c d e - 4 b^3 d^3 - 4 b^2 c^3 e + b^2 c^2 d^2.
\end{align*}

As an immediate consequence, we obtain the following:

\begin{lemma}\label{lem:discdiv}
Let $k\geq1$.
Suppose the coefficients
of $f(x,y)=ax^4+bx^3y+cx^2y^2+dxy^3+ey^4\in V(\Z_p)$  satisfy one of the following divisibility properties:
\begin{itemize}
\item $p^{2k+1}\mid a$, $p^{k+1}\mid b$, $p\mid d$, and $p\mid e$;
\item $p\mid b$, $p\mid c$, $p^{k}\mid d$, and $p^{2k}\mid e$.
\end{itemize}
Then $p^{2k+2}\mid\Delta(f)$.
\end{lemma}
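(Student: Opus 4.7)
The plan is to dispatch Case 1 using the first congruence in Lemma \ref{lem:disc-cong}, and Case 2 by a direct monomial-by-monomial check in the explicit expansion of $\Delta(f)$ displayed immediately below Lemma \ref{lem:disc-cong}.

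For Case 1, under the hypotheses $p^{2k+1}\mid a$, $p^{k+1}\mid b$, $p\mid d$, $p\mid e$, I would invoke \eqref{eq:disc-cong-cubic}. The modulus ideal $(ae,ad^2)$ has $p$-adic valuation at least $\min(\ord_p(ae),\ord_p(ad^2))\geq \min(2k+2,2k+3)=2k+2$. On the other side, the residual term $b^2(c^2d^2+18bcde-4bd^3-4c^3e-27b^2e^2)$ is visibly divisible by $b^2$, whose valuation is $\geq 2(k+1)=2k+2$. Combining, $\ord_p(\Delta(f))\geq 2k+2$.

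For Case 2, with hypotheses $p\mid b$, $p\mid c$, $p^k\mid d$, $p^{2k}\mid e$ but no constraint on $a$, the congruences of Lemma \ref{lem:disc-cong} do not seem to produce the bound cleanly (even the ``mirror'' of \eqref{eq:disc-cong-cubic} obtained via the involution $(a,b,c,d,e)\leftrightarrow(e,d,c,b,a)$ only yields a modulus ideal of valuation $\geq 2k$). Instead, I would work directly with the sixteen monomials in the explicit form of $\Delta(f)$ and check that each has $p$-adic valuation $\geq 2k+2$ under the hypotheses. Sample bounds: $a^3e^3$ gives $\geq 6k$; $a^2d^4$ gives $\geq 4k$; $a^2cd^2e$ gives $\geq 4k+1$; $ac^3d^2$ gives $\geq 2k+3$; $ac^4e$ gives $\geq 2k+4$; $b^4e^2$ gives $\geq 4k+4$; $b^2c^3e$ gives $\geq 2k+5$; the remaining terms admit similarly immediate bounds. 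All sixteen are $\geq 2k+2$ once $k\geq 1$, which gives the claim.

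The main ``obstacle'' is simply organizing the tedious but routine monomial check in Case 2. I expect the tightest term to be $a^2d^4$, whose valuation is exactly $4k$; this is precisely where the hypothesis $k\geq 1$ (which the lemma assumes) is needed.
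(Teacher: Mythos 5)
Your proposal is correct and in substance the same as the paper's argument: Case 1 is handled identically via \eqref{eq:disc-cong-cubic}, and your monomial-by-monomial check of the explicit discriminant in Case 2 (with the tight term $a^2d^4$ of valuation $4k$, requiring $k\geq1$) simply re-derives what the paper packages as the congruence \eqref{eq:disc-cong-abcd}, applied after the involution $\Delta(a,b,c,d,e)=\Delta(e,d,c,b,a)$. So the only difference is that the paper cites that prepackaged congruence instead of listing the sixteen monomial bounds, and your bounds check out.
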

\begin{proof}
This follows from \eqref{eq:disc-cong-cubic} and \eqref{eq:disc-cong-abcd}.
Note that $\Delta(a,b,c,d,e)=\Delta(e,d,c,b,a)$.
\end{proof}

We now prove Proposition \ref{prop:mainls-equiv}.

\medskip

\begin{proof}
Recall from the proof of \cite[Proposition 3.18]{BS2Sel}
that if $f$ is insoluble,
the splitting type of $f\pmod p$
is either $(1^21^2)$, $(2^2)$, $(1^4)$
or $(0)$. In particular $p^2\mid \Delta(f)$
so we have the assertion for $k=0$.
For the rest of the proof we assume $k\geq1$.
By suitably multiplying an element in $\Q_p^\times$ to the variables and
further transforming the variables by an elemnt in $\PGL_2(\Z_p)$, we may assume that $e_0=f_0(0,1)$
is a squared element.
Let $f(x,y)=ax^4+bx^3y+cx^2y^2+dxy^3+ey^4$.
Then $e\equiv e_0\pmod {p^{2k}}$.
If $p^{2k}\nmid e_0$,
then the congruence condition
implies that
$e/e_0\in 1+p\Z_p$ if $p$ is odd and
$e/e_0\in 1+8\Z_2$ if $p=2$.
In particular $e/e_0=u^2$ for some $u\in\Z_p^\times$
and so $e=u^2e_0$ is a squared element.
This contradicts to $f$ being not locally soluble.
Thus $p^{2k}\mid e_0$
and so $p^{2k}\mid e$.
If $p^k\nmid d$,
then Lemma \ref{lem:hensel}
implies that $f$ is locally soluble,
again a contradiction. Thus $p^k\mid d$.

Since $f$ is insoluble,
the splitting type of $f\pmod p$
is either $(1^21^2)$, $(2^2)$, $(1^4)$
or $(0)$. If the splitting type is $(2^2)$,
then $e=f(0,1)$ must be in $\Z_p^\times$, so this can not occur.
If the splitting type is either $(1^4)$
or $(0)$, then $p\mid b$ and $p\mid c$,
so by Lemma \ref{lem:discdiv},
$p^{2k+2}\mid \Delta(f)$ as needed.
Suppose the splitting type of $f$ is $(1^21^2)$.
Applying a $\PGL_2(\Z_p)$-transformation if necessary,
we may assume that the coefficients
of $f$ satisfy $p\mid a$, $p\mid b$, $p\nmid c$, $p^{k}\mid d$, and $p^{2k}\mid e$. Since $f$ is insoluble and local solubility is a $\PGL_2(\Q_p)$-invariant property, it follows that $f_1=\tiny{\begin{pmatrix}p^k &\\&1 \end{pmatrix}} f$ is insoluble. Now the coefficients $(a_1,b_1,c_1,d_1,e_1)=(p^{2k}a,p^{k}b,c,d/p^k,e/p^{2k})$ are all integral and satisfy $p^{2k+1}\mid a_1$, $p^{k+1}\mid b_1$, and $p\nmid c_1$. Since $f_1$ is insoluble, the splitting type of $f_1$ must again be $(1^21^2)$ (the only other options, $(1^22)$ or $(1^211)$, would imply that $f_1$ is soluble). Therefore, applying another $\PGL_2(\Z_p)$-transformation, we may assume that in addition we also have $p\mid d_1$ and $p\mid e_1$. Applying Lemma \ref{lem:discdiv}, it follows that  $p^{2k+2}\mid\Delta(f_1)=\Delta(f)$ as necessary.
\end{proof}

We are now ready to prove the following:
\begin{proposition}\label{prop:mainls}
If $p$ is an odd prime, then the function $\ell_p$ is well approximated by periodic functions.
If $p=2$, then the function $\ell_p$ is
almost well approximated by periodic functions.
\end{proposition}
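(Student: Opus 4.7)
\medskip

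\noindent{\bf Proof plan.} The idea is to build $\ell_p^{(k)}$ as a telescoping difference of ``upper approximations'' to $\ell_p$ obtained by coarsening the condition of local solubility mod $p^{2k}$. Concretely, define $\ell_p^{[k]}: V(\Z_p)\to\{0,1\}$ by setting $\ell_p^{[k]}(f)=1$ if there exists some $f_0\in V(\Z_p)$ with $f_0$ soluble and $f\equiv f_0\pmod{p^{2k}}$, and $0$ otherwise. This is manifestly periodic with period $p^{2k}$, and $\ell_p^{[0]}\equiv 1$ (take $f_0=x^4$, which is soluble at $(x,y,z)=(1,0,1)$). Because the defining condition gets more restrictive as $k$ grows we have $\ell_p^{[k]}\le \ell_p^{[k-1]}$ pointwise. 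Set $\ell_p^{(0)}:=\ell_p^{[0]}\equiv 1$ and $\ell_p^{(k)}:=\ell_p^{[k]}-\ell_p^{[k-1]}$ for $k\ge1$; then $|\ell_p^{(k)}|\le1$ (in fact $\ell_p^{(k)}\in\{-1,0\}$ for $k\ge1$), and $\ell_p^{(k)}$ is periodic with period $p^{2k}$ since $\ell_p^{[k-1]}$ is periodic with the divisor period $p^{2(k-1)}$.

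The telescoping identity $\sum_{k=0}^{K}\ell_p^{(k)}=\ell_p^{[K]}$ reduces property (a) to showing that for fixed $f\in V(\Z_p)$ with $\Delta(f)\ne0$ one has $\ell_p^{[K]}(f)=\ell_p(f)$ for all sufficiently large $K$. If $f$ itself is soluble this is immediate. If $f$ is insoluble and $\ell_p^{[K]}(f)=1$, then by Proposition \ref{prop:mainls-equiv} we get $p^{2K+2}\mid\Delta(f)$ when $p$ is odd, and $p^{2K}\mid\Delta(f)$ for $K\ge1$ when $p=2$; since $\Delta(f)\ne0$, this forces $\ell_p^{[K]}(f)=0=\ell_p(f)$ once $K$ is large. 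Thus the sum is finite at each point and equals $\ell_p(f)$.

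Finally I verify the support condition. If $\ell_p^{(k)}(f)\ne0$ with $k\ge1$ then $\ell_p^{[k-1]}(f)=1$ and $\ell_p^{[k]}(f)=0$, which means $f$ is insoluble but congruent modulo $p^{2(k-1)}$ to some soluble $f_0$. Applying Proposition \ref{prop:mainls-equiv}(1) for odd $p$ with the parameter $k-1$ gives $p^{2k}\mid\Delta(f)$, establishing property (c) and proving that $\ell_p$ is well approximated. For $p=2$ we instead invoke part (2) of Proposition \ref{prop:mainls-equiv} with the parameter $k-2$, requiring $k\ge 2$, to conclude $p^{2k-2}\mid\Delta(f)$; the cases $k=0,1$ are absorbed into the implicit $O(1)$ loss. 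This is precisely the $p^{2k-O(1)}$ divisibility demanded by ``almost well approximated.''

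The verifications are essentially formal given Proposition \ref{prop:mainls-equiv}, so there is no serious obstacle: the main content of the proposition is already isolated in that preliminary statement, and the role of this proof is simply to package those congruence-and-discriminant estimates into the telescoping decomposition demanded by the definitions of well- and almost-well-approximated functions.
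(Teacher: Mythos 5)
Your proposal is correct and follows essentially the same route as the paper: your $\ell_p^{[k]}$ is exactly the paper's auxiliary function $L_p^{(k)}$ (the indicator of being congruent mod $p^{2k}$ to some soluble form), and the telescoping differences, the pointwise stabilization argument using $\Delta(f)\neq 0$, and the support bounds via Proposition \ref{prop:mainls-equiv} (with parameter $k-1$ for odd $p$, and the shifted parameter with an $O(1)$ loss for $p=2$) all match the paper's proof. No gaps.
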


\begin{proof}
Let $k\geq0$. For $f\in V(\Z_p)\setminus\{\Delta=0\}$,
we let $L_p^{(k)}(f)=0$ if any $g\in V(\Z_p)$ satisfying $g\equiv f\pmod{p^{2k}}$
is not locally soluble,
and $L_p^{(k)}(f)=1$ otherwise.
Then we define the functions
$\ell_p^{(k)}$ on $V(\Z_p)$ by setting $\ell_p^{(0)}:=L_p^{(0)}$ and
$\ell_p^{(k)}:=L_p^{(k)}-L_p^{(k-1)}$ for $k\geq 1$.
By definition $L_p^{(k)}$
is periodic with period $p^{2k}$
and is $\PGL_2(\Z_p)$-invariant,
so is $\ell_p^{(k)}$.
We show that
$\ell_p$ is well approximated by $\ell_p^{(k)}$ for odd $p$,
and almost well approximated by $\ell_p^{(k)}$ for $p=2$.

By definition, $\ell_p^{(0)}=L_p^{(0)}$ is identically $1$,
and $L_p^{(k)}(f)\geq L_p^{(k+1)}(f)$ for all $k$ and $f$.
If $f$ is soluble then $L_p^{(k)}(f)=1$ for all $k$,
meanwhile if $f$ is insoluble and $p^{2k+2}\nmid\Delta(f)$,
then Proposition \ref{prop:mainls-equiv} implies that
$L_p^{(k)}(f)=0$ if $p$ is odd, and
$L_p^{(k+1)}(f)=0$ if $p=2$.
Thus $\lim_{k\to\infty}L_p^{(k)}(f)=\ell_p(f)$ for all $f\in V(\Z_p)\setminus\{\Delta=0\}$.
Let $k\geq1$, and suppose $\ell_p^{(k)}(f)\neq0$.
This happens only when
$L_p^{(k-1)}(f)=1$ and $L_p^{(k)}(f)=0$.
Then Proposition \ref{prop:mainls-equiv}
implies that
$p^{2k}\mid \Delta(f)$ if $p$ is odd, and
$p^{2k-2}\mid \Delta(f)$ if $p=2$.
\end{proof}

\medskip

Finally, we deduce that $\ell_p/m_p$ is well approximated for all odd primes, and almost well approximated for $p=2$. To do this, we prove that the product of two  (almost) well approximated functions is (almost) well approximated, and that under certain conditions, the inverse of a large and (almost) well approximated function is also (almost) well approximated.

\begin{lemma}\label{lem:lawainv}
Suppose that $\phi:V(\Z_p)\backslash\{\Delta=0\}\to\R$ and $\phi':V(\Z_p)\backslash\{\Delta=0\}\to\R$ are large and $($almost$)$ well approximated. Then their product $\phi\phi'$ is also large and $($almost$)$ well approximated. 

Suppose $\phi:V(\Z_p)\backslash\{\Delta=0\}\to\R_{>0}$ is bounded away from $0$, large and $($almost$)$ well approximated via the series of functions $\phi^{(k)}$. Let $\Phi^{(k)}$ denote the partial sums of $\phi^{(k)}$, i.e., $\Phi^{(k)}:=\sum_{n=0}^k\phi^{(k)}$, and assume that the $\Phi^{(k)}$ are also bounded away from $0$ by an absolute constant. That is, we have $\Phi^{(k)}(f)\geq c>0$ for all $f$, where $c$ is an absolute constant. Then $1/\phi$ is large and $($almost$)$ well approximated.
\end{lemma}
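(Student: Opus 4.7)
The plan is to construct explicit decompositions $(\phi\phi')^{(k)}$ and $(1/\phi)^{(k)}$ from the given data $\phi^{(k)},\phi'^{(k)}$, then verify properties (a)--(c) in turn. For the product, I would set
\[
(\phi\phi')^{(k)} := \sum_{\max(i,j)=k} \phi^{(i)}\phi'^{(j)}.
\]
Reordering the formal product $(\sum_i\phi^{(i)})(\sum_j\phi'^{(j)})$ by $\max(i,j)$ gives $\phi\phi'=\sum_k(\phi\phi')^{(k)}$, and the rearrangement is legitimate because (c) forces only finitely many $\phi^{(i)}(f)$ and $\phi'^{(j)}(f)$ to be nonzero at any fixed $f$ with $\Delta(f)\neq 0$, so the outer sum is in fact finite. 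Each summand $\phi^{(i)}\phi'^{(j)}$ with $i,j\leq k$ is periodic modulo $p^{2k}$, giving (b). Property (c) is precisely why one groups by $\max$: if $\phi^{(i)}(f)\phi'^{(j)}(f)\neq 0$ then $p^{2i}\mid\Delta(f)$ and $p^{2j}\mid\Delta(f)$, whence $p^{2\max(i,j)}\mid\Delta(f)$. The almost-well-approximated case is identical, with the $O(1)$ shift taken to be the maximum of the two individual shifts.

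For the inverse, I would introduce the partial sums $\Phi^{(k)}:=\sum_{n=0}^{k}\phi^{(n)}$ (periodic modulo $p^{2k}$, and bounded below by an absolute constant $c>0$ by hypothesis) and define the telescoping decomposition
\[
\left(\tfrac{1}{\phi}\right)^{(0)} := 1, \qquad \left(\tfrac{1}{\phi}\right)^{(k)} := \frac{1}{\Phi^{(k)}} - \frac{1}{\Phi^{(k-1)}} \quad (k\geq 1).
\]
Partial sums of these then telescope to $1/\Phi^{(k)}$, which converges pointwise to $1/\phi$ by property (a) of $\phi$ together with the lower bound on $\phi$. Periodicity modulo $p^{2k}$ is inherited from $\Phi^{(k)}$ and $\Phi^{(k-1)}$. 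The decisive computation is
\[
\left(\tfrac{1}{\phi}\right)^{(k)} = \frac{\Phi^{(k-1)}-\Phi^{(k)}}{\Phi^{(k)}\Phi^{(k-1)}} = \frac{-\phi^{(k)}}{\Phi^{(k)}\Phi^{(k-1)}},
\]
which transfers the support condition from $\phi^{(k)}$ to $(1/\phi)^{(k)}$. The uniform lower bound on $\Phi^{(k)}$ then gives $|(1/\phi)^{(k)}|\leq c^{-2}|\phi^{(k)}|\leq c^{-2}$, yielding uniform boundedness up to a harmless absolute rescaling. The almost-well-approximated case is handled identically with $p^{2k-O(1)}\mid\Delta(f)$ in place of $p^{2k}\mid\Delta(f)$.

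The hard part is the inverse case: a priori it is not obvious why $1/\phi$ should admit a decomposition whose $k$-th piece is supported on $\{p^{2k}\mid\Delta\}$, since inversion can introduce global dependence on values of $\phi$ at all depths. The telescoping identity $\Phi^{(k)}-\Phi^{(k-1)}=\phi^{(k)}$ resolves this in one line — after factoring, the numerator of $(1/\phi)^{(k)}$ is literally $-\phi^{(k)}$, so the support transfers for free and the lower bound on $\Phi^{(k)}$ takes care of boundedness. The product case is by comparison routine, provided one groups summands by $\max(i,j)$; the naive grouping by $i+j$ would destroy the support condition.
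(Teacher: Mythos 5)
Your proposal is correct and matches the paper's argument: your grouping of the product by $\max(i,j)$ is exactly the paper's $\psi^{(k)}:=\Phi^{(k)}\Phi'^{(k)}-\Phi^{(k-1)}\Phi'^{(k-1)}$, and your telescoping decomposition $1/\Phi^{(k)}-1/\Phi^{(k-1)}$ for the inverse is literally the one used there, with the support condition verified by the same observation (your factored form $-\phi^{(k)}/(\Phi^{(k)}\Phi^{(k-1)})$ is equivalent to the paper's remark that $p^{2k}\nmid\Delta(f)$ forces $\Phi^{(k)}(f)=\Phi^{(k-1)}(f)$).
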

\begin{proof}
We define the partial sums $\Phi^{(k)}:=\sum_{n=0}^k\phi^{(k)}$ and $\Phi'^{(k)}:=\sum_{n=0}^k\phi'^{(k)}$; define $\Psi^{(k)}:=\Phi^{(k)}\Phi'^{(k)}$; and finally define $\psi^{(0)}$ to be identically $1$ and $\psi^{(k)}:=\Psi^{(k)}-\Psi^{(k-1)}$ for $k\geq 1$.
It is then easy to check that $\phi\phi'$ is large and (almost) well approximated via the functions $\psi^{(k)}$.

Consider the functions $\Psi^{(k)}=(\Phi^{(k)})^{-1}$, and define $\psi^{(0)}$ to be identically $1$, and $\psi^{(k)}:=\Psi^{(k)}-\Psi^{(k-1)}$ for $k\geq 1$. We claim that $1/\phi$ is large and (almost) well approximated via the functions $\psi^{(k)}$. Indeed, Properties (a) and (b) are immediately seen to be satisfied. To verify Property (c) in the case when $\phi$ is well approximated, note that if $f\in V(\Z)$ is an element with $p^{2k}\nmid\Delta(f)$, then $\phi^{(k)}(f)=0$, which implies that $\Phi^{(k)}(f)=\Phi^{(k-1)}(f)$, which in turn implies that $\psi^{(k)}(f)=1/\Phi^{(k)}(f)-1/\Phi^{(k)}(f)=0$ as necessary. The proof in the case when $\phi$ is almost well approximated is identical.
\end{proof}

Suppose $\chi:\Z_p^2\backslash\{\Delta\neq 0\}\to\R$ is large and (almost) well approximated by periodic functions. Then it is clear that the function $\Inv_\chi:=\chi\circ\Inv:V(\Z_p)\backslash\{\Delta=0\}\to\R$ is also (almost) well approximated. Here, for a ring $R$, the function $\Inv:V(R)\to R^2$ is defined by setting $\Inv(f)=(I(f),J(f))$.
We then have the following consequence of Propositions \ref{prop:mpgood}, \ref{prop:mainls}, and Lemma \ref{lem:lawainv}
\begin{corollary}
Let $\chi:\Z_p^2\backslash\{\Delta\neq 0\}\to\R$ be $($almost$)$ well approximated by periodic functions. Then $\Inv_\chi\cdot \ell_p/m_p$ is large and $($almost$)$ well approximated by periodic functions when $p\geq 3$, and almost well approximated when $p=2$.
\end{corollary}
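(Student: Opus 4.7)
The plan is to write the target function as the three-factor product $\Inv_\chi \cdot \ell_p \cdot (1/m_p)$ and apply the product and inversion clauses of Lemma \ref{lem:lawainv} after checking that each factor is (almost) well approximated.

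First, I would verify that $\Inv_\chi = \chi \circ \Inv$ inherits (almost) well-approximation from $\chi$. Given the decomposition $\chi = \sum_{k \geq 0} \chi^{(k)}$ supplied by the hypothesis, the natural candidate for the local decomposition of $\Inv_\chi$ is $\Inv_\chi^{(k)} := \chi^{(k)} \circ \Inv$. Because the invariant map $\Inv: V(\Z_p) \to \Z_p^2$ is given by polynomial formulas in the coefficients of $f$, it commutes with reduction modulo $p^{2k}$, so each $\Inv_\chi^{(k)}$ inherits periodicity with period $p^{2k}$. The support condition transfers via the identity $\Delta(f) = \Delta(I(f), J(f))$ recorded in the introduction: if $\chi^{(k)}$ is supported where $p^{2k-O(1)} \mid \Delta(I,J)$, then $\Inv_\chi^{(k)}$ is supported where $p^{2k-O(1)} \mid \Delta(f)$. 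Property (a) (absolute convergence summing to $\chi \circ \Inv$) is immediate from the corresponding property for $\chi$.

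Second, I would invert $m_p$ using the second clause of Lemma \ref{lem:lawainv}. Proposition \ref{prop:mpgood} shows $m_p$ is well approximated via the $m_p^{(k)}$. Each $m_p^{(k)}$ is non-negative, being the cardinality of $\PGL_2(\Z_p) \backslash \mathcal C^{(k)}_f$, and $m_p^{(0)} \equiv 1$. Consequently the partial sums $\Phi^{(k)} := \sum_{n=0}^{k} m_p^{(n)}$ satisfy $\Phi^{(k)} \geq 1$ uniformly, and so does the limit $m_p$ itself. These are exactly the hypotheses required by the inversion clause of Lemma \ref{lem:lawainv}, yielding that $1/m_p$ is well approximated by periodic functions on $V(\Z_p) \setminus \{\Delta = 0\}$.

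Finally, I would combine the three factors by two applications of the product clause of Lemma \ref{lem:lawainv}, using Proposition \ref{prop:mainls} for $\ell_p$ (well approximated when $p$ is odd, almost well approximated when $p = 2$). The product $\Inv_\chi \cdot \ell_p \cdot (1/m_p)$ is then well approximated exactly when all three factors are, i.e.\ when $\chi$ is well approximated and $p \geq 3$, and is almost well approximated otherwise — precisely the dichotomy in the statement. There is no real obstacle: the only step that requires any verification beyond invoking the cited results is the uniform lower bound $m_p \geq 1$ needed to invert $m_p$, and this is immediate from $m_p^{(0)} \equiv 1$ together with the non-negativity of each $m_p^{(k)}$.
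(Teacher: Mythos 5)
Your proposal is correct and follows essentially the same route the paper intends: the paper states this corollary as a direct consequence of Propositions \ref{prop:mpgood} and \ref{prop:mainls} together with Lemma \ref{lem:lawainv}, with the transfer of (almost) well-approximation from $\chi$ to $\Inv_\chi=\chi\circ\Inv$ noted (as you verify via $\Delta(f)=\Delta(I(f),J(f))$ and the compatibility of $\Inv$ with reduction mod $p^{2k}$) in the paragraph preceding the statement. Your explicit check that $m_p^{(0)}\equiv 1$ and $m_p^{(k)}\geq 0$ give the uniform lower bound on the partial sums needed for the inversion clause is exactly the point the paper leaves implicit.
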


\section{Density estimates and Fourier analysis on $V(\Z/n\Z)$}\label{sec:FT}

As we will see in \S8, the error terms obtained in Section 4 are insufficiently strong for the purposes of summing $\PGL_2(\Z)$-orbits on irreducible binary quartic forms weighted by a locally well approximated function. In fact, the obtained error terms in both the main body and the cuspidal region are insufficiently strong. The main goals of this section are: 1: to prove that the previously introduced Dirichlet series $D^\pm(\phi,s)$ have analytic continuation to the left of $\Re(s)=1$, so as to show that their values at $1/2$ are well defined, and 2: to obtain cancellation in the Fourier transforms of $\PGL_2(\Z/p^2\Z)$-invariant functions on $V(\Z/p^2\Z)$. These results will then be combined in \S 8 with equidistribution techniques to improve the main body count.

This section is organized as follows. In Section 6.1, we obtain upper bounds on the density of the set of elements in $V(\Z_p)$ whose discriminants are divisible by $p^{2k}$ for $k\geq 1$. This will allow us to analytically continue the Dirichlet series $D^\pm(\phi,s)$ to the left of $\Re(s)=1$. Then in Section 6.2, we obtain nontrivial cancellation in the Fourier transform of $\PGL_2(\Z/p^2\Z)$-invariant functions on $V(\Z/p^2\Z)$.

For a function $\phi\colon V(\Z_p)\rightarrow\R$, similar to \eqref{eq:nuden},
we define
\begin{equation}\label{eq:nuden_l}
\nu(\phi):=\int_{V(\Z_p)}\phi(v)dv;\quad\quad
\nu_a(\phi):=\int_{V_a(\Z_p)}\phi(v)dv.
\end{equation}
Above, the measures $dv$ are normalized so that $V(\Z_p)$ and $V_a(\Z_p)$ have volume $1$.

\subsection{Bounds on the density of elements in $V(\Z_p)$ with small discriminant}

For $k\geq 1$, let $\chi_{p^{2k}}:V(\Z/p^{2k}\Z)\to\R$ denote the characteristic function of any set of elements $f\in V(\Z/p^{2k}\Z)$ with $\Delta(f)=0$. In this subsection, we begin by proving the following result.

\begin{proposition}\label{prop:ppden}
We have
\begin{equation*}
\nu(\chi_{p^2})\ll\frac{1}{p^2},\quad\quad \nu(\chi_{p^4})\ll\frac{1}{p^4},\quad\quad
\nu(\chi_{p^{2k}})\ll\frac{k}{p^{3k/2}},
\end{equation*}
for $k\geq 3$.
\end{proposition}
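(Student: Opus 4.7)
The plan is to partition $V(\Z_p)$ according to the splitting type $\sigma$ of $\bar f\in V(\F_p)$, place each stratum in canonical form via the $\PGL_2(\Z_p)$-action, and analyze $v_p(\Delta(f))$ on each using the explicit formula for $\Delta$ recorded in the proof of Lemma~\ref{lem:disc-cong}. Non-singular splitting types satisfy $v_p(\Delta(f))=0$ and contribute nothing to $\nu(\chi_{p^{2k}})$ for $k\geq 1$, so only the singular types $(1^211),(1^22),(1^21^2),(2^2),(1^31),(1^4)$, and $(0)$ require analysis.

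For $k=1$ and $k=2$, I would verify that each singular stratum contributes $\ll p^{-2k}$ by a direct calculation. On the simple-double-root strata $(1^211)$ and $(1^22)$ (each of density $\sim 1/p$), placing the double root at a standard position shows via the discriminant formula that $v_p(\Delta(f))=1$ generically, with each additional unit of $v_p(\Delta)$ costing one codimension; this gives a contribution $\ll p^{-1}\cdot p^{-(2k-1)}=p^{-2k}$ per stratum. The strata $(1^21^2),(2^2),(1^31)$, and the canonical form of $(1^4)$ are handled analogously, invoking Lemma~\ref{lem:discdiv} to lower-bound $v_p(\Delta)$ where convenient. The content stratum $(0)$ has $v_p(\Delta)\geq 6$ automatically (via $\Delta(pf_1)=p^6\Delta(f_1)$) and density $p^{-5}$, giving a contribution $\ll p^{-5}$, which suffices for $k\leq 2$.

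For $k\geq 3$, the bound $k/p^{3k/2}$ reflects the dominant contribution from the $(1^4)$ stratum together with its iterated refinements. In canonical form $f=ux^4+p(bx^3y+cx^2y^2+dxy^3+ey^4)$ with $u\in\Z_p^\times$, inspection of each monomial $a^{i_0}b^{i_1}c^{i_2}d^{i_3}e^{i_4}$ of the discriminant shows that its $p$-adic valuation is at least $i_1v_p(b)+i_2v_p(c)+i_3v_p(d)+i_4v_p(e)$; requiring $v_p(\Delta(f))\geq 2k$ therefore imposes a system of linear constraints on the tuple $(v_p(b),v_p(c),v_p(d),v_p(e))$. Summing the densities of the feasible tuples yields a contribution $\ll k\cdot p^{-3k/2}$ to $\nu(\chi_{p^{2k}})$, where the factor $k$ accounts for the number of admissible Newton polygon shapes. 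The content stratum $(0)$ is absorbed via the recursion $\nu(\chi_{p^{2k}})\leq p^{-5}\nu(\chi_{p^{2k-6}})+(\text{non-content})$, which is sub-dominant by induction on $k$, and all other singular strata contribute $\ll p^{-2k}\leq p^{-3k/2}$.

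The principal obstacle is the Newton-polygon analysis on the $(1^4)$ stratum: one must enumerate the admissible tuples $(v_p(b),v_p(c),v_p(d),v_p(e))$, bound the density of each, and verify that the total sum is $O(k\cdot p^{-3k/2})$ rather than something weaker. Additional care will be needed at $p=2$, where the discriminant congruences of Lemma~\ref{lem:disc-cong} have a slightly different form and the Hensel input from Lemma~\ref{lem:hensel} requires one extra level of refinement.
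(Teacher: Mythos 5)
The key step of your $k\geq 3$ argument contains a genuine gap: the inference ``requiring $v_p(\Delta(f))\geq 2k$ therefore imposes a system of linear constraints on the tuple $(v_p(b),v_p(c),v_p(d),v_p(e))$'' goes the wrong way. Monomial-by-monomial valuations only bound $v_p(\Delta)$ from \emph{below}; because of cancellation between monomials, the locus $\{p^{2k}\mid\Delta\}$ is not contained in any union of coefficient-valuation boxes of correspondingly small density, in your canonical coordinates or any other single choice. Concretely, take $f(x,y)=(x-py)^4+p^Ny^4=x^4-4px^3y+6p^2x^2y^2-4p^3xy^3+(p^4+p^N)y^4$ with $N$ large. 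This lies exactly in your $(1^4)$ stratum with unit leading coefficient, its coefficient valuations are frozen at $(1,2,3,4)$, every monomial of the discriminant has valuation exactly $12$, and yet $\Delta(f)=\Delta(x^4+p^Ny^4)=256\,p^{3N}$, so $v_p(\Delta(f))=3N$ is arbitrarily large. Thus counting ``feasible valuation tuples'' does not upper-bound $\nu(\chi_{p^{2k}})$: the extremal configurations are iterated clusterings of the four roots around a common (possibly irrational, possibly nonzero) center at deeper and deeper $p$-adic levels, and these are invisible to coefficient valuations after a single normalization. Capturing them requires a genuinely recursive analysis over root configurations (including clusters defined over ramified and unramified extensions and mixed configurations), and proving that the total density is $\ll k\,p^{-3k/2}$ is precisely the hard content of the proposition; your sketch replaces it with a one-step Newton-polygon count that the example above defeats.

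For comparison, the paper does not attempt such a stratification. For $k\geq 3$ it fibers over the leading coefficient $a$ with $p^\ell\parallel a$, passes to monic forms via the map $\mon_a$ (Jacobian $a^6$, $\Delta(\mon_a(f))=a^6\Delta(f)$), imports the bound for monic quartics with highly divisible discriminant from \cite[Lemma~5.1]{2204.01651}, and sharpens it by a factor $p^{-\ell/2}$ using the divisibility of the $x^2y^2$-coefficient together with a translation argument; the factor $k$ arises from summing over $\ell$. For the $k=1,2$ bounds it again avoids a direct stratum count on $V(\Z_p)$: it pushes forward to $(I,J)$-space via Proposition~\ref{prop:Jac}, bounds the fibers using Bhargava's parametrization and Nakagawa's bound on suborders, and invokes the density of $(I,J)$ with $p^\delta\mid\Delta(I,J)$ from \cite[Proposition~3.8]{MR4585297}. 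Your proposed direct computation is plausible for $\nu(\chi_{p^2})\ll p^{-2}$ (it is in the spirit of Lemma~\ref{lem:need-Dancont}), but already for $\nu(\chi_{p^4})\ll p^{-4}$ the same refinement issues appear (a double root modulo $p$ can split into two nearby simple roots), so ``analogously'' hides real work there as well. As written, the proposal does not yield an upper bound for the main case $k\geq 3$, and would need either the full recursive root-cluster analysis or an external input such as the monic-quartic result the paper cites.
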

\begin{proof}
We begin with the case $k\geq 3$. Fix $0\neq a\in \Z_p$, and denote the set of binary quartic forms $f\in V(\Z_p)$ with $a(f)=a$ by $V_a(\Z_p)$. Let $S_a(2k)\subset V_a(\Z_p)$ denote the set of elements $f\in V_a(\Z_p)$ with $p^{2k}\mid\Delta(f)$. To obtain a bound on the density of $S_a(2k)$, we consider the map
\begin{equation*}
\begin{array}{rcl}
\mon_a: V_a(\Z_p)&\to& V_1(\Z_p)\\[.05in]
ax^4+bx^3y+cx^2y^2+dxy^3+ey^4&\mapsto&
x^4+bx^3y+acx^2y^2+a^2dxy^3+a^3ey^4.
\end{array}
\end{equation*}
The Jacobian change of variables of this map is clearly $a^6$, and it is easily seen that $\Delta(\mon_a(f))=a^6\Delta(f)$. If $p^\ell\parallel a$, then it follows that for $g(x,y)\in\mon_a(S_a(2k))$, we have $p^{2k+6\ell}\mid\Delta(g)$. In particular, we have $\mon_a(S_a(2k))\subset S_1(2k+6\ell)$.

It is proved in \cite[Lemma 5.1]{2204.01651} that the density of $S_1(2k+6\ell)$ is $\ll p^{-3k/2-9\ell/2}$.
We claim that the density of $\mon_a(S_a(2k))\subset S_1(2k+6\ell)$ is in fact smaller, and bounded by $\ll p^{-3k/2-5\ell}$. For this, we use the additional fact that every $f(x,y)\in\mon_a(S_a(2k))$ has $x^2y^2$-coefficient divisible by $p^\ell$. The set $S_1(2k+6\ell)$ is preserved by the transformation $f(x,y)\mapsto f(x+ry,y)$ for any $r\in\Z_p$. This yields a surjective map $\Z_p\times S_1^{(0)}(2k+6\ell)\to S_1(2k+6\ell)$, where $S_1^{(0)}(2k+6\ell)$ is the set of elements in $S_1(2k+6\ell)$ whose $x^3y$-coefficient belongs to $\{0,1,2,3\}$. The density of $S_1^{(0)}(2k+6\ell)$ satisfies the same bound as the density of $S_1(2k+6\ell)$, and is thus $\ll p^{-3k/2-9\ell/2}$. We have the following lemma
\begin{lemma}
Fix any $f(x,y)\in S_1^{(0)}(2k+6\ell)$. Then the density of $r\in\Z_p$ such that $f(x+ry,y)$ belongs to $\mon_a(S_a(2k))$ is $\ll p^{-\ell/2}$.
\end{lemma}
\begin{proof}
We prove the lemma in the case when the $x^4y$-coefficient is $0$; the other cases are identical. In this case, we need to upper bound the set of $r\in\Z_p$ such that $p^\ell\mid (6r^2+c)$ for any fixed $c\in\Z_p$, and the set of such $r$ clearly has density $\ll p^{-\ell/2}$.
\end{proof}

\noindent We therefore have 
\begin{equation*}
\nu(S_a(2k))=p^{6\ell}\nu(\mon_a(S_a(2k)))\ll p^{6\ell}p^{-\ell/2}\nu(S_1^{(0)}(2k+6\ell))\ll
p^{6\ell}p^{-\ell/2}p^{-3k/2-9\ell/2}= p^{\ell-3k/2}.
\end{equation*}
The required result (for $k\geq 3$) follows immediately by integrating over $a$.

\medskip

For $k=2$ and $k=3$, it is sufficient to bound the densities of the sets $T_\delta:=\{f\in V(\Z_p):p^\delta\parallel\Delta(f)\}$ for $\delta=2$, $3$, $4$, and $5$. For this, we use the Jacobian change of variables formula of \cite[Proposition 3.11]{BS2Sel} (also stated in Proposition \ref{prop:Jac}) to obtain
\begin{equation}\label{eq:vTdelta}
\nu(T_\delta)\ll \int_{\substack{(I,J)\in\Z_p^2\\p^\delta\parallel\Delta(I,J)}}\#\frac{\Inv^{-1}(I,J)}{\PGL_2(\Z_p)}dIdJ.
\end{equation}
Let $g_{I,J}$ denote the monic cubic polynomial with invariants $I$ and $J$, and let $R_{I,J}$ denote the corresponding cubic ring. Let $\mathcal{Q}_{I,J}$ denote the set of  quartic algebras over $\Z_p$ with resolvent $R_{I,J}$. We have the following result regarding the size of $\mathcal{Q}_{I,J}$.
\begin{lemma}
We have
\begin{equation*}
\#\frac{\Inv^{-1}(I,J)}{\PGL_2(\Z_p)}\leq \#\Stab_{\GL_2(\Z_p)}(g_{I,J})\#\mathcal{Q}_{I,J}.
\end{equation*}
\end{lemma}
\begin{proof}
The set $\mathcal{Q}_{I,J}$ is in bijection with $\GL_2(\Z_p)\times\SL_3(\Z_p)$-orbits on the set $S_{I,J}$ of pairs $(A,B)\in W(\Z_p)$ such that the ring associated to $4\det(Ax-By)$ is $R_{I,J}$. The map $\iota$ defined in \eqref{eq:iota} gives an injection from $\Inv^{-1}(I,J)$ to $S_{I,J}$. Since $\PGL_2$ is isomorphic to $\SO_{A_0}$ (see the discussion following the definition of $\iota$) this induces an injection $\PGL_2(\Z_p)\backslash\Inv^{-1}(I,J)\to \SL_3(\Z_p)\backslash S_{I,J}$. Moreover, the image of this injection is contained in $S'_{I,J}$, the set of pairs $(A,B)\in S_{I,J}$ with monic cubic resolvent. The lemma follows since it is easy to see that the size of a set of $\GL_2(\Z_p)$-equivalent elements in $\SL_3(\Z_p)\backslash S'_{I,J}$ is bounded by $\#\Stab_{\GL_2(\Z_p)}(g_{I,J})$.
\end{proof}

Set $K_{I,J}:=R_{I,J}\otimes \Q_p$, and note that there are $O(1)$ \'etale quartic algebras $K_4$ over $\Q_p$ with resolvent $K_{I,J}$. Then every element in $\mathcal{Q}_{I,J}$ is a suborder in some such $K_4$ having index $\leq p^2$, and index $\leq p$ if $\Delta(K_4)\geq p^2$. (This latter condition follows since we are assuming that $\delta\leq 5$.) The previously used result of Nakagawa \cite[Theorem 1]{MR1342021} implies that there are $O(1)$ such suborders in $K_4$. The result now follows from \eqref{eq:vTdelta} in conjunction with \cite[Proposition 3.8]{MR4585297}, which upper bounds the density of pairs $(A,B)$ such that $p^\delta\mid\Delta(x^3+Ax+B)$ by $O(p^{-2})$ when $\delta=2$ or $\delta=3$ and by $O(p^{-4})$ when $\delta=4$ or $\delta=5$.
\end{proof}

We now collect several consequences of Proposition \ref{prop:ppden}. The most important of these is the analytic continuation of $D(\phi,s)$ for large and locally well approximated functions $\phi:V(\Z)\to\R$. To establish this continuation, we need the following lemma.

\begin{lemma}\label{lem:need-Dancont}
Fix an integer $k\geq 1$ and an element $a\in \Z/p^{2k}\Z$. We write $a=up^\ell$, where $u\in\Z/p^{2k}\Z$ is a unit and $\ell\leq 2k$. Then we have the following bounds.
\begin{equation*}
\nu_u(\chi_{p^2}),\nu_{up}(\chi_{p^2})\ll\frac{1}{p^2},\;\;\nu_{up^2}(\chi_{p^2})\ll\frac{1}{p},\quad \nu_{up^\ell}(\chi_{p^4})\ll\frac{1}{p^{4-\ell}},
\quad
\nu_{up^\ell}(\chi_{p^{2k}})\ll\min\Bigl(\frac{k}{p^{3k/2-\ell}},1\Bigr),
\end{equation*}
for $k\geq 3$.
\end{lemma}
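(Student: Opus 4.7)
The plan is to run both arguments in the proof of Proposition \ref{prop:ppden} fibered over the leading coefficient $a = u p^\ell$, rather than integrating over $a$ at the very end. The trivial bound $\nu_a(\chi_{p^{2k}}) \leq 1$ gives the minimum with $1$ in the $k \geq 3$ case.

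For $k \geq 3$, I adapt the monogenization step. The map $\mon_a \colon V_a(\Z_p) \to V_1(\Z_p)$ has Jacobian determinant $|a|^6 = p^{-6\ell}$, and sends $\{f \in V_a(\Z_p) : p^{2k}\mid\Delta(f)\}$ into the subset of $V_1(\Z_p)$ where both $p^{2k+6\ell}\mid\Delta(g)$ and $p^\ell \mid c(g)$ hold; the latter divisibility is because the $x^2y^2$-coefficient of $\mon_a(f)$ equals $ac$. Combining the bound of \cite[Lemma 5.1]{2204.01651} on the first divisibility with the translation-by-$r$ trick from the proof of Proposition \ref{prop:ppden} (which gives an extra $p^{-\ell/2}$ saving from $p^\ell \mid c(g)$), the density of this subset in $V_1(\Z_p)$ is $\ll k/p^{3k/2+5\ell}$; multiplying by the Jacobian factor $p^{6\ell}$ yields the claimed bound $\ll k/p^{3k/2-\ell}$.

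For $k = 1, 2$ the $\mon_a$ bound is insufficient, and I instead adapt the parametrization argument from the second half of Proposition \ref{prop:ppden}. Using a version of Proposition 3.11 of \cite{BS2Sel} fibered over $a$, I express $\nu_a(\chi_{p^{2k}})$ as an integral over pairs $(I,J) \in \Z_p^2$ with $p^{2k} \mid \Delta(I,J)$, weighted by $\#[V_a(\Z_p) \cap \Inv^{-1}(I,J)/\PGL_2(\Z_p)]$. By Proposition \ref{prop:Nak} combined with Theorem \ref{th:QCparam}, this weight is $O(1)$ whenever $p^\delta \parallel \Delta(I,J)$ with $\delta \leq 5$; the density bounds $\ll p^{-2}$ (for $\delta \in \{2,3\}$) and $\ll p^{-4}$ (for $\delta \in \{4,5\}$) from \cite[Proposition 3.8]{MR4585297} then control the integral. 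The restriction to $V_a$ reduces the fiber dimension from $3$ to $2$, and a careful $\ell$-dependent analysis of the fiber yields the sharper bounds $1/p^{4-\ell}$ for $k = 2$ and $1/p^2, 1/p^2, 1/p$ for $k = 1$ with $\ell = 0, 1, 2$ respectively.

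The main obstacle will be the fibered version of the Jacobian change of variables in the $k = 1, 2$ cases: verifying that restricting to $V_a$ gives exactly the claimed $p$-power saving (rather than something weaker) requires explicit analysis of the polynomials $I(f), J(f)$ in terms of $(b,c,d,e)$ with $a$ fixed, together with use of the congruences in Lemma \ref{lem:disc-cong} to handle the degenerate fibers where $p \mid b$ or $p \mid c$.
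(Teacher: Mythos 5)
Your treatment of the last bound is correct: for $k\geq 3$ you simply re-run the intermediate step of the proof of Proposition \ref{prop:ppden} before integrating over $a$, and that step does give $\nu_{up^\ell}(\chi_{p^{2k}})\ll p^{\ell-3k/2}$, which together with the trivial bound $\nu_a\leq 1$ yields the claim. In fact this case, and the $\chi_{p^4}$ case, require no new argument at all: since $\Delta(u^{-1}f)=u^{-6}\Delta(f)$, the density $\nu_a(\chi_{p^{2k}})$ depends only on $\ell=v_p(a)$, so positivity gives $\nu(\chi_{p^{2k}})\geq \frac{p-1}{p^{\ell+1}}\,\nu_{up^\ell}(\chi_{p^{2k}})$, i.e.\ $\nu_{up^\ell}(\chi_{p^{2k}})\ll p^{\ell}\,\nu(\chi_{p^{2k}})$; this is exactly how the paper deduces the $\chi_{p^4}$ and $\chi_{p^{2k}}$ bounds directly from Proposition \ref{prop:ppden}. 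Your $k=2$ detour through the parametrization is therefore unnecessary.

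The genuine gap is in the $k=1$ bounds $\nu_{up}(\chi_{p^2})\ll p^{-2}$ and $\nu_{up^2}(\chi_{p^2})\ll p^{-1}$ (and $\nu_u(\chi_{p^2})\ll p^{-2}$): these are precisely the cases where the averaged bound $\nu(\chi_{p^2})\ll p^{-2}$ loses the factor $p^\ell$ and genuinely new input is needed, and your proposal does not supply it. The route you sketch --- a version of Proposition \ref{prop:Jac} ``fibered over $a$,'' with the orbit weight controlled by Nakagawa and the $(I,J)$-density bounds of \cite{MR4585297} --- does not work as stated: the change of variables in Proposition \ref{prop:Jac} applies to $\PGL_2$-stable multisets, and $V_a(\Z_p)$ is not $\PGL_2(\Z_p)$-stable (the leading coefficient varies across each orbit), so there is no straightforward restriction of the $(I,J)$-integral to the slice $V_a$; moreover, bounds on the density of $(I,J)$ with $p^\delta\parallel\Delta(I,J)$ say nothing about how the mass within a fiber distributes over leading coefficients, which is the whole question. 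You explicitly flag this as ``the main obstacle'' but never resolve it, and the promised ``$\ell$-dependent analysis of the fiber'' is exactly the missing content. The paper instead proves these two bounds by short direct computations from Lemma \ref{lem:disc-cong}: for $p^2\mid a$, congruence \eqref{eq:disc-cong-cubic} gives $\Delta(f)\equiv b^2\Delta_3\pmod{p^2}$ with $\Delta_3$ the discriminant of the cubic part, so $p^2\mid\Delta(f)$ forces $p\mid b$ (density $p^{-1}$) or $p^2\mid\Delta_3$ (density $p^{-2}$); for $p\parallel a$ one splits according to whether $(1{:}0)$ is a multiple root of $f$ mod $p$ and uses \eqref{eq:disc-cong-quad} (and its reflection $\Delta(a,b,c,d,e)=\Delta(e,d,c,b,a)$) to get $O(p^{-2})$ in each case, and $\nu_u(\chi_{p^2})\ll p^{-2}$ follows from a splitting-type count. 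Without these (or equivalent) computations your proposal does not prove the $k=1$ part of the lemma, which is its essential content beyond Proposition \ref{prop:ppden}.
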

\begin{proof}
The bound on $\nu_{u}(\chi_{p^2})$ follows from an analysis of the possible splitting types, while the last two bounds are a direct consequence of Proposition \ref{prop:ppden}. We consider the remaining two bounds.

We first consider
$\nu_{up^2}(\chi_{p^2})$.
Let $f=(a,b,c,d,e)\in V(\Z_p)$ and assume $p^2\mid a$
Then by \eqref{eq:disc-cong-cubic},
\begin{equation*}
\Delta(f)\equiv b^2(c^2d^2+18bcde-4bd^3-4c^3e-27b^2e^2)
\pmod{p^2}.
\end{equation*}
Note that $\Delta_3:=c^2d^2+18bcde-4bd^3-4c^3e-27b^2e^2$
is the discriminant of $bx^3+cx^2y+dxy^2+ey^3$.
The density of $(b,c,d,e)\in\Z_p^4$
with $p^2\mid b^2$ is $O(p^{-1})$,
while with $p^2\mid \Delta_3$ is $O(p^{-2})$,
as desired.

Finally we consider $\nu_{up}(\chi_{p^2})$.
Let $f=(a,b,c,d,e)\in V(\Z_p)$ with
 $a=up$ for some $u\in\Z_p^\times$,
and assume $p^2\mid \Delta(f)$.
Then $(f\pmod p)\in V(\F_p)$ has a multiple root in $\mathbb P^1$.
Note that $(1:0)\in\mathbb P^1(\F_p)$ is
a root.
Suppose $(1:0)$ is a multiple root.
Then $p\mid b$. Thus by \eqref{eq:disc-cong-quad},
\[
\Delta(f)\equiv 4ac^3(4ce-d^2)
\pmod{p^2}.
\]
Thus $p^2\mid\Delta(f)$ if and only if $p\mid c$ or $p\mid (4ce-d^2)$.
The $(b,c,d,e)\in\Z_p^4$ satisfying the conditions have density $O(p^{-2})$ as desired.
Suppose $(1:0)\in\mathbb P^1(\F_p)$ is a simple root. Then $(f\pmod p)$ has one multiple root in $\mathbb P^1(\F_p)$. Further suppose that the multiple root is $(0:1)$. Then $p\mid d$ and $p\mid e$. Thus by \eqref{eq:disc-cong-quad}
with $\Delta(a,b,c,d,e)=\Delta(e,d,c,b,a)$, we have
\[
\Delta(f)\equiv -4b^2c^3e
\pmod{p^2}.
\]
So $p^2\mid\Delta(f)$ if and only if $p^2\mid e$ or     $p\mid bc$.
The density of $(b,c,d,e)\in\Z_p^4$
satisfying the conditions is
$O(p^{-3})$. There are $p$ possibilities
for the multiple root of $(f\pmod p)$,
and the densities all coincide.
Thus the total density of $(b,c,d,e)\in\Z_p^4$
for $p^2\mid\Delta(f)$ is $O(p^{-2})$,
completing the proof.
\end{proof}

Let $\phi:V(\Z)\to\R$ be a large and locally well approximated function via $\phi(\cdot)=\sum_n\phi(n;\cdot)$. We clearly have the equality $D^\pm(\phi,s)=\sum_{n}D^\pm(\phi(n;\cdot),s)$ in the region $\Re(s)>1$ of absolute convergence. Recall that $\phi(n;\cdot)$ is defined by congruence conditions modulo $n^2$, and is supported on the set of elements $f\in V(\Z_p)$ satisfying $n^2\mid c\Delta(f)$, for some positive integer $c$. We have the following result on $D(\phi(n;\cdot),s)$:

\begin{proposition}\label{lem:Dancont}
Keep the above notation. The functions $D^\pm(\phi(n;\cdot),s)$ have analytic continuation to the whole plane, with at most a simple pole at $s=1$ with residue $r_n=\nu(\phi(n;\cdot))$.
Moreover, we have the bound
\begin{equation}\label{eq:Dconvtemp}
\left|\ D^\pm(\phi(n;\cdot),s)-\frac{r_n}{s-1}\ \right|\ll_\epsilon \frac{n^\epsilon}{n_1^{3/2+\sigma/2}n_2^{4\sigma}m_3^{1/3+\sigma/6}}(1+|t|),
\end{equation}
for $s=\sigma+it$ with $\sigma>1/3$, where we write $n=n_1n_2^2m_3$ with $n_1$ and $n_2$ squarefree, $m_3$ cubefull $(p\mid m_3$ implies $p^3\mid m_3)$, and $n_1$, $n_2$, $m_3$ pairwise relatively prime.
\end{proposition}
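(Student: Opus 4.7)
The plan is to proceed in three steps. First, I establish meromorphic continuation and compute the residue. Since $\phi(n;\cdot)=\prod_{p\mid n}\phi_p^{(k_p)}$ with $p^{k_p}\|n$, and each local factor $\phi_p^{(k)}$ is periodic with period $p^{2k}$, the density $a\mapsto\nu_{\pm a}(\phi(n;\cdot))$ is periodic modulo $n^2$. Splitting the Dirichlet series by residue class yields
\begin{equation*}
D^\pm(\phi(n;\cdot),s) \;=\; n^{-2s}\sum_{a_0=1}^{n^2}\nu_{\pm a_0}(\phi(n;\cdot))\,\zeta(s,a_0/n^2),
\end{equation*}
a finite linear combination of Hurwitz zeta functions. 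This gives meromorphic continuation to all of $\C$, and since each $\zeta(s,x)$ has a simple pole of residue $1$ at $s=1$ while $\sum_{a_0\bmod n^2}\nu_{\pm a_0}(\phi(n;\cdot))=n^2\nu(\phi(n;\cdot))$ by CRT and multiplicativity, the residue of $D^\pm(\phi(n;\cdot),s)$ at $s=1$ equals $r_n=\nu(\phi(n;\cdot))$ as claimed.

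Second, I establish the bound in the region of absolute convergence $\sigma>1$. Writing $a=b\prod_{p\mid n}p^{\ell_p}$ with $\gcd(b,n)=1$ and using multiplicativity $\nu_a(\phi(n;\cdot))=\prod_{p\mid n}\nu_a(\phi_p^{(k_p)})$ factorizes
\begin{equation*}
\sum_{a>0}\frac{|\nu_{\pm a}(\phi(n;\cdot))|}{a^\sigma}\;\le\;\zeta(\sigma)\prod_{p\mid n}\sum_{\ell\ge 0}p^{-\ell\sigma}\,\max_{u\in\Z_p^\times}|\nu_{\pm up^\ell}(\phi_p^{(k_p)})|.
\end{equation*}
Since $|\phi_p^{(k)}|\le\chi_{p^{2k}}$, the density bounds in Lemma~\ref{lem:need-Dancont} make the inner factor $\ll p^{-\min(2,1+2\sigma)}$ for $p\mid n_1$, $\ll p^{-4\sigma}$ for $p\mid n_2$, and $\ll k\,p^{-3k\sigma/2}$ for $p^k\|m_3$. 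The elementary inequalities $\min(2,1+2\sigma)\ge 3/2+\sigma/2$ (on $\sigma\ge 1/3$) and $3k\sigma/2\ge k(1/3+\sigma/6)$ (on $\sigma\ge 1/4$) then recover the target bound for $\sigma\ge 1+\epsilon$, with no $|t|$-growth.

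Third, to extend this bound into the critical strip $1/3<\sigma\le 1$ while picking up the claimed factor $(1+|t|)$, my plan is to apply the Phragm\'en--Lindel\"of principle to the entire function $D^\pm(\phi(n;\cdot),s)-r_n/(s-1)$ on a vertical strip. The right edge $\sigma=1+\epsilon$ is controlled by Step~2. On a left edge inside the region of analytic continuation, I combine the Hurwitz zeta functional equation with the discrete Fourier decomposition $\nu_{\pm a_0}(\phi(n;\cdot))=\sum_{j\bmod n^2}\hat\nu^\pm_j\,e(ja_0/n^2)$, where $e(x)=\exp(2\pi i x)$. The main obstacle is to maintain sharp $n$-dependence across the interpolation, since naive Plancherel bounds on $\hat\nu^\pm_j$ are lossy; instead one uses CRT to factor $\hat\nu^\pm_j=\prod_{p\mid n}\hat\nu^{(p)}_{j_p}$ and exploits the support property $\supp\phi_p^{(k)}\subset\{f:p^{2k}\mid\Delta(f)\}$ to obtain prime-local Fourier estimates of the same shape as in Step~2. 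Linear interpolation in $\sigma$ between the two edges then yields the stated exponents $3/2+\sigma/2$, $4\sigma$, $1/3+\sigma/6$, while the $(1+|t|)$ growth arises from the $\Gamma(1-s)$ factor in the functional equation.
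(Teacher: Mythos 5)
Your Step 1 (Hurwitz zeta decomposition by residue classes mod $n^2$, giving continuation and the residue $r_n=\nu(\phi(n;\cdot))$) and your Step 2 (factoring the densities over $p\mid n$ and invoking Lemma \ref{lem:need-Dancont}) are fine, modulo the small point that the inequalities $\min(2,1+2\sigma)\ge 3/2+\sigma/2$ and ``inner factor $\ll p^{-4\sigma}$'' hold only for $\sigma\le 1$, so Step 2 really yields the target bound only on a line $\sigma=1+\epsilon$ (up to $n^{O(\epsilon)}$, absorbed by the $n^\epsilon$); that is harmless for your interpolation set-up. The genuine gap is Step 3: the entire content of the proposition is the bound \emph{inside} the strip, and you never establish the left edge. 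You assert that the Hurwitz functional equation plus a discrete Fourier expansion of $a_0\mapsto\nu_{\pm a_0}(\phi(n;\cdot))$, ``prime-local Fourier estimates of the same shape as in Step 2,'' and linear interpolation will produce the exponents $3/2+\sigma/2$, $4\sigma$, $1/3+\sigma/6$, but none of these estimates is carried out, and as described the plan misses the structural input that actually produces those exponents. The key point (and the paper's route) is that $\phi(n;\cdot)$ is strongly invariant, so for each divisor $d=(a,n^2)$ the map $a\mapsto\nu_{\pm ad}$ on units is invariant under multiplication by \emph{squares}; expanding in multiplicative characters therefore involves only quadratic characters mod $n^2/d$, whose conductors are bounded by $\rad(n^2/d)$, and the stated exponents come from combining this conductor drop with the convexity bound for $L(s,\chi)$ and the density bounds of Lemma \ref{lem:need-Dancont}. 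An additive-character (Hurwitz/Lerch) decomposition sees the full modulus $n^2/\gcd$ rather than its radical: for example, for squarefree $n=n_1$ and $d=1$ the full-conductor convexity input gives only $n_1^{-2}\cdot(n_1^2)^{1/2-\sigma/2}=n_1^{-1-\sigma}$, which is weaker than the required $n_1^{-3/2-\sigma/2}$ throughout $1/3<\sigma<1$, and no interpolation against the $\sigma=1+\epsilon$ edge can repair a deficient left-edge bound. So your proposed route cannot give \eqref{eq:Dconvtemp} unless you prove genuine cancellation in the coefficients $\hat\nu^\pm_j$ exploiting the square-class invariance (or, equivalently, the quadratic-character structure), which is exactly the missing work.

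Concretely, to close the gap you should replace Step 3 by the direct argument: split $D^\pm(\phi(n;\cdot),s)=\sum_{d\mid n^2}d^{-s}\sum_{(a,n^2)=1}\nu_{\pm ad}a^{-s}$, note each inner sum is a combination of $O_\epsilon(n^\epsilon)$ quadratic Dirichlet $L$-functions of conductor $\ll\rad(n^2/d)$ with coefficients bounded by $\max_a|\nu_{\pm ad}(\phi(n;\cdot))|$, apply convexity to get the factor $\rad(n^2/d)^{1/2-\sigma/2+\epsilon}(1+|t|)$, bound $\max_a|\nu_{\pm ad}|$ prime-by-prime via Lemma \ref{lem:need-Dancont} (using $|\phi_p^{(k)}|\le\chi_{p^{2k}}$ as in your Step 2), and check that the maximum over $d=d_1d_2d_3$ of the resulting product is $\ll_\epsilon n^\epsilon n_1^{-3/2-\sigma/2}n_2^{-4\sigma}m_3^{-1/3-\sigma/6}$ for $\sigma>1/3$; this also removes the need for Phragm\'en--Lindel\"of and for any unproved $t$-aspect bookkeeping on a left edge. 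Note also that Corollary \ref{prop:FT} only provides Fourier bounds modulo $p^2$ and for strongly invariant functions, so it cannot by itself supply the ``prime-local Fourier estimates'' you invoke for $p^k\parallel n$ with $k\ge 2$.
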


\begin{proof}
We write $\psi:=\phi(n,\cdot)$.
We break up the Dirichlet series into summands corresponding to each divisor $d$ of $n^2$:
\begin{equation*}
D^\pm(\psi,s)=\sum_{a\geq 1}\frac{\nu_{\pm a}(\psi)}{a^s}=\sum_{d\mid n^2}\sum_{\substack{a\geq 1\\(a,n^2)=d}}\frac{\nu_{\pm a}(\psi)}{a^s}
=\sum_{d\mid n^2}\frac{1}{d^s}\sum_{\substack{a\geq 1\\(a,n^2)=1}}\frac{\nu_{\pm ad}(\psi)}{a^s}.
\end{equation*}
Since $\psi$ is $\PGL_2(\Z/n^2\Z)$ invariant, it follows that $\nu_{\pm a}(\psi)=\nu_{\pm u^2a}(\psi)$ for each $u\in\Z$ with $(u,n^2)=1$. It follows that 
for any $d$, the value of $\nu_{\pm ad}(\psi)$ only depends on the residue class of $a$ modulo $n^2/d$. Moreover, the inner product of $\nu(\psi|_{ad})$ with a character $\chi$ of $(\Z/(n^2/d)\Z)^\times$ is $0$ unless $\chi$ is quadratic.
Therefore, each summand in the rightmost term of the above equation is a weighted sum of $O(n^\epsilon)$ quadratic Dirichlet $L$-functions $L(s,\chi)$, where $\chi$ is a quadratic character on $(\Z/(n^2/d)\Z)^\times$. The analytic continuation of $D^\pm(\psi,s)$ follows immediately.

To prove the bound, we start with observing that when $\chi$ is a (quadratic) character on $(\Z/(n^2/d)\Z)^\times$, the conductor of $L(s,\chi)$ is $\ll \rad(n^2/d)$, where $\rad$ denotes the radical function. Applying the convexity bound, we obtain for $s=\sigma+it$ with $\sigma\in(0,1-\delta)$ for any positive $\delta$, the following estimate:
\begin{equation*}
\Bigl|\sum_{\substack{a\geq 1\\(a,n^2)=1}}\frac{\nu_{\pm ad}(\psi)}{a^s}\Bigr|\ll_\epsilon \max_{a}|\nu_{\pm ad}(\psi)|\rad(n^2/d)^{1/2-\sigma/2+\epsilon}(1+|t|).
\end{equation*}
Next, we bound $\max_{a}|\nu_{\pm ad}(\psi)|$ using Lemma \ref{lem:need-Dancont}. As above, we write $n=n_1n_2^2m_3$. We also write $d=d_1d_2d_3$, where  $d_1\mid n_1^{2}$, $d_2\mid n_2^4$, and $d_3\mid m_3^2$.
Therefore, we have
\begin{equation*}
\begin{array}{rcl}
&&\displaystyle\frac{1}{d^\sigma}\max_{a}|\nu_{\pm ad}(\psi)|\rad(n^2/d)^{1/2-\sigma/2+\epsilon}
\\[.2in]
&&\ll_\epsilon 
\displaystyle n^\epsilon\Bigl(
\prod_{p^2\mid d_1}p^{-1-2\sigma}\prod_{p\mid n_1^2/d_1}p^{-3/2-\sigma/2}
\Bigr)\cdot\Bigl(\frac{d_2^{1-\sigma}}{n_2^4}\prod_{p\mid n_2^4/d_2}p^{1/2-\sigma/2}\Bigr)\cdot d_3^{-\sigma}\min\Bigl(\frac{d_3}{m_3^{3/2}},1\Bigr)m_3^{(1-\sigma)/6}
\\[.2in]
&&\ll_\epsilon 
\displaystyle
\frac{n^\epsilon}{n_1^{3/2+\sigma/2}n_2^{4\sigma}m_3^{1/3+\sigma/6}},
\end{array}
\end{equation*}
where the last estimate follows from a straightforward check. The result now follows noting that the number of divisors of $n^2$ is bounded by $O_\epsilon(n^\epsilon)$.
\end{proof}

The above proposition has the following immediate corollary, which follows by noting that the sum over $n$ of the right hand side in \eqref{eq:Dconvtemp} converges absolutely for $\sigma>1/3$ and that the sum of the residues $r_n(=\nu(\phi(n;\cdot))$ also converges absolutely.

\begin{cor}\label{cor:anacont}
Let $\phi:V(\Z)\to\R$ be a large and locally well approximated function. Then $D^\pm(\phi,s)$ has an analytic continuation to $\Re(s)>1/3$ with only a possible simple pole at $s=1$. In particular, the value of $D(\phi,1/2)$ is well defined.
\end{cor}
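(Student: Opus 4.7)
The plan is to decompose $\phi$ using its local approximations and reduce directly to Proposition \ref{lem:Dancont}. Since $\phi$ is large and locally well approximated, we can write $\phi(f)=\sum_{n\geq 1}\phi(n;f)$ where $|\phi(n;\cdot)|\leq 1$, each $\phi(n;\cdot)$ is defined modulo $n^2$, and is supported on $\{f:n^2\mid c\Delta(f)\}$ for an absolutely bounded constant $c$. Hence, formally,
\begin{equation*}
D^\pm(\phi,s)=\sum_{n\geq 1}D^\pm(\phi(n;\cdot),s),
\end{equation*}
and by Proposition \ref{lem:Dancont} each summand is entire up to a possible simple pole at $s=1$ with residue $r_n=\nu(\phi(n;\cdot))$. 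I would therefore write $D^\pm(\phi(n;\cdot),s)=r_n/(s-1)+G_n^\pm(s)$ with $G_n^\pm$ entire, and verify that both the residue sum and the entire sum converge appropriately.

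First, I would show $\sum_n r_n$ converges absolutely, thereby isolating the pole at $s=1$. Using the support condition on $\phi(n;\cdot)$ together with the strong invariance (so $\phi_p^{(k)}$ factors through $\chi_{p^{2k}}$-support), Proposition \ref{prop:ppden} gives $|r_n|\ll\prod_{p^k\parallel n}\nu(\chi_{p^{2k}})$, where each local factor is $1+O(1/p^2)$: indeed $\nu(\chi_{p^2})\ll p^{-2}$, $\nu(\chi_{p^4})\ll p^{-4}$, and $\nu(\chi_{p^{2k}})\ll k p^{-3k/2}$ for $k\geq 3$. The resulting Euler product converges absolutely, giving $\sum_n|r_n|<\infty$.

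Next, I would verify that $\sum_n G_n^\pm(s)$ converges absolutely and locally uniformly on $\{\Re(s)>1/3\}$. The bound \eqref{eq:Dconvtemp} decouples under the decomposition $n=n_1n_2^2m_3$ into three independent sums, and convergence reduces to checking each: $\sum_{n_1\text{ sqfree}} n_1^{-3/2-\sigma/2+\epsilon}$ is harmless; $\sum_{n_2\text{ sqfree}} n_2^{-4\sigma+\epsilon}$ converges once $\sigma>1/4$; and $\sum_{m_3\text{ cubefull}} m_3^{-1/3-\sigma/6+\epsilon}$ converges for any $\sigma>0$, since the Dirichlet series over cubefull integers has abscissa of convergence $1/3$. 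All three hold throughout $\Re(s)>1/3$ (the binding constraint being $\sigma>1/4$), and the factor $(1+|t|)$ preserves uniformity on compact subsets.

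Combining, $D^\pm(\phi,s)=\frac{\sum_n r_n}{s-1}+\sum_n G_n^\pm(s)$ gives the desired meromorphic continuation to $\Re(s)>1/3$ with at most a simple pole at $s=1$; in particular $D^\pm(\phi,1/2)$ is well defined. The only substantive step is the convergence check for the three-fold decomposition of $n$; the heavy lifting has already been done in Proposition \ref{lem:Dancont} and Proposition \ref{prop:ppden}, so the main obstacle—bounding the discriminant-divisibility densities $\nu(\chi_{p^{2k}})$ and establishing the subconvexity-type bound \eqref{eq:Dconvtemp}—is not part of this corollary at all, leaving only an elementary assembly.
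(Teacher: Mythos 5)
Your proposal is correct and follows essentially the same route as the paper: decompose $\phi=\sum_n\phi(n;\cdot)$, apply Proposition \ref{lem:Dancont} to each summand, note that $\sum_n r_n$ converges absolutely (via Proposition \ref{prop:ppden}), and sum the bound \eqref{eq:Dconvtemp} over the decomposition $n=n_1n_2^2m_3$ to get locally uniform convergence for $\Re(s)>1/3$. The paper's proof is exactly this observation, stated without the explicit convergence checks you carry out.
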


\subsection{Equidistribution of strongly invariant sets in $V(\Z/p^2\Z)$}

For a positive integer $n$, and a function $\phi:V(\Z/n\Z)\to\C$, let $\widehat{\phi}:V^*(\Z/n\Z)\to\C$ be its Fourier transform normalized as follows:
\begin{equation}
\widehat{\phi}(h)=\frac{1}{n^5}\sum_{f\in V(\Z/n\Z)}\phi(f)e\Big(\frac{[f,h]}{n}\Big)=\frac{1}{n^5}\sum_{f\in V(\Z/n\Z)}\phi(f)\exp\Big(\frac{2\pi i[f,h]}{n}\Big),
\end{equation}
where $[\cdot,\cdot]$ is the natural bilinear form $V(\Z/n\Z)\times V^*(\Z/n\Z)\to\Z/n\Z$. For the rest of the section, we take $n=p^k$ to be a prime power. We need our result only for $k=2$, but since the argument is identical we work for general $k$. Let $S\subset V(\Z/n\Z)$ be a strongly invariant set, i.e., its characteristic function $\chi_S$ is strongly invariant. Clearly we have $\widehat{\chi_S}(0)=|S|/n^5$ and we have the ``trivial'' bound $|\widehat{\chi_S}(h)|\leq |S|/n^5$ for any $h\in V^*(\Z/n\Z)$. (In fact, these trivial bounds clearly hold for every subset $S$.) The first result of this subsection improves upon this trivial bound for strongly invariant sets in $V(\Z/p^k\Z)$ not intersecting $pV(\Z/p^k\Z)$. To simplify notation for the next proposition, we write $V$, $V^*$, $\GL_1$, $\PGL_2$ for $V(\Z/p^k\Z)$, $V^*(\Z/p^k\Z)$, $\GL_1(\Z/p^k\Z)$, and $\PGL_2(\Z/p^k\Z)$, respectively.
We state the result in terms of bounding the {\it orbital exponential sum} $\mathcal{G}_{p^k}(f,h)$ associated to $f\in V$ and $h\in V^*$ defined as
\begin{equation}\label{eq:oes}
\mathcal{G}_{p^k}(f,h):=
\frac{1}{|\GL_1||\PGL_2|}\sum_{t\in\GL_1}\sum_{g\in\PGL_2}\exp\Bigl(2\pi i\cdot\frac{t^2[gf,h]}{p^k}\Bigr).
\end{equation}
Then we prove the following proposition.
\begin{proposition}\label{prop:orbitalintegral}
Let notation be as above, and let $f\in V$ be an element which is not a multiple of~$p$. Then we have
\begin{equation}\label{eq:oes-estimate}
    \mathcal{G}_{p^k}(f,h)\ll
        \begin{cases}
            1   &   h=0,\\
            p^{-1/2} & h\in p^{k-1}V^*, h\neq0,\\
            p^{-1} & h\notin p^{k-1}V^*.
        \end{cases}
\end{equation}
\end{proposition}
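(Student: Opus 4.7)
The case $h = 0$ is immediate: every summand equals $1$. For the other two cases, let $j \geq 0$ be maximal with $h \in p^j V^*(\Z/p^k\Z)$ and write $h = p^j h_0$ with $h_0$ primitive in $V^*(\Z/p^{k-j}\Z)$; set $\ell := k - j$, so $\ell = 1$ corresponds to $h \in p^{k-1}V^* \smallsetminus \{0\}$ and $\ell \ge 2$ to $h \notin p^{k-1}V^*$. Since $t^2[gf,h]/p^k \equiv t^2[gf,h_0]/p^\ell \pmod 1$ depends on $g,t$ only modulo $p^\ell$, the Jacobians of the collapse from $\Z/p^k\Z$ to $\Z/p^\ell\Z$ cancel exactly and the problem reduces to bounding
\[
\mathcal G \;:=\; \frac{1}{|(\Z/p^\ell\Z)^\times| \cdot |\PGL_2(\Z/p^\ell\Z)|} \sum_{t}\sum_{g} e_{p^\ell}\!\left(t^2\,[gf,h_0]\right)
\]
by $O(p^{-\ell/2})$; this recovers both remaining cases, since $p^{-\ell/2} \le p^{-1}$ for $\ell \ge 2$.

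My plan is to carry out the $t$-sum first. For fixed $g$, set $a(g) := [gf,h_0]$ and $m(g) := v_p(a(g)) \in \{0,1,\dots,\ell\}$. Standard evaluation of the quadratic Gauss sum on $(\Z/p^{\ell-m}\Z)^\times$, combined with the observation that $(\Z/p^\ell\Z)^\times \to (\Z/p^{\ell-m}\Z)^\times$ is a $p^m$-to-one cover, yields for $p$ odd
\[
\left| \frac{1}{|(\Z/p^\ell\Z)^\times|}\sum_{t} e_{p^\ell}\!\left(t^2 a(g)\right) \right| \;\ll\; \begin{cases} 1 & \text{if } m(g) = \ell, \\ p^{(m(g)-\ell)/2} & \text{if } 0 \le m(g) \le \ell-1, \end{cases}
\]
while the case $p = 2$ of the proposition is trivial since $p^{-1/2}$ and $p^{-1}$ are $O(1)$ there. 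The next step is to prove the level-set estimate
\[
\frac{\#\{g \in \PGL_2(\Z/p^\ell\Z) \,:\, v_p(a(g)) \ge m\}}{|\PGL_2(\Z/p^\ell\Z)|} \;\ll\; p^{-m}, \qquad 0 \le m \le \ell.
\]
The base case $m=1$ is a Schwartz--Zippel count on $\PGL_2(\F_p)$ applied to the polynomial $P(g) := [gf,h_0]$ of total degree $4$ in the entries of $g$, using that $P$ is not identically zero modulo $p$; this last fact follows because otherwise $\bar h_0$ would annihilate the $\GL_2(\overline{\F_p})$-span of $\bar f$, forcing $\bar h_0 \equiv 0$ by the $\GL_2$-irreducibility of $\Sym^4$ (and by ad hoc inspection in the finitely many small-$p$ edge cases where $\Sym^4$ is not irreducible). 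The general case is then a Hensel-lifting induction. Combining the two ingredients,
\[
|\mathcal G| \;\ll\; p^{-\ell} + \sum_{m=0}^{\ell-1} p^{-m}\cdot p^{(m-\ell)/2} \;\ll\; p^{-\ell/2},
\]
which yields the proposition.

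The main obstacle I anticipate is the Hensel step in the level-set bound for $m \ge 2$: lifts of a solution modulo $p^{m-1}$ to solutions modulo $p^m$ come in multiplicity $p^{\dim \PGL_2 - 1}$ only at points $g_0$ where the gradient $\nabla P(g_0)$ is nonzero modulo $p$, so one must control the singular locus $\{P = \nabla P = 0\}\subset \PGL_2(\F_p)$ and show it is a proper subvariety of codimension at least two. For truly degenerate configurations $(\bar f, \bar h_0)$ in small characteristic one either carries out a direct case analysis, or invokes that the target bound is vacuous (being $O(1)$) and can be absorbed into the implied constant. Everything else in the argument---the Gauss sum evaluation, the reduction to $h_0$ primitive, and the final summation---is routine.
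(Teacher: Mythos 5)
Your reduction to a primitive $h_0$ and the $m\le 1$ part of your argument are fine, but the key lemma is false, and so is the intermediate target you reduce to. Take $f=x^4$ and $h_0$ the primitive dual vector pairing with the $x^2y^2$-coefficient. Writing $a,c$ for the entries of the first column of (a lift of) $g$, one has $gf=\det(g)^{-2}(ax+cy)^4$, hence $[gf,h_0]=\det(g)^{-2}a^2c^2$. Then $\{g: p^2\mid [gf,h_0]\}=\{g:p\mid ac\}$ has proportion $\asymp p^{-1}$ in $\PGL_2(\Z/p^{\ell}\Z)$, not $\ll p^{-2}$, so the level-set estimate fails already at $m=2$. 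This is not a repairable technicality in the Hensel step: the mod-$p$ zero locus of $g\mapsto[gf,h_0]$ can be everywhere non-reduced (here it is the divisor $2\{a=0\}+2\{c=0\}$, and with $h_0$ dual to the $x^4$-coefficient it is $4\{a=0\}$), so its singular locus has codimension one, not two, and the lifting multiplicities you need simply are not there. Worse, the bound $\mathcal G\ll p^{-\ell/2}$ for the reduced sum — which you claim "recovers both remaining cases" — is itself false: with $f=x^4$ and $h_0$ dual to the $x^4$-coefficient, $[gf,h_0]=\det(g)^{-2}a^4$, and for $\ell=3$ (or $4$) the set $\{p\mid a\}$, of proportion $\asymp p^{-1}$, already makes the phase vanish identically, while the complementary terms contribute $O(p^{-\ell/2})$; hence $\mathcal G\asymp p^{-1}\gg p^{-\ell/2}$. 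Note the proposition itself only asserts $p^{-1}$ here, so your strengthened goal overshoots what is true.

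The repair is to discard the Hensel induction and the $p^{-\ell/2}$ goal and use only the $m\le 1$ information together with the coarse Gauss-sum trichotomy, which is exactly what the paper does. Your Schwartz--Zippel/irreducibility argument correctly shows that $\det(g)^2[gf,h_0]$ is a nonzero degree-$4$ polynomial in the entries of $g$ for $p\ge 5$, so $[gf,h_0]$ is a $p$-adic unit outside a set of $g$ of proportion $O(p^{-1})$ (the paper gets the same statement by writing $g$ in the big Bruhat cell and viewing $[gf,h]$ as a degree-$4$ polynomial in the torus variable $s$ with leading coefficient $f(n,1)h(1,m)$). Bounding the exceptional $g$ trivially and the remaining $g$ by the standard normalized Gauss-sum bound — $p^{-1/2}$ when the argument is a unit times $p^{k-1}$ (the case $h\in p^{k-1}V^*\setminus\{0\}$, i.e.\ $\ell=1$) and $p^{-1}$ when it is not divisible by $p^{k-1}$ (the case $\ell\ge 2$) — gives precisely \eqref{eq:oes-estimate}, with $p\in\{2,3\}$ absorbed into the implied constant as you and the paper both note.
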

\begin{proof}
The classical quadratic Gauss sum is defined by
\begin{equation}
    \mathcal Q_{p^k}(a):=\frac{1}{|\GL_1|}\sum_{t\in\GL_1}\exp\Bigl(2\pi i\cdot\frac{t^2a}{p^k}\Bigr),
    \qquad a\in\Z/p^k\Z.
\end{equation}
The explicit formula of the quadratic Gauss sum is well known. In particular, we have
\begin{equation}\label{eq:qgs-estimate}
    \mathcal{Q}_{p^k}(a)\ll
        \begin{cases}
            1   &   a=0,\\
            p^{-1/2} & p^{k-1}\mid a, a\neq0,\\
            p^{-1} & p^{k-1}\nmid a.
        \end{cases}
\end{equation}
Then by definition, the orbital exponential sum is expressed as
\begin{equation}
\mathcal{G}_{p^k}(f,h)=
\frac{1}{|\PGL_2|}\sum_{g\in\PGL_2}\mathcal Q_{p^k}\bigl([gf,h]\bigr).
\end{equation}
For simplicity we first consider the case $h\notin pV^\ast$.
We show that except for $O(p^{-1})$-proposition of $g$ in $\PGL_2$,
$[gf,h]$ is not divisible by $p$. Then \eqref{eq:qgs-estimate} implies that
$\mathcal{G}_{p^k}(f,h)\ll p^{-1}$.
To study $[gf,h]$, it will be convenient to have an explicit description of $V^\ast$.
Since the estimate \eqref{eq:oes-estimate} holds automatically for $p=2,3$,
we assume $p\neq2,3$.
For $f=(f_0,f_1,f_2,f_3,f_4), h=(h_0,h_1,h_2,h_3,h_4)\in V$, let
$[f,h]=f_0h_0+4^{-1}f_1h_1+6^{-1}f_2h_2+4^{-1}f_3h_3+f_4h4$.
This bilinear form is $\PGL_2$-invariant in the sense that $[gf,h]=[f,g^Th]$ holds for all $f,h\in V$ and $g\in\PGL_2$,
where $g^T$ is the matrix transpose of $g$. Via the map $V\ni h\mapsto[\cdot,h]\in V^*$ we identify $V^*$ with $V$,
and regard $h$ as an element in $V$.

Let $\PGL_2^\bullet\subset \PGL_2$ consists of elements
whose $(1,2)$-entry (i.e., the upper right entry) is in
$(\Z/p^k\Z)^\times$. Then the proportion of elements $g\in\PGL_2$
not in $\PGL_2^\bullet$ is $O(p^{-1})$,
and hence has a negligible contribution to  \eqref{eq:oes-estimate}.
Any $g\in\PGL_2^\bullet$ is expressed uniquely in the form
\[
g=
l_ma_swl_n:=
\begin{pmatrix}
1&0\\m&1
\end{pmatrix}
\begin{pmatrix}
s&0\\0&1
\end{pmatrix}
\begin{pmatrix}
0&1\\1&0
\end{pmatrix}
\begin{pmatrix}
1&0\\n&1
\end{pmatrix}
\]
for
$m,n\in\Z/p^k\Z$ and $s\in(\Z/p^k\Z)^\times$.
We consider $[gf,h]=[a_swl_nf,l_m^Th]$.
The $x^4$-coefficient of $wl_nf$ is $(wl_nf)(1,0)=f(n,1)$, and 
the $x^4$-coefficient of $l_m^Th$ is $(l_m^Th)(1,0)=h(1,m)$.
Therefore, we have
\begin{equation}\label{eq:bilinear-in-s}
    [gf,h]=\frac{1}{s^2}(f(n,1)h(1,m)s^4+p_{m,n}(s)).
\end{equation}
for a degree $\leq3$ polynomial $p_{m,n}(s)$ in $s$
with coefficients in $(\Z/p^k\Z)[m,n]$.
Since $f\pmod p$ and $h\pmod p$ are both not the zero form by our assumption, they have at most $4$ roots in $\mathbb{P}^1(\F_p)$.
Therefore, except for $O(p^{-1})$-propotion of $g$, $f(n,1)h(1,m)$ is not divisible by $p$.
For such majority of $g$, the reduction modulo $p$
of the polynomial of $s$ in \eqref{eq:bilinear-in-s} is
of degree $4$, and has at most $4$ roots in $\F_p$.
Therefore, except for $O(p^{-1})$-propotion of $s$ (and hence of $g$),
$[gf,h]$ is not divisible by $p$. This finishes the proof of \eqref{eq:oes-estimate} for $h\notin pV$.

This argument works for general $h\neq0$:
Let us write $h=p^lh'$,
where $l<k$ and not all the coefficients of $h'$ are
divisible by $p$. Then $[gf,h]=p^l[gf,h']$.
(Here $h'$ is well defined as an element in $V(\Z/p^{k-l}\Z)$
and we also regard $[gf,h']\in \Z/p^{k-l}\Z$.)
Exactly by the same argument,
we confirm that except for $O(p^{-1})$-proportion of $g$,
$[gf,h']$ is not divisible by $p$. Thus again \eqref{eq:qgs-estimate} implies \eqref{eq:oes-estimate}, concluding the proof of the proposition.
\end{proof}

Proposition \ref{prop:orbitalintegral} combined 
 with the first bound of Proposition \ref{prop:ppden} immediately implies the following result.

\begin{cor}\label{prop:FT}
Let $p$ be a prime, and let $\phi:V(\Z/p^2\Z)\to\R$ be a strongly invariant bounded function whose support is contained within the set of elements $f\in V(\Z/p^2\Z)$ with $\Delta(f)=0$. Then we have
\begin{equation*}
\widehat{\phi}(0)\ll\frac{1}{p^2},\quad\quad
\widehat{\phi}(ph)\ll\frac{1}{p^{2+1/2}},\quad\quad
\widehat{\phi}(h)\ll\frac{1}{p^{3}},
\end{equation*}
for $h\in V^*(\Z/p^2\Z), h\notin pV^*(\Z/p^2\Z)$.
\end{cor}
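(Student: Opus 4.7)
The strategy is to express $\widehat{\phi}(h)$ as an average of orbital exponential sums $\mathcal{G}_{p^2}(f,h)$, and then combine the pointwise bounds of Proposition \ref{prop:orbitalintegral} with the density bound of Proposition \ref{prop:ppden}. Since $\phi$ is strongly invariant, I can replace $\phi(f)$ in the definition of $\widehat{\phi}$ by the group average $|\GL_1(\Z/p^2\Z) \times \PGL_2(\Z/p^2\Z)|^{-1}\sum_{(t,g)}\phi(t^2 gf)$ and change variables $f \mapsto t^2 g f$ in the inner sum, producing
\begin{equation*}
\widehat{\phi}(h) = \frac{1}{p^{10}} \sum_{f \in V(\Z/p^2\Z)} \phi(f) \, \mathcal{G}_{p^2}(f, h).
\end{equation*}
Because $|\phi|$ is bounded and $\phi$ is supported on $\{\Delta(f)=0\}$, the task reduces to bounding $p^{-10}\sum_{f : \Delta(f)=0}|\mathcal{G}_{p^2}(f,h)|$.

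The next step is to split the sum according to whether $f \in pV(\Z/p^2\Z)$ or not. On the range $f \notin pV$, Proposition \ref{prop:orbitalintegral} applies directly with $k=2$, giving $|\mathcal{G}_{p^2}(f,h)|\ll 1$, $p^{-1/2}$, and $p^{-1}$ in the three respective cases $h = 0$, $h \in pV^*\setminus\{0\}$, and $h \notin pV^*$. Proposition \ref{prop:ppden} supplies the crude cardinality bound $|\{f \in V(\Z/p^2\Z) : \Delta(f) = 0\}| \ll p^{10}\cdot p^{-2} = p^8$, and multiplying out yields exactly the three claimed bounds $p^{-2}$, $p^{-5/2}$, and $p^{-3}$ from this range.

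The main obstacle is the complementary range $f \in pV(\Z/p^2\Z)$, which Proposition \ref{prop:orbitalintegral} does not cover, since it requires $f \not\equiv 0 \pmod{p}$. This range has only $p^5$ elements, parametrized by $\bar{f'} \in V(\F_p)$ via $f = pf'$. If $h$ is also divisible by $p$, then $t^2[gf,h]/p^2 \in \Z$ so $\mathcal{G}_{p^2}(f,h) = 1$ identically, producing a contribution of size $p^5 / p^{10} = p^{-5}$, which is absorbed by the bounds from the main range. If instead $h \notin pV^*$, a short computation identifies $\mathcal{G}_{p^2}(pf', h)$ with the mod-$p$ orbital sum $\mathcal{G}_p(\bar{f'}, \bar h)$ via a one-level descent, at which point Proposition \ref{prop:orbitalintegral} with $k=1$ gives $\ll p^{-1/2}$ for each of the $p^5 - 1$ nonzero $\bar{f'}$, contributing $O(p^{-11/2})$; the single point $\bar{f'}=0$ contributes $O(p^{-10})$. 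In every case the $f \in pV$ contribution is dominated by the $f \notin pV$ contribution, so combining the two ranges yields the three stated Fourier bounds.
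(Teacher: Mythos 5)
Your proof is correct and is essentially the argument the paper intends when it says the corollary follows ``immediately'' from Propositions \ref{prop:orbitalintegral} and \ref{prop:ppden}: use strong invariance to rewrite $\widehat{\phi}(h)=p^{-10}\sum_f\phi(f)\mathcal{G}_{p^2}(f,h)$, bound the support of $\phi$ by $\ll p^{8}$ via $\nu(\chi_{p^2})\ll p^{-2}$, and apply the orbital exponential sum bounds. Your separate treatment of the range $f\in pV(\Z/p^2\Z)$ is a correct (and welcome) piece of bookkeeping the paper leaves implicit, though the descent to $\mathcal{G}_p$ there is more than needed, since the trivial bound $|\mathcal{G}_{p^2}(f,h)|\leq 1$ already gives a contribution $\ll p^{5}/p^{10}=p^{-5}$, which is dominated in all three cases.
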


\section{Uniformity estimates}\label{sec:Unif}

Recall that for a positive integer $m$, the set of generic elements in $V(\Z)$ whose discriminants are divisible by $m^2$ is denoted by $\W_m$. In this section, we prove Theorem \ref{Thm:Unif} by giving a bound for the number of $\PGL_2(\Z)$-orbits on $\W_m$ (on average over $m$) having bounded height.

\subsection{Fibering over quartic fields}

Let $f\in V(\Z)$ be an integral binary quartic form, and let $(Q_f,C_f,\alpha_f)$ be the triple corresponding to the $\PGL_2(\Z)$-orbit of $f$ under the bijction of Theorem~\ref{th:bqfQC}. If $f$ is generic, then $Q_f$ and $C_f$ are integral domains, and thus are orders in a quartic field and a cubic field, respectively. We define $L_f$ to be $Q_f\otimes\Q$, and let $\O_f$ denote the maximal order in $L_f$. Let $R_f$ be the (unique) cubic resolvent ring of $\O_f$. Define the {\it index} $\ind(f)$ of $f$ to be the index of $Q_f$ in $\O_f$ (equivalently, the index of $C_f$ in $R_f$), and define the {\it strongly divisible factor} $\sd(f)$ of $f$ to be the product of primes $p$ such that $p^2\mid\Delta(\O_f)$. For positive integers $m_1$ and $m_2$ we define
\begin{equation*}
\W_{m_1,m_2}:=\bigl\{
f\in \W_{m_1m_2}:m_1\mid\ind(f),\,
m_2\mid \sd(f)
\bigr\}.
\end{equation*}
For positive real numbers $M_1$ and $M_2$, let $N(X;M_1,M_2)$ denote the following quantity:
\begin{equation}
N(X;M_1,M_2):=\sum_{\substack{m_1\in[M_1,2M_1]\\ m_2\geq M_2}}|\mu(m_2)|\#\frac{\{f\in\W_{m_1,m_2}:H(f)<X\}}{\PGL_2(\Z)}.
\end{equation}
Note that $\Delta(f)/(\ind(f)^2\sd(f)^2)=\Delta(\O_f)/\sd(f)^2$ is squarefree upto some bounded product $C_6$ of $2$'s and $3$'s.
As a consequence, we have the inclusion
$\W_m\subset \bigcup_{\substack{m\mid C_6m_1m_2}}\W_{m_1,m_2}$,
where the $m_2$ are squarefree. 
Therefore, for a real number $M>1$, we have
\begin{equation}\label{eq:firstinc}
\sum_{m\geq M}\#
\frac{\{f\in\W_m:H(f)<X\}}{\PGL_2(\Z)}
\ll \sum_{\substack{2^k=M_1\leq X^{1/2}\\M_2=M/(2C_6M_1)}}N(X;M_1,M_2),
\end{equation}
where $M_1$ ranges over powers of $2$ betewen $1$ and $X^{1/2}$, while $M_2=M/(2C_6M_1)$. In particular, the sum over $M_1,M_2$ has length $O(\log(X))$.

\medskip

We repackage the count $N(X;M_1,M_2)$ by fibering over maximal quartic orders. To this end, let $V(\Z)^\gen_{H<X}$ denote the set of generic elements in $V(\Z)$ having height less than $X$. We consider the map\begin{equation}\label{eq:ftoringmap}
\begin{array}{rcl}
\Psi_X:\displaystyle\PGL_2(\Z)\backslash V(\Z)^\gen_{H<X}&\to&
\{\mbox{maximal quartic order with its (unique) cubic resolvent  order}\}
\\[.1in]
f&\mapsto & (\cO_f,R_f).
\end{array}
\end{equation}
    The ring $C_f$ is a monogenized cubic subring of $R_f$, with monogenizer $\alpha_f\in R_f$. For a cubic ring $R$, we let $R^\red$ denote the set of elements $\alpha\in R$ such that $\Tr(\alpha)\in\{-1,0,1\}$. Since the monogenizer of $C_f$ is defined upto addition by an element of $\Z$, we may in fact assume that $\alpha_f\in R_f^\red$. We introduce a {\it height} on $R_f$ by setting $h(\alpha):=\max(|\alpha'|_v)$, where $\alpha'$ is the unique $\Z$-translate of $\alpha$ with $\alpha'\in R^\red$, 
and $v$ ranges over the archimedian valuations of $K=C\otimes \Q$; when $v$ is a complex place, we take $|x|_v$ to be the absolute value of $x$. (An equivalent height is to take the length of $\alpha'$ in $R_f\otimes_\Z\R$.) A standard computation reveals that we have $h(\alpha_f)\ll H(f)^{1/6}$.

We study the fibers of the map \eqref{eq:ftoringmap}. Clearly, we have an injection
\begin{equation}\label{eq:Psiinjection}
\Psi_X^{-1}(\O,R)\hookrightarrow\bigl\{(Q,\alpha):Q\subset \O,\,\alpha\in R^\red,\,h(\alpha)\ll X^{1/6},\,[\O:Q]=[R:\Z[\alpha]]\bigr\}.
\end{equation}
In fact, the above map remains an injection even if we also specify that $\Z[\alpha]$ is a cubic resolvent ring of $Q$. However, we will not be using this fact in our estimates. Recall that by Bhargava's work \cite{BHCL3}, the set of pairs $(\O,R)$, where $\O$ is quartic ring, and $R$ is a cubic resolvent ring of $\O$, is in bijection with $\GL_2(\Z)\times\SL_3(\Z)$-orbits on $W(\Z)$. We denote the set of elements in $W(\Z)$ corresponding to pairs $(\O,R)$, where $\O$ is a maximal integral domain and $R$ is an integral domain, by $W(\Z)^\mg$. For $(A,B)\in W(\Z)^\mg$, let $w_X(A,B)$ to be the size of the right hand side of \eqref{eq:Psiinjection},
where (where $(A,B)$ to the pair of rings $(\O,R)$. 
For positive real numbers $Y$ and $M_2$, let $S_W(Y,M_2)$ denote the set of $\GL_2(\Z)\times\SL_3(\Z)$-orbits on the set of elements $(A,B)\in W(\Z)^\mg$ such that $|\Delta(A,B)|\ll Y$, and $m_2^2\mid \Delta(A,B)$ for some squarefree positive integer $m_2\geq M_2$. Then we have the following result.
\begin{proposition}\label{prop:NXM1M2}
With notation as above, we have
\begin{equation}\label{eq:uniftemp2}
N(X;M_1,M_2)\ll \sum_{(A,B)\in S_W(X/M_1^2,M_2)}w_X(A,B).
\end{equation}
\end{proposition}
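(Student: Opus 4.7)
The plan is to apply the map $\Psi_X$ of \eqref{eq:ftoringmap}, use Bhargava's parametrization (Theorem \ref{th:QCparam}) to translate the image $(\O_f,R_f)$ to a $\GL_2(\Z)\times\SL_3(\Z)$-orbit $(A_f,B_f)\in W(\Z)^\mg$, and then invoke the injection \eqref{eq:Psiinjection} to bound each fiber by $w_X$.

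First, I would rewrite $N(X;M_1,M_2)$ by interchanging the order of summation, expressing it as $\sum_f C(f)$, where $f$ ranges over $\PGL_2(\Z)$-orbits in $V(\Z)^\gen_{H<X}$ and $C(f)$ counts pairs $(m_1,m_2)$ with $m_1\in[M_1,2M_1]$, $m_1\mid\ind(f)$, $m_2\geq M_2$, and $m_2\mid\sd(f)$. Since $\Delta(f)=\ind(f)^2\Delta(\O_f)$ and $\sd(f)^2\mid\Delta(\O_f)$, we have $\ind(f)^2\sd(f)^2\mid\Delta(f)$, and combined with $|\Delta(f)|\ll H(f)<X$, this forces $\ind(f),\sd(f)\ll X^{1/2}$. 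Hence $C(f)\leq\tau(\ind(f))\tau(\sd(f))\ll X^\epsilon$, which is absorbed into the implied constant.

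Next, I would verify that each contributing orbit $f$ yields a pair $(A_f,B_f)\in S_W(X/M_1^2,M_2)$. The discriminant identity $\Delta(A_f,B_f)=\Delta(\O_f)=\Delta(f)/\ind(f)^2$, together with $H(f)<X$ and $\ind(f)\geq M_1$, gives $|\Delta(A_f,B_f)|\ll X/M_1^2$; moreover, $\sd(f)\geq M_2$ is squarefree by construction and $\sd(f)^2\mid\Delta(A_f,B_f)$, providing the required witness.

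Finally, for each $(A,B)\in S_W(X/M_1^2,M_2)$ corresponding to $(\O,R)$, the number of orbits $f$ with $\Psi_X(f)=(\O,R)$ is bounded by $w_X(A,B)$ via \eqref{eq:Psiinjection}: the assignment $f\mapsto(Q_f,\alpha_f)$ is injective, $Q_f\subset\O$, $\alpha_f\in R^\red$ satisfies $h(\alpha_f)\ll H(f)^{1/6}\ll X^{1/6}$, and $[\O:Q_f]=[R:\Z[\alpha_f]]=\ind(f)$ since the cubic resolvent construction preserves indices. Summing $w_X(A,B)$ over $(A,B)\in S_W(X/M_1^2,M_2)$ then yields the claim. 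The main technical wrinkle is the multiplicity absorption via the divisor bound; the rest is essentially a repackaging of the parametrization results already established in Section \ref{sec:param}.
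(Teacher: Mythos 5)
Your argument is correct and follows essentially the same route as the paper: push each orbit through $\Psi_X$, check that $(\O_f,R_f)$ lands in $S_W(X/M_1^2,M_2)$ using $|\Delta(\O_f)|=|\Delta(f)|/\ind(f)^2\ll X/M_1^2$ together with the squarefree witness dividing $\sd(f)$, and bound each fiber by $w_X(A,B)$ via the injection \eqref{eq:Psiinjection}. The one imprecision is the claim that the $(m_1,m_2)$-multiplicity $C(f)\ll_\epsilon X^\epsilon$ is ``absorbed into the implied constant'': an $X^\epsilon$ factor is not an absolute constant, so strictly your argument gives $N(X;M_1,M_2)\ll_\epsilon X^\epsilon\sum_{(A,B)\in S_W(X/M_1^2,M_2)}w_X(A,B)$ --- a harmless loss for the downstream use in Theorem \ref{Thm:Unif} (and note the paper's own proof simply counts each orbit once, without addressing this multiplicity at all), but worth stating explicitly rather than hiding in the constant.
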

\begin{proof}
For integers $m_1>M_1$ and $m_2>M_2$ Let $f\in \W_{m_1,m_2}$ be a binary quartic form with $H(f)\leq X$. Note that we have $|\Delta(Q_f)|=|\Delta(f)|\ll H(f)$. Hence, we have $|\Delta(R_f)|=|\Delta(\O_f)|\ll X/M_1^2$, and moreover, $m_2^2\mid\Delta(\cO_f)$. Hence, $(\O_f,R_f)\in S_W(X/M_1^2,M_2)$. Therefore, we have
\begin{equation*}
N(X;M_1,M_2)\ll \sum_{(A,B)\in S_W(X/M_1^2,M_2)}\#\Psi_X^{-1}(A,B)\ll \sum_{(A,B)\in S_W(X/M_1^2,M_2)}w_X(A,B),
\end{equation*}
as needed.
\end{proof}

\subsection{Preliminary results on cubic rings}

Evaluating $w_X(A,B)$ requires counting elements $\alpha\in R^\red$, where $R$ is a cubic ring. To this end, we collect some basic results on cubic rings and their shapes.

We begin by introducing some notation. Let $g$ be an irreducible integral binary cubic form, whose $\GL_2(\Z)$-orbit corresponds to the cubic order $R$ under the Dalone--Faddeev parametrization. Let $K$ denote the fraction field of $R$. We identify $R\otimes\R$ with $\R^3$ by choosing the isomorphism $\C\cong \R^2$ given by $a+ib\mapsto (a+b,a-b)$ when $K$ is complex. The embedding $R\to R\otimes\R$ maps $R$ into a lattice. Denote the lengths of successive minima of this lattice by $1$ (corresponding the to element $1\in R$), $\ell_1(R)$, and $\ell_2(R)$. (Here, we have normalized the length on $R\otimes\R$ such that $1\in R$ has length $1$ in $R\otimes\R$.) We define the {\it skewness} of $R$ to be $\sk(R):=\ell_2(R)/\ell_1(R)$. 
We begin with the following lemma.

\begin{lemma}\label{lem:crp}
Let $R$ be a cubic order. Then we have
\begin{equation*}
\ell_1(R)\asymp \frac{|\Disc(R)|^{\frac14}}{\sk(R)^{\frac12}};\quad
\ell_2(R)\asymp |\Disc(R)|^{\frac14}\sk(R)^{\frac12};\quad \ell_2(R)\ll\ell_1(R)^2;\quad 1\leq \sk(R)\ll |\Disc(R)|^{\frac16}.
\end{equation*}
\end{lemma}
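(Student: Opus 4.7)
The plan is to combine Minkowski's second theorem on successive minima with the multiplicative structure of $R$. The Minkowski embedding $R \hookrightarrow R \otimes \R$ has covolume $\asymp |\Disc(R)|^{1/2}$, a standard property of the discriminant. Since the successive minima of $R$ as a lattice in $R \otimes \R$ are $1$, $\ell_1(R)$, and $\ell_2(R)$, Minkowski's theorem gives $\ell_1(R) \cdot \ell_2(R) \asymp |\Disc(R)|^{1/2}$. Combining this with the definition $\sk(R) = \ell_2(R)/\ell_1(R)$ immediately produces the first two asymptotic formulas. The lower bound $\sk(R) \geq 1$ is the tautology $\ell_1 \leq \ell_2$, while the upper bound $\sk(R) \ll |\Disc(R)|^{1/6}$ will be a formal consequence of $\ell_2 \ll \ell_1^2$: substituting the asymptotic formulas converts the latter into $\sk^{3/2} \ll |\Disc(R)|^{1/4}$.

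It remains to prove $\ell_2(R) \ll \ell_1(R)^2$, and for this I would take $\alpha \in R$ realizing the second successive minimum, so $\|\alpha\| \asymp \ell_1(R)$ and $\alpha$ is $\R$-linearly independent from $1$. Shifting $\alpha$ by an appropriate integer to keep its projection onto $\R \cdot 1$ bounded preserves $\|\alpha\| \asymp \ell_1$, and the product $\alpha^2 \in R$ then satisfies $\|\alpha^2\| \ll \|\alpha\|^2 \ll \ell_1^2$ by submultiplicativity of the sup-norm on $R \otimes \R$. When $R \otimes \Q$ is a cubic field, the triple $\{1, \alpha, \alpha^2\}$ is automatically $\Q$-linearly independent: otherwise $\alpha$ would satisfy a quadratic polynomial over $\Q$, forcing $\alpha \in \Q$ and contradicting its $\R$-linear independence from $1$. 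This exhibits three $\R$-linearly independent lattice vectors of norm $\ll \ell_1^2$, so $\ell_2 \leq \|\alpha^2\| \ll \ell_1^2$ by the definition of the third successive minimum.

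The main obstacle is the case where $R \otimes \Q$ is not a field, so that $R \otimes \Q \cong \Q \times F$ for a quadratic étale algebra $F$; here $\{1, \alpha, \alpha^2\}$ may fail to be linearly independent. In this situation I would appeal to the explicit Delone--Faddeev basis $\{1, \beta_1, \beta_2\}$ coming from a binary cubic form representing $R$. The multiplication rules express $\beta_1^2$, $\beta_2^2$, and $\beta_1 \beta_2$ as $\Z$-linear combinations of $1, \beta_1, \beta_2$, and a short case analysis based on the decomposition type of $R \otimes \Q$ (with $F$ either a quadratic field or $F \cong \Q \times \Q$) produces from these relations a third short lattice vector of norm $\ll \ell_1^2$ that is linearly independent from $\{1, \alpha\}$. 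This handles the degenerate case and completes the proof.
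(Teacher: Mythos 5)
Your treatment of the case where $R\otimes\Q$ is a cubic field is essentially identical to the paper's proof: Minkowski's second theorem gives $\ell_1(R)\ell_2(R)\asymp|\Disc(R)|^{1/2}$, the first two asymptotics and then $\sk(R)\ll|\Disc(R)|^{1/6}$ follow formally, and $\ell_2(R)\ll\ell_1(R)^2$ comes from squaring an element $\alpha$ realizing the first nontrivial minimum, with $\{1,\alpha,\alpha^2\}$ linearly independent because a cubic field has no quadratic subfield.

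Your last paragraph, however, contains a genuine error, though a harmless one in context. In this paper ``cubic order'' means an order in a cubic field: the lemma is applied in \S\ref{sec:Unif} to the cubic resolvent rings of maximal quartic orders attached to generic forms (one works inside $W(\Z)^\mg$, where $R$ is an integral domain), and the binary cubic form $g$ introduced just before the lemma is irreducible. So the case $R\otimes\Q\cong\Q\times F$ never arises and no extra case analysis is needed. More importantly, the claim that a Delone--Faddeev case analysis would still produce a third short vector in that case cannot be made to work, because the bounds $\ell_2(R)\ll\ell_1(R)^2$ and $\sk(R)\ll|\Disc(R)|^{1/6}$ are simply false for general non-domain cubic rings. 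For instance, for large $N$ the lattice
\[
R=\Z\cdot(1,1,1)+\Z\cdot(0,2,0)+\Z\cdot(0,0,N)\subset\Z^3
\]
is closed under multiplication, hence is an order in $\Q\times\Q\times\Q$ with $|\Disc(R)|=4N^2$; yet any lattice vector outside the span of $(1,1,1)$ and $(0,2,0)$ has length $\gg N$, so $\ell_1(R)\asymp1$ while $\ell_2(R)\asymp N$, violating $\ell_2\ll\ell_1^2$, and $\sk(R)\asymp N\gg|\Disc(R)|^{1/6}$. The correct repair is simply to invoke the standing assumption that $R$ is a domain, which is exactly what your field-case argument (and the paper's proof) already uses.
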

\begin{proof}
First note that by Minkowski's theorem, we have $\ell_1(R)\ell_2(R)\asymp \sqrt{|\Disc(R)|}$. The first two claims follow immediately from this and the definition of $\sk(R)$. Suppose $\beta\in R\backslash\Z$ is the element having length $\ell_1(C)$. Then $\beta^2$ has length $\asymp\ell_1^2(R)$. Since $\ell_2(R)$ is the second successsive minima, we have $\ell_1(R)^2\gg \ell_2(R)$, which yields the third claim. The final claim is a consequence of the first three claims.
\end{proof}

Next, recall that an integral irreducible binary cubic form $g(x,y)\in U(\Z)$ corresponds to a cubic order $R$, together with a basis $\{\overline{\beta_1},\overline{\beta_2}\}$ of $R/\Z$. We say that $g(x,y)$ is {\it almost Minkowski reduced} if the basis $\{\beta_1,\beta_2\}$ is almost Minkowski reduced, where we say that some set of linearly independent vectors in $\R^n$ is {\it almost Minkowski reduced} if the angles between every pair of them
is bounded below by a positive absolute constant. 
Note that if $\{1,\beta_1,\beta_2\}$ is an almost Minkowski reduced basis of $R$, then $\{\overline{\beta_1},\overline{\beta_2}\}$ is an almost Minkowski reduced basis of $R/\Z$. Conversely, if $\{\overline{\beta_1},\overline{\beta_2}\}$ is an almost Minkowski reduced basis of $R/\Z$, then $\overline{\beta_1}$ and $\overline{\beta_2}$ admit lifts to $\beta_1$ and $\beta_2$, respectively, such that $\{1,\beta_1,\beta_2\}$ is an 
almost Minkowski reduced basis of $R$. Indeed, to ensure this, $\beta_1$ and $\beta_2$ should simply be taken to be elements in $R^\red$.

The notion of being almost Minkowsky reduced can be generalized to elements of $U(\R)$: Namely, given $h(x,y)\in U(\R)$ with nonzero discriminant, an analogue of the Delone--Faddeev parametrization over $\R$ (see for example \cite{BSHMC}) yields an \'etale cubic algebra $R_h$ over $\R$ along with a basis $\{\overline{\beta_1},\overline{\beta_2}\}$ for $R_h/\R$. The space $R_h/\R$ can be naturally identified with the set of traceless elements in $R_h$, and this identification allows us to restrict the standard norm and inner product on $\R^3$ to $R_h/\R$. We say that 
$h(x,y)$ is {\it almost Minkowski reduced} if $\{\overline{\beta_1},\overline{\beta_2}\}$ is almost Minkowski reduced. We define $\overline{\sk}(h)$ to be $|\overline{\beta_1}|/|\overline{\beta_2}|$, and note that if $h(x,y)\in U(\Z)$, corresponding to the cubic order $R$, is almost Minkowski reduced, then $\overline{\sk}(h)\asymp\sk(R)$.
We now have the following result.
\begin{lemma}
Let $B\subset U(\R)$ be a bounded set whose boundary does not intersect the discriminant zero locus. Then every element in $\FF\cdot B$ is almost Minkowski reduced. Moreover, for $\gamma=utk\in\FF$ and $h\in B$, we have $\sk(\gamma\cdot h)\asymp t^2$.
\end{lemma}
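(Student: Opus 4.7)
The plan is to translate the statement into a concrete linear-algebraic computation on the $2$-dimensional $\R$-vector space $R_h/\R$, equipped with the inner product inherited from $R_h$ via the trace pairing. First I would make explicit the $\mathrm{GL}_2(\R)$-equivariance of the Delone--Faddeev correspondence over $\R$: if $h\in U(\R)$ with $\Delta(h)\neq 0$ corresponds to the \'etale algebra $R_h$ equipped with a basis $\{\overline{\beta_1},\overline{\beta_2}\}$ of $R_h/\R$, then $\gamma\cdot h$ corresponds to the same algebra $R_h$ with new basis $(\overline{\beta_1},\overline{\beta_2})\gamma$. This follows from unwinding the defining identity $\overline{\beta}\wedge\overline{\beta}^2 = h(x,y)\,(\overline{\beta_1}\wedge\overline{\beta_2})$ for $\overline{\beta}=x\overline{\beta_1}+y\overline{\beta_2}$ against the twisted action $\gamma\cdot h(x,y) = h((x,y)\gamma)/\det\gamma$; since $\det\gamma = 1$ on $\mathcal{F}$, no scalar twist enters.

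Next I would exploit compactness of $B$. Because $B$ is bounded with closure disjoint from the discriminant locus, the triple $(R_h,\overline{\beta_1},\overline{\beta_2})$ varies continuously in a compact family as $h$ ranges over $B$. Fixing an orthonormal reference frame of $R_h/\R$ with respect to the trace pairing, the Gram matrix $G_h$ of $\{\overline{\beta_1},\overline{\beta_2}\}$ then lies in a compact set of positive-definite symmetric $2\times 2$ matrices, so its eigenvalues are uniformly bounded above and uniformly bounded away from zero for $h\in B$.

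The heart of the proof is a direct Iwasawa-type calculation. For $\gamma=utk\in\mathcal{F}$, the Gram matrix of the transformed basis is $G' = \gamma^T G_h\gamma$. The orthogonal factor $k$ merely conjugates $G_h$ by a rotation, preserving the compact family and its eigenvalue bounds; the shear $u$ has entries bounded by $1/2$ in absolute value. Thus the leading $t$-behaviour is controlled by the diagonal factor $t=\mathrm{diag}(t^{-1},t)$: a direct computation of $G'=(utk)^T G_h (utk)$ yields $G'_{11}\asymp t^{-2}$, $G'_{22}\asymp t^{2}$, and $G'_{12}=O(1)$, with implicit constants depending only on $B$. This immediately gives $|\overline{\beta'_1}|\asymp t^{-1}$, $|\overline{\beta'_2}|\asymp t$, hence $\sk(\gamma\cdot h)\asymp t^{2}$, reading the skewness as the ratio of the longer basis vector to the shorter. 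For the angle, $\cos\theta = G'_{12}/\sqrt{G'_{11}G'_{22}}$, and since $\det G' = \det G_h$ is uniformly bounded away from zero while $G'_{11}G'_{22}$ is uniformly bounded above, the identity $(G'_{12})^2 = G'_{11}G'_{22}-\det G'$ forces $|\cos\theta|$ to be bounded by a constant strictly less than $1$ uniformly in $\gamma\in\mathcal{F}$ and $h\in B$; this is the almost Minkowski reducedness.

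The main obstacle I anticipate is carefully pinning down the $\mathrm{GL}_2(\R)$-equivariance of the Delone--Faddeev correspondence over $\R$ and tracking conventions (in particular identifying which basis vector plays the role of $\overline{\beta_1}$ versus $\overline{\beta_2}$, which is what determines whether the skewness is naturally written as $t^{2}$ or as $t^{-2}$). Once that bookkeeping is fixed, the rest reduces to routine two-dimensional linear algebra, modulo verifying that the $O(1)$ perturbations from the shear $u$ and from the compact family of Gram matrices $G_h$ do not disturb the leading-order asymptotics.
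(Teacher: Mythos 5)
Your strategy coincides with the paper's (equivariance of the Delone--Faddeev correspondence over $\R$, compactness of $B$ giving two-sided eigenvalue bounds on the Gram matrix, then an Iwasawa-factor computation), so the only issue is one concrete step -- but it is exactly the step the lemma hinges on. With the paper's twisted action $\gamma\cdot h(x,y)=h((x,y)\gamma)/\det\gamma$, unwinding the index-form identity shows that the basis attached to $\gamma\cdot h$ is $(\overline{\beta_1},\overline{\beta_2})\gamma^{T}$ (the new basis vectors are formed from the \emph{rows} of $\gamma$), so the Gram matrix transforms as $G'=\gamma\,G_h\,\gamma^{T}$, not as $G'=\gamma^{T}G_h\gamma$ as you wrote. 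This is not merely the $t^{2}$-versus-$t^{-2}$ bookkeeping you flagged: it decides on which side of the product the rotation $k$ sits. With your formula, $G'=k^{T}(t)\,u^{T}G_hu\,(t)k$, the factor adjacent to $G_h$ is the shear, and $k$ conjugates the \emph{already stretched} matrix $(t)u^{T}G_hu(t)$. For $t$ large and $k$ a rotation by $\pi/4$, that conjugation makes all three entries $G'_{11},G'_{22},G'_{12}$ of size $\asymp t^{2}$, so the asserted estimates $G'_{11}\asymp t^{-2}$, $G'_{22}\asymp t^{2}$, $G'_{12}=O(1)$ fail, the ratio of the basis-vector lengths is $\asymp 1$ rather than $t^{2}$, and $|\cos\theta|\to 1$; under that convention both conclusions of the lemma would be false for such $\gamma\in\FF$. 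Your sentence ``the orthogonal factor $k$ merely conjugates $G_h$'' is precisely what requires the correct convention $G'=\gamma G_h\gamma^{T}=u(t)\,\bigl(kG_hk^{-1}\bigr)\,(t)u^{T}$: then $kG_hk^{-1}$ stays in the compact family of positive-definite matrices, and your diagonal/shear computation and the angle argument go through verbatim (the residual ambiguity of which vector is labelled $\overline{\beta_1}$ really is cosmetic, but the transpose is not).

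Once corrected, your write-up is essentially an expanded version of the paper's proof, which only records three qualitative facts (compactness of $B$ gives uniform almost-reducedness; the basis transforms by $\gamma$; almost-reducedness is stable under $\FF$) and deduces the skewness claim from the second. One point worth making explicit when you repair the computation: stability under $\FF$ genuinely needs the two-sided eigenvalue bounds supplied by compactness of $B$, not just the angle bound -- conjugating a very skew (but angle-bounded) Gram matrix by a rotation can destroy the angle bound, as the $\pi/4$ example above shows -- so your compactness step is doing real work there and should be kept in the argument.
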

\begin{proof}
We start by noting three facts. First, the assumptions on $B$ ensure that the angles between the two basis vectors corresponding to each element of $B$ are uniformly bounded away from $0$. Hence every element in $B$ is almost Minkowski reduced. Second, let $\gamma\in \GL_2(\R)$ and let $h(x,y)\in U(\R)$ be an element corresponding to the basis $\{\overline{\beta_1},\overline{\beta_2}\}$ of $R_h$. Then $\{\gamma\overline{\beta_1},\gamma\overline{\beta_2}\}$ is the basis corresponding to $\gamma\cdot h$. Third, if $\{\overline{\beta_1},\overline{\beta_2}\}$ is almost Minkowski reduced, then so is $\{\gamma\overline{\beta_1},\gamma\overline{\beta_2}\}$ for every $\gamma\in\FF$. The first assertion follows from these three facts. The second assertion is an immediate consequence of the second fact.
\end{proof}

We conclude this subsection by explaining how these results can be used to help evaluate the right hand side of \eqref{eq:uniftemp2}.
Let $g(x,y)$ be an irreducible integral binary cubic form corresponding to the cubic order $R$ be a cubic order and basis $\{1,\beta_1,\beta_2\}$, with $\beta_1,\beta_2\in R^\red$. There is a natural bijection between $R^\red$ and $\Z^2$, where we map $(r,s)\in\Z^2$ to the unique $\Z$-translate $\alpha$ of $r\beta_1+s\beta_2$ such that $\tTr(\alpha)\in\{-1,0,1\}$. Suppose the basis $\{1,\beta_1,\beta_2\}$ is almost Minkowsky reduced. Then the condition $h(\alpha)\ll X^{1/6}$ translates to $|r|\ll X^{1/6}/\ell_1(R)$ and $|s|\ll X^{1/6}/\ell_2(R)$. 
Suppose $(A,B)\in W(\Z)^\mg$ with cubic resolvent $g(x,y)$ corresponds to the pair $(\O,R)$. Assume that $g(x,y)$ corresponds to an almost Minkowski reduced basis $\{1,\beta_1,\beta_2\}$ of $R$. The quantity $w_X(A,B)$ is equal to the number of pairs $(Q,\alpha)$, where $Q$ is a suborder of $\cO$, $\alpha\in R$ with $h(\alpha)\ll X^{1/6}$ and $[\cO:Q]=[R:\Z[\alpha]]$. The height condition implies that there exist integers $r_1$ and $r_2$ with $|r_1|\ll X^{1/6}/\ell_1(R)$ and $|r_2|\ll X^{1/6}/\ell_2(R)$ such that $\alpha=r_1\beta_1+r_2\beta_2+n$ for some integer $n$. Moreover, since $g(x,y)$ is the index form on $R/\Z$, it follows that $[R:\Z[\alpha]]=|g(r_1,r_2)|$.
Therefore, with notation as above, we have the following bound:
\begin{equation}\label{eq:wX}
w_X(A,B)\ll\sum_{\substack{(r_1,r_2)\in\Z^2\\|r_1|\ll X^{1/6}/\ell_1(R)\\|r_2|\ll X^{1/6}/\ell_2(R)}}\#\{Q\subset\O:[\O:Q]=|g(r_1,r_2)|\}.
\end{equation}
We turn now to the right hand side of \eqref{eq:uniftemp2}. It is necessary to sum over $(A,B)\in S_W(X/M_1^2,M_2)$. Such an element $(A,B)$ corresponds to a pair $(\O,R)$, where $\O$ is a maximal quartic order, and $R$ is the cubic resolvent of $\O$. We will break the ranges of the possible values of $|\Delta(R)|$ and $\sk(R)$ into dyadic ranges. By Lemma \ref{lem:crp}, a pair of such dyadic ranges $[Y,2Y]$ and $[Z,2Z]$ determines the sizes of $\ell_1(R)$ and $\ell_2(R)$ up to absolutely bounded multiplicative constants. Furthermore, $Y$ will be bounded above by $X/M_1^2$. 

Let $S_W(Y,Z;M_2)$ denote the set of $\GL_2(\Z)\times\SL_3(\Z)$-orbits on the set of elements $(A,B)\in W(\Z)^\mg$, corresponding to $(\O,R)$, such that $Y\leq|\Delta(A,B)|<2Y$, $Z\leq\sk(R)<2Z$, and the discriminant of $(A,B)$ is strongly divisible by $m_2^2$ for some squarefree positive integer $m_2\geq M_2$. Then an application of Bhargava's averaging method in \cite{dodqf} yields an absolutely bounded ball $\cB\subset W(\R)$, with support bounded away from the discriminant-$0$ locus of $W(\R)$, such that
\begin{equation}\label{eq:unifQavg}
\sum_{(A,B)\in S_W(Y,Z;M_2)}w_X(A,B)\ll
\int_{\substack{\lambda\asymp Y^{1/12}\\t\asymp Z^{1/2}\\s_1,s_2\gg 1}}
\Bigl(\sum_{(A,B)\in (t,s)\lambda\cB\cap W(\Z)^\mg_{M_2}}w_X(A,B)\Bigr)\frac{d^\times s  d^\times t d^\times \lambda}{s_1^6s_2^6t^2},
\end{equation}
where $s=(s_1,s_2)$, $d^\times s:=d^\times s_1 d^\times s_2$, $(t,s_1,s_2)$ is the diagonal element $(g_2,g_3)\in\GL_2(\R)\times\SL_3(\R)$ with $g_2=\diag(t^{-1},t)$ and $g_3=\diag(s_1^{-2}s_2^{-1},s_1s_2^{-1},s_1s_2^2)$, and $W(\Z)^\mg_{M_2}$ denotes the subset of $W(\Z)^\mg$ whose discriminants are strongly divisible by $m_2^2$ for some squarefree $m_2\geq M_2$. In the next subsection, we use \eqref{eq:wX} and \eqref{eq:unifQavg} to obtain the uniformity estimate.

\subsection{Bounds in the small $Y$ range}

Let $X$, $Y$, and $Z$ be positive real numbers with $Y\ll X/M_1^2$ and $Z\ll Y^{1/6}$. (This bound on $Z$ comes from Lemma \ref{lem:crp}.) In this subsection, we will break up our uniformity estimate into three parts (the sum over $r_1,r_2$ with $r_1r_2\neq0$, the sum over $r_2$ with $r_1=0$, and the sum over $r_1$ with $r_2=0$). For the first two parts to be nonzero, we will show that we must have $Y\ll X^{2/3}$. In this subsection, we obtain bounds on these two parts. 

We begin with the following lemma. Recall that we define $N(k)$ for positive integers $k$ in Proposition \ref{prop:Nak}.
For convenience we define it for negative integers $k$
in the same manner.
In the sequel we use the following bound.
\begin{lemma}\label{lem:sumdar}
We have
\begin{equation*}
\sum_{\substack{0\neq |d|\ll D\\0\neq|r|\ll R}}N(dr^3)\ll_\epsilon D^{1+\epsilon}R^{2+\epsilon}.
\end{equation*}
\end{lemma}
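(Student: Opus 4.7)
The plan is to fiber the double sum over $r$ and bound the inner sum over $d$ by a local (Euler product) computation, using two structural facts about $N$: it is multiplicative, and $N(k)$ equals the largest integer whose square divides $k$. Since $N$ depends only on the absolute value of its argument, I may restrict throughout to $d,r\geq 1$ at the cost of an absolutely bounded constant.

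For fixed $r$, I will decompose each $d\geq 1$ as $d=ef$, where $e$ is the largest divisor of $d$ supported on primes dividing $r$ and $\gcd(f,r)=1$. The coprimality $\gcd(er^3,f)=1$ combined with multiplicativity gives $N(dr^3)=N(er^3)N(f)$, so bounding the $f$-factor by the elementary estimate $\sum_{f\leq M}N(f)\ll_\epsilon M^{1+\epsilon}$ (which follows from writing $f=t^2 s$ with $s$ squarefree and counting) yields
\begin{equation*}
\sum_{d\leq D}N(dr^3)\ll_\epsilon D^{1+\epsilon}\sum_{e\mid r^\infty}\frac{N(er^3)}{e^{1+\epsilon}}.
\end{equation*}
The remaining sum factors as an Euler product $\prod_{p\mid r}\sum_{a\geq 0}p^{\lfloor(a+3v_p(r))/2\rfloor-(1+\epsilon)a}$. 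A short computation, split by the parities of $a$ and of $v_p(r)$, shows that each local factor is $p^{\lfloor 3v_p(r)/2\rfloor}\cdot\bigl(1+O_\epsilon(1/p)\bigr)$. Taking the product over $p\mid r$ therefore produces $\sum_{e\mid r^\infty}N(er^3)/e^{1+\epsilon}\ll N(r^3)\cdot C^{\omega(r)}\ll_\epsilon N(r^3)\,r^{\epsilon}$.

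To close the argument I will use the pleasant identity $N(r^3)=r\cdot N(r)$, which is immediate from the pointwise equality $e+\lfloor e/2\rfloor=\lfloor 3e/2\rfloor$ at every prime. Together with $\sum_{r\leq R}N(r)\ll_\epsilon R^{1+\epsilon}$ this yields $\sum_{r\leq R}N(r^3)\leq R\sum_{r\leq R}N(r)\ll_\epsilon R^{2+\epsilon}$, and assembling the three estimates gives the claimed bound $\sum_{d,r}N(dr^3)\ll_\epsilon D^{1+\epsilon}R^{2+\epsilon}$. The only step requiring any real care is the local computation verifying that the Euler factor at $p$ is essentially $p^{\lfloor 3v_p(r)/2\rfloor}$ uniformly in $v_p(r)\geq 1$; the remaining steps are routine applications of divisor-sum estimates and the definition of $N$.
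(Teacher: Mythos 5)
Your argument is essentially correct, and in fact the paper offers nothing to compare it with: Lemma \ref{lem:sumdar} is stated with the remark that it ``follows from standard methods'' and its proof is omitted. Your route — restrict to $d,r\geq1$, split $d=ef$ with $e\mid r^\infty$ and $(f,r)=1$, use multiplicativity of $N$ to get $N(dr^3)=N(er^3)N(f)$, bound $\sum_{f\leq M}N(f)\ll_\epsilon M^{1+\epsilon}$, factor the remaining sum over $e$ as an Euler product, and finish with $N(r^3)=rN(r)$ and $\sum_{r\leq R}N(r)\ll_\epsilon R^{1+\epsilon}$ — is exactly the kind of standard argument intended, and it correctly avoids the trap of pretending $N(dr^3)\leq N(d)N(r^3)$, which is false (e.g.\ $N(p\cdot p^3)=p^2$ while $N(p)N(p^3)=p$). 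One intermediate claim is misstated, though the proof survives: for odd $v=v_p(r)$ the local factor $\sum_{a\geq0}p^{\lfloor(a+3v)/2\rfloor-(1+\epsilon)a}$ is \emph{not} $p^{\lfloor 3v/2\rfloor}\bigl(1+O_\epsilon(1/p)\bigr)$, since the $a=1$ term already contributes $p^{\lfloor 3v/2\rfloor-\epsilon}$, i.e.\ a relative $p^{-\epsilon}$, which is far larger than $1/p$; pairing the terms $a=2j,2j+1$ shows the factor equals $p^{\lfloor 3v/2\rfloor}(1+p^{-\epsilon})(1-p^{-1-2\epsilon})^{-1}$ for odd $v$ and $p^{3v/2}(1+p^{-1-\epsilon})(1-p^{-1-2\epsilon})^{-1}$ for even $v$. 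What you actually need downstream is only that each local factor is at most an absolute constant times $p^{\lfloor 3v_p(r)/2\rfloor}$ (e.g.\ at most $4p^{\lfloor 3v_p(r)/2\rfloor}$), which the displayed formulas give, so the bound $\sum_{e\mid r^\infty}N(er^3)/e^{1+\epsilon}\ll C^{\omega(r)}N(r^3)\ll_\epsilon r^\epsilon N(r^3)$ and hence the lemma follow as you wrote; just correct the ``$1+O_\epsilon(1/p)$'' to ``$O(1)$'' (or ``$1+O(p^{-\epsilon})$'') in the odd-valuation case.
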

The above lemma follows from standard methods, and we omit the proof.

For $\lambda\asymp Y^{1/12}$, $t\asymp Z^{1/2}$, and $s_1,s_2\gg 1$, the cubic resolvent ring $R$ associated to every element in $(t,s_1,s_2)\lambda\cB\cap W(\Z)^\mg$ satisfies (by design) $|\Delta(R)|\asymp Y$ and $\sk(R)\asymp Z$. Therefore, from Lemma \ref{lem:crp}, we also have $\ell_1(R)\asymp Y^{1/4}/Z^{1/2}$ and $\ell_2(R)\asymp Y^{1/4}Z^{1/2}$.
We define the quantities
\begin{equation}\label{eq:R1R2}
R_1:=X^{1/6}Z^{1/2}/Y^{1/4};\quad\quad
R_2:=X^{1/6}/(Y^{1/4}Z^{1/2}).
\end{equation}
Then the ranges of $|r_1|$ and $|r_2|$ in the right hand side of \eqref{eq:wX}, for every $(A,B)\in (t,s_1,s_2)\lambda\cB\cap W(\Z)^\mg$, are $|r_1|\ll R_1$ and $|r_2|\ll R_2$. 

We define the functions $w_X^{(1)}$, $w_X^{(2)}$ and $w_X^{(3)}$ from $W(\Z)^\mg\to\R$ by
\begin{equation*}
w_X^{(1)}(A,B)=\sum_{\substack{r_1r_2\neq 0\\r_i\ll R_i}}N(g(r_1,r_2)),\;\;
w_X^{(2)}(A,B)=\sum_{\substack{r_2\ll R_2}}N(g(0,r_2)),\;\;
w_X^{(3)}(A,B)=\sum_{\substack{r_1\ll R_1}}N(g(r_1,0))
\end{equation*}
where $g(x,y)$ is the cubic resolvent form corresponding to $(A,B)$ and the sum is over integers $r_1$ and $r_2$, not both zero.
By Proposition \ref{prop:Nak}, we have
\begin{equation}\label{eq:wxex1wx2}
w_X(A,B)\ll_\epsilon X^\epsilon \left(w_X^{(1)}(A,B)+w_X^{(2)}(A,B)+w_X^{(3)}(A,B)\right).
\end{equation}

\subsubsection*{Bounding the contribution of $w_X^{(1)}(A,B)$}
Let $X$, $Y$, and $Z$ be fixed, and take $\lambda\asymp Y^{1/12}$, $t\asymp Z^{1/2}$, and $s_1,s_2\gg 1$. Set $s=(s_1,s_2)$. For
\begin{equation}\label{eq:contT1}
\sum_{(A,B)\in (t,s)\lambda\cB\cap W(\Z)^\mg}w_X^{(1)}(A,B)
\end{equation}
to be nonzero, we must have $R_2\gg 1$, which in turn implies
$X^{1/6}\gg Y^{1/4}Z^{1/2}$.
Thus, only pairs $(A,B)$ with $\Delta(A,B)\ll X^{2/3}$ are being counted. With such a strong bound on the discriminant, we will not need any savings from the fact that only pairs $(A,B)$ whose discriminants are strongly divisible by some $m_2^2$ (with $m_2>M_2$) are being counted.

Fix an element $(A,B)\in (t,s)\lambda\cB$, and let $g(x,y)=ax^3+bx^2y+cxy^2+dy^3$ denote its cubic resolvent form. The coefficients of $g(x,y)$ and the values of $g(r_1,r_2)$, for $|r_1|\ll R_1$ and $|r_2|\ll R_2$, satisfy the following bound:
\begin{equation}\label{eq:unifCRest1}
|a|\ll \frac{Y^{1/4}}{Z^{3/2}};\quad
|b|\ll \frac{Y^{1/4}}{Z^{1/2}};\quad
|c|\ll Y^{1/4}Z^{1/2};\quad
|d|\ll Y^{1/4}Z^{3/2};\quad
|g(r_1,r_2)|\ll \frac{X^{1/2}}{Y^{1/2}}.
\end{equation}
We denote the coefficients of elements $(A,B)\in W(\R)$ by $a_{ij},b_{ij}$. The action of torus elements $(t,s)\lambda$ on $W(\R)$ multiply each coefficient $c_{ij}$ by a factor which we denote by $w(c_{ij})$.
We have
\begin{equation}
w(a_{11})=\frac{\lambda} {ts_1^{4}s_2^{2}};\quad
w(a_{12})=\frac{\lambda} {ts_1s_2^{2}};\quad
w(a_{13})=\frac{\lambda s_2}{ts_1};\quad
w(a_{22})=\frac{\lambda s_1^2} {ts_2^{2}};\quad
w(b_{11})=\frac{\lambda t} {s_1^{4}s_2^{2}}.
\end{equation}
If $(A,B)\in W(\Z)$ satisfies either $a_{11}=b_{11}=0$, or $a_{11}=a_{12}=a_{13}=0$, or $a_{11}=a_{12}=a_{22}=0$, then $(A,B)$ is not generic (in the first case, $A$ and $B$ have a common rational zero, implying that the quartic ring corresponding to them is not an integral domain; in the second case, we have $\det(A)=0$ implying that the cubic resolvent ring is not an integral domain). Thus, for $(t,s_1,s_2)\lambda\cB\cap W(\Z)^\gen$ to be nonempty, we must have $w(b_{11})\gg 1$, $w(a_{13})\gg 1$ and $w(a_{22})\gg 1$ (which in turn implies that $w(c_{ij})\gg 1$ for all $c_{ij}$ other than $a_{11}$ and $a_{12}$). These weight conditions will be used to constrain the ranges of $s_1$ and $s_2$ in terms of $t$ and $\lambda$.

We obtain bounds on \eqref{eq:contT1} using two methods. In the first method, we use the trivial bound $N(k)\ll_\epsilon |k|^{1/2+\epsilon}$. This yields the estimate 
\begin{equation*}
w_X^{(1)}(A,B)\ll_\epsilon R_1R_2(X/Y)^{1/4+\epsilon}\ll_\epsilon \frac{X^{7/12+\epsilon}}{Y^{3/4}},
\end{equation*}
where we are using the bound on $|g(r_1,r_2)|$ from \eqref{eq:unifCRest1}. Therefore, using Proposition \ref{prop-davenport} to bound the number of generic elements in $(t,s)\lambda\cB\cap W(\Z)$, we obtain
\begin{equation*}
\begin{array}{rcl}
\displaystyle\sum_{(A,B)\in (t,s)\lambda\cB\cap W(\Z)^\mg} w_X^{(1)}(A,B)
&\ll_\epsilon& \displaystyle
X^{7/12+\epsilon}Y^{-3/4}\bigl(\lambda^{12}+\lambda^{11}ts_1^4s_2^2+\lambda^{10}t^2s_1^5s_2^4\bigr)\\
&\ll& \displaystyle
X^{7/12+\epsilon}\left(Y^{1/4}+ts_1^4s_2^2Y^{1/6}+t^2s_1^5s_2^4Y^{1/12}\right).
\end{array}
\end{equation*}
Noting that we have $Z\ll Y^{1/6}$, and integrating this over $t$, $s$, and $\lambda$ yields
\begin{equation}\label{eq:unifC1M1}
\begin{array}{rcl}
\displaystyle\int_{\substack{\lambda\asymp Y^{1/12}\\t\asymp Z^{1/2}\\s_1,s_2\gg 1}}
\Bigl(\sum_{(A,B)\in (t,s)\lambda\cB\cap W(\Z)^\mg}\!\!\!\!\!\!w_X^{(1)}(A,B)\Bigr)\frac{d^\times s d^\times t d^\times \lambda}{s_1^6s_2^6t^2}
&\ll_\epsilon& \displaystyle
X^{7/12+\epsilon}\Bigl(\frac{Y^{1/4}}{Z^2}+\frac{Y^{1/6}}{Z}+Y^{1/12}\Bigr)\\
&\ll& X^{7/12+\epsilon}Y^{1/4}.
\end{array}
\end{equation}
Since $Y\ll X^{2/3}$, we are already at the border of what is needed.

\medskip

Our second method is based on the following idea. As $A$, $B$, $r_1$, and $r_2$ vary, the value of $g(r_1,r_2)=4\det(Ar_1+Br_2)$ should equidistribute. That would mean that our estimate $N(k)\ll_\epsilon |k|^{1/2+\epsilon}$ is inefficient most of the time. (For example, when $k$ is squarefree, we have $N(k)=1!)$ To realize this idea, we fiber first over $r_1$ and $r_2$, then over possible values of $k=4\det(Ar_1+Br_2)$ (with each $k$ weighted by $N(k)$). For each triple $(r_1,r_2,k)$, we count the number of possible matrices $Ar_1+Br_2$ with determinant $k/4$ (using, crucially, \cite[Theorem 4.8]{unitmono}). Finally, for fixed $r_1$, $r_2$, and $Ar_1+Br_2$, we estimate the number of $(A,B)$: Namely, we bound by considering
\begin{equation*}
\begin{array}{rcl}
\displaystyle
\sum_{(A,B)}w_X^{(1)}(A,B)
&=&
\displaystyle
\sum_{(A,B)}\sum_{(r_1,r_2)}N(4\det(Ar_1+Br_2))
\\
&=&
\displaystyle
\sum_{(r_1,r_2)}\sum_{k}N(k)\sum_{\substack{C\in S(\Z)\\4|\det(C)|=k}}
\#\{(A,B): Ar_1+Br_2=C\}.
\end{array}
\end{equation*}

There are $\ll R_1R_2$ choices for the pair $(r_1,r_2)$, and from the last bound in \eqref{eq:unifCRest1},
$k$ ranges $0\neq|k|\ll X^{1/2}/Y^{1/2}$. We estimate the number of $C=Ar_1+Br_2$ with $\det(C)=k/4$ as follows:
Let $S$ denote the space of $3\times 3$ symmetric matrices. Let $\cB'\subset S(\R)$ be a bounded set of $3\times 3$-symmetric matrices. Let $k\neq 0$ be any fixed integer.  For $T>0$, we write 
\begin{equation*}
\begin{array}{lcl}
\displaystyle N_k(s_1,s_2;T)&:=&\bigl\{C\in(s_1,s_2)T\cB'\cap S(\Z):4\det(C)=k\bigr\};
\end{array}
\end{equation*}
Then we have the following result proved in \cite[Theorem 4.8]{unitmono}.
\begin{proposition}\label{prop:symcount}
For a real number $T>1$ and integer $k\neq 0$, we have
\begin{equation*}
\displaystyle N_k(s_1,s_2;T)\ll_\epsilon
(s_1^3+s_2^3)T^{3+\epsilon}+s_1^4s_2^5T^{2+\epsilon},
\end{equation*}
independent of $k$.
\end{proposition}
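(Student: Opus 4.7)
The plan is a case analysis on the vanishing of the two ``cuspidal'' entries $c_{11}$ and $c_{12}$, which carry the smallest weights under the $\SL_3$-torus $(s_1,s_2)$. Parametrize the torus by $(u_1,u_2,u_3)=(s_1^{-2}s_2^{-1},\,s_1s_2^{-1},\,s_1s_2^2)$; its action multiplies the $(i,j)$-entry of a symmetric matrix by $u_iu_j$, so $C\in(s_1,s_2)T\cB'$ satisfies $|c_{ij}|\ll Tu_iu_j$. Since $u_1u_2u_3=1$, every term in the Leibniz expansion of $\det(C)$ has total weight $(u_1u_2u_3)^2=1$, giving $|k|=|4\det(C)|\ll T^3$, so all divisor-type bounds on $k$ cost only $T^\epsilon$. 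The working identity, from expanding along the first row, is
\begin{equation*}
4\det(C) = (c_{11}c_{22}-c_{12}^2)c_{33} - c_{11}c_{23}^2 + 2c_{12}c_{13}c_{23} - c_{13}^2c_{22} = k
\end{equation*}
(up to the normalization convention for $S$).

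When $c_{11}=c_{12}=0$, this collapses to $c_{13}^2 c_{22}=-k/4$, and the divisor bound yields $O_\epsilon(T^\epsilon)$ choices for $(c_{13},c_{22})$. The remaining entries $c_{23},c_{33}$ range freely in intervals of lengths $Ts_1^2 s_2$ and $Ts_1^2 s_2^4$ respectively, producing the second-term contribution $T^{2+\epsilon} s_1^4 s_2^5$.

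When $c_{11}=0$ and $c_{12}\neq 0$, solving the equation for $c_{33}$ imposes the congruence $c_{12}^2 \mid 2c_{12}c_{13}c_{23}-c_{13}^2c_{22}-k/4$, of density $\ll c_{12}^{-2}$ over the free entries $(c_{13},c_{22},c_{23})$. Multiplying by the box volume $Tu_1u_3\cdot Tu_2^2\cdot Tu_2u_3 = T^3 s_1^3$ and summing the convergent series $\sum_{c_{12}\geq 1}c_{12}^{-2}\ll 1$ gives $O(T^3 s_1^3)$. The case $c_{11}\neq 0$ is handled by the dual fibration: solve the same equation for $c_{11}$ after fibering over the remaining five entries, with divisibility modulus $M:=c_{22}c_{33}-c_{23}^2$. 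After stratifying by $|M|$ and handling the degenerate locus $M=0$ separately, one obtains the symmetric contribution $O(T^{3+\epsilon} s_2^3)$. Summing the three cases gives the proposition.

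The main obstacle is this last case. The modulus $M$ is itself a quadratic form vanishing on the cone $c_{22}c_{33}=c_{23}^2$; off the cone I would stratify by dyadic values of $|M|$, bounding $\#\{(c_{22},c_{23},c_{33}):|M|\asymp V\}$ by fixing $c_{23}$ and applying a divisor bound to $c_{22}c_{33}=V+c_{23}^2$. On the cone, the equation degenerates (its discriminant as a quadratic in $(c_{12},c_{13})$ vanishes) into a single linear relation squared $(\alpha c_{12}-\beta c_{13})^2 = -k\gamma/4$, whose lattice-point count in the box must be controlled by classical representation bounds while $c_{11}$ is now unconstrained by the equation. Combining these estimates so that the naive $T^5/(s_1^2 s_2^4)$ lattice-point bound is sharpened by a full factor of $T$ to match $T^{3+\epsilon} s_2^3$ is the delicate arithmetic ingredient.
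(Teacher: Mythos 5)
The first thing to note is that the paper does not prove this proposition at all: it is imported wholesale as \cite[Theorem 4.8]{unitmono}, so there is no internal argument to compare against, and what you are attempting is a reproof of that black-box input. Within your attempt, the two degenerate strata are essentially sound. For $c_{11}=c_{12}=0$ the divisor bound on $c_{13}^2c_{22}=-k/4$ together with $|k|\ll T^3$ does give the $s_1^4s_2^5T^{2+\epsilon}$ term. For $c_{11}=0\neq c_{12}$ your claim that the congruence has density $\ll c_{12}^{-2}$ is not uniformly true: writing $d=\gcd(c_{13},c_{12})$, the density of admissible $c_{22}$ degrades to $d^2/c_{12}^2$, and one must also keep the $+1$ terms when the modulus $c_{12}^2$ exceeds the box length $Ts_1^2s_2^{-2}$; but since solvability forces $d^2\mid 4k$, summing over $d$ and $c_{12}$ costs only $T^\epsilon$, and the corrected computation lands at $\ll_\epsilon T^{3+\epsilon}s_1^3$ plus admissible terms. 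So those two cases can be patched.

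The genuine gap is the case $c_{11}\neq 0$, which is the generic case and carries almost all of the count once $T\gg s_1^4s_2^2$, and which you yourself flag as unresolved. The plan of fibering over $(c_{12},c_{13},c_{22},c_{23},c_{33})$ and demanding $M\mid k+Q$ with $M=c_{22}c_{33}-c_{23}^2$ cannot be executed as a density argument for three reasons. First, the binary form $Q(c_{12},c_{13})=c_{33}c_{12}^2-2c_{23}c_{12}c_{13}+c_{22}c_{13}^2$ has discriminant $-4M\equiv 0\pmod M$, so modulo $M$ it is (away from gcd strata) a unit times the square of a linear form; the solutions of $Q\equiv -k\pmod M$ are governed by square roots of $-k$ and by gcd stratifications, not by a uniform density $\asymp 1/|M|$, and the worst strata need the same kind of analysis that made your Case 2 delicate, now with a composite, highly variable modulus. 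Second, $|M|$ can be as large as $T^2s_1^4s_2^2$ while the $(c_{12},c_{13})$-box has area only $T^2s_1^{-2}s_2^{-1}$, so in a large part of the range the modulus exceeds the box and ``density times volume'' yields nothing; there one must instead count representations of the integers $-k+Mc_{11}$ by the form $Q$ of discriminant $-4M$ (or change the fibration), which is precisely the arithmetic input you have not supplied. Third, the $M=0$ cone, where $c_{11}$ is unconstrained in a box of size $Ts_1^{-4}s_2^{-2}$, is only named, not bounded; the required estimate (parametrizing the cone, solving $\gamma(\alpha c_{12}-\beta c_{13})^2=-k$, and multiplying by the free $c_{11}$-range) has to be carried out with the skewed box dimensions to see that it stays below $(s_1^3+s_2^3)T^{3+\epsilon}$. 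Note also that the saving you must extract in the generic case is of size $|M|\asymp T^2$ (for bounded $s_1,s_2$), not ``a full factor of $T$'': the naive bound there is $\asymp T^5$ against a target of $T^{3+\epsilon}$. As it stands, the proposal establishes the cuspidal contributions but not the heart of the proposition.
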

Note that for $(A,B)\in (t,1,1)\lambda\cB$, and $r_1\ll R_1$ and $r_2\ll R_2$, the coefficients of $r_1A+r_2B$ are $\ll X^{1/6}/Y^{1/6}$. Therefore, the number of integral symmetric matrices $C=r_1A+r_2B$, with $(A,B)\in (t,s)\lambda\cB$ satisfying $4\det(C)=k$, is
\begin{equation*}
\ll_\epsilon (s_1^3+s_2^3)\frac{X^{1/2+\epsilon}}{Y^{1/2}}+s_1^4s_2^5\frac{X^{1/3+\epsilon}}{Y^{1/3}}.
\end{equation*}
The number of choices for the coefficients
$b_{11}$, $b_{12}$, $a_{13}$, $a_{22}$, $a_{23}$, and $a_{33}$
of $(A,B)\in (t,s)\lambda\cB\cap W(\Z)^\mg$ is $\ll Y^{1/2}/Z$,
since the ranges of these coefficients are all $\gg 1$.
The condition $r_1r_2\neq 0$ implies that the choice of these coefficients, along with $Ar_1+Br_2$ determines $(A,B)$.
Putting this together and applying Lemma \ref{lem:sumdar}, we obtain
\begin{equation*}
\begin{array}{rcl}
\displaystyle\sum_{(A,B)\in (t,s)\lambda\cB\cap W(\Z)^\mg}\!\!\!\!\!\!w_X^{(1)}(A,B)
&\ll_\epsilon& \displaystyle R_1R_2\frac{X^{1/2}}{Y^{1/2}}\Big((s_1^3+s_2^3)\frac{X^{1/2+\epsilon}}{Y^{1/2}}+s_1^4s_2^5\frac{X^{1/3+\epsilon}}{Y^{1/3}}\Big)\frac{Y^{1/2}}{Z}
\\[.2in]&=&\displaystyle
(s_1^3+s_2^3)\frac{X^{4/3+\epsilon}}{YZ}+s_1^4s_2^5\frac{X^{7/6+\epsilon}}{Y^{5/6}Z}.
\end{array}
\end{equation*}
Integrating this over $t$, $s$, and $\lambda$ yields
\begin{equation}\label{eq:unifC1M2}
\int_{\substack{\lambda\asymp Y^{1/12}\\t\asymp Z^{1/2}\\s_1,s_2\gg 1}}
\Bigl(\sum_{(A,B)\in (t,s)\lambda\cB\cap W(\Z)^\mg}\!\!\!\!\!\!w_X^{(1)}(A,B)\Bigr)\frac{d^\times s d^\times t d^\times \lambda}{s_1^6s_2^6t^2}\ll_\epsilon 
\frac{X^{4/3+\epsilon}}{YZ^2}+\frac{X^{7/6+\epsilon}}{Y^{5/6}Z^2}\ll\frac{X^{4/3+\epsilon}}{Y}.
\end{equation}
We combine \eqref{eq:unifC1M1} and \eqref{eq:unifC1M2} to obtain
\begin{equation}\label{eq:unifCase1}
\begin{array}{rcl}
\displaystyle\int_{\substack{\lambda\asymp Y^{1/12}\\t\asymp Z^{1/2}\\s_1,s_2\gg 1}}
\Bigl(\sum_{(A,B)\in (t,s)\lambda\cB\cap W(\Z)^\mg}\!\!\!\!\!\!w_X^{(1)}(A,B)\Bigr)\frac{d^\times s d^\times t d^\times \lambda}{s_1^6s_2^6t^2}
&\ll_\epsilon &\displaystyle
X^\epsilon\min\{X^{7/12}Y^{1/4},\frac{X^{4/3}}{Y}\}
\\[.2in]\displaystyle
&\leq& X^{11/15+\epsilon},
\end{array}
\end{equation}
which is sufficiently small.

\subsubsection*{Bounding the contribution of $w_X^{(2)}(A,B)$}
As before, let $X$, $Y$, and $Z$ be fixed, and take $\lambda\asymp Y^{1/12}$, $t\asymp Z^{1/2}$, and $s_1,s_2\gg 1$. Set $s=(s_1,s_2)$, and consider $(A,B)\in (t,s)\lambda\cB\cap W(\Z)^\mg_{M_2}$. We have
\begin{equation}\label{eq:contT2}
w_X^{(2)}(A,B)=\sum_{|r|\ll R_2}N(4\det(B)r^3).
\end{equation}
Fiber over $r$ (going up to $R_2$ in size), and $d=\det(B)$ (going up to $Y^{1/4}Z^{3/2}$ in size). For each fixed $d$, the number of choices for $B$ is 
\begin{equation*}
\ll_\epsilon (s_1^3+s_2^3)Y^{1/4+\epsilon}Z^{3/2}+s_1^4s_2^5 Y^{1/6+\epsilon}Z
\end{equation*}
from Proposition~\ref{prop:symcount}. Meanwhile, the number of choices for $A$ is bounded depending on the ranges of $s_1$ and $s_2$. If $w(a_{11})\gg 1$, then there are $\ll Y^{1/2}/Z^3$ choices for $A$; if $w(a_{11})<1$ but $w(a_{12})\gg 1$, then there are $\ll Y^{1/2}/Z^3\cdot w(b_{11})/w(a_{11})$ choices for $A$; if $w(a_{11})<1$ and $w(a_{12})< 1$, then there are $\ll Y^{1/2}/Z^3\cdot w(b_{11})w(b_{12})/(w(a_{11})w(a_{12}))$ choices for $A$. Here, we are allowed to multiply by $w(b_{11})$ and $w(b_{12})$ since they are both $\gg 1$. This implies that the number of choices for $A$ is $\ll Y^{1/2}/Z$.
Therefore, applying Lemma \ref{lem:sumdar}, we have
\begin{equation*}
\begin{array}{rcl}
\displaystyle\sum_{(A,B)\in (t,s)\lambda\cB\cap W(\Z)^\mg}\sum_{|r|\ll R_2}N(4\det(B)r^3)
&\ll_\epsilon&\displaystyle
X^\epsilon R_2^2Y^{3/4}Z^{1/2}\bigl((s_1^3+s_2^3)Y^{1/4}Z^{3/2}+s_1^4s_2^5 Y^{1/6}Z\bigr)
\\[.2in]&=&\displaystyle
X^{1/3+\epsilon} \left((s_1^3+s_2^3)Y^{1/2}Z+s_1^4s_2^5Y^{5/12}Z^{1/2}\right).
\end{array}
\end{equation*}
Integrating this over $s$, $t$, and $\lambda$, and using the bound $Y\ll X^{2/3}$ implies that we have
\begin{equation}\label{eq:unifCase2P1}
\int_{\substack{\lambda\asymp Y^{1/12}\\t\asymp Z^{1/2}\\s_1,s_2\gg 1}}
\Bigl(\sum_{(A,B)\in (t,s)\lambda\cB\cap W(\Z)^\mg}w_X^{(2)}(A,B)\Bigr)\frac{d^\times s d^\times t d^\times \lambda}{s_1^6s_2^6t^2}\ll_\epsilon X^{2/3+\epsilon},
\end{equation}
which is sufficiently small.

\subsection{Bounding the contribution of $w_X^{(3)}$ using a uniform Ekedahl sieve}

Finally, we obtain upper bounds on
\begin{equation}\label{eq:UnifMT}
\sum_{(A,B)\in (t,s)\lambda\cB\cap W(\Z)^\mg_{M_2}}w_X^{(3)}(A,B) = \sum_{(A,B)\in (t,s)\lambda\cB\cap W(\Z)^\mg_{M_2}}\sum_{|r|\ll R_1}N(4\det(A)r^3).
\end{equation}
This is the region from which we expect the biggest contribution. In the previous estimates \eqref{eq:unifCase1} and \eqref{eq:unifCase2P1} we had $R_2\gg 1$ which implies $Y\ll X^{2/3}$. Thus, in particular we already have $M_1\gg X^{1/6}$ and did not need any savings from $M_2$. In this final term \eqref{eq:UnifMT}, the situation is very different. In particular, we have to consider values of $Y$ going all the way up to $X$, in which case all our savings will come from $M_2$.

Our strategy to bound \eqref{eq:UnifMT} is similar to our previous methods. We fiber over $r$ (going up to $R_1$)
and $a=4\det(A)$ (going up to $Y^{1/4}/Z^{3/2}$). 
As before, once $a\neq 0$ is fixed, the number of $A$ that can arise with $4\det(A)=a$ is bounded in Proposition \ref{prop:symcount} by
\begin{equation*}
(s_1^3+s_2^3)\frac{Y^{1/4+\epsilon}}{Z^{3/2}}+s_1^4s_2^5 \frac{Y^{1/6+\epsilon}}{Z}.
\end{equation*}
Once $A$ has been fixed (with $4\det(A)=a$), we bound the number of possible $B$'s with $(A,B)\in (t,s)\lambda\cB\cap W(\Z)_{M_2}^\mg$ using the Ekedhal sieve, as developed in \cite{geosieve}. The first thing we note is that if $p\mid A$ for some prime $p$, then $(A,B)$ is not maximal for any $B$. Therefore, we can assume that the $\F_p$-rank of $A$ is $1$, $2$, or $3$ for every prime $p$. Suppose first that $p\mid a=\det(A)$, implying that the $\F_p$-rank of $A$ is $1$ or $2$. In this case, the condition $p^2\mid\Delta(A,B)$ for mod $p$ reasons is at least a codimension $1$ condition on $B$. Moreover, it is easy to see that the polynomial cutting out this condition involves one of the coefficients $b_{22}$, $b_{23}$, and $b_{33}$. Next suppose that $p\nmid a$. In this case, the condition $p^2\mid\Delta(A,B)$ for mod $p$ reasons is a codimension $2$-condition on $B$, and we may apply the Ekedahl sieve from \cite[Proof of Theorem 3.3]{geosieve} without change. In other words, we proceed as follows. We have fixed $A$ with $4\det(A)=a$. We are interested in counting the number of $B$'s in a certain domain such that $m^2$ divides the discriminant of $(A,B)$ for mod $m$ reasons, where $m$ is squarefree and $m>M_2$.
We fiber over the possible values $m'$ of the gcd of $a$ and $m$. Each such $m'$ imposes a mod-$m'$ condition on one of $b_{22}$, $b_{23}$, and $b_{33}$ (the smallest range of which is that of $b_{22}$). Moreover, there must exist an $m''\geq M_2/m'$ which imposes a mod-$m''$ codimension-2 condition on $B$ (involving $b_{33})$. Carrying this out, we obtain
\begin{equation*}
\begin{array}{rcl}
&&\displaystyle\#\bigl\{B:(A,B)\in (t,s)\lambda\cB\cap W(\Z)^\mg_{M_2}\bigr\}
\\[.1in]&\ll_\epsilon&\displaystyle
X^\epsilon
\sum_{\substack{m'\mid a\\|\mu(m')|=1}}w(b_{11})w(b_{12})w(b_{13})\max\Bigl(\frac{w(b_{22})}{m'},1\Bigr)
w(b_{23})\max\Bigl(\frac{w(b_{33})}{\max(M_2/m',1)},1\Bigr)
\\[.1in]&\leq&\displaystyle
X^\epsilon
\sum_{\substack{m'\mid a\\|\mu(m')|=1}}
\Bigl(\frac{\lambda^6t^6}{M_2}+\frac{\lambda^5t^5s_1^{-2}s_2^{-4}}{m'}+\frac{\lambda^5t^5s_1^{-2}s_2^2}{\max(M_2/m',1)}+\lambda^4t^4s_1^{-4}s_2^{-2}\Bigr)
\\[.2in]&\ll_\epsilon&\displaystyle
X^\epsilon
\Bigl(
\frac{Y^{1/2}Z^3}{M_2}+s_1^{-2}s_2^2Y^{5/12}Z^{5/2}
\Bigr).
\end{array}
\end{equation*}
Combining the above discussion with the bound 
\begin{equation*}
\sum_{a\ll Y^{1/4}/Z^{3/2}}\sum_{r\ll R_1}N(ar^3)\ll_\epsilon X^\epsilon R_1^2Y^{1/4}/Z^{3/2}
\end{equation*}
implied by Lemma \ref{lem:sumdar}, we obtain
\begin{equation*}
\begin{array}{rcl}
&&\displaystyle\sum_{(A,B)\in (t,s)\lambda\cB\cap W(\Z)_{M_2}^\mg}\sum_{0\neq|r|\ll R_1}N(4\det(A)r^3)
\\[.2in]&\ll_\epsilon&\displaystyle
X^\epsilon \frac{R_1^2 Y^{1/4}}{Z^{3/2}}
\Bigl(s_1^3s_2^3\frac{Y^{1/4}}{Z^{3/2}}+s_1^4s_2^5\frac{Y^{1/6}}{Z}\Bigr)
\Bigl(\frac{Y^{1/2}Z^3}{M_2}+s_1^{-2}s_2^2Y^{5/12}Z^{5/2}
\Bigr)
\\[.2in]&=&\displaystyle
X^\epsilon \Bigl(
\frac{s_1^3s_2^3X^{1/3}Y^{1/2}Z}{M_2}+
s_1^4s_2^5\frac{X^{1/3}Y^{5/12}Z^{3/2}}{M_2}+
s_1s_2^5X^{1/3}Y^{5/12}Z^{1/2}+
s_1^2s_2^7X^{1/3}Y^{1/3}Z
\Bigr).
\end{array}
\end{equation*}
Recall that we always have $Z\ll Y^{1/6}$ and $R_1\gg 1$. The latter condition implies $Z^{1/2}\gg Y^{1/4}/X^{1/6}$. Multiplying the last summand in the above displayed equation by $1\ll \sqrt{w(a_{22})}\asymp s_1s_2^{-1}Y^{1/12}Z^{-1/2}$, and integrating over $\lambda$, $t$, and $s$ yields
\begin{equation}\label{eq:unifCase2P2}
\begin{array}{rcl}
&&\displaystyle\int_{\substack{\lambda\asymp Y^{1/12}\\t\asymp Z^{1/2}\\s_1,s_2\gg 1}}
\Bigl(\sum_{(A,B)\in (t,s)\lambda\cB\cap W(\Z)^\mg}\sum_{|r|\ll R_1}N(4\det(A)r^3)\Bigr)\frac{d^\times s d^\times t d^\times \lambda}{s_1^6s_2^6t^2}
\\[.2in]&\ll_\epsilon&\displaystyle
\frac{X^{1/3+\epsilon}Y^{1/2}}{M_2}+\frac{X^{1/3+\epsilon}Y^{5/12}}{Z^{1/2}}
\\[.2in]&\ll_\epsilon&\displaystyle
\frac{X^{1/3+\epsilon}Y^{1/2}}{M_2}+
X^{1/2+\epsilon}Y^{1/6}.
\end{array}
\end{equation}
We are now ready to prove the main result of this section.

\medskip

\noindent{\bf Proof of Theorem \ref{Thm:Unif}:} Combining \eqref{eq:unifCase1}, \eqref{eq:unifCase2P1}, and \eqref{eq:unifCase2P2} yields
\begin{equation*}
\int_{\substack{\lambda\asymp Y^{1/12}\\t\asymp Z^{1/2}\\s_1,s_2\gg 1}}
\Bigl(\sum_{(A,B)\in (t,s)\lambda\cB\cap W(\Z)^\mg_{M_2}}\!\!\!\!\!\!w_X(A,B)\Bigr)\frac{d^\times s d^\times t d^\times \lambda}{s_1^6s_2^6t^2}\ll_\epsilon  
\frac{X^{1/3+\epsilon}Y^{1/2}}{M_2}+
X^{1/2+\epsilon}Y^{1/6}+X^{11/15+\epsilon}.
\end{equation*}
From Proposition \ref{prop:NXM1M2}, \eqref{eq:unifQavg}, and summing the above equation over $Y$ and $Z$ in the dyadic ranges $Y\leq X/M_1^2$ and $Z\ll Y^{1/6}$, we obtain
\begin{equation*}
N(X;M_1,M_2)\ll_\epsilon  
\frac{X^{5/6+\epsilon}}{M_1M_2}+
\frac{X^{2/3+\epsilon}}{M_1^{1/3}}+X^{11/15+\epsilon}.
\end{equation*}
Finally, applying \eqref{eq:firstinc} completes the proof of Theorem \ref{Thm:Unif}.
$\Box$

\section{Proofs of the main results}\label{sec:proofs}

In this section, we prove our main results.

\subsection{Summing large and locally well approximated functions over $\PGL_2(\Z)\backslash V(\Z)$}

In this section, we prove the following result.

\begin{theorem}\label{thm:NphiXM}
Let $\phi$ be a large and locally well approximated function.  Then, for $i\in\{0,1,2+,2-\}$, we have
\begin{equation*}
N^{(i)}(\phi,X)=M_{5/6}^{(i)}(\phi)X^{5/6}+M_{3/4}^{(i)}(\phi)X^{3/4}+O_\epsilon(X^{3/4-1/3804+\epsilon}).
\end{equation*}
\end{theorem}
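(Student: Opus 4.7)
The plan is to combine the secondary-term expansion of Section 4 for periodic $\phi$ with the local decomposition of Section 5 and the uniformity estimate of Theorem \ref{Thm:Unif}. Since $\phi$ is large and locally well approximated, Section 5 gives
\[
\phi(f) = \prod_p \sum_{k \geq 0} \phi_p^{(k)}(f) = \sum_{n \geq 1} \phi(n;f),
\quad
\phi(n;f) := \prod_{p^{k}\parallel n} \phi_p^{(k)}(f),
\]
so each $\phi(n;\cdot)$ is $\PGL_2(\Z)$-invariant, periodic modulo $n^2$, bounded by $1$, and supported on binary quartic forms whose discriminants are divisible by $n^2$ up to an absolutely bounded power of $2$. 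Thus $N^{(i)}(\phi,X) = \sum_{n} N^{(i)}(\phi(n;\cdot),X)$, and I would split this sum at a threshold $M = X^\beta$ with $\beta$ slightly exceeding $1/12$.

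For the tail $n > M$, the support condition places $\phi(n;\cdot)$ inside $\W_{n/O(1)}$, so Theorem \ref{Thm:Unif} directly yields
\[
\Bigl|\sum_{n > M} N^{(i)}(\phi(n;\cdot),X)\Bigr| \ll_\epsilon \frac{X^{5/6+\epsilon}}{M} + X^{11/15+\epsilon},
\]
which is acceptable for $M = X^{1/12+\delta}$. One also needs to dispose of the tails of the main-term contributions $\sum_{n > M} M_{5/6}^{(i)}(\phi(n;\cdot))X^{5/6}$ and $\sum_{n > M} M_{3/4}^{(i)}(\phi(n;\cdot))X^{3/4}$: the former is controlled by the density bound on $\nu(\phi(n;\cdot))$ coming from Proposition \ref{prop:ppden}, and the latter by the uniform-in-$n$ estimate on $D^\pm(\phi(n;\cdot),1/2)$ supplied by Proposition \ref{lem:Dancont}. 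Both furnish a power saving in $M$, so the full constants $M_{5/6}^{(i)}(\phi)$ and $M_{3/4}^{(i)}(\phi)$ (well defined by Corollary \ref{cor:anacont}) emerge cleanly from the $n \leq M$ range.

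In that main range $n \leq M$, I would apply \eqref{eq:Shintanifinal} to $\phi(n;\cdot)$ at modulus $m = n^2$. The resulting error $E(\phi(n;\cdot); X; \kappa)$ from Section 4.3 is governed by $\supp(\phi(n;\cdot))$ and is far too weak to sum over $n$ without sharpening. The critical enhancement is to replace the Davenport lattice-point count of Lemma \ref{lem:ptcountMB} by twisted Poisson summation on $V(\Z/n^2\Z)$, where the Fourier transform $\widehat{\phi(n;\cdot)}$ satisfies the nontrivial cancellation estimates of Corollary \ref{prop:FT}, themselves derived from the orbital exponential sum bound of Proposition \ref{prop:orbitalintegral}. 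Redoing the small-$t$ and large-$t$ analyses of Section 4.3 with these Fourier-improved point counts, and then summing over $n \leq M$, assembles the main terms to $M_{5/6}^{(i)}(\phi) X^{5/6} + M_{3/4}^{(i)}(\phi) X^{3/4}$ with a controlled error.

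The main obstacle, and what pins down the specific exponent $\alpha = 1/3804$, is optimizing this small-$n$ step. The Fourier savings from Corollary \ref{prop:FT} are at best $p^{-1/2}$ or $p^{-1}$ per prime, and must offset: the support growth of $\phi(n;\cdot)$ at modulus $n^2$; the cuspidal error $X^{2/3+\kappa/3}$ from \eqref{eq:II12}; the large-$t$ aperiodic fiber error integrated over $|a| \ll X^{(1-4\kappa)/6}$; and the tail threshold $M = X^\beta$. Balancing $\beta$ and $\kappa$ against the uniformity bound $X^{5/6+\epsilon}/M$ produces the stated exponent. A subsidiary technicality is the \emph{almost} qualifier at $p = 2$, where the support of $\phi(2;\cdot)$ only enforces divisibility of $\Delta$ by $2^{2k - O(1)}$; one must verify that this bounded loss is absorbed into the constants without disrupting convergence of the Dirichlet series or the precise form of the $X^{3/4}$ coefficient.
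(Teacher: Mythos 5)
Your overall architecture is indeed the paper's: decompose $\phi=\sum_n\phi(n;\cdot)$, cut the tail at $n\geq X^{1/12+\delta}$ with Theorem \ref{Thm:Unif}, treat the remaining $n$ by the Section 4 counting machinery sharpened by twisted Poisson summation with the Fourier bounds of Corollary \ref{prop:FT}, and optimize the parameters. However, two steps of your plan would fail as written. First, you cannot ``redo the small-$t$ analysis with Fourier-improved point counts'' uniformly over all $n\leq M$: the cancellation in Corollary \ref{prop:FT} (coming from Proposition \ref{prop:orbitalintegral}) gives at most a factor $1/p$ per prime, independent of the power of $p$ dividing $n$, so the Poisson error is reduced only through the squarefree part $n_1$ of $n$; moreover Poisson summation requires smoothing the region at a scale $\eta$, which must satisfy $\eta\ll X^{-1/12}$ (else the boundary term $\eta X^{5/6}$ already swamps $X^{3/4}$) and hence costs a factor $\eta^{-5}\gg X^{5/12}$. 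For $n$ that are nearly squarefull ($n_1$ small) this loss is not compensated by anything, and the smoothed Poisson estimate is far worse than the sharp Davenport count. The paper therefore splits the main-ball sum according to the size of $n_1$, using Poisson only when $n_1>X^{1/12-\theta}$ and reverting to the sharp count \eqref{eq:smallt} otherwise, where the sparsity of such $n$ together with the density bound $\nu(\chi_{n^2})\ll n_1^{-2}n_2^{-4}m_3^{-3/2}$ of Proposition \ref{prop:ppden} suffices.

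Second, the cuspidal (large-$t$) range cannot be handled by these Fourier bounds at all: Corollary \ref{prop:FT} concerns functions on all of $V(\Z/n^2\Z)$, whereas the cusp analysis leading to \eqref{eq:larget} is fibered over the $x^4$-coefficient $a$, and no cancellation estimate for the restriction of $\phi(n;\cdot)$ to $V_a$ is available or proved. What is actually needed, and what the paper supplies, are fiberwise density and support bounds exploiting that the density on a fiber with fixed $a$ (and $b$) depends only on the valuations of those coefficients: Lemma \ref{lemma_pre_prop} (together with Lemma \ref{lem:b0bound}) handles $X^{1/12-\theta}\leq n<X^{1/12+\delta}$, and the bound $\supp(\chi_{n^2},a)\ll_\epsilon \supp(\chi_{n^2})\,(a,n^2)/n^{2-\epsilon}$ makes the error in \eqref{eq:larget} summable over smaller $n$. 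Without these ingredients your accounting of ``the large-$t$ aperiodic fiber error'' has no mechanism to converge when summed over $n$, and the optimization that is supposed to pin down $\alpha=1/3804$ cannot be carried out. With these two repairs your plan coincides with the paper's proof; your treatment of the tails of the main terms via Proposition \ref{prop:ppden} and Proposition \ref{lem:Dancont} is consistent with what the paper does.
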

\noindent We assume that $\phi$ is fixed $($and suppress the dependence on this constant in all the error terms in this section$)$.

\medskip

\begin{proof}
The proof of this result combines the methods and results of \S4, \S6, and \S7, and is carried out in the following steps.

\medskip

\noindent {\bf Step 1: Cutting off the tail.}
We begin by noting that for
$f\in V(\Z)\backslash\{\Delta\neq 0\}$, we have
\begin{equation*}
\phi(f)=\prod_p\phi_p(f)=\sum_{n\geq 1}\phi(n;f),\quad\mbox{where}\quad \phi(n;f):=\prod_{p^k\parallel n}\phi_p^{(k)}(f).
\end{equation*}
The function $\phi(n;\cdot)$ is defined modulo $n^2$ and supported on the set of elements $f\in V(\Z)$ with $n^2\mid C\Delta(f)$ for a fixed positive integer $C$. Let $\delta>0$ be a constant to be optimized later. Applying the uniformity estimate in Theorem \ref{Thm:Unif}, we obtain
\begin{equation}\label{eq:Mtailcut}
\sum_{n\geq X^{1/12+\delta}}N^{(i)}(\phi(n;\cdot);X)\ll_\epsilon X^{3/4-\delta+\epsilon}+X^{3/4-1/60+\epsilon},
\end{equation}
since $\sum_{n\geq1}|\phi(n,f)|\leq\sum_{n^2\mid\Delta(f)}1\ll_\epsilon|\Delta(f)|^\epsilon$.
As in \S4, set $Y:=X^{1/6}$. Combining \eqref{eq:avg} and \eqref{eq:Mtailcut}, we have
\begin{equation}\label{eq:Mtailcut1}
\begin{array}{rcl}
N^{(i)}(\phi;X)&=&\displaystyle\frac{1}{\sigma_i\Vol(G_0)}\sum_{n< X^{1/12+\delta}}\cI(\phi(n;\cdot),Y)+O_\epsilon(X^{3/4-\delta+\epsilon}+X^{3/4-1/60+\epsilon})
\\[.2in]
&=&\displaystyle\frac{1}{\sigma_i\Vol(G_0)}\sum_{n< X^{1/12+\delta}}\bigl(\cI^{(1)}(\phi(n;\cdot),Y;\kappa)+\cI^{(2)}(\phi(n;\cdot),Y;\kappa)\bigr)
\\[.2in]
&&\displaystyle +O_\epsilon(X^{3/4-\delta+\epsilon}+X^{3/4-1/60+\epsilon}),
\end{array}
\end{equation}
for some $0<\kappa<1/4$ to be chosen later, where the quantities $\cI(\cdot,Y)$ and $I^{(i)}(\cdot,Y;\kappa)$ are defined in \eqref{eq:tcIdef} and \eqref{eq:cIdef}, respectively. We refer to the sum over $\cI^{(1)}(\phi(n;\cdot),Y;\kappa)$ as the main ball contribution to $N^{(i)}(\phi;X)$ and to the sum over $\cI^{(2)}(\phi(n;\cdot),Y;\kappa)$ as the cuspidal contribution.

\medskip
\medskip

\noindent {\bf Step 2: Estimates for the main ball. Part 1: $n$ has large squarefree part.} Let $n< X^{1/12+\delta}$ be a positive integer. Denote the squarefree part of $n$ by $n_1$.
We use the bounds on the Fourier coefficients of $\phi(n;\cdot)$ obtained in Proposition~\ref{prop:FT} to estimate the quantity
\begin{equation*}
N_n(S^{(i)};Y,(u,t),\phi):=
\sum_{f\in Y(u,t)S^{(i)}\cap V(\Z)}\phi(n;f)
\end{equation*}
for $u\in [-1/2,1/2]$ and $t\gg 1$. Note that we have $(u,t)S^{(i)}=(t)\cdot ((u/t^2,1)\cdot S^{(i)})$, and that $u/t^2$ is absolutely bounded.

Let $\eta>0$ be a sufficiently small real number, to be optimized later. Define the set $S^\sharp\subset S:=(u/t^2,1)S^{(i)}$ to be the (compact) set of elements $v\in \R^n$ such that $|v-w|\geq\eta$ for all elements $w$ in the boundary of $S$. Define the set $S^\flat \supset S$ to be the (open) set of elements $v\in \R^n$ such that $|v-w|<\eta$ for some $w$ in $\overline{S}$, the closure of $S$. Then there exist $C^\infty$ functions $\Psi^\sharp:=\Psi^\sharp_{S,\eta}:\R^n\to\R_{\geq 0}$ and $\Psi^\flat:=\Psi^\flat_{S,\eta}:\R^n\to\R_{\geq 0}$ satisfying the following properties.
\begin{itemize}
\item[{\rm (a)}] The function $\Psi^\sharp$ is $1$ in a neighbourhood of $S^\sharp$ and its support is contained in $S$.
\item[{\rm (b)}] The function $\Psi^\flat$ is $1$ in a neighbourhood of $\overline{S}$ and its support is contained in $S^\flat$.
\item[{\rm (c)}] The partial derivatives of $\Psi^\sharp$ and $\Psi^\flat$ satisfy the following bounds:
\begin{equation*}
|\partial^\alpha(\Psi^\sharp)|,|\partial^\alpha(\Psi^\flat)|\ll_{S,\alpha} \eta^{-|\alpha|}.
\end{equation*}
\item[{\rm (d)}] The Fourier transforms of $\Psi^\sharp$ and $\Psi^\flat$
satisfy the following bounds:
\begin{equation*}
|\widehat{\Psi^\sharp}(w)|,|\widehat{\Psi^\flat}(w)|\ll_M \min\big(1, (\eta|w|)^{-M}\big),
\end{equation*}
for $M>0$.
\end{itemize}
Properties (a), (b), and (c) are consequences of \cite[Theorem 1.4.1, Equation (1.4.2)]{Hormander}. Property~(d) is a standard consequence of Property (c). Furthermore, since $u/t^2$ is absolutely bounded, the above error terms are independent of $u$.
Define the auxilliary counting functions
\begin{equation*}
\begin{array}{rcl}
N_n^\sharp(S;Y,(t),\phi)&:=&\displaystyle\sum_{f\in V(\Z)}\Psi^{\sharp}\Bigl(\frac{(t)^{-1}f}{Y}\Bigr)\phi(n;f),\quad
\\[.3in]
N_n^{{\rm err}}(S;Y,(t),\phi)&:=&\displaystyle\sum_{f\in V(\Z)}(\Psi^{\flat}-\Psi^{\sharp})\Bigl(\frac{(t)^{-1}f}{Y}\Bigr)|\phi(n;f)|,
\end{array}
\end{equation*}
and note that we have
\begin{equation}\label{eq:finalstep21}
N_n(S^{(i)};Y,(u,t),\phi(n;))=N_n^\sharp(S;Y,(t),\phi)+O(N_n^{{\rm err}}(S;Y,(t),\phi)).
\end{equation}
We use twisted Poisson summation to write
\begin{equation*}
N_n^\sharp(S;Y,(t),\phi)=
Y^5\sum_{w\in {V^\ast(\Z)}}\widehat{\Psi^\sharp}
\Bigl(\frac{(t)\cdot Yw}{n^2}\Bigr)
\widehat{\phi(n;\cdot)}(w).
\end{equation*}
From Property (d) of the function $\Psi^\sharp$, it follows that, up to negligible error, we can restrict the above sum to $w=(a,b,c,d,e)\in V^\ast(\Z)$ satisfying 
\begin{equation*}
|a|\ll_\epsilon \frac{t^4n^2}{\eta Y^{1-\epsilon}};
\quad
|b|\ll_\epsilon \frac{t^2n^2}{\eta Y^{1-\epsilon}};
\quad
|c|\ll_\epsilon \frac{n^2}{\eta Y^{1-\epsilon}};
\quad
|d|\ll_\epsilon \frac{n^2}{\eta t^2 Y^{1-\epsilon}};
\quad
|e|\ll_\epsilon \frac{n^2}{\eta t^{4} Y^{1-\epsilon}}.
\quad
\end{equation*}
Indeed, for the rest of the $w$'s, we obtain the necessary saving from the superpolynomial decay of $\widehat{\Psi^\sharp}$. Hence, upto a negligible error, we have
\begin{equation}\label{eq:finalstep22}
\begin{array}{rcl}
|N_n^\sharp(S;Y,(t),\phi)-\nu(\phi(n;))\widehat{\Psi^\sharp}(0)Y^5|&\ll_\epsilon& \displaystyle \eta^{-5}t^6X^{5/6+10\delta+\epsilon}\cdot\frac{\nu(\phi(n;))}{n_1},
\\[.2in]
|N_n^{{\rm err}}(S;Y,(t),\phi)-\nu(|\phi(n;)|)(\widehat{\Psi^\flat}(0)-\widehat{\Psi^\sharp}(0))Y^5| &\ll_\epsilon& \displaystyle \eta^{-5}t^6X^{5/6+10\delta+\epsilon}\cdot\frac{\nu(|\phi(n;)|)}{n_1},
\end{array}
\end{equation}
where we use Corollary \ref{prop:FT} to estimate $\widehat{\phi(n;)}$ and $\widehat{|\phi|(n;)}$. Let $\chi_{n^2}$ denote the characteristic function of the set of elements $f$ such that $n^2\mid\Delta(f)$,
and note that some bounded constant multiple of  $\chi_{n^2/(c,n^2)}$
dominates $|\phi(n;)|$, for some positive integer $c$ determined by $\phi$.
Combining \eqref{eq:finalstep21} and \eqref{eq:finalstep22}, we obtain
\begin{equation*}
\begin{array}{rcl}
N_n(S^{(i)};Y,(t),\phi)&=&\nu(\phi(n;))\Vol(S^{(i)})X^{5/6}
\\[.1in]
&&\displaystyle+O\bigl(\nu(\chi_{n^2})\eta X^{5/6}\bigr)+O_\epsilon\Bigl(t^6\eta^{-5}\frac{\nu(\chi_{n^2})}{n_1}X^{5/6+10\delta+\epsilon}\Bigr).
\end{array}
\end{equation*}
Finally, integrating the left hand side of the above equation over $u$ and $t$, we obtain the following estimate for $\cI^{(1)}(\phi(n;),Y;\kappa)$:
\begin{equation}\label{eq:finalpart2Nnest}
\begin{array}{rcl}
\displaystyle\cI^{(1)}(\phi(n;\cdot),Y;\kappa)&=&\displaystyle\nu(\phi(n;\cdot))\Vol(S^{(i)})X^{5/6}\int_{t\geq \frac{\sqrt[4]{3}}{\sqrt{2}}}\int_{u\in N(t)} \psi_1\Bigl(\frac{t}{Y^{\kappa}}\Bigr)du\frac{d^\times t}{t^{2}}
\\[.2in]&&\displaystyle
+O\bigl(\nu(\chi_{n^2})\eta X^{5/6}\bigr)+O_\epsilon\Bigl(\eta^{-5}\frac{\nu(\chi_{n^2})}{n_1}X^{5/6+10\delta+2\kappa/3+\epsilon}\Bigr).
\end{array}
\end{equation}

\medskip
\medskip

\noindent {\bf Step 3: Estimates for the main ball. Part 2: $n$ has small squarefree part.}
When $n$ has small squarefree part, we simply use \eqref{eq:smallt} to write
\begin{equation}\label{eq:finalpart3Nnest}
\begin{array}{rcl}
\displaystyle\cI^{(1)}(\phi(n;\cdot),Y;\kappa)&=&\displaystyle\nu(\phi(n;\cdot))\Vol(S^{(i)})X^{5/6}\int_{t\geq \frac{\sqrt[4]{3}}{\sqrt{2}}}\int_{u\in N(t)} \psi_1\Bigl(\frac{t}{Y^{\kappa}}\Bigr) du\frac{d^\times t}{t^{2}}
\\[.2in]&&\displaystyle
+\supp(\phi(n;))O\Bigl(\frac{X^{2/3+\kappa/3}}{n^8}+\frac{X^{1/2+2\kappa/3}}{n^6}+X^{2\kappa/3}\Bigr).
\\[.2in]
&=&\displaystyle\nu(\phi(n;\cdot))\Vol(S^{(i)})X^{5/6}\int_{t\geq \frac{\sqrt[4]{3}}{\sqrt{2}}}\int_{u\in N(t)} \psi_1\Bigl(\frac{t}{Y^{\kappa}}\Bigr) du\frac{d^\times t}{t^{2}}
\\[.2in]&&\displaystyle
+O\Bigl( n^2\nu(\chi_{n^2})X^{2/3+\kappa/3}
+n^4\nu(\chi_{n^2})X^{1/2+2\kappa/3}
+n^{10}\nu(\chi_{n^2})X^{2\kappa/3}\Bigr).
\end{array}
\end{equation}

\medskip
\medskip

\noindent {\bf Step 4: Estimates for the main ball. Part 3: Summing over $n$.} Write $n=n_1n_2^2m_3$, where $n_1$, $n_2$, and $m_3$ are pairwise coprime, $n_1$ and $n_2$ are squarefree, and $m_3$ is cubefull. Fix $\theta>0$ to be optimized later. We sum $\cI^{(1)}(\phi(n;\cdot),Y;\kappa)$ over $1\leq n<X^{1/12+\delta}$, using \eqref{eq:finalpart2Nnest} when $n_1> X^{1/12-\theta}$ and using \eqref{eq:finalpart3Nnest} when $n_1\leq X^{1/12-\theta}$. This yields
\begin{equation}\label{eq:phiFinalMainBall1}
\sum_{1\leq n<X^{1/12+\delta}}{\cI^{(1)}}(\phi(n;\cdot),Y;\kappa)=\nu(\phi)\Vol(S^{(i)})X^{5/6}\int_{t\geq \frac{\sqrt[4]{3}}{\sqrt{2}}}\int_{u\in N(t)} \psi_1\Bigl(\frac{t}{Y^{\kappa}}\Bigr) du\frac{d^\times t}{t^{2}}+O_\epsilon({\rm error}),
\end{equation}
where the error term is given by
\begin{equation*}
\begin{array}{rcl}
{\rm error}=&&
\displaystyle\sum_{\substack{n< X^{1/12+\delta}\\n_1>X^{1/12-\theta}}}\Bigl(\nu(\chi_{n^2})\eta X^{5/6}+\eta^{-5}\frac{\nu(\chi_{n^2})}{n_1}X^{5/6+10\delta+2\kappa/3+\epsilon}\Bigr)
\\[.25in]&+&\displaystyle
\sum_{\substack{n< X^{1/12+\delta}\\n_1\leq X^{1/12-\theta}}}
\Bigl( n^2\nu(\chi_{n^2})X^{2/3+2\kappa/3}
+n^4\nu(\chi_{n^2})X^{1/2+2\kappa/3}+n^{10}\nu(\chi_{n^2})X^{2\kappa/3}\Bigr)
\\[.25in]&+&\displaystyle
X^\epsilon\sum_{n\geq X^{1/12+\delta}}\nu(\chi_{n^2})X^{5/6}.
\end{array}
\end{equation*}

From Proposition \ref{prop:ppden}, we have the bound $\nu(\chi_{n^2})\ll n_1^{-2}n_2^{-4}m_3^{-3/2}\log n$.
Using this, we bound the first summand of the first line in the error as being $\ll_\epsilon$
\begin{equation}\label{eq:tempnsum1}
\eta X^{5/6+\epsilon}
\sum_{n_1>X^{1/12-\theta}}n_1^{-2}
\sum_{n_2^2\ll \frac{X^{1/12+\delta}}{n_1}}n_2^{-4}
\sum_{m_3\ll\frac{X^{1/12+\delta}}{n_1n_2^2}}m_3^{-3/2}
\ll \eta X^{3/4+\theta+\epsilon}.
\end{equation}
Similarly, the second summand in the first line is $\ll_\epsilon$
\begin{equation*}
\begin{array}{rcl}
&&\displaystyle\eta^{-5} X^{5/6+10\delta+2\kappa/3+\epsilon}
\sum_{n_1>X^{1/12-\theta}}n_1^{-3}
\sum_{n_2^2\ll \frac{X^{1/12+\delta}}{n_1}}n_2^{-4}
\sum_{m_3\ll\frac{X^{1/12+\delta}}{n_1n_2^2}}m_3^{-3/2}
\ll
\eta^{-5} X^{2/3+2\theta+10\delta+2\kappa/3+\epsilon}.
\end{array}
\end{equation*}
To bound the three summands in the second line,
we note that $\sum_{m<M}m^\alpha=O(M^{\alpha+1/3})$ for $\alpha>0$,
where in the sum $m$ runs through all cubefull numbers less than $M$.
Thus, the three summands are respectively
$\ll_\epsilon$
\begin{equation}\label{eq:tempnsum2}
\begin{array}{rcl}
\displaystyle X^{2/3+\kappa/3+\epsilon}\sum_{n_1\leq X^{1/12-\theta}}
\sum_{n_2^2\ll \frac{X^{1/12+\delta}}{n_1}}
\sum_{m_3\ll\frac{X^{1/12+\delta}}{n_1n_2^2}}m_3^{1/2}
&\ll_\epsilon&
\displaystyle
X^{3/4-\theta/6+5\delta/6+\kappa/3+\epsilon},
\\[.5in]
\displaystyle X^{1/2+2\kappa/3+\epsilon}\sum_{n_1\leq X^{1/12-\theta}}n_1^2
\sum_{n_2^2\ll \frac{X^{1/12+\delta}}{n_1}}n_2^4
\sum_{m_3\ll\frac{X^{1/12+\delta}}{n_1n_2^2}}m_3^{5/2}
&\ll_\epsilon&
\displaystyle
X^{3/4-\theta/6+17\delta/6+2\kappa/3+\epsilon},
\\[.5in]
\displaystyle
X^{2\kappa/3}\sum_{n_1\leq X^{1/12-\theta}}n_1^8
\sum_{n_2^2\ll \frac{X^{1/12+\delta}}{n_1}}n_2^{16}
\sum_{m_3\ll\frac{X^{1/12+\delta}}{n_1n_2^2}}m_3^{17/2}
&\ll_\epsilon&
\displaystyle
X^{3/4-\theta/6+53\delta/6+2\kappa/3+\epsilon}.
\end{array}
\end{equation}
Finally, the last line in the error term is $\ll X^{3/4-\delta+\epsilon}$.

Optimizing, we pick $\eta=X^{-1/72+\theta/6+5\delta/3+\kappa/9}$ and $\theta=1/96+43\delta/8+5\kappa/12$ to obtain
\begin{equation}\label{eq:phiFinalMainBall}
\begin{array}{rcl}
\cI^{(1)}(\phi,Y;\kappa)&=&\displaystyle\nu(\phi)\Vol(S^{(i)})X^{5/6}\int_{t\geq \frac{\sqrt[4]{3}}{\sqrt{2}}}\int_{u\in N(t)}\psi_1\Bigl(\frac{t}{Y^{\kappa}}\Bigr) du\frac{d^\times t}{t^{2}}
\\[.25in]&&\displaystyle
+O_\epsilon\Bigl(X^{3/4-1/576+83\delta/24+5\kappa/72+\epsilon}
+X^{3/4-\delta+\epsilon}
\Bigr).
\end{array}
\end{equation}

\medskip
\medskip
\noindent {\bf Step 5: Estimates for the cusp. Part 1: $n$ is large.}
We begin with the following result:
\begin{lemma}\label{lemma_pre_prop}
Let $1\leq F_0,F_1,F_2, F_3, F_4$ be real numbers, and let $B_{\vec{F}}$ denote the set of elements $f(x,y)=f_0x^4+f_1x^3y+f_2x^2y^2+f_3xy^3+f_4y^4\in V(\R)$ such that $-F_i\leq f_i\leq F_i$ for each $i\in\{0,1,2,3,4\}$. For a positive integer $n$, we have
\begin{equation}\label{eq:unifskew}
\#\bigl\{f\in
V(\Z)^\gen\cap B_{\vec{F}}:a(f)b(f)\neq 0,\,n^2\mid\Delta(f)\bigr\}\ll_\epsilon 
n^\epsilon\nu(\chi_{n^2})M(\vec{F},n),
\end{equation}
where
$M(\vec{F},n):=F_0F_1(\max(F_2,n^2)\max(F_3,n^2)\max(F_4,n^2))$
\end{lemma}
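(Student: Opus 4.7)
I will fiber the count over the integer pair $(a,b) = (a(f), b(f))$ with $ab \neq 0$, and for each such pair bound the number of triples $(c,d,e) \in \Z^3$ in the box $[-F_2,F_2]\times[-F_3,F_3]\times[-F_4,F_4]$ with $n^2 \mid \Delta(a,b,c,d,e)$. Dropping the generic condition (which can only decrease the count) and partitioning $(c,d,e)$ by residue class modulo $n^2$, each class admits at most $\prod_{i=2}^{4}(1+2F_i/n^2) \ll \prod_{i=2}^{4}\max(F_i,n^2)/n^2$ integer lifts to the box. Hence the inner count is at most a constant multiple of $\prod_{i=2}^{4}\max(F_i,n^2)\cdot N_{a,b}/n^6$, where
\[
N_{a,b} := \#\bigl\{(\bar c, \bar d, \bar e) \in (\Z/n^2\Z)^3 : n^2 \mid \Delta(a, b, \bar c, \bar d, \bar e)\bigr\}.
\]

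\textbf{Key estimate.} Summing the above over $(a,b)$, the lemma reduces to proving
\[
\sum_{\substack{0 < |a| \leq F_0 \\ 0 < |b| \leq F_1}} N_{a,b} \ll_\epsilon n^\epsilon F_0 F_1 \cdot n^6 \nu(\chi_{n^2}).
\]
The naive approach of further splitting $(a,b)$ by residues modulo $n^2$ and invoking $\sum_{\bar a, \bar b \bmod n^2} N_{\bar a, \bar b} = n^{10} \nu(\chi_{n^2})$ yields the bound $(F_0/n^2+1)(F_1/n^2+1) \cdot n^{10}\nu(\chi_{n^2})$, which inflates $F_0 F_1$ to $(F_0+n^2)(F_1+n^2)$ and is too weak when $F_0 < n^2$ or $F_1 < n^2$. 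To recover the sharper bound, I will apply the Chinese Remainder Theorem to reduce to one prime at a time: for each prime $p$ with $p^{2k} \| n^2$, I analyze the local count $N_{a,b}^{(p)} := \#\{(\bar c, \bar d, \bar e) \in (\Z/p^{2k}\Z)^3 : p^{2k} \mid \Delta\}$ by cases on the valuations $(v_p(a), v_p(b))$. When $p \nmid ab$, the discriminant is a genuine cubic in $e$ with unit leading coefficient $256 a^3$, and Hensel-type arguments give $N_{a,b}^{(p)} \ll p^{6k}\nu(\chi_{p^{2k}})$. When $p \mid a$ or $p \mid b$, I will use the congruences in Lemma \ref{lem:disc-cong}---in particular $\Delta \equiv 4ac^3(4ce - d^2) \pmod{(a^2, ab, b^2)}$---to bound $N_{a,b}^{(p)}$, and exploit the sparsity of such $(a,b)$ in a box of size $F_0 \times F_1$ to absorb the larger local density.

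\textbf{Main obstacle.} The technical heart of the argument is the prime-by-prime case analysis showing that, for every valuation pattern $(v_p(a), v_p(b))$, the local count $N_{a,b}^{(p)}$ is bounded by a quantity compatible with the density of the corresponding $(a,b)$-pairs in $[-F_0, F_0] \times [-F_1, F_1]$; the per-prime bounds then combine via CRT with a divisor-function loss absorbed into $n^\epsilon$. The nonzero conditions $a \neq 0$ and $b \neq 0$ are essential throughout: they exclude the degenerate situation $a = b = 0$ in which $\Delta$ vanishes identically (so that $N_{a,b} = n^6$), and they ensure that the $p$-adic valuations of $a$ and $b$ remain finite so that the case analysis terminates.
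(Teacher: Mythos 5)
Your opening reduction is the same as the paper's: fiber over $(a,b)$ with $ab\neq 0$, note that for fixed $(a,b)$ the condition $n^2\mid\Delta$ is a condition on $(c,d,e)$ modulo $n^2$, and bound the count in the box by $\prod_{i=2}^4\max(F_i,n^2)$ times the local density $\nu_{a,b}(\chi_{n^2}):=N_{a,b}/n^6$. You also correctly identify the real issue, namely that one needs $\sum_{a,b}\nu_{a,b}(\chi_{n^2})\ll_\epsilon n^\epsilon F_0F_1\,\nu(\chi_{n^2})$ rather than the naive $(F_0+n^2)(F_1+n^2)$ bound. But at exactly this point the proposal stops being a proof: the prime-by-prime case analysis that you yourself call ``the technical heart'' is only announced, never carried out, and the one quantitative claim you do make is not justified by the mechanism you cite. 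For $p\nmid ab$ you assert that Hensel-type arguments give $N^{(p)}_{a,b}\ll p^{6k}\nu(\chi_{p^{2k}})$; Hensel can give an upper bound on the number of $(\bar c,\bar d,\bar e)$ with $p^{2k}\mid\Delta$, but it says nothing about $\nu(\chi_{p^{2k}})$, so to conclude this comparison you would need either a matching \emph{lower} bound on $\nu(\chi_{p^{2k}})$ (which neither you nor the paper proves, and which Proposition \ref{prop:ppden} does not supply) or a direct comparison argument. For $p\mid ab$ you only gesture at Lemma \ref{lem:disc-cong} plus ``sparsity''; the statement that must actually be proved --- that for every valuation pattern the local density exceeds the average by at most a factor $p^{v_p(a)+v_p(b)}$, which is precisely what the sparsity of such $(a,b)$ in $[-F_0,F_0]\times[-F_1,F_1]$ can absorb --- is never formulated, and without it the ``absorb the larger local density'' step cannot be checked (a loss of, say, $p^{2v_p(a)}$ would break the argument).

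For comparison, the paper closes this gap with a single observation that makes the whole case analysis unnecessary: for units $\lambda,r\in\Z_p^\times$ the substitution $f(x,y)\mapsto\lambda f(x,ry)$ preserves divisibility of $\Delta$ by powers of $p$, so $\nu_{a,b}(\chi_{p^{2k}})$ depends only on $(v_p(a),v_p(b))$. Writing $\nu(\chi_{p^{2k}})$ as the average over valuation classes, whose measures are $(1-1/p)^2p^{-i-j}$, immediately gives $\nu_{a,b}(\chi_{p^{2k}})\ll p^{v_p(a)+v_p(b)}\nu(\chi_{p^{2k}})$ uniformly, with no Hensel argument, no use of Lemma \ref{lem:disc-cong}, and no lower bound on $\nu(\chi_{p^{2k}})$. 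Multiplying over $p\mid n$ and summing over $a,b$ divisible by $d_a=\prod_{p\mid n}p^{v_p(a)}$, $d_b=\prod_{p\mid n}p^{v_p(b)}$ (each at most $F_0$, resp.\ $F_1$) then yields the stated bound with the divisor-type loss $n^\epsilon$. I would encourage you to either adopt this scaling-invariance comparison or actually supply the per-valuation estimates your plan requires; as written, the key inequality is assumed rather than proved.
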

\begin{proof}
For a ring $R$, and fixed elements $a\in R$ and $b\in R$, let $V(R)_{a,b}$ denote the set of elements $f\in V(R)$ with $a(f)=a$ and $b(f)=b$. For $m\geq 1$ with prime factorization $m=p_1^{k_1}\cdots p_\ell^{k_\ell}$, let $\nu_{a,b}(\chi_{m^2})$ denote the density in $V(\widehat{\Z})_{a,b}$ of the set of elements $f\in V(\widehat{\Z})_{a,b}$ with $m^2\mid\Delta(f)$. It is clear that we have 
\begin{equation*}
\nu_{a,b}(\chi_{m^2})=\prod_{i=1}^\ell\nu_{a,b}(\chi_{p_i^{2k_i}}).
\end{equation*}
Let $p$ be a prime. If $\lambda$ and $r$ are elements of $\Z_p^\times$, and $f\in V(\Z_p)$, then $\Delta(f(x,y))=\Delta(\lambda f(x,ry))$. As a consequence, it follows that for any $k\geq 1$, we have $\nu_{a,b}(\chi_{p^k})=\nu_{a',b'}(\chi_{p^k})$ as long as $v_p(a)=v_p(a')$ and $v_p(b)=v_p(b')$. 
Therefore, for nonzero elements $a$ and $b$ of $\Z_p$, we have the bound
\begin{equation*}
\nu_{a,b}(\chi_{p^k})\ll \nu(\chi_{p^k})v_p(a)v_p(b).
\end{equation*}

Returning to the sets $B_{\vec{F}}$, we simply fiber over $a$ and $b$ with $ab\neq 0$, and note that the LHS of \eqref{eq:unifskew} is equal to
\begin{equation*}
\begin{array}{rcl}
&&\displaystyle\sum_{1\leq |a|\leq F_0}
\sum_{1\leq |b|\leq F_1}
\#\bigl\{f\in V(\Z)_{a,b}^\gen\cap B_{\vec{F}}:n^2\mid\Delta(f)\bigr\}
\\[.2in]&\ll&\displaystyle
\sum_{1\leq |a|\leq F_0}\sum_{1\leq |b|\leq F_1}
\nu_{a,b}(\chi_{n^2})\max(F_2,n^2)\max(F_3,n^2)\max(F_4,n^2)
\\[.2in]&\ll&\displaystyle
\sum_{\substack{{\rm rad}(d_a)\mid n\\{\rm rad}(d_b)\mid n}}
\sum_{\substack{|a|\leq F_0,\,|b|\leq F_1\\d_a\mid a,\,d_b\mid b}}
d_ad_b\nu(\chi_{n^2})\max(F_2,n^2)\max(F_3,n^2)\max(F_4,n^2)
\\[.2in]&\ll&\displaystyle
\sum_{\substack{d_a\leq F_0, d_b\leq F_1\\{\rm rad}(d_a)\mid n,\,{\rm rad}(d_b)\mid n}}
\nu(\chi_{n^2})M(\vec{F},n)
\\[.2in]&\ll_\epsilon&\displaystyle
n^\epsilon\nu(\chi_{n^2})M(\vec{F},n),
\end{array}
\end{equation*}
as necessary.
\end{proof}

Pick $\theta>0$ to be optimized later (this $\theta$ is of course independent of the $\theta$ which was optimized in the previous step). Let $n$ be an integer such that $X^{1/12-\theta}\leq n< X^{1/12+\delta}$.
We use Proposition~\ref{lemma_pre_prop} (and Lemma \ref{lem:b0bound} for the elements $f$ with $b(f)=0$) to write 
\begin{equation}\label{eq:tempstep5}
\cI^{(2)}(\phi(n;\cdot),Y;\kappa)\ll_\epsilon\displaystyle
X^{2/3}+\int_{Y^\kappa\ll t\ll Y^{1/4} }n^\epsilon\nu(\chi_{n^2})
\frac{Y^2}{t^6}\max(Y,n^2)\max(t^2Y,n^2)\max(t^4Y,n^2)\frac{d^\times t}{t^2}.
\end{equation}
Suppose first that $n< X^{1/12}$. Then we have $Y\geq n^2$, and so the integral in the equation above is $\ll X^{5/6-\kappa/3+\epsilon}\nu(\chi_{n^2})$. Next suppose that $n\geq X^{1/12}$. We will finally choose $\kappa\geq 6\delta$, and assume this now. In that case, we have $t^2Y\gg n^2$. Thus the integral is $\ll X^{3/4-\kappa/3+\epsilon}n^2\nu(\chi_{n^2})$.
Summing this over $X^{1/12-\theta}\leq n< X^{1/12+\delta}$ therefore yields
\begin{equation}\label{eq:Cuspbign}
\begin{array}{rcl}
\displaystyle\sum_{n=X^{1/12-\theta}}^{X^{1/12+\delta}}\cI^{(2)}(\phi(n;\cdot),Y;\kappa)
&\ll_\epsilon & X^{2/3+\epsilon}+
\displaystyle X^{5/6-\kappa/3+\epsilon}\sum_{n=X^{1/12-\theta}}^{X^{1/12}}\nu(\chi_{n^2})
\\[.15in]
&&\displaystyle +X^{3/4-\kappa/3+\epsilon}\sum_{n=X^{1/12}}^{X^{1/12+\delta}}n^2\nu(\chi_{n^2})
\\[.2in]
&\ll_\epsilon& X^{2/3+\epsilon}+X^{3/4+\theta-\kappa/3+\epsilon}+X^{3/4+\delta-\kappa/3+\epsilon}.
\end{array}
\end{equation}
Above, we estimate the sums over $n$ of $\nu(\chi_{n^2})$ and $n^2\nu(\chi_{n^2})$ just as in \eqref{eq:tempnsum1} and \eqref{eq:tempnsum2}, respectively. Moreover, the contribution of $X^{2/3}$ in \eqref{eq:tempstep5} comes from Lemma \ref{lem:b0bound}, which bounds the number of relevant forms $f(x,y)$ with $b(f)=0$.
When summed over $n$, each such form is counted at most $O_\epsilon(X^\epsilon)$ times, with a weight bounded by $1$. Thus the sum over of the error term from Lemma \ref{lem:b0bound} can be bounded by $O_\epsilon(X^{2/3+\epsilon})$ as stated.

\medskip
\medskip

\noindent {\bf Step 6: Estimates for the cusp. Part 2: $n$ is small.} Let $n\leq X^{1/12-\theta}$ be a  fixed positive integer.
We use \eqref{eq:larget} to write $\cI^{(2)}(\phi(n;\cdot),Y;\kappa)$ as a sum of two main terms along with an error that is $\ll_\epsilon$
\begin{equation}\label{eq:cuspesttemp2}
X^{2/3+\epsilon}+\frac{X^{2/3}}{n^6}\sum_{0\neq|a|\ll Y^{1-4\kappa}}\frac{\supp(\phi(n;\cdot),a)}{|a|}\ll_\epsilon X^{2/3+\epsilon}+\frac{X^{2/3}}{n^6}\sum_{0\neq|a|\ll Y^{1-4\kappa}}\frac{\supp(\chi_{n^2},a)}{|a|}.
\end{equation}
Let $q=p^k$ be a prime power. We know that $\supp(\chi_{q^2},a)=\supp(\chi_{q^2},b)$ if the valuations of $a$ and $b$ at $p$ are the same. It follows that if $p^\ell\parallel a$ for $\ell\leq 2k$, then we have
\begin{equation*}
\supp(\chi_{q^2},a)\ll \frac{\supp(\chi_{q^2})}{q^{2k-\ell}}=\frac{\supp(\chi_{q^2})}{q^{2k}}\cdot (a,q^2),
\end{equation*}
which, by the Chinese remainder theorem, implies that we have
\begin{equation*}
\supp(\chi_{n^2},a)\ll_\epsilon \frac{\supp(\chi_{n^2})}{n^{2-\epsilon}}(a,n^2).
\end{equation*}
Hence, we have
\begin{equation*}
\begin{array}{rcl}
\displaystyle\sum_{0\neq |a|\ll Z}\frac{\supp(\chi_{n^2},a)}{|a|}
&\ll&\displaystyle
\sum_{d\mid n^2}\sum_{\substack{0\neq |a|\ll Z\\(a,n^2)=d}}\frac{\supp(\chi_{n^2},a)}{|a|}
\\[.25in]&\ll_\epsilon&\displaystyle
\frac{\supp(\chi_{n^2})}{n^{2-\epsilon}}\sum_{d\mid n^2}\sum_{\substack{0\neq |a|\ll Z\\d\mid a}}\frac{d}{|a|}
\\[.25in]&\ll_\epsilon&\displaystyle
\frac{\supp(\chi_{n^2})}{n^{2-\epsilon}}Z^\epsilon.
\end{array}
\end{equation*}
Therefore, the error term in \eqref{eq:cuspesttemp2}, summed up over $n< X^{1/12-\theta}$, is $\ll_\epsilon$
\begin{equation*}
X^{3/4-\theta+\epsilon}+X^{2/3+\epsilon}\sum_{n< X^{1/12-\theta}}m_3^{1/2}\ll X^{3/4-\theta+\epsilon}.
\end{equation*}
Optimizing, we pick $\theta=\kappa/6$. Combining this with \eqref{eq:Cuspbign} yields
\begin{equation}\label{eq:CuspFinalEst}
\begin{array}{rcl}
\cI^{(2)}(\phi,Y;\kappa)&=&
\displaystyle\nu(\phi)\Vol(S^{(i)})X^{5/6}\int_{t>0}\psi_2\Bigl(\frac{t}{Y^\kappa}\Bigr)t^{-2}d^\times t
\\[.2in]
&&\displaystyle +
\frac{1}{4}\bigl(D^+(\phi,1/2)\V^+(S^{(i)})+D^-(\phi,1/2)\V^-(S^{(i)})\bigr)
\\[.2in]
&&\displaystyle +
O_\epsilon(X^{3/4-\kappa/6+\epsilon}+X^{3/4+\delta-\kappa/3+\epsilon}).
\end{array}
\end{equation}

\medskip
\medskip 

\noindent {\bf Step 7: Putting it all together.}
Combining \eqref{eq:Mtailcut1}, \eqref{eq:phiFinalMainBall}, and \eqref{eq:CuspFinalEst}, we obtain
\begin{equation*}
\begin{array}{rcl}
N^{(i)}(\phi,X)&=&
\displaystyle\frac{1}{\sigma_i\Vol(G_0)}
\Bigl(\nu(\phi)\Vol(\FF)\Vol(S^{(i))})X^{5/6}+\frac{1}{4}\sum_{\circ=\pm}D^\circ(\phi,1/2)\V^\circ(S^{(i)})X^{3/4}\Bigr)
\\[.2in]
&&\displaystyle +
O_\epsilon(X^{3/4-\delta+\epsilon}+X^{3/4-1/576+83\delta/16+5\kappa/72+\epsilon}+X^{3/4-\kappa/6+\epsilon}+X^{3/4+\delta-\kappa/3+\epsilon}).
\end{array}
\end{equation*}
Optimizing, we pick $\kappa=6\delta$ and $\delta=1/3804$ to bound the error term by $O_\epsilon(X^{3/4-1/3804+\epsilon})$, thereby obtaining the result.
\end{proof}

\subsection{Computing the primary and secondary terms}

We begin with the following Jacobian change of variables formula proved in \cite[Proposition 3.11]{BS2Sel}.
Let $\omega$ be a generator of the rank-$1$ module of top degree differentials of $\PGL_2$ over $\Z$.
Let $dv$ be Euclidean measure on $V$. 
Then we have 
\begin{proposition}[\cite{BS2Sel}]\label{prop:Jac}
Let $F$ be $\R$, $\C$, or $\Q_p$ for some prime $p$. Let $R$ be an open subset of $F\times F$, and let $s:R\to V(F)$ be a continuous function such that the invariants of $s_{I,J}:=s(I,J)$ are $I$ and $J$. Then for any measurable function $\phi:V(F)\to\R$, we have
\begin{equation*}
\int_{v\in \PGL_2(F)\cdot s(R)}\phi(v)dv
=\Bigl|\frac{1}{27}\Bigr|\int_R\int_{\PGL_2(F)}\phi(g\cdot s_{I,J})\omega(g)dIdJ,
\end{equation*}
where we regard $\PGL_2(F)\cdot s(R)$ as a multiset and $|\cdot|$ denotes the absolute value of elements in $F$.
\end{proposition}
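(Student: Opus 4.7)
The plan is to view $\Phi: \PGL_2(F) \times R \to V(F)$, $\Phi(g,(I,J)) = g \cdot s_{I,J}$, as a generically \'etale map between spaces of matching dimension $5 = 3 + 2$, and apply the usual change of variables formula. This yields
$$\int_{\PGL_2(F) \cdot s(R)} \phi(v)\, dv = \int_R \int_{\PGL_2(F)} \phi(g \cdot s_{I,J})\, |J_\Phi(g,I,J)|\, \omega(g)\, dI\, dJ,$$
so the task reduces to showing $|J_\Phi|$ is identically $|1/27|$. By $\PGL_2(F)$-equivariance of $\Phi$ in the $g$-argument and left-invariance of the Haar form $\omega$, it suffices to compute the Jacobian at points of the form $(e, (I,J))$, where $e$ is the identity element.

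At such a base point, $d\Phi$ is a linear map $\mathfrak{pgl}_2(F) \oplus T_{(I,J)}R \to T_{s_{I,J}}V(F)$. Off the discriminant locus, $\Inv: V(F) \to F^2$ has $\PGL_2(F)$-orbits as fibers (with finite stabilizers, which account for the fact that the integrand on the right is a multiset sum), so $d(\Inv)_{s_{I,J}}$ identifies a complement $N$ of the orbit tangent space with $T_{(I,J)} F^2$. In this decomposition $d\Phi$ is block-triangular: its $\mathfrak{pgl}_2$ block is the infinitesimal action on $s_{I,J}$, which is compatible with $\omega$ by the choice of $\omega$ as a top form, while its $T_{(I,J)}R$ block equals the identity since $\Inv \circ s = \mathrm{id}_R$. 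The Jacobian thus reduces to a single constant depending only on the normalization of $dI \wedge dJ$ relative to the Euclidean form $dv$.

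The factor $|1/27|$ is then extracted by a single explicit computation at a convenient base point. Taking a sparse representative $s_{I,J}$, picking three standard generators of $\mathfrak{pgl}_2$, and computing the resulting $5 \times 5$ Jacobian matrix in the Euclidean coordinates $(da,db,dc,dd,de)$ using the formulas
$$I = 12ae - 3bd + c^2, \qquad J = 72ace + 9bcd - 27ad^2 - 27eb^2 - 2c^3,$$
the determinant evaluates to $\pm 27$ times the orbital normalization (which is absorbed into $\omega$), giving the factor $|1/27|$. This is the same $27$ that appears in the identity $\Delta = (4I^3-J^2)/27$, and is essentially forced by the integral-coefficient normalization of the invariant polynomials.

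The main obstacle is to check that this constant does not depend on the choice of base point $(I,J) \in R$. This follows from an invariance argument: the ratio $J_\Phi$ is a $\PGL_2$-invariant scalar function on $V(F) \setminus \{\Delta = 0\}$ and hence descends to a function of $(I,J)$; equivariance under the scaling action $f \mapsto \lambda^2 f$, which rescales $(I,J) \mapsto (\lambda^4 I, \lambda^6 J)$ with distinct weights, then forces this function to be constant. The single-point evaluation pins the value down as $|1/27|$, completing the plan.
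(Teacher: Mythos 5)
The paper does not prove this proposition at all --- it is quoted from \cite{BS2Sel} (Proposition 3.11) --- so there is no in-paper argument to match, and your overall strategy (a Jacobian computation for the map $\Phi(g,(I,J))=g\cdot s_{I,J}$, with the multiset convention absorbing the finite stabilizers) is indeed the standard one. But there is a genuine gap at precisely the step you single out as the main obstacle. First, the assertion that the Jacobian factor ``is a $\PGL_2(F)$-invariant scalar function on $V(F)\setminus\{\Delta=0\}$ and hence descends to a function of $(I,J)$'' fails over $F=\R$ or $\Q_p$: a fiber of $\Inv$ with $\Delta\neq0$ generally contains several $\PGL_2(F)$-orbits (e.g.\ $V(\R)^{(0)}$ versus $V(\R)^{(2\pm)}$, or soluble versus insoluble forms over $\Q_p$), so $F$-rational invariance only gives a function on the orbit space; descent to $(I,J)$ requires invariance under $\PGL_2(\overline{F})$ together with algebraicity of the factor. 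Second, even granting the descent, invariance under $(I,J)\mapsto(\lambda^4I,\lambda^6J)$ does \emph{not} force constancy: any function of the weight-zero combination $J^2/I^3$ (and, over $\R$, of the signs of $I$ and $J$) is invariant, distinct weights notwithstanding. The scaling argument would only suffice if you had already shown the factor to be a \emph{polynomial} in $(I,J)$ (or regular and nonvanishing on all of $\{\Delta\neq0\}$ including $I=0$ and $J=0$), which you have not. This matters concretely: a single-point evaluation, or an evaluation along the natural sparse section $s_{I,J}=x^3y-\tfrac{I}{3}xy^3-\tfrac{J}{27}y^4$, only reaches orbits of forms with a rational linear factor, whereas the definite real orbits and the insoluble $p$-adic orbits --- exactly the ones this paper later integrates over --- are untouched without the extra descent input.

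The repair is standard. Write the comparison as an identity of algebraic $2$-forms, $\iota_{X_1}\iota_{X_2}\iota_{X_3}(dv)=\rho\cdot\Inv^*(dI\wedge dJ)$, where $X_1,X_2,X_3$ are the polynomial vector fields attached to an integral basis of $\mathfrak{pgl}_2$; both sides annihilate the orbit directions, so $\rho$ is a $\PGL_2(\overline{F})$-invariant rational function, hence a rational function of $(I,J)$. Then compute the $5\times5$ determinant along the section above for \emph{all} $(I,J)$ at once: with columns $X_i(s_{I,J})$, $\partial_I s$, $\partial_J s$ the $(I,J)$-dependence drops out and the determinant is identically $\pm\tfrac{1}{27}$, which pins $\rho\equiv\pm\tfrac1{27}$ on every fiber and hence on every $F$-orbit. (Your statement that the determinant ``evaluates to $\pm27$ times the orbital normalization'' is the reciprocal of what the computation gives and suggests it was not carried out.) Two further points you should address: the section $s$ is only assumed continuous, so you may not differentiate it --- one proves the formula for a smooth section and transfers it using unimodularity of $\PGL_2(F)$ (the inner integral depends only on the orbit of $s_{I,J}$) together with local constancy of the orbit type along a continuous section; and your reduction to $g=e$ tacitly uses that each $g_0$ acts on $(V(F),dv)$ with Jacobian $1$, which holds for the twisted action because the determinant is $\det(\gamma)^{10}/\det(\gamma)^{10}=1$, but should be stated.
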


Next, we evaluate the quantities $\V^\pm(S^{(i)})$ defined in \S4 by
\begin{equation*}
\mathcal{V}^\pm(S^{(i)}):=
\int_{\pm s>0}
\frac{\Vol(S^{(i)}|_s)}{|s|^{1/2}}ds,
\end{equation*}
where $S^{(i)}=S_1^{(i)}=G_0R_1^{(i)}$.
To this end, we have the following lemma.
\begin{lemma}\label{lem:hyperellipinfint}
Let $F$ be $\R$ or $\Q_p$ for some prime $p$. Let $f\in V(K)\backslash\{\Delta=0\}$ be an $F$-soluble binary quartic form with invariants $I(f)=I$ and $J(f)=J$, and denote $E^{IJ}$ by $E$. Then we have
\begin{equation*}
\int_{(x,z)\in C_f(F)}\frac{dx}{|z|}=\int_{(x,y)\in E(F)}\frac{dx}{|y|},
\end{equation*}
where $C_f:z^2=f(x,1)$ is the genus-$1$ curve corresponding to $f$.
\end{lemma}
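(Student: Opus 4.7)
The plan is to realize both integrals as Haar-measure volumes of the same compact $F$-analytic group, transported through an explicit isomorphism. Since $f$ is $F$-soluble, the genus-$1$ curve $C_f : z^2 = f(x,1)$ admits an $F$-rational point $P_0$. The Jacobian of $C_f$ is precisely $E = E^{IJ}$ (this is the classical description of the Jacobian of a hyperelliptic genus-$1$ curve in terms of the invariants $I,J$ of its defining quartic, and underlies the parametrization used in Theorem~\ref{thparame2e}), so the pair $(C_f,P_0)$ is $F$-isomorphic as an elliptic curve to $E$ with its origin. Fix such an $F$-isomorphism of elliptic curves $\phi\colon C_f \to E$. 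Both $\omega_{C_f}:=dx/z$ and $\phi^*(dx/y)$ are translation-invariant $1$-forms on $C_f$, so $\phi^*(dx/y) = \lambda\,\omega_{C_f}$ for a unique $\lambda \in F^\times$. By the change-of-variables formula,
\[
\int_{E(F)} \frac{dx}{|y|} \;=\; |\lambda|_F \int_{C_f(F)} \frac{dx}{|z|},
\]
so it suffices to show $|\lambda|_F = 1$.

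To pin down $\lambda$ I would construct $\phi$ explicitly. First consider the case where $f(x,1)$ has an $F$-rational root $\alpha$, so $P_0=(\alpha,0)$ is a Weierstrass point. Writing $f(x,1)=(x-\alpha)g(x)$ with $g\in F[x]$ a cubic, the substitution $u=1/(x-\alpha)$, $w=u^2 z$ transforms $C_f$ into the cubic curve $w^2 = u^3 g(\alpha+1/u)$ in the $(u,w)$-plane, and a direct computation yields $dx/z = -du/w$. Because the substitution in $x$ is implemented by an element of $\PGL_2(F)$ and the scaling of $z$ is by a unit, the invariants $(I,J)$ of the resulting cubic agree (with the correct weight) with those of $f$; a final $F$-linear change of variables with Jacobian of absolute value $1$ then brings this Weierstrass cubic into the form $y^2 = x^3-(I/3)x-(J/27)$, sending $-du/w$ to $\pm dx/y$. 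Hence $\lambda=\pm 1$ in this case. For general $P_0=(x_0,z_0)$ with $z_0\neq 0$, I would apply the fractional linear substitution sending $x_0$ to $\infty$; after accounting for the compensating factor in $z$, the leading coefficient of the transformed quartic becomes $z_0^2\in (F^\times)^2$, so the transformed curve has a Weierstrass $F$-point at infinity and reduces to the previous case.

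The main obstacle will be the explicit bookkeeping in the chain of transformations: verifying that each elementary step (the fractional linear change in $x$, the compensating rescaling of $z$, and the final normalization of the Weierstrass cubic) alters $dx/z$ by a factor of $F$-absolute value exactly $1$. This reduces to checking that the pair $(I,J)$ determines $(E^{IJ},dx/y)$ up to a unit twist in $F^\times$, i.e., that the distinguished invariant differential on the model $y^2=x^3-(I/3)x-(J/27)$ is the right one to match $dx/z$ under the construction above. Once this is verified one concludes $|\lambda|_F=1$, which gives the claimed equality of integrals.
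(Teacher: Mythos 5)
Your reduction to showing $|\lambda|_F=1$ for the constant comparing the two invariant differentials is sound, and your Case 1 (a rational root $\alpha$ of $f(x,1)$) does work out: the substitution $u=1/(x-\alpha)$, $w=u^2z$ is an instance of the twisted $\PGL_2(F)$-action, which preserves $I,J$ and $dx/z$ exactly, and from $w^2=Au^3+Bu^2+Cu+D$ the map $x=Au+B/3$, $y=Aw$ lands precisely on $y^2=x^3-(I/3)x-(J/27)$ with $dx/y=du/w$. Note, however, that this last substitution is \emph{not} an ``$F$-linear change of variables with Jacobian of absolute value $1$'': its affine Jacobian is $A^2$, and $|A|_F$ can be arbitrary. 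What saves you is the exact cancellation of the two factors of $A$ in $dx/y=A\,du/(Aw)$, together with the fact that the resulting coefficients are exactly $-I/3$ and $-J/27$ for this normalization of $E^{IJ}$; this is precisely the verification you deferred as ``bookkeeping,'' and it is the crux (for $F=\Q_3$, say, a stray factor of $3$ would destroy the statement).

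The genuine gap is Case 2. When $f(x,1)$ has no $F$-rational root (e.g.\ $f$ positive definite over $\R$, or irreducible but soluble over $\Q_p$), sending $x_0$ to infinity makes the leading coefficient the square $z_0^2$, but the resulting rational points above $x=\infty$ are \emph{not} Weierstrass points (the ramification points of $z^2=f(x,1)$ are the roots of the quartic), so this does not ``reduce to the previous case'': Case 1's computation starts from a rational root. A quartic with square leading coefficient and no rational root needs its own reduction to a Weierstrass cubic (the classical complete-the-square birational map), with its own tracking of the differential and of the $(I,J)$-normalization; none of that is in your proposal. A cleaner fix — and the route the paper takes — avoids all casework: the constant $\lambda\in F^\times$ is unchanged under base change, $dx/z$ is exactly invariant under the twisted $\PGL_2$-action, and over $\overline{F}$ one may move $f$ to $x^3y-(I/3)xy^3-(J/27)y^4$, whereupon the trivialization $P\mapsto P(Q)$ of the torsor $C_f$ becomes translation by $Q$ on $E$, which preserves the invariant differential; hence $\lambda=\pm1$. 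Either supply the square-leading-coefficient computation or replace your Case 2 by such a base-change/translation argument.
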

\begin{proof}
By assumption, $f$ is soluble over $F$, which implies $C_f(F)$ is nonempty and hence contains some point $Q$. Since $C_f$ is a trivial principal homogeneous space for $E$ over $F$, it follows that $E(F)$ has a simple transitive action on $C_f(F)$. The map $\phi_Q:E(F)\to C_f(F)$, sending $P$ to $P(Q)$ is a bijection. For the purpose of proving the lemma, it is enough to show that the Jacobian of $\phi_Q$ (with respect to the measures $dx/z$ and $dx/y$ on $C_f(F)$ and $E(F)$, respectively), is~equal to $1$.

Let $\overline{\phi}_Q:E(\overline{F})\to C_f(\overline{F})$ be the map sending $P$ to $P(Q)$, where $\overline{F}$ is the algebraic closure of $F$. The Jacobians of $\phi_Q$ and $\overline{\phi}_Q$ are clearly equal. We now prove that the Jacobian of $\overline{\phi}_Q$ is $1$, as follows: first, by replacing $f$ by a $\PGL_2(\overline{F})$-translate, if necessary, we may assume that $f(x,y)=x^3y-I/3xy^3-J/27y^4$. In this case, the curve $C_f(\overline{F})$ can be naturally identified with $E(\overline{F})$ by sending $(x,z)$ to $(x,y)$. Under this identification, we simply have $\overline{\phi}_Q(P)=P+Q$, which has Jacobian $1$ as necessary. The lemma follows.
\end{proof}

Next, we prove the following result.
\begin{proposition}
For $i\in\{0,1,2+,2-\}$, we have
\begin{equation}\label{eq:scminfvolcomp}
\frac{\V^+(S^{(i)})+\V^-(S^{(i)})}{\Vol(G_0)}
=\frac{C_{3/4}^{\circ}}{27\pi},
\end{equation}
where we take $\circ$ to be $\Delta>0$ when $i\in\{0,2+,2-\}$ and $\circ$ to be $\Delta<0$ when $i=1$.
\end{proposition}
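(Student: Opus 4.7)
\medskip

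\noindent \textbf{Proof plan for Proposition \ref{eq:scminfvolcomp}.}
The plan is to reduce the volume integrals $\V^\pm(S^{(i)})$ to an integral over the invariant plane weighted by the $K$-average of $|a(gs_{I,J})|^{-1/2}$, and then to identify that $K$-average with $\widetilde{\Omega}(I,J)/\pi$ via Lemma~\ref{lem:hyperellipinfint}. First I would unfold the definitions: writing $\V^+(S^{(i)})+\V^-(S^{(i)}) = \int_{V(\R)^{(i)}} \chi_{G_0 R_1^{(i)}}(v)|a(v)|^{-1/2}\,dv$ and applying Proposition \ref{prop:Jac} with the section $(I,J)\mapsto s_{I,J}\in R_1^{(i)}$ over $R=\{(I,J):H(I,J)<1,\,\Delta\circ 0\}$. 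Since $R_1^{(i)}$ contains exactly one $\PGL_2(\R)$-orbit representative per $(I,J)\in R$, one has $\chi_{G_0 R_1^{(i)}}(gs_{I,J})=1$ iff $g\in G_0\cdot\Stab(s_{I,J})$; the factor $\sigma_i=|\Stab(s_{I,J})|$ arising from the multiset interpretation on the right of Proposition \ref{prop:Jac} cancels against the $\sigma_i$ from summing the integrand (which is $\Stab$-right-invariant) over $\Stab$-cosets of $G_0$, yielding
\begin{equation*}
\V^+(S^{(i)})+\V^-(S^{(i)})
\;=\;\frac{1}{27}\int_{R}\int_{G_0}\frac{\omega(g)}{|a(gs_{I,J})|^{1/2}}\,dI\,dJ.
\end{equation*}

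The heart of the proof is the claim that
\begin{equation*}
\int_{K}\frac{dk}{|a(kgs_{I,J})|^{1/2}}\;=\;\frac{\widetilde{\Omega}(I,J)}{\pi}
\qquad\text{for every }g\in\PGL_2(\R).
\end{equation*}
To establish this, I would use the identity $a(kgs_{I,J})=s_{I,J}((1,0)kg)$ (with $|\det(kg)|=1$). Setting $(\cos\theta,-\sin\theta)=(1,0)k$, so that $dk=d\theta/(2\pi)$, the right-multiplication by $g$ sends the unit circle parametrized by $\theta$ to an ellipse $\vec w(\theta)=(\cos\theta,-\sin\theta)g$. The miracle is that the substitution $t=w_1/w_2$ eliminates the dependence on $g$: a direct computation using $\det g=1$ gives $w_2\,dw_1-w_1\,dw_2=d\theta$, hence $d\theta=w_2^2\,dt$, while homogeneity yields $|s_{I,J}(\vec w)|^{1/2}=w_2^2\,|s_{I,J}(t,1)|^{1/2}$, so $d\theta/|s_{I,J}(\vec w)|^{1/2}=dt/|s_{I,J}(t,1)|^{1/2}$. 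As $\theta$ traverses $[0,2\pi)$ the map $t=w_1/w_2$ is a degree-$2$ projection onto $\P^1_\R$, so $\int_0^{2\pi}d\theta/|s_{I,J}(\vec w)|^{1/2}=2\int_{-\infty}^\infty dt/|s_{I,J}(t,1)|^{1/2}$. Splitting $\R$ by the sign of $s_{I,J}(t,1)$ and applying Lemma \ref{lem:hyperellipinfint} to the hyperelliptic curves $C_{s_{I,J}}:z^2=s_{I,J}(x,y)$ and $C_{-s_{I,J}}:z^2=-s_{I,J}(x,y)$, whose Jacobians are $E^{I,J}$ and $E^{I,-J}$ respectively, gives $\int_{-\infty}^\infty dt/|s_{I,J}(t,1)|^{1/2}=\Omega(E^{I,J})+\Omega(E^{I,-J})=\widetilde{\Omega}(I,J)$, and the factor $1/\pi$ comes from the normalization $dk=d\theta/(2\pi)$.

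Finally, I would invoke the left-$K$-invariance of $G_0$: for any integrable $f$ on $\PGL_2(\R)$, the substitution $g\mapsto kg$ together with left-invariance of $\omega$ shows that $\int_{G_0}f(g)\,\omega(g)=\int_{G_0}\bar f(g)\,\omega(g)$ where $\bar f(g):=\int_K f(kg)\,dk$. Applying this to $f(g)=|a(gs_{I,J})|^{-1/2}$ and using the previous step, $\bar f$ is the constant $\widetilde{\Omega}(I,J)/\pi$, so $\int_{G_0}\omega(g)/|a(gs_{I,J})|^{1/2}=\Vol(G_0)\widetilde{\Omega}(I,J)/\pi$. Substituting back and recognizing $C_{3/4}^\circ=\int_R\widetilde{\Omega}(I,J)\,dI\,dJ$ completes the proof.

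The main obstacle I expect is the $K$-averaging step: one has to verify that the substitution $t=w_1/w_2$ really does kill the $g$-dependence (the computation of $d\theta=w_2^2\,dt$ is the crucial algebraic identity, using $\det g=1$), and one has to be careful that the partition of $\int_{\R}dt/|s_{I,J}(t,1)|^{1/2}$ by the sign of $s_{I,J}$ correctly assembles into $\Omega(E^{I,J})+\Omega(E^{I,-J})$ rather than, say, a single period; here the sign flip of $s_{I,J}$ corresponds exactly to the invariant change $(I,J)\mapsto(I,-J)$, which is the content of the quadratic twist symmetry hiding in the definition of $\widetilde{\Omega}$.
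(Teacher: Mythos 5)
Your proposal follows the paper's proof in all essentials: reduce via Proposition \ref{prop:Jac} to the $K$-average of $|a(\cdot)|^{-1/2}$ along the orbit of $s_{I,J}$, then identify that average with $\widetilde{\Omega}(I,J)/\pi$ using Lemma \ref{lem:hyperellipinfint}. The one genuine difference is how you justify replacing the $G_0$-average by the $K$-average: you prove directly (via the $t=w_1/w_2$ substitution, using $\det g=\pm1$) that $\int_K |a(kg\cdot s_{I,J})|^{-1/2}\,dk$ is independent of $g$, and then use left $K$-invariance of $G_0$; the paper instead appeals to the $G_0$-independence of the secondary constant deduced from \eqref{eq:Shintanifinal}. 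Your version of this step is more self-contained, and is a small improvement.

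However, the step you yourself single out as the main obstacle --- the assembly of $\int_{\R}dt/|s_{I,J}(t,1)|^{1/2}$ into $\Omega(E^{I,J})+\Omega(E^{I,-J})$ --- is not actually resolved by the ``quadratic twist symmetry'' remark, and it is genuinely delicate. Lemma \ref{lem:hyperellipinfint} requires $\R$-solubility. For $i\in\{0,1\}$ both $s_{I,J}$ and $-s_{I,J}$ are soluble over $\R$, both sign regions are nonempty, and your argument is complete. But for the definite orbits $i=2\pm$, one of $\pm s_{I,J}$ is negative definite: its sign region is empty, the corresponding curve has no real points, and Lemma \ref{lem:hyperellipinfint} supplies only one of the two periods. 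The symmetry $J\mapsto -J$ is a symmetry of the region $\{H<1,\,\Delta>0\}$, not something that produces the missing period pointwise. Concretely, for $f=x^4+y^4$ with $(I,J)=(12,0)$ one has $\int_{\R}dx/\sqrt{1+x^4}=\Gamma(1/4)^2/(2\sqrt{\pi})=\Omega(E^{12,0})=\tfrac12\widetilde{\Omega}(12,0)$, so the identity ``$K$-average $=\widetilde{\Omega}(I,J)/\pi$'' fails for definite forms; as written your computation (and, to be fair, the paper's own proof, which applies Lemma \ref{lem:hyperellipinfint} to both $C_f$ and $C_{-f}$ at this point without checking solubility) yields $\frac{1}{27\pi}\int_{H<1,\Delta>0}\Omega(E^{I,\pm J})\,dI\,dJ=\frac{1}{54\pi}C^{\Delta>0}_{3/4}$ for $i=2\pm$, not $\frac{1}{27\pi}C^{\Delta>0}_{3/4}$. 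So for $i\in\{0,1\}$ your proof is correct and matches the paper; for $i=2\pm$ the definite case needs a separate argument (or a reconciliation of the constant), which neither your write-up nor the paper's proof currently provides.
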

\begin{proof}
We start with writing
\begin{equation}\label{eq:secinfvoltemp}
\begin{array}{rcl}
\displaystyle \frac{\V^+(S^{(i)})+\V^-(S^{(i)})}{\Vol(G_0)}
&=&\displaystyle
\frac{1}{\Vol(G_0)}\int_{G_0\cdot R^{(i)}_1}\frac{df}{\sqrt{|a(f)|}}
\\[.2in]&=&\displaystyle
\frac1{27}\int_{f_{IJ}\in R^{(i)}_1}\int_{\gamma\in G_0}
\frac{1}{\sqrt{|a(\gamma\cdot f_{IJ})|}}d\gamma dIdJ
\\[.2in]&=&\displaystyle
\frac1{27}\int_{f_{IJ}\in R^{(i)}_1}\int_{\theta\in K}
\frac{1}{\sqrt{|a(\theta\cdot f_{IJ})|}}d\theta dIdJ,
\end{array}
\end{equation}
where the first equality follows from the definition of $S^{(i)}=G_0\cdot R_1^{(i)}$, the second equality from a direct application of Proposition \ref{prop:Jac}, and the third from the fact that the left hand side of \eqref{eq:scminfvolcomp} is independent of the right $K$-invariant set $G_0$; this last fact is deduced from the fact that the leading constants of the two terms in the right hand side of \eqref{eq:Shintanifinal} must be independent of $G_0$. Above, as in \S4, the measure $d\theta$ is Haar-measure on $K=\SO_2(\R)$ normalized to have volume $1$. For $f\in V(\R)\backslash\{\Delta=0\}$ with invariants $I$ and $J$, we compute the innermost integral above to be
\begin{equation*}
\begin{array}{rcl}
\displaystyle\int_{\theta\in K}\frac{1}{\sqrt{|a(\theta\cdot f)|}}d\theta&=&\displaystyle\frac{1}{2\pi}\int_{\theta=0}^{2\pi}\frac{1}{\sqrt{|f(\cos(\theta),\sin(\theta))|}}d\theta
\\[.2in]&=&\displaystyle
\frac{1}{2\pi}\int_{\theta=0}^{2\pi}\frac{1}{\sin^2(\theta)\sqrt{|f({\rm cotan}(\theta),1)|}}d\theta
\\[.2in]&=&\displaystyle
\frac{1}{\pi}\int_{x=-\infty}^{\infty}\frac{1}{\sqrt{|f(x,1)|}}dx,
\end{array}
\end{equation*}
using the change of variables $x={\rm cotan}(\theta)$. 
Now the set $\{(x,\sqrt{|f(x,1)|}):x\in\R\}$ parametrizes precisely half of the real points on the two hypperelliptic curves $C_f$ and $C_{-f}$ (the other half being parametrized by the set $\{(x,-\sqrt{|f(x,1)|}):x\in\R\}$). Thus, from Lemma \ref{lem:hyperellipinfint} and the definition of $\widetilde{\Omega}(I,J)$ in~\eqref{eq:omegatildeded}, we have
\begin{equation*}
\begin{array}{rcl}
\displaystyle\int_{\theta\in K}\frac{1}{\sqrt{|a(\theta\cdot f)|}}d\theta
&=&\displaystyle
\frac{1}{\pi}\int_{\substack{(x,z)\in C_f\\z>0}}\frac{dx}{z}+\frac{1}{\pi}\int_{\substack{(x,z)\in C_{(-f)}\\z>0}}\frac{dx}{z}
\\[.2in]&=&\displaystyle
\frac{1}{\pi}\int_{\substack{(x,y)\in E^{I,J}\\y>0}}\frac{dx}{y}+
\frac{1}{\pi}\int_{\substack{(x,y)\in E^{I,-J}\\y>0}}\frac{dx}{y}
\\[.2in]&=&\displaystyle
\frac{1}{\pi}\widetilde\Omega(I,J).
\end{array}
\end{equation*}
The result now follows immediately from \eqref{eq:secinfvoltemp} and the definition of $C_{3/4}^{\circ}$ in \eqref{eq:mainconstdeg}.
\end{proof}

It is now possible to compute the values of both $\V^+(S^{(i)})$ and $\V^-(S^{(i)})$:
\begin{cor}\label{cor:vvsi}
We have $\V^\pm(S^{(i)})=c^{\pm,i}\Vol(G_0)C^\circ_{3/4}/(27\pi)$, where $c^{\pm,0}=c^{\pm,1}=1/2$, $c^{\pm,2\pm}=1$, $c^{\pm,2\mp}=0$, and where we take $\circ$ to be $\Delta>0$ when $i\in\{0,2+,2-\}$ and $\circ$ to be $\Delta<0$ when $i=1$.
\end{cor}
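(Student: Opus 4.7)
The plan is to combine the preceding proposition with an elementary sign observation for the definite cases $i = 2\pm$, and with a $J \mapsto -J$ symmetry for the indefinite cases $i \in \{0, 1\}$. For $i = 2+$, every $f \in V(\R)^{(2+)}$ is positive definite, so $a(f) = f(1, 0) > 0$; this forces $S^{(2+)}|_s = \emptyset$ for $s \leq 0$, whence $\V^-(S^{(2+)}) = 0$. The preceding proposition then yields $\V^+(S^{(2+)}) = \Vol(G_0) C^{\Delta > 0}_{3/4}/(27\pi)$, matching $c^{+,2+} = 1$ and $c^{-,2+} = 0$; the case $i = 2-$ is symmetric, with $a(f) < 0$ throughout.

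For $i \in \{0, 1\}$, I would insert the indicator $\mathbf{1}_{\pm a(\gamma f_{IJ}) > 0}$ into the Jacobian formula of Proposition \ref{prop:Jac} just as in the proof of the preceding proposition, carry out the same $G_0 \to K$ reduction, and perform the substitution $x = \cot\theta$. The result is
\[
\V^\pm(S^{(i)})/\Vol(G_0) = \frac{1}{27\pi}\int_{\R^2_{H<1, \Delta\circ}} \Omega(E^{I, \pm J})\, dI \, dJ,
\]
where $\circ$ is $\Delta > 0$ for $i = 0$ and $\Delta < 0$ for $i = 1$; the identification of the signed $x$-integrals with $\Omega(E^{I, \pm J})$ uses Lemma \ref{lem:hyperellipinfint} applied to $f_{IJ}$ and to $-f_{IJ}$, both of which are $\R$-soluble in the indefinite setting. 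The change of variables $J \mapsto -J$ preserves the region $\R^2_{H<1, \Delta\circ}$ (since $H$ and $\Delta$ depend on $J$ only through $J^2$) and exchanges the two integrands, so they coincide; their sum is $\int \widetilde\Omega(I, J)\, dI\, dJ = C^\circ_{3/4}$, forcing each to equal $C^\circ_{3/4}/2$ and yielding $c^{\pm, 0} = c^{\pm, 1} = 1/2$.

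The main (mild) technical point is verifying the $G_0$-independence in the signed setting, which does not follow directly from its unsigned analogue used in the preceding proposition. This can be established by writing $G_0 = KH$ using the left $K$-invariance of $G_0$ and noting that, for fixed $(I, J)$, the inner integral $\int_K \mathbf{1}_{\pm a(kf')>0}/\sqrt{|a(kf')|}\, dk$ depends on $f' \in \PGL_2(\R) \cdot f_{IJ}$ only through $(I, J)$---again a consequence of Lemma \ref{lem:hyperellipinfint} applied to $\pm f'$.
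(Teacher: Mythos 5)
Your proposal is correct, and it handles the definite cases $i=2\pm$ exactly as the paper does (positive/negative definiteness forces the sign of $a(f)$, so one of $\V^\pm(S^{(2\pm)})$ vanishes and the proposition gives the other). For $i\in\{0,1\}$ your route differs from the paper's: the paper does not recompute the signed quantities at all, but instead observes that $\{-f:f\in R_1^{(i)}\}$ is also a fundamental set for $\PGL_2(\R)$ on $V(\R)^{(i)}$, so that (granting the choice-independence of $\V^\pm$) the substitution $f\mapsto -f$, which flips the sign of $a(f)$, yields $\V^+(S^{(i)})=\V^-(S^{(i)})$ and hence each equals half of the sum already computed in the proposition. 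You instead insert the sign indicator into the Jacobian formula of Proposition \ref{prop:Jac}, reduce from $G_0$ to $K$, and identify $\V^\pm(S^{(i)})/\Vol(G_0)$ with $\frac{1}{27\pi}\int\Omega(E^{I,\pm J})\,dI\,dJ$ via Lemma \ref{lem:hyperellipinfint} applied to $\pm f_{IJ}$ (both soluble since the forms are indefinite), then use that $H$ and $\Delta$ depend on $J$ only through $J^2$; this is the same underlying symmetry (since $-f$ has invariants $(I,-J)$) expressed on the invariant region rather than on fundamental sets. Your version is slightly longer but buys two things: explicit closed formulas for $\V^+$ and $\V^-$ separately as period integrals, and an explicit justification (via left $K$-invariance of $G_0$, the $KAN$ decomposition, and the orbit-independence of the inner $K$-integral) of the $G_0$-independence of the signed quantities, a point the paper's argument uses only implicitly when it transfers $\V^\pm$ between the two fundamental sets. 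The paper's version buys brevity by reusing the unsigned computation verbatim. Your sketch of the $K$-reduction is sound as stated and completes correctly.
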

\begin{proof}
This result is an immediate consequence of Propoposition along with the following identities:
\begin{equation*}
\V^\pm(S^{(2\mp)})=0;\quad
\V^+(S^{(0)})=\V^-(S^{(0)});\quad
\V^+(S^{(1)})=\V^-(S^{(1)}).
\end{equation*}
The first identity is immediate since every element in $R_1^{(2+)}$ (resp.\ $R_1^{(2-)}$), and hence every element in $S^{(2+)}=G_0\cdot R_1^{(2+)}$ (resp.\ $S^{(2-)}=G_0\cdot R_1^{(2-)}$), is positive (resp.\ negative) definite. The second identity can be deduced as follows: for $i\in\{0,1\}$, the set $\{-f:f\in R^{(i)}\}$ is also a fundamental set for the action of $\PGL_2(\R)$ on $V(\R)^{(i)}$. Hence, as in \eqref{eq:secinfvoltemp}, we have
\begin{equation*}
\V^+(S^{(i)})+\V^-(S^{(i)})=\int_{G_0\cdot R_1^{(i)}}\frac{df}{\sqrt{|a(f)|}}=\int_{G_0\cdot (-R_1^{(i)})}\frac{df}{\sqrt{|a(f)|}},
\end{equation*}
with the contribution from either integral to $\V^\pm(S^{(i)})$ coming from forms $f$ with $\pm a(f)>0$. Since $a(-f)=-a(f)$, the result follows.
\end{proof}

We may now compute the values of $M_{5/6}^{(i)}(\phi)$ and $M_{3/4}^{(i)}(\phi)$ for large and locally well approximated functions $\phi$.
\begin{proposition}\label{prop:Mscomp}
Let $i\in\{0,1,2+,2-\}$, let $j\in\{0,1\}$ and let $\phi$ be a large and locally well approximated function. Then we have
\begin{equation*}
\begin{array}{rcl}
M_{5/6}^{(i)}(\phi)&=&\displaystyle
\frac{2\nu(\phi)\zeta(2)}{27\sigma_i}C^\circ_{5/6};
\\[.2in]
M_{3/4}^{(j)}(\phi)&=&\displaystyle
\frac{D^+(\phi,1/2)+D^-(\phi,1/2)}{216\sigma_j\pi}C_{3/4}^{\circ};
\\[.2in]
M_{3/4}^{(2\pm)}(\phi)&=&\displaystyle
\frac{D^{\pm}(\phi,1/2)}{108\sigma_2\pi}C_{3/4}^{\Delta>0}.
\end{array}
\end{equation*}
As before, we take $\circ$ to be $\Delta>0$ when $i\in\{0,2+,2-\}$ or $j=0$ and to be $\Delta<0$ when $i=1$ or $j=1$.
\end{proposition}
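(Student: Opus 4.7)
The plan is to assemble the formulas directly from the definitions
$$M_{5/6}^{(i)}(\phi)=\frac{\nu(\phi)\Vol(\FF)\Vol(S^{(i)})}{\sigma_i\Vol(G_0)},\qquad M_{3/4}^{(i)}(\phi)=\frac{D^+(\phi,1/2)\V^+(S^{(i)})+D^-(\phi,1/2)\V^-(S^{(i)})}{4\sigma_i\Vol(G_0)},$$
introduced in \S4, together with Corollary \ref{cor:vvsi} and the Jacobian change of variables formula in Proposition \ref{prop:Jac}. The verification is essentially a bookkeeping exercise; there is no serious obstacle.

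First I would compute $\Vol(S^{(i)})$. Since $S^{(i)}=G_0\cdot R^{(i)}_1$ and $R^{(i)}_1\subset V(\R)^{(i)}$ is a continuous section of the invariant map over the region of $(I,J)\in\R^2$ with $H(I,J)<1$ and appropriate sign of $\Delta$, Proposition \ref{prop:Jac} (applied to $\phi\equiv 1$ on $S^{(i)}$) yields
$$\Vol(S^{(i)})=\frac{1}{27}\int_{R^{(i)}_1}\Vol(G_0)\,dIdJ=\frac{\Vol(G_0)}{27}\cdot C^{\circ}_{5/6},$$
where $\circ$ is $\Delta>0$ for $i\in\{0,2+,2-\}$ and $\Delta<0$ for $i=1$, using the definition of $C^\circ_{5/6}$ in \eqref{eq:mainconstdeg} together with the fact recalled in \S4.1 that $R^{(i)}_1$ meets each $\PGL_2(\R)$-orbit exactly once.

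Next, for the primary term I would cite the classical computation
$$\Vol(\FF)=\int_{t\geq\sqrt[4]{3}/\sqrt{2}}\int_{u\in N(t)}t^{-2}\,du\,d^\times t=\frac{\pi^2}{3}=2\zeta(2),$$
for the measure on $\PGL_2(\R)$ normalized so that $\Vol(K)=1$ as in \S4.1. Substituting these values into the formula for $M_{5/6}^{(i)}(\phi)$ gives
$$M_{5/6}^{(i)}(\phi)=\frac{\nu(\phi)\cdot 2\zeta(2)\cdot \Vol(G_0)C^\circ_{5/6}/27}{\sigma_i\Vol(G_0)}=\frac{2\nu(\phi)\zeta(2)}{27\sigma_i}C^\circ_{5/6},$$
as claimed.

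Finally, for the secondary term I would substitute the evaluation $\V^\pm(S^{(i)})=c^{\pm,i}\Vol(G_0)C^\circ_{3/4}/(27\pi)$ from Corollary \ref{cor:vvsi} directly into the definition of $M_{3/4}^{(i)}(\phi)$. For $i=j\in\{0,1\}$, where $c^{+,j}=c^{-,j}=1/2$, the $\Vol(G_0)$ factors cancel and produce
$$M_{3/4}^{(j)}(\phi)=\frac{(D^+(\phi,1/2)+D^-(\phi,1/2))C^\circ_{3/4}}{4\sigma_j\cdot 2\cdot 27\pi}=\frac{D^+(\phi,\tfrac12)+D^-(\phi,\tfrac12)}{216\sigma_j\pi}C^\circ_{3/4}.$$
For $i=2\pm$, only the $c^{\pm,2\pm}=1$ coefficient survives (since $c^{\mp,2\pm}=0$), yielding
$$M_{3/4}^{(2\pm)}(\phi)=\frac{D^\pm(\phi,1/2)C^{\Delta>0}_{3/4}}{4\sigma_2\cdot 27\pi}=\frac{D^\pm(\phi,\tfrac12)}{108\sigma_2\pi}C^{\Delta>0}_{3/4}.$$
This completes the three assertions.
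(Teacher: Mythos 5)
Your treatment of the two secondary-term formulas is fine and is exactly the paper's argument: substituting Corollary \ref{cor:vvsi} into the definition of $M_{3/4}^{(i)}(\phi)$ and using $c^{\pm,j}=1/2$, $c^{\pm,2\pm}=1$, $c^{\pm,2\mp}=0$ gives the stated constants. The problem is in your derivation of the first equality. Your ``classical computation'' $\Vol(\FF)=\int_{t\geq\sqrt[4]{3}/\sqrt{2}}\int_{u\in N(t)}t^{-2}\,du\,d^\times t=\pi^2/3$ is false for the measure normalized so that $\Vol(K)=1$: substituting $y=t^2$, $x=u$ identifies $\FF$ with the modular fundamental domain and gives $t^{-2}du\,d^\times t=\tfrac12\,dx\,dy/y^2$, so the integral equals $\tfrac12\cdot\tfrac{\pi}{3}=\pi/6$, a rational multiple of $\pi$; with $\Vol(K)=1$ one can never produce a $\pi^2$. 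The value $2\zeta(2)$ is the covolume of $\PGL_2(\Z)$ with respect to the \emph{algebraic} measure $\omega$ appearing in Proposition \ref{prop:Jac} (the generator of the integral top-degree forms, for which $\Vol_\omega(\PGL_2(\Z_p))=1-p^{-2}$ and the Tamagawa number $2$ give $\Vol_\omega(\PGL_2(\Z)\backslash\PGL_2(\R))=2\zeta(2)$), and $\omega=2\pi\,d\gamma$ in the paper's normalization.

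Correspondingly, your application of Proposition \ref{prop:Jac} gives $\Vol(S^{(i)})=\tfrac{1}{27}\,\Vol_\omega(G_0)\,C^\circ_{5/6}$, with $G_0$ measured by $\omega$, whereas the $\Vol(G_0)$ in the denominator of the definition of $M^{(i)}_{5/6}(\phi)$ is taken with respect to $d\gamma$; you cancel these as if they were equal. Your two slips (understating $\Vol(S^{(i)})$ by $2\pi$ and overstating $\Vol(\FF)$ by $2\pi$) happen to cancel, so your final formula is correct, but neither intermediate step is, and no justification for the cancellation is given. The correct bookkeeping is: only the ratio $\Vol(\FF)/\Vol(G_0)$ enters, and it is independent of the normalization of the Haar measure, so
\begin{equation*}
\frac{\Vol(\FF)\Vol(S^{(i)})}{\Vol(G_0)}
=\frac{\Vol_\omega(\FF)}{27}\,C^\circ_{5/6}
=\frac{2\zeta(2)}{27}\,C^\circ_{5/6},
\end{equation*}
where the essential input is the covolume $2\zeta(2)$ with respect to $\omega$, not a volume computation for $\FF$ in the $\Vol(K)=1$ normalization. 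As written, your proof of the $M^{(i)}_{5/6}$ formula does not go through; it needs this measure comparison (or the covolume fact for $\omega$) made explicit.
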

\begin{proof}
The first equality above follows from Proposition \ref{prop:Jac}, while the remaining equalities follow from Corollary \ref{cor:vvsi}.
\end{proof}

Theorem \ref{th:mainCongCountFull} follows from Theorem \ref{thm:NphiXM} and Proposition \ref{prop:Mscomp}. Proposition \ref{prop:Mscomp} also completes the proof of Theorem \ref{Th:Shintani4} in \S4. For Theorem \ref{th:mainCongCountFin}, the only remaining pieces are the values of $\kappa_{5/6}(\sigma_p)$ and $\kappa_{5/6}(\sigma_p)$ for splitting types $\sigma_p$; these are computed in the appendix.
Finally, we prove the main elliptic curve results. 

\medskip

\noindent{\bf Proof of Theorem \ref{Theorem1:cong}:} 
Let $\phi:\Z^2\to\R$ be large and locally approximated. From Theorem \ref{th:ellipselpar} and Lemmas \ref{lem:red1} and \ref{lem:red2}, it follows that we have
\begin{equation*}
\begin{array}{rcl}
\displaystyle\sum_{E_{AB}\in\EE(X)^+}(|\Sel_2(E_{AB})|-1)\phi(A,B)&=& N^{(0)}(\psi,2^6X)+N^{(2+)}(\psi,2^6X)+O_\epsilon(X^{2/3+\epsilon}),
\\[.1in]
\displaystyle\sum_{E_{AB}\in\EE(X)^-}(|\Sel_2(E_{AB})|-1)\phi(A,B)&=& N^{(1)}(\psi,2^6X)+O_\epsilon(X^{2/3+\epsilon}),
\end{array}
\end{equation*}
where $\psi\colon V(\Z)\rightarrow \R$ is defined by $\psi(f)=\phi(A(f),B(f))\ell(f)/m(f)$. By Lemma \ref{lem:lawainv}, it follows that $\psi$ is large and well approximated, and so we may apply Theorem \ref{thm:NphiXM} to write the left hand sides of the above equation as a sum of two main terms along up to a sufficiently small error. Finally, the fact that the main terms arising from Theorem \ref{thm:NphiXM} align with the main term claimed in Theorem \ref{Theorem1:cong} follows from \cite[Theorem 3.1]{BS2Sel} by simply approximating each $\phi_p$ by a finite sum of characteristic functions. The result follows. $\Box$

\medskip

Theorem \ref{Theorem1} is an immediate consequence of Theorem \ref{Theorem1:cong}. 

\appendix

\section{Computations of primary and secondary local densities}

Let $\sigma_p$ be a splitting type modulo $p$. We begin by computing the primary density $\kappa_{5/6}(\sigma_p)$ for primes $p\geq 3$.
\begin{lemma}
Let $p\geq 3$ be a prime. The values of $\kappa_{5/6}(\sigma_p)$ are as given in Table \ref{tab:primaryden}.
\end{lemma}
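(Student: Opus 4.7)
The plan is to reduce the computation of $\kappa_{5/6}(\sigma_p)$ to an elementary count of binary quartic forms over $\F_p$, and then enumerate these counts splitting type by splitting type. Tracing back through Theorem~\ref{th:mainCongCountFin} and Proposition~\ref{prop:Mscomp}, the quantity $\kappa_{5/6}(\sigma_p)$ equals $\nu(\phi_p)$, where $\phi_p:V(\Z_p)\to\R$ is the characteristic function of those $f\in V(\Z_p)$ whose reduction modulo $p$ has splitting type $\sigma_p$. Since $\phi_p$ is periodic with period $p$, we have
\begin{equation*}
\kappa_{5/6}(\sigma_p)=\frac{\#\{f\in V(\F_p):f \text{ has splitting type }\sigma_p\}}{p^5}.
\end{equation*}

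The next step is to enumerate, for each quartic splitting type $\sigma_p$, the number of nonzero $f\in V(\F_p)$ with that type. Write $N_d=N_d(p)$ for the number of monic $\Gal(\overline{\F_p}/\F_p)$-orbits of size exactly $d$ in $\P^1(\overline{\F_p})$, so that
\begin{equation*}
N_1=p+1,\quad N_2=\tfrac{p^2-p}{2},\quad N_3=\tfrac{p^3-p}{3},\quad N_4=\tfrac{p^4-p^2}{4},
\end{equation*}
giving the number of normalized irreducible binary forms over $\F_p$ of degrees $1,2,3,4$, respectively. Every nonzero $f\in V(\F_p)$ factors uniquely as $c\prod_i f_i^{e_i}$ with $c\in\F_p^\times$ and $f_i$ distinct normalized irreducibles. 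Hence the count for splitting type $(f_1^{e_1}\cdots f_k^{e_k})$ is $(p-1)$ times the number of unordered choices of distinct normalized irreducibles of the correct degrees: for example, $(1^4)$ contributes $(p-1)N_1$; $(1^21^2)$ contributes $(p-1)\binom{N_1}{2}$; $(1^211)$ contributes $(p-1)N_1\binom{N_1-1}{2}$; $(1111)$ contributes $(p-1)\binom{N_1}{4}$; $(112)$ contributes $(p-1)\binom{N_1}{2}N_2$; $(22)$ contributes $(p-1)\binom{N_2}{2}$; $(2^2)$ contributes $(p-1)N_2$; $(1^22)$ contributes $(p-1)N_1 N_2$; $(13)$ contributes $(p-1)N_1 N_3$; $(1^31)$ contributes $(p-1)N_1(N_1-1)$; $(4)$ contributes $(p-1)N_4$; and $(0)$ contributes $1$. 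Dividing each count by $p^5$ produces the value of $\kappa_{5/6}(\sigma_p)$ recorded in Table~\ref{tab:primaryden}.

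Finally, as a consistency check, summing all these contributions should recover $p^5$, which is straightforward polynomial bookkeeping in $p$. The only delicate point in the argument is the convention on normalization of irreducible factors (equivalently, the choice of representative for each scalar class); but since we weight every factorization by a single free leading scalar $c\in\F_p^\times$, no overcounting occurs. The entire argument is purely combinatorial, and the main (minor) obstacle is nothing more than carrying out the bookkeeping for all twelve splitting types without error; the hypothesis $p\geq 3$ plays no essential role here beyond ensuring that $2$ is invertible (which is not even needed for these counts, so the formulas in fact hold for $p=2$ as well, though they are not tabulated in Table~\ref{tab:primaryden}).
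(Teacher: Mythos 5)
Your proposal is correct and follows essentially the same route as the paper: reduce $\kappa_{5/6}(\sigma_p)$ to the density of the splitting type in $V(\F_p)$ and count forms as a unit scalar times a set of distinct normalized irreducible factors of the prescribed degrees, exactly the counting argument the paper sketches (e.g.\ $(p-1)\binom{p+1}{4}/p^5$ for type $(1111)$). All twelve of your counts reproduce the entries of Table \ref{tab:primaryden}, including the intended value $(p-1)^2/(2p^4)$ for type $(2^2)$, which appears in the table with an evident typographical slip.
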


\begin{table}[ht]
\centering
\begin{tabular}{|c | l|}
\hline
$\sigma_p$ & 
$\kappa_{5/6}(\sigma_p)$ 
\\[5pt]
\hline\hline
$(1111)$ & 
$(p+1)(p-1)^2(p-2)/(24p^4)$ 
\\[10pt]
$(112)$ & 
$(p+1)(p-1)^2/(4p^3)$ 
\\[10pt]
$(13)$ & 
$(p+1)^2(p-1)^2/(3p^4)$ 
\\[10pt]
$(22)$ &
$(p-1)^2(p+1)(p-2)/(8p^4)$ 
\\[10pt]
$(4)$ & 
$(p+1)(p-1)^2/(4p^3)$ 
\\[10pt]
\hline
$(1^211)$ & 
$(p+1)(p-1)^2/(2p^4)$ 
\\[10pt]
$(1^22)$ & 
$(p+1)(p-1)^2/(2p^4)$ 
\\[10pt]
$(1^31)$ & 
$(p+1)(p-1)/p^4$ 
\\[10pt]
\hline
$(1^21^2)$ & 
$(p+1)(p-1)/(2p^4)$
\\[10pt]
$(2^2)$ & 
$(p-1^2)/(2p^4)$
\\[10pt]
$(1^4)$ & 
$(p+1)(p-1)/p^5$
\\[10pt]
\hline
\end{tabular}
\caption{Splitting type densities in $V(\Z_p)$ for $p\geq 3$}\label{tab:primaryden}
\end{table}
\begin{proof}
To compute the density of integral forms with some fixed splitting type $\sigma_p$, note that this is the same as the density of elements in $V(\F_p)$ with splitting type $\sigma_p$, and that this latter density can be computed via a simple counting argument. For example, an element in $V(\F_p)$ with splitting type $(1111)$ is determined by four distinct points in $\P^1_{\F_p}$, up to multiplication by an element in $\F_p^\times$. There are ${p+1}\choose{4}$ such sets of four points, and multiplying by $(p-1)/p^5$ yields the density. The cases of the other splitting types are similar.
\end{proof}

Next, let $\phi:V(\Z)\to\R$ be a large and locally well approximated function via the functions $\phi_p:V(\Z_p)\to\R$. We proved in Corollary \ref{cor:anacont} that $D^\pm(\phi,s)$ has an analytic continuation to the right of $\Re(s)=1/3$. Assume that $\phi_p$ is invariant under multiplication by units in $\Z_p$, i.e., $\phi_p(f)=\phi_p(uf)$ for $u\in\Z_p^\times$. Then $D^\pm(\phi,s)$ has an Euler product expansion:
\begin{equation*}
\begin{array}{rcl}
&&\displaystyle D^\pm(\phi,s)=
\displaystyle\sum_{a>0}\frac{\nu_{\pm a}(\phi)}{a^s}
=\sum_{a>0}\prod_{p^k\parallel a}\frac{\nu_{p^k}(\phi_p)}{p^{ks}}=\prod_p D_p(\phi_p,s),
\\[.2in]\mbox{where}&&
\displaystyle D_p(\phi_p,s):=
\nu_1(\phi_p)+\frac{\nu_p(\phi_p)}{p^s}
+\frac{\nu_{p^2}(\phi_p)}{p^{2s}}+\cdots
=\sum_{k\geq 0}\frac{\nu_{p^k}(\phi_p)}{p^{ks}}.
\end{array}
\end{equation*}
We now compute these densities $\nu_{p^k}(\phi_p)$ for certain functions $\phi_p$.

\begin{proposition}
For a splitting type $\sigma$, let $\chi_\sigma:V(\Z_p)\to\{0,1\}$ denote the characteristic function of the set of elements in $V(\Z_p)$ having splitting type $\sigma$. The values of $\nu_{p^k}(\chi_\sigma)$ are as given in Table~\ref{tablenu}.
\end{proposition}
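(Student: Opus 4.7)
The plan is to reduce each density $\nu_{p^k}(\chi_\sigma)$ to a concrete combinatorial count over $\F_p^4$. Since $\chi_\sigma(f)$ depends only on $f\bmod p$, and splitting types are invariant under scalar multiplication by units, the value $\nu_a(\chi_\sigma)$ depends only on $v_p(a)$. Moreover, for any $k\geq 1$ the reduction modulo $p$ of $p^kx^4+bx^3y+cx^2y^2+dxy^3+ey^4$ is $bx^3y+cx^2y^2+dxy^3+ey^4$, independently of $k$, so $\nu_{p^k}(\chi_\sigma)$ is constant for $k\geq 1$. Therefore only two densities need to be computed: the ``unit case'' $\nu_1(\chi_\sigma)$ and the ``zero case'' $\nu_p(\chi_\sigma)$.

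For the unit case, $p^4\cdot\nu_1(\chi_\sigma)$ equals the number of $(\bar b,\bar c,\bar d,\bar e)\in\F_p^4$ for which the monic form $x^4+\bar bx^3y+\bar cx^2y^2+\bar dxy^3+\bar ey^4$ has splitting type $\sigma$. This count is obtained by the same enumeration that yields Table~\ref{tab:primaryden}, but constrained so that $(1:0)$ is not among the roots or irreducible factors (since the leading coefficient is nonzero). Equivalently, $\nu_1(\chi_\sigma)=(p-1)^{-1}$ times the density in $V(\F_p)$ of forms of splitting type $\sigma$ whose $x^4$-coefficient lies in $\F_p^\times$, which is computed by restricting the $\P^1(\F_p)$ and $\P^1(\F_{p^r})$ root-patterns of Table~\ref{tab:primaryden} to orbits disjoint from $(1:0)$.

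For the zero case, we write $f=y\cdot g(x,y)$ where $g(x,y)=\bar bx^3+\bar cx^2y+\bar dxy^2+\bar ey^3$, and classify by the splitting type of the binary cubic $g$ together with the multiplicity of $y$ as a factor of $g$. In particular, $\nu_p(\chi_\sigma)=0$ unless $\sigma$ contains at least one linear factor accounting for the root $y=0$; thus splitting types like $(22)$, $(4)$, and $(2^2)$ give zero here. The degenerate subcase $g\equiv 0$, which occurs with density $p^{-4}$, contributes to $\nu_p(\chi_{(0)})$. For the remaining $\sigma$, one factors $\sigma$ as $(y\text{-factor})\times\tau$ for a cubic splitting type $\tau$ of $g$, and reduces to the analogous (one-dimension-lower) count of binary cubic forms over $\F_p$ with prescribed splitting type and prescribed multiplicity at $(1:0)$.

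The main obstacle is the careful case-by-case bookkeeping for splitting types $\sigma$ with repeated linear factors involving $(1:0)$, such as $(1^31)$, $(1^21^2)$, $(1^4)$, and $(1^22)$: for each such $\sigma$ in the zero case, one must separately enumerate over the possible multiplicities (simple, double, or triple) of $y$ as a factor of $g$, and keep track of the automorphism scalars $(p-1)$ attached to each linear, quadratic, or cubic irreducible factor. These enumerations are elementary and mirror those behind Table~\ref{tab:primaryden}, but the combinatorial accounting, together with handling the degenerate $g\equiv 0$ case and verifying the vanishing of $\nu_p(\chi_\sigma)$ for splitting types without a rational linear factor, is the most intricate part of the argument.
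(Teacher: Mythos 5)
Your proposal is correct and takes essentially the same approach as the paper: both reduce each $\nu_{p^k}(\chi_\sigma)$ to a count over $\F_p$ (monic quartics avoiding the root $(1:0)$ for $\nu_1$, and forms $y\cdot g$ with $g$ a binary cubic, split by the multiplicity of $y$ in $g$, for $\nu_{p^k}$ with $k\geq 1$), and your observation that $\nu_{p^k}(\chi_\sigma)$ is constant in $k\geq 1$ matches the table. The only extra content in the paper's proof concerns the rows $\chi_\sigma^{\max}$ of Table~\ref{tablenu}, where maximality of the resolvent is a mod-$p^2$ condition (so $k=1$ and $k\geq 2$ differ); your argument does not treat these, but they also fall outside the statement as given to you.
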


\begin{table}[ht]
\centering
\begin{tabular}{|c | c| c| c|}
  \hline
  &&&\\[-8pt]
  $\phi$&$\nu_1(\phi)$&$\nu_p(\phi)$&$\nu_{p^k}(\phi)$, $k\geq 2$\\[10pt]
  \hline\hline
  &&&\\[-8pt]
  $\chi_{(1111)}$ & 
  $(p-1)(p-2)(p-3)/(24p^3)$ & 
  $(p-1)^2(p-2)/(6p^3)$ & 
  $(p-1)^2(p-2)/(6p^3)$
  \\[10pt]
  
  $\chi_{(112)}$ &
  $(p-1)^2/(4p^2)$ &
  $(p-1)^2/(2p^2)$ &
  $(p-1)^2/(2p^2)$
  \\[10pt]
  
  $\chi_{(13)}$ &
  $(p+1)(p-1)/(3p^2)$ &
  $(p+1)(p-1)^2/(3p^3)$ &
  $(p+1)(p-1)^2/(3p^3)$ 
  \\[10pt]
  
  $\chi_{(22)}$ &
  $(p-1)(p^2-p-2)/(8p^3)$&
  $0$ &
  $0$ 
  \\[10pt]
  
  $\chi_{(4)}$ &
  $(p+1)(p-1)/(4p^2)$&
  $0$ & 
  $0$ 
  \\[10pt]
  
  \hline
  $\chi_{(1^211)}$ &
  $(p-1)(p-2)/(2p^3)$ &
  $3(p-1)^2/(2p^3)$ &
  $3(p-1)^2/(2p^3)$ 
  \\[10pt]
  
$\chi_{(1^22)}$ &
$(p-1)/(2p^2)$&
$(p-1)^2/(2p^3)$&
$(p-1)^2/(2p^3)$
\\[10pt]

$\chi_{(1^31)}$ &
$(p-1)/p^3$&
$2(p-1)/p^3$&
$2(p-1)/p^3$\\[10pt]

\hline
  $\chi_{(1^211)}^\max$ &
  $(p-1)^2(p-2)/(2p^4)$ &
  $(3p-2)(p-1)^2/(2p^4)$ &
  $(p-1)^3/p^4$\\[10pt]

  $\chi_{(1^22)}^\max$ &
$(p-1)^3/(2p^4)$&
$(p-1)^2/(2p^3)$&
0\\[10pt]

$\chi_{(1^31)}^\max$ &
$(p-1)^2/p^4$&
$(2p-1)(p-1)/p^4$&
$(p-1)^2/p^4$\\[10pt]

\hline
$(1^21^2)$ &
$(p-1)/(2p^3)$&
$(p-1)/p^3$ &
$(p-1)/p^3$\\[10pt]
$(2^2)$ &
$(p-1)/(2p^3)$ &
0&
0\\[10pt]
$(1^4)$ &
$1/p^3$&
$(p-1)/p^4$&
$(p-1)/p^4$\\
\hline
\end{tabular}
\caption{Splitting type densities in $V(\Z_p)_a$}\label{tablenu}
\end{table}

\begin{proof}
Since the set of elements in $V(\Z_p)$ with a given splitting type is defined modulo $p$, the densities associated to $\chi_\sigma$ can be computed by counting the relevant elements in $V(\F_p)$. So for example, the number of elements $f(x,y)\in V(\F_p)$ with leading coefficient $1$ (resp.\ leading coefficient $0$) and factoring into four distinct linear factors is equal to $p(p-1)(p-2)(p-3)/24$ (resp.\ $p(p-1)^2(p-2)/6$, leading to the listed values of $\nu_a(\chi_{(1111)})$. The computations for the other unramified splitting types are similar. 

Next consider $\sigma=(1^211)$. An element $f(x,y)\in V(\Z_p)$ has splitting type $\sigma$ if and only if the reduction of $f$ modulo $p$ has three distinct roots in $\P^1_{\F_p}$, one of which (say $r$) is a double root. Moreover it is easy to see that the resolvent of $f$ is maximal if and only if $p^2\nmid f(\tilde{r})$ for a lift $\tilde{r}$ of $r$. As before, the value of $\nu_1(\chi_{(1^211)})$ can be computed via counting roots in $\F_p$ to be $p(p-1)(p-2)/(2p^4)$, as listed in the table. Since each such element in $V(\Z_p)$ is maximal with probability $(p-1)/p$, the listed value of $\nu_1(\chi_{(1^211)}^\max)$ is correct. To compute the values of $\nu_{p^k}(\chi_\sigma)$, we separate into cases depending on whether the root at infinity is the double root or a single root. In the former case, we must count elements in $V(\F_p)$ of the form $\alpha y^2(x-r_1y)(x-r_2y)$, where $\alpha\in\F_p^\times$ and $r_1,r_2\in\F_p$ are distinct. There are clearly $p(p-1)^2/2$ such elements, and such a form lifts to a maximal quartic form in $V(\Z_p)$ if and only if $k=1$. In the latter case, we must count elements in $V(\F_p)$ of the form $\alpha y(x-r_1)(x-r_2)^2$; there are $p(p-1)^2$ such elements, and they lift to maximal elements with probability $1-1/p$. Adding up the contributions from these cases gives the desired values. The computations in the other cases are similar, where we also note that a binary quartic form with splitting type $(1^21^2)$, $(2^2)$, or $(1^4)$ is never strongly maximal.
\end{proof}

\bibliographystyle{abbrv}
\bibliography{references}

\end{document}